\pgfplotsset{compat=1.6}        % safe version for arXiv
\newcommand{\longleftrightarrowTilde}{
    \mathrel{\tikz \draw[<->, thick] (0,0) -- (2.5,0) node[midway, above] {\(\sim\)};}
}
\newcommand{\pto}[1]{\textbf{#1}}
\newcommand{\C}{\mathbb{C}}
\newcommand\Z{{\mathbb Z}}
\newcommand\ziz{{\Z\oplus i\,\Z}}
\DeclareMathOperator\Mod{Mod}
\DeclareMathOperator{\arccot}{arccot}
\newcommand{\spzz}{\mathrm{Sp}(2,\Z)}
\newcommand{\glplus}{\mathrm{GL}^+(2,\mathbb{R})}
\newcommand{\slz}{{\mathrm{SL}(2, \mathbb{Z})}}
\newcommand{\pslz}{{\mathrm{PSL}(2, \mathbb{Z})}}
\newcommand{\cp}{\mathbb{C}\mathrm{\mathbf{P}}^1}
\newcommand{\rp}{\mathbb{R}\mathrm{\mathbf{P}}^1}
\newcommand{\modul}{\textnormal{Mod}(S_{g,n})}
\newcommand{\homolz}{\text{H}_1(X,\mathbb{Z})}
\newcommand{\homolzn}{\textnormal{H}_1\big(\,X,\,\mathbb{Z}\,\big)}
\newcommand{\shomolzn}{\textnormal{H}_1(S_{g,n},\mathbb{Z})}
\newcommand{\shomolzoo}{\textnormal{H}_1(S_{1,1},\mathbb{Z})}
\newtheorem{thm}{Theorem}[section]
\newtheorem{cor}[thm]{Corollary}
\newtheorem{prop}[thm]{Proposition}
\newtheorem{lem}[thm]{Lemma}
\theoremstyle{definition}
\newtheorem{defn}[thm]{Definition}
\theoremstyle{remark}
\newtheorem{rmk}[thm]{Remark}
\theoremstyle{definition}
\theoremstyle{definition}
\theoremstyle{definition}
\numberwithin{equation}{section}
\title[Isoperiodic foliation of the stratum $\Omega\mathcal{M}_1(1,1,-2)$]{Isoperiodic foliation of the stratum $\Omega\mathcal{M}_1(1,1,-2)$}
\author{Gianluca Faraco}
\address[Gianluca Faraco]{Dipartimento di Matematica e Applicazioni U5, Universita` degli Studi di Milano-Bicocca, Via Cozzi 55, 20125 Milano, Italy}
\email{gianluca.faraco@unimib.it}
\author{Guillaume Tahar}
\address[Guillaume Tahar]{Beijing Institute of Mathematical Sciences and Applications, Huairou District, Beijing, China}
\email{guillaume.tahar@bimsa.cn}
\author{Yongquan Zhang}
\address[Yongquan Zhang]{Department of Mathematics, College of William \& Mary, 200 Ukrop Way, Williamsburg, VA 23185, USA}
\email{yzhang114@wm.edu}
\date{\today}
\keywords{Translation surface, Elliptic curve, Isoperiodic foliation, Meromorphic differential, Period character}
\begin{document}

\begin{abstract}
This paper describes the geometry and topology of leaves of the isoperiodic foliation of the stratum $\Omega\mathcal{M}_1(1,1,-2)$.  We prove that each leaf is a surface of infinite genus homeomorphic to the Loch Ness monster surface and supports a singular Euclidean structure whose singularities correspond to isoperiodic forms in the lower stratum $\Omega\mathcal{M}_1(2,-2)$. Along the way we also characterize the possible groups arising as the Veech group of a leaf and give a description of the large-scale conformal geometry of the wall-and-chamber decomposition of the leaves.
%On a Riemann surface, periods of a meromorphic differential along closed loops define a period character from the absolute homology group into the additive group of complex numbers. Fixing the period character in strata of meromorphic differentials defines the isoperiodic foliation where the remaining degrees of freedom are the relative periods between the zeros of the differential. In strata of meromorphic differentials with exactly two zeros, leaves have a natural structure of translation surface. In this paper, we give a complete description of the isoperiodic leaves in marked stratum $\widetilde{\mathcal{H}}(1,1,-2)$ of meromorphic $1$-forms with two simple zeros and a double pole on an elliptic curve. For each character, the metric completion of the corresponding leaf is a connected Loch Ness Monster. The translation structures of generic leaves feature a ramified cover of infinite degree over the flat torus defined by the lattice of absolute periods. By comparison, metric completions of isoperiodic leaves of the unmarked stratum $\mathcal{H}(1,1,-2)$ are complex disks endowed with a half-translation structure having infinitely many singular points. Finally, we give a description of the large-scale conformal geometry of the wall-and-chamber decomposition of the leaves.

\end{abstract}
\maketitle
\setcounter{tocdepth}{1}
\tableofcontents

\vspace{-0.5cm}

\section{Introduction}

\subsection{Overview from a complex-analytic perspective}\label{ssec:overview} Let \(S_{g,n}\) be a surface of genus \(g\) with \(n\) punctures. Let \(\mathcal M_{g,n}\) be the moduli space of Riemann surfaces homeomorphic to \(S_{g,n}\), and denote by \(\Omega\mathcal M_{g,n}\) the moduli space of pairs \((X,\omega)\) where \(X\) is such a Riemann surface and \(\omega\) is a holomorphic \(1-\)form on $X$ with poles at the punctures. We refer to these as \textit{(meromorphic) abelian differentials}. The moduli space \(\Omega\mathcal M_{g,n}\) admits a natural stratification given by unordered partitions \(\mu=(m_1,\dots,m_k,-p_1,\dots,-p_n)\) of \(2g-2\) (where negative integers are allowed) as follows. If \(\omega\) is a \(1-\)form with exactly \(k\) zeros of orders \(m_1,\dots,m_k\) and \(n\) poles at the punctures of orders \(p_1,\dots,p_n\), then \(\omega\) is said to be of type \(\mu\). Each partition \(\mu\) then singles out the stratum \(\Omega\mathcal M_g(\,\mu\,)\) of all meromorphic \(1-\)forms of type \(\mu\). We call \(\mu\) the \textit{signature} of the stratum. It is worth mentioning that strata are also often denoted by \(\mathcal H_g(\,\mu\,)\). In the present paper, however, we shall adopt the following notations.

\medskip

\textit{Convention:} For every signature \(\mu\) we denote by \(\mathcal H_g(\,\mu\,)\) the stratum of abelian differentials of type \(\mu\) whose zeros of the same order are marked. As a consequence the forgetful map \(\mathcal H_g(\,\mu\,)\longrightarrow \Omega\mathcal M_g(\,\mu\,)\) defines a covering map of degree at most \(k!\) where \(k\) is the number of positive integers in the partition \(\mu\). The reason why we need to make this distinction shall be clear later on.
%\textit{Convention:} For every signature\cm{Yongquan: type / signature, inconsistent terminology?} \(\mu\) we denote by \(\mathcal H_g(\,\mu\,)\) the strata of abelian differentials of type \(\mu\) with marked zeros. As a consequence the forgetful map \(\mathcal H_g(\,\mu\,)\longrightarrow \Omega\mathcal M_g(\,\mu\,)\) defines a genuine covering map of degree at most \(k!\) where \(k\) is the number of positive integers of the partition \(\mu\). The reason why we need to make this distinction shall be clear later on.

\medskip 

\noindent We now introduce \textit{period characters}, the first key notion of the present paper. It is well-known that every pair \((X,\omega)\) naturally determines a representation in homology, via the mapping 
\begin{equation}
    \chi\colon \homolzn \longrightarrow \mathbb C, \qquad \textnormal{ defined by } \qquad \gamma\longmapsto \int_\gamma \omega.
\end{equation}
Two pairs are said to be \textit{isoperiodic} if they determine the same period character \(\chi\). An \textit{isoperiodic fiber} is defined as the moduli space that parametrizes meromorphic forms that yield the same period character. For a general representation, say \(\rho\colon\shomolzn\longrightarrow \mathbb C\), we can define the isoperiodic fiber as follows
\begin{equation}
    \mathfrak {L}(\,\rho,\,\mu\,)=\left\{\,\,(X,\omega)\in\Omega\mathcal M_{g}(\,\mu\,)\,\,\,\Big|\,\,
    \begin{aligned}
        &\,\,\,\rho=f^*\chi, \textnormal{ for some } f\in\textnormal{Homeo}(S_{g,n},\,X)\\
        &\,\,\textnormal{ where } \chi \textnormal{ is the period character of } (X,\omega) 
    \end{aligned}\,\,
    \right\}.
\end{equation}

\noindent Notice that this space may be empty if \(\rho\) does not arise as the period character of any pair with prescribed signature. We will refer to a connected component of an isoperiodic fiber as an \textit{isoperiodic leaf}. Varying $\rho$ we obtain a holomorphic foliation on the stratum, called the \textit{isoperiodic foliation}. In \S\ref{sec:kernel_foliation} we shall provide more details about period realization and thus these spaces. Our results aim to provide a geometric and topological descriptions of \(\mathfrak L(\rho,\mu)\) when \(\mu=(1,1;-2)\). In what follows we shall adopt the following:

\smallskip 

\textit{Convention:} We will use the letter \(\rho\) to denote a general representation not necessarily associated with any particular structure, while the letter \(\chi\) will be used to indicate the period character of a specific structure.
 
\smallskip

\subsection{The stratum \(\Omega\mathcal{M}_1(1,1,-2)\)}\label{ssec:stratum112} We now focus on the stratum with signature \(\mu=(1,1,-2)\). It is shown in \cite{CFG} that every non-trivial representation can be realized in this stratum. As a consequence \(\mathfrak L(\,\rho,\,\mu\,)\) is not empty whenever \(\rho\) is non-trivial. Clearly, by \textit{trivial representation} we mean the unique representation \(\rho\) with \(\textnormal{Im}(\,\rho\,)=\{\,0\,\}\). Our first main result is the following:

\begin{thm}\label{thm:Unique}
For \(\mu=(1,1,-2)\), every locus \(\mathfrak L(\,\rho,\mu\,)\) is connected. Equivalently, every non-trivial representation \(\rho\) is realized by a unique isoperiodic leaf \(\mathfrak L(\,\rho,\mu\,)\).
\end{thm}

\noindent As a direct consequence of connectedness two isoperiodic structures with same signature can always be deformed continuously from one into another in the same leaf. In this case, the terms ``isoperiodic leaf'' and ``isoperiodic fiber'' can be used interchangeably. 

\smallskip

\noindent Our understanding of these leaves extends beyond what is stated in the theorem above. In fact, we are able to provide a description of the isoperiodic leaves not only from a topological perspective but also from a geometric one. Before stating the other results, we need to recall strata with marked points as in our convention in \S\ref{ssec:overview}. In the stratum \(\Omega\mathcal{M}_1(1,1,-2)\), all structures have two simple zeros that are indistinguishable, as there is no canonical way to single out one over the other. Marking the points enables us to differentiate them, and since there are two possible ways to do so, the stratum with marked zeros naturally forms a double cover. In our case, it is often more convenient to study the isoperiodic foliation on the stratum \(\mathcal{H}_1(1,1,-2)\) with marked zeros, whose topology and geometry then inform those of its unmarked counterpart. In order to distinguish isoperiodic leaves in marked and unmarked strata, we introduce the following notation:

\smallskip

\begin{equation}
    \mathfrak {ML}(\,\rho,\,\mu\,)=\left\{\,\,(X,\omega)\in\mathcal H_{g}(\,\mu\,)\,\,\,\Big|\,\,
    \begin{aligned}
        &\,\,\,\rho=f^*\chi, \textnormal{ for some } f\in\textnormal{Homeo}(S_{g,n},\,X)\\
        &\,\,\textnormal{ where } \chi \textnormal{ is the period character of } (X,\omega) 
    \end{aligned}\,\,
    \right\}.
\end{equation}

\smallskip

\begin{rmk}
    Clearly, for every signature \(\mu\) the covering projection \(\mathcal H_g(\,\mu\,)\longrightarrow \Omega\mathcal M_g(\,\mu\,)\) naturally yields a covering projection having the same degree between leaves \(\mathfrak {ML}(\,\rho,\,\mu\,)\longrightarrow \mathfrak {L}(\,\rho,\,\mu\,)\).
\end{rmk}

\noindent Our second result provides a topological description of the isoperiodic fibers in \(\mathcal H_{g}(\,\mu\,)\).

%Our next result focuses on the geometry and topology of the isoperiodic leaves in \(H_{g}(\,\mu\,)\), and as a direct consequence, we will derive conclusions about the isoperiodic foliation on \(\Omega\mathcal{M}_1(1,1,-2)\). 

\begin{thm}\label{thm:LochNess}
Let \(\rho\) be a non trivial representation and let \(\mu=(1,1,-2)\). In the marked stratum $\mathcal{H}_1(\,\mu\,)$, the metric completion of the isoperiodic leaf $\,\mathfrak{ML}(\,\rho,\,\mu\,)$ is homeomorphic to the Loch Ness monster surface. Moreover $\mathfrak{ML}(\,\rho,\,\mu\,)$ admits a translation structure determined by an abelian differential with infinitely many zeros of order two, \textit{i.e.} conical singularities of angle $6\pi$.
\end{thm}

\smallskip

\noindent We remind the reader of a few notions for convenience. The Loch-Ness monster is an infinite-type surface with infinite genus and only one end. In particular, it is well-known that such a surface is unique up to homeomorphism. A translation structure is a geometric structure on an oriented surface, locally modeled on \((\mathbb{C},\,\mathbb{C})\), which can be seen as the geometric counterpart of an abelian differential on a Riemann surface. In fact, each abelian differential naturally gives rise to such a structure, and conversely, every translation structure is determined by an abelian differential on a Riemann surface, see \S\ref{ssec:transurfa}. In what follows, we shall rely on this equivalence and often use this more geometric perspective to prove our results.

\smallskip

\begin{figure}[htp]
    \centering
    \includegraphics[width=0.9\linewidth]{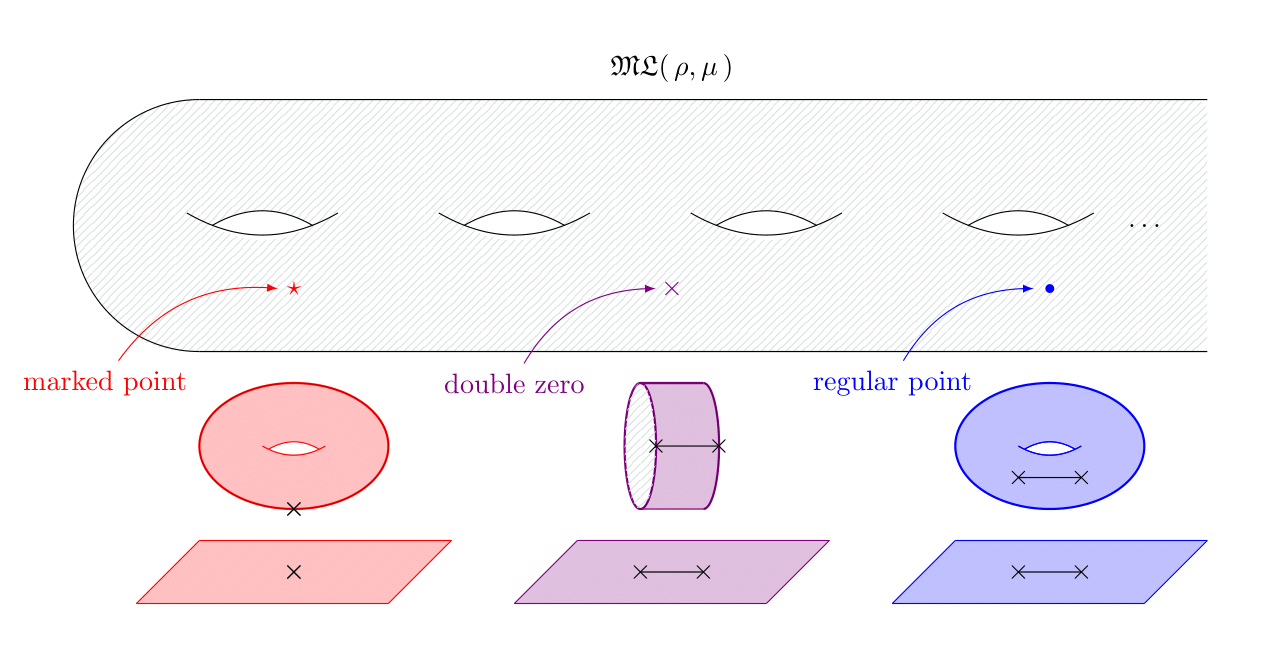}\bigskip
    \caption{Metric completion of an isoperiodic leaf \(\mathfrak{ML}(\,\rho,\mu\,)\).}
    \label{fig:isoleaf}
\end{figure}

\smallskip

\noindent We emphasize that in our theorem the Loch Ness monster arises as the \textit{metric completion} of an isoperiodic leaf in the marked stratum \(\mathcal H_1(1,1,-2)\). The leaf itself is homeomorphic to the Loch Ness monster minus infinitely many points and supports a translation structure whose differential has all zeros and poles (if any) at these punctures. By definition, each point of the leaf parametrizes an abelian differential with two distinct simple zeros and a single double pole on a torus, and small deformation of such a differential yields another one of the same type. At the same time, punctures of the leaf also have a precise geometric meaning: they parametrize degenerated structures along a sequence of abelian differentials represented by points on the leaf tending to a puncture. In our case, there are two possible degenerations: a sequence of abelian differentials of type \((1,1,-2)\) may converge to one of type \((2,-2)\), or the underlying tori may degenerate to a nodal curve. More precisely, the former occurs when the puncture corresponds to a zero of the differential of the metric completion, and the latter occurs otherwise.

%if any sequence in the leaf converges to a zero of the differential of the metric completion then the degenerated differential is of type \((2,-2)\) otherwise it is a nodal curve, see Figure \ref{fig:isoleaf}.

\medskip

\noindent As a consequence of Theorem \ref{thm:LochNess} we derive the following description of isoperiodic leaves in the unmarked stratum \(\Omega\mathcal M_1(1,1,-2)\).

\begin{cor}
    If \(\mu=(1,1,-2)\), then the isoperiodic leaf $\mathfrak{ML}(\,\rho,\,\mu\,)$ naturally comes with a non-trivial isometry of order two whose quotient corresponds to the isoperiodic leaf \(\mathfrak{L}(\,\rho,\,\mu\,)\) in \(\Omega\mathcal M_1(1,1,-2)\), the unmarked stratum. Its metric completion is a half translation surface biholomorphic to the unit disk \(\mathbb D\) endowed with a quadratic differential having infinitely many simple zeros and at most a simple pole. 
\end{cor}

\noindent In fact, for a generic unmarked leaf (when the image of $\rho$ is a lattice in $\mathbb{C}$), we construct an explicit biholomorphic map to the Teichm\"uller space $\mathcal{T}_{1,1}$, which is conformally a disk (see \S\ref{sec:conf_geom}).
%Moreover, if  positive leaf or an arithmetic real leaf, \textit{i.e.} \(\textnormal{Im}(\,\rho\,)\cong \mathbb Z\), the quadratic differential also has a simple pole, namely a conical singularity of angle $\pi$.

\bigskip

\subsection{Veech groups of isoperiodic leaves}  Given a translation structure on a possibly non-compact surface, a homeomorphism is said to be \textit{affine} if it is the restriction of some transformation in \(\textnormal{Aff}(\,\mathbb{R}^{2} \,)\) in local translation charts. Notice that the derivative of an affine homeomorphism is a well-defined element in \(\textnormal{GL}^{+}(2,\mathbb{R})\). For a fixed translation structure, the derivatives of its affine homeomorphisms form the \textit{Veech group}. In fact, since each isoperiodic leaf \(\mathfrak L(\,\rho,\,\mu\,)\) is uniquely determined by the representation, all elements of \(\textnormal{GL}^{+}(2,\mathbb{R})\) that stabilize \(\rho\) yield affine automorphisms of the associated leaf. In the present paper we also determine the Veech groups of isoperiodic leaves for stratum \(\mathcal H_1(1,1,-2)\), which turn out to be very large. More precisely it holds the following:

\begin{thm}\label{thm:Veech}
Let \(\rho\) be a non-trivial representation and let \(\mu=(1,1,-2)\).  The Veech group of an isoperiodic leaf \(\,\,\mathfrak{ML}(\,\rho,\,\mu\,)\) in the marked stratum \(\mathcal{H}_1(1,1,-2)\) is a conjugate of:
\begin{itemize}
    \medskip
    \item \(\textnormal{SL}(2,\,\mathbb Z)\) if \(\textnormal{Im}(\,\rho\,)\) is not contained in a real line;
    \medskip
    \item $\left\{\pm\begin{pmatrix}u&b\\0&a\end{pmatrix}:a,b\in\mathbb{R}, a>0, u\in G\right\}$ if \(\textnormal{Im}(\,\rho\,)\) is proportional to a non-discrete subgroup \(\Gamma\) of a real quadratic field $\mathbb{Q}(\sqrt{D})$ so that $G=\{u>0:u\Gamma=\Gamma\}$ is nontrivial;
    \medskip
    \item $\left\{\pm\begin{pmatrix}1&b\\0&a\end{pmatrix}:a,b\in\mathbb{R}, a>0\right\}$ otherwise.
\end{itemize}
\end{thm}

\medskip

\subsection{Structure of isoperiodic leaves and their conformal geometry}\label{ssec:isostrucandconfgeo} Each isoperiodic leaf is in itself a moduli space and parametrizes isoperiodic abelian differentials of a given type \(\mu\) according to their geometry. Given an abelian differential \((X,\omega)\), its convex core is defined as the convex hull of the zeros (for more details see \S\ref{ssec:core}). For \(\mu=(1,1,-2)\) we can single out three different types of cores: it could be a torus, a cylinder or has empty interior. Although the names are quite intuitive, for now we shall say nothing about how to distinguish these structures and defer the discussion to \S\ref{ssec:core}. Given any structure, the type of core is invariant under sufficiently small deformations but may change if the structure is deformed excessively. In other words, having the same core is an open condition and hence, in a given leaf, the set of structures with the same type of core forms an open subset, possibly disconnected. Each of these open subsets is called \textit{chamber}. A \textit{wall}, instead, is a closed codimension one subset that separates two chambers. Notice that these chambers do not need to parametrize structures with the same kind of core. In fact, if during the deformation of a given structure a wall is crossed, one can observe a drastic change in the core, and the point on the wall is precisely the structure where the transition occurs. 

\smallskip

\noindent As a consequence we thus distinguish three kind of chambers: \textit{torus type}, \textit{cylinder type} and \textit{degenerate type} (that means the core has empty interior). The most prevalent chambers are of those of cylinder type. We show that each such a chamber is conformally equivalent to \(\mathbb D\), the unit disk. For a generic unmarked leaf \(\mathfrak L(\,\rho,\,\mu\,)\), chambers of cylinder type are parametrized by primitive elements in the period group \(\textnormal{Im}(\,\rho\,)\). By using the identification $\mathcal{T}_{1,1}\cong\mathbb{H}$ we can draw these chambers as subsets of the upper half plane \(\mathbb{D}\). Recall that $\textnormal{SL}(2,\,\mathbb Z)$ acts on $\mathcal{T}_{1,1}$ as mapping classes, and correspondingly on the unit disk as hyperbolic isometries.

\begin{thm}\label{thm:confgeoleaves}
Let \(\rho\) be a non-trivial representation, \(\mu=(1,1,-2)\) and let \(\mathfrak{L}(\,\rho,\,\mu\,)\) be a generic unmarked leaf with period group \(\textnormal{Im}(\,\rho\,)\). Under a suitable conformal identification \(\mathfrak{L}(\,\rho,\,\mu)\cong\mathcal{T}_{1,1}\cong\mathbb{H}\), the boundary of each cylinder chamber in \(\mathbb{H}\cup\mathbb{R}\cup\{\infty\}\) is a rational number or \(\infty\), and every element in \(\mathbb{Q}\cup\{\infty\}\) arises this way. Moreover, the actions of $\textnormal{SL}(2,\,\mathbb Z)$ on \(\mathfrak{L}(\,\rho,\,\mu\,)\cong\mathbb{H}\) as the Veech group and hyperbolic isometries respectively agree on the boundary \(\mathbb{R}\cup\{\infty\}\). Finally, for each cylinder chamber, it is not contained in the $K$-neighborhood of a hyperbolic geodesic ray for any $K>0$, nor does it contain any horodisk based at the same point at infinity.
\end{thm}

\noindent We give more precise information about the conformal geometry of cylinder chambers in \S\ref{sec:conf_geom}. Figure~\ref{fig:positive_leaf_schema} shows a schematic picture of an unmarked positive leaf, where some cylinder chambers are shown in red.
\begin{figure}[htp]
    \centering
    \includegraphics[width=0.6\linewidth]{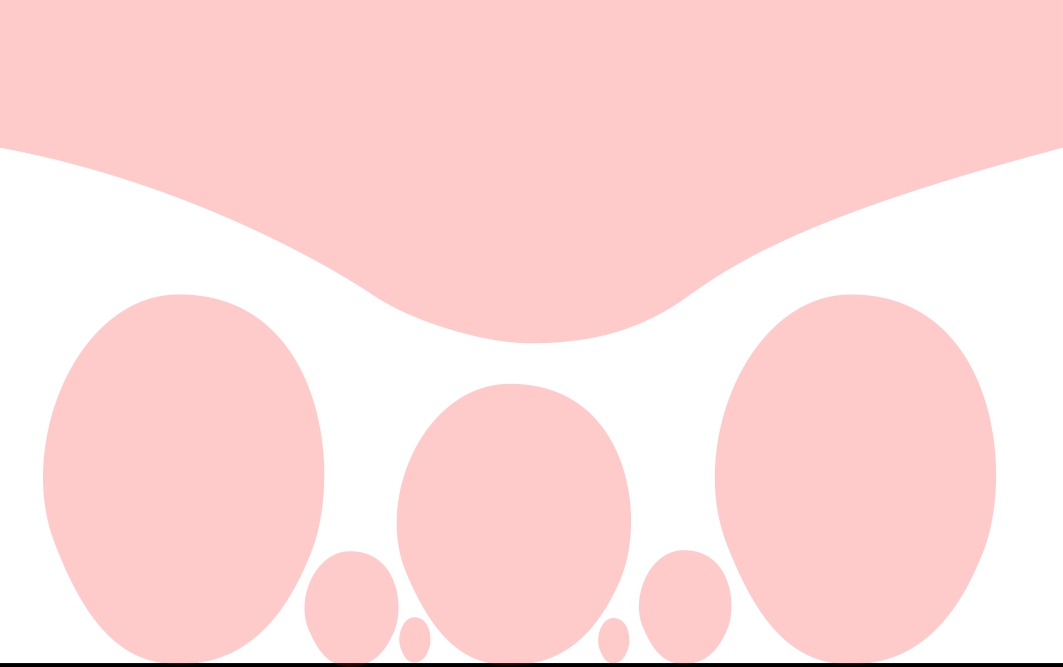}
    \caption{A schematic picture of an unmarked positive leaf, drawn in the upper half plane. For the chamber based at $\infty$, the two ends of its boundary goes to infinity with asymptotics $\asymp\log x$.}
    \label{fig:positive_leaf_schema}
\end{figure}

\subsection{Discussion on related works} A question that naturally arises is under which conditions an isoperiodic fiber is connected. For holomorphic differentials it is known that the isoperiodic fibers in $\Omega\mathcal S_g$ are generically connected up to a few exceptions; see \cite[Theorems 1.2 and 1.3]{CDF}. In \cite{Winsor}, Winsor have studied the connectedness of the isoperiodic foliation for non-principal strata of holomorphic differentials in $\Omega\mathcal M_g$. For meromorphic differentials, an analogous result has recently been obtained in \cite{CD} for differentials with two simple poles and in \cite{ACD} for differentials with at least three simple poles under an additional genericity condition on the periods. It is worth mentioning that in \cite{ACD,CD, CDF} the authors work with strata that have all simple zeros and obtain a classification of the closure of leaves. A study of the isoperiodic foliation has been carried out in \cite{KLS} in the specific case of meromorphic differentials with real absolute periods (\textit{i.e.} real-normalized differentials) and a unique double pole. Theorem~\ref{thm:Unique} of the present paper confirms the counting of leaves realizing a given group of absolute periods in the case of genus one \cite[Theorem 9]{KLS}.

\smallskip

\noindent For strata of meromorphic $1$-forms on the Riemann sphere $\mathbb{CP}^{1}$, the isoperiodic foliation coincides with the \textit{isoresidual fibration} defined by the map sending a differential to the vector of its residues at the (labeled) poles; see \cite{GT1} for details. For strata of differentials on $\mathbb{CP}^{1}$ with exactly two zeros, a systematic study carried out in \cite{CGPT} has shown that generic isoresidual fibers are (possibly nonconnected) complex curves of finite genus (in the multi-scale compactification defined in \cite{BCGGM}). In contrast with the results of the present paper, generally isoresidual fibers in strata of genus zero are defined by algebraic equations (residues at the poles are rational functions of the locations of zeros and poles).

\medskip

\subsection{Organization of the paper} In \S\ref{sec:translation}, we give general background on translation surfaces and their moduli spaces. In particular, we introduce the notion of volume of a representation, the notion of core and the walls-and-chambers decomposition. In \S\ref{sec:kernel_foliation}, we provide the classification of leaves and describe their translation structures: conical singularities and decomposition into chambers. As already alluded above each leaf contains chambers of cylinder type where the cores of the parametrized translation surfaces are cylinders. The geometric description and classification of these chambers are given in \S\ref{sec:chamcyltype}. Proofs of Theorems~\ref{thm:Unique}, \ref{thm:LochNess} and \ref{thm:Veech} are split into the next four sections depending on the type of the isoperiodic leaf as follows:
\begin{itemize}
    \item \textit{Positive leaves}, \textit{i.e.} \(\text{Im}(\,\rho\,)\) is a lattice of $\mathbb{C}$ and \(\rho\) has positive volume, are treated in \S\ref{sec:posleaves};
    \item \textit{Negative leaves}, \textit{i.e.} \(\text{Im}(\,\rho\,)\) is a lattice of $\mathbb{C}$ and \(\rho\) has negative volume, are treated in \S\ref{sec:negleaves};
    \item \textit{Non-arithmetic real leaves}, \textit{i.e.} \(\text{Im}(\,\rho\,)\) is a dense subgroup of \(\mathbb R\), are treated in \S\ref{sec:nonarith};
    \item \textit{Arithmetic real leaves}, \textit{i.e.} \(\text{Im}(\,\rho\,)=a\,\mathbb Z\cong\mathbb Z\) for some $a \in \mathbb{C}^{\ast}$, are treated in \S\ref{sec:arithm}.
\end{itemize}

\smallskip

\noindent Finally, in \S\ref{sec:conf_geom} and the appendix, we discuss in greater detail the conformal geometry of the leaves, describing the large scale asymptotic of the cylinder chambers in them in terms of the Teichm\"uller / hyperbolic metric.

\smallskip

\paragraph{\bf Acknowledgements.} The authors would like to thank Dawei Chen, Alexandre Eremenko, Curtis McMullen and Karl Winsor for valuable remarks and discussions. We also thank Steffen Rhode for giving an illuminating talk on the paper \cite{ILRS}, and Don Marshall for sharing his program which produces true trees. G.F. and Y.Z. acknowledge the support of the Max Planck Institute for Mathematics, where part of the work was done. G.F. was supported by SSP 2026 DFG Priority Program \textit{Geometry at Infinity} and he is a member of GNSAGA. Finally,
research by G.T. is supported by the Beijing Natural Science Foundation (Grant IS23005) and the French National Research Agency under the project TIGerS (ANR-24-CE40-3604), and Y.Z. is partially supported by an AMS-Simons Travel Grant.

\section{Geometry of translation structures}\label{sec:translation}

%\noindent We begin by introducing basic concepts of the main objects of interest. 

\noindent In this section we aim to provide a more geometric description of meromorphic \(1\)-forms on Riemann surfaces via the language of geometric structures, complementing the complex-analytic description.

\subsection{Elliptic curves and strata of differentials} An \textit{elliptic curve} \(E\) is a compact Riemann surface of genus one. By Uniformization Theorem, every elliptic curve is biholomorphic to the quotient space $\C/\Lambda$ where $\Lambda\cong\Z^2$ is a lattice of $\mathbb{C}$. Two lattices, say $\Lambda$ and $\Lambda'$, define the same complex structure if and only if there exists $\alpha \in \mathbb{C}^{\ast}$ such that $\Lambda'=\alpha\Lambda$. Let $\mathcal{M}_{1,1}$ denote the moduli space of elliptic curves with a marked point. This space is well-known to be a one-dimensional complex orbifold with two singularities given by those elliptic curves admitting non-trivial complex automorphisms.

\smallskip

\noindent We define strata \(\Omega\mathcal{M}_1(1,1,-2)\) as the moduli space of pairs \((E,\omega)\) where \(E\) is an elliptic curve and $\omega$ is a meromorphic $1$-form with two unmarked simple zeros and a single double pole. Two pairs, say $(E_1,\omega_1)$ and $(E_2,\omega_2)$, are identified if there is a biholomorphic map \(f\colon E_1\longrightarrow E_2\) such that \(\omega_1=f^*\omega_2\). We also introduce the stratum with marked zeros \(\mathcal H_1(1,1,-2)\) as the double cover of 
\(\Omega\mathcal{M}_1(1,1,-2)\) where the differentials are decorated with the marking of the two simple zeros. We shall label, whenever this is necessary, these zeros as \(\pto B\) and \(\pto W\) -- black and white. For instance we shall make use of this labeling in several pictures along the way.

\begin{rmk}
    Fixing the double pole as the neutral element of the group law of the elliptic curve, a meromorphic $1$-form with two simple zeros and one double pole is characterized by the position of its zeros -- a principal divisor $A$+$B$ on the elliptic curve -- and a scaling factor $\lambda \in \mathbb{C}^{\ast}$. If the two zeros coincide, we obtain a differential with a double pole and Abel-Jacobi relations imply that the double zero is a $2$-torsion point of the elliptic curve.
\end{rmk}

\noindent We also introduce \(\Omega\mathcal M_1(2,-2)\), as the moduli space of pairs \((E,\omega)\) where \(\omega\) has a single zero and a single pole both of order two. The stratum \(\Omega\mathcal M_1(\,-\,,\,-2)\) is defined as \(\Omega\mathcal M_1(2,-2)\,\cup\,\Omega\mathcal M_1(1,1,-2)\) and comprises all meromorphic \(1-\)forms on \(E\) with a double pole. The forgetful map \(\mathfrak F\colon\mathcal H_1(1,1,-2)\longrightarrow \Omega\mathcal M_1(1,1,-2)\) naturally extends to a branched covering map \(\mathcal H_1(\,-\,,-2)\longrightarrow \Omega\mathcal M_1(\,-\,,-2)\) of degree \(2\) with \(\Omega\mathcal M_1(2,-2)\) as the branched locus. Some remarks are in order.

\begin{rmk}
    As a direct consequence the natural \(2-\)fold covering \(\mathfrak {ML}(\,\rho,\,\mu\,)\longrightarrow \mathfrak {L}(\,\rho,\,\mu\,)\) extends to a branched cover of the same degree between their metric completion. We shall make use of this latter covering map to infer the geometric properties of isoperiodic leaves in \(\Omega\mathcal M_1(1,1,-2)\).
\end{rmk}

\begin{rmk}
    Any meromorphic function on $E=\mathbb{C}/\Lambda$ with a unique double pole at the lattice point is of the form $a\wp+b$ with $a \in \mathbb{C}^{\ast}$ and $b \in \mathbb{C}$, where $\wp$ denotes the meromorphic function on $E$ induced from the Weierstarss $\wp$-function associated with the lattice $\Lambda$. Consequently, any meromorphic $1$-form of \(\Omega\mathcal M_1(\,-\,,-2)\) is a linear combination of $\wp dz$ and $dz$. This gives a way of parametrizing such forms, as they are clearly characterized by these coefficients.
\end{rmk}

\subsection{Translation surfaces}\label{ssec:transurfa} We now introduce the geometric counterpart of any pair \((X,\omega)\). As already alluded to in \S\ref{ssec:stratum112}, a \textit{translation surface} is a branched $(\C,\C)$-structure, \textit{i.e.} the datum of a maximal atlas where local charts in $\C$ have the form  $z\longmapsto z^k$, for $k\ge1$, and transition maps given by translations on their overlappings. Any such an atlas defines an underlying complex structure $X$ and the pullbacks of the $1$-form $dz$ on $\C$ via local charts globalize to a holomorphic differential $\omega$ on $X$. Vice versa, a holomorphic differential $\omega$ on a complex structure $X$ defines a singular Euclidean metric with isolated singularities corresponding to the zeros of $\omega$. In a neighborhood of a point $\pto P$ which is not a zero of $\omega$, a local coordinate is defined as
\begin{equation}
    z(\,\pto Q\,)=\int_{\,\pto P}^{\,\pto Q} \omega 
\end{equation} 
in which  $\omega=dz$, and the coordinates of two overlapping neighborhoods differ by a translation $z\mapsto z+c$ for some $c\in\mathbb C$. Around a zero, say $\pto P$ of order $k\ge1$, there exists a local coordinate $z$ such that $\omega=z^kdz$. The point $\pto P$ is also called a \textit{branch point}, since any local chart around it is locally a branched $k+1$ covering totally ramified at $\pto P$ over an open subset of $\C$.

\begin{rmk}[Local geometry around a singularity] 
We have already described the Euclidean geometry around a zero of order \(m\). Shortly, it is locally isometric to the vertex of an Euclidean cone of angle \(2\pi(m+1)\). The geometry around a pole depends not only on its order but also on the value of the residue, measured by evaluating \(\omega\) along any closed path around the singularity. There are canonical models for these geometries and we refer to \cite{Bo, CFG, Ta} for more details.
\end{rmk}

\noindent In the present work we shall consider only double poles with zero residue. Notice that this is far from being restrictive. In fact, for every pair \((E,\omega)\in\Omega\mathcal M_1(1,1,-2)\) with period character \(\chi\), any simple essential loop around the pole in \(\omega\) arises as the commutator of some pair of simple closed curves generating $\textnormal{H}_1(S_{1,1},\mathbb Z)$. Since commutators always have trivial period and the period of a pole equals its residue up to a factor \(2\pi i\) we conclude that the residue around the unique pole of $\omega$ is zero. Referring to the models in \cite{Bo, CFG, Ta} mentioned above, a neighborhood of the double pole is isometric to a neighborhood of infinity in the flat plane. We can finally state the main object of this section.

\begin{defn}[Translation surfaces with poles]\label{tswp}
Let $\omega$ be a meromorphic differential on a compact Riemann surface $\overline{X}\in\mathcal{M}_g$. We define a \emph{translation surface with poles} to be the structure induced by $\omega$ on the surface $X=\overline{X}\setminus\Sigma$, where $\Sigma$ is the set of poles of $\omega$.
\end{defn}

\noindent Given a translation structure $(X,\omega)$ on a surface $S_{g,n}$, local charts globalize to a holomorphic multi-valued function \(F\colon X\longrightarrow \mathbb C\) via analytic continuation. Such a function lifts to a genuine map $\text{dev}\colon\widetilde{S}_{g,n}\longrightarrow \C$ called the \textit{developing map}, where $\widetilde{S}_{g,n}$ is the universal cover of $S_{g,n}$. The developing map satisfies an equivariant property with respect to some representation $\rho\colon\homolz\longrightarrow\C$ called the \textit{holonomy} of the translation structure. The following lemma establishes the relation between holonomy representations and period characters.

\begin{lem}
A representation $\rho\colon\shomolzn\longrightarrow \C$ is the period of some abelian differential $\omega\in\Omega(X)$ with respect to some complex structure $X$ on $S_{g,n}$ if and only if it is the holonomy of the translation structure on $S_{g,n}$ determined by $\omega$.
\end{lem}

\noindent This twofold nature of a representation $\rho$ permits us to tackle our problem by adopting a geometric approach as done in several works aiming to realize representation as period characters. 

\smallskip

\subsection{Action of $\text{GL}^{+}(2,\mathbb{R})$}\label{subsec:glaction}
We next consider the action of \(\text{GL}^+(2,\,\mathbb R)\) on strata defined as follows. For every translation surface \((X,\omega)\) and \(g\in\text{GL}^+(2,\mathbb{R})\), we define \(g\,\cdot\,(X,\omega)\) as the translation surface obtained by post-composing coordinate functions of \((X,\omega)\) with \(g\). A more geometric way to see this action is the following. If $(X,\omega)$ is constructed by identifying parallel sides of a collection of (possibly unbounded) polygons $P_i$ in $\mathbb{C}$, then $g\cdot(X,\omega)$ is obtained by identifying the corresponding sides of the polygons $g(P_i)$, viewing $g$ as a linear function on $\mathbb{C}\cong\mathbb{R}^2$.

\begin{rmk}
    It is easy to see that this induces an action on each stratum, see \cite{Zo} for a general reference on translation surfaces. \cite[\S3.3]{Fil} discusses more specifically the case of strata of meromorphic differentials.
\end{rmk}

\noindent The $\text{GL}^{+}(2,\mathbb{R})$-action as defined above also applies to translation surfaces of infinite type. We recall for the reader's convenience that a translation surface $(X,\omega)$ is:
\begin{itemize}
    \item of \textit{finite type} if $X$ is a compact Riemann surface (from which we may have removed finitely many points) and $\omega$ is of finite area;
    \item of \textit{infinite type} otherwise.
\end{itemize}

\smallskip

\noindent We now recall the following:

\begin{defn}
    The \textit{Veech group} of a translation surface is the group of derivatives of all affine homeomorphisms. Here an \textit{affine homeomorphism} of a translation surface is a homeomorphism whose restrictions in local coordinates are affine maps (\textit{i.e.} elements in \(\text{Aff}(\mathbb{R}^{2})\)). Equivalently, $g\in\text{GL}^{+}(2,\mathbb{R})$ lies in the Veech group of $(X,\omega)$ if and only if $g\cdot (X,\omega)$ is isomorphic (as translation surfaces) to $(X,\omega)$.
\end{defn}

\noindent Although Veech groups of translation surfaces of finite type (thus corresponding to holomorphic $1$-forms) are always discrete subgroups of \(\text{SL}(2,\,\mathbb R)\), Veech groups of translation surfaces of infinite type are much less constrained. See the main result of \cite{RMV}.

\smallskip

\subsection{Volume} Let \(\rho\colon\shomolzoo\longrightarrow \mathbb C\) be a representation and let \(\{\,\alpha,\beta\,\}\) be two generators for \(\shomolzoo\) that represent a pair od simple closed curves on \(S_{1,\,1}\) with intersection number one. We define the \textit{volume} of \(\rho\), denoted by \(\textnormal{vol}(\,\rho\,)\), as the quantity
\begin{equation}
    \textnormal{vol}(\,\rho\,)=\mathfrak{Im}\,\Big(\,\,\overline{\chi(\,\alpha\,)}\,\chi(\,\beta\,)\,\,\Big).
\end{equation}

\noindent It can be checked directly that this does not depend on the choice of $\alpha,\beta$. The following lemma is crucial in its simplicity as it will permit us to make a distinction between positive and negative leaves.

\begin{lem}
    For every \(t\in\mathbb R\) there is a representation \(\rho\colon\shomolzoo\longrightarrow \mathbb C\) such that \(\textnormal{vol}(\,\rho\,)=t\).
\end{lem}

\noindent We do not provide a proof here as it can be directly deduced from \cite[Theorem A]{CFG}, even though a simpler and shorter proof is possible. As we shall see the geometry of an isoperiodic leaf depends strongly on the sign of \(\textnormal{vol}(\,\rho\,)\) which could be positive, negative or zero. See \cite{CF} for further details about volume in the most general setting.

\smallskip

\subsection{Metric features of translation surfaces}\label{ssec:core} We now recall two features of translation surfaces, namely saddle connections and core.

\smallskip

\noindent Given a translation surface \((X,\omega)\in\Omega\mathcal M_{g,n}\), a \textit{saddle connection} is defined as a geodesic segment, in the flat metric induced by \(\omega\) connecting two zeros of \(\omega\), so that no interior points of the segment are singularities. In principle these two zeros do not need to be distinct; we allow them to be the same, in which case we say the saddle connection is \textit{closed}. On a translation surface $(X,\omega)$, every saddle connection represents a class in the relative homology group \(\textnormal{H}_{1}(X,\, Z(\,\omega\,),\,\mathbb Z)\) where \(Z(\omega)\) is the set of zeros of \(\,\omega\,\). Recall that \(\omega\) has poles at the punctures of \(X\).

\smallskip

\noindent The next definition was first introduced in \cite{HKK} and then systematically used in \cite{GT, Ta, Ta1} to study the geometry of translation structures induced by meromorphic differentials. All the constructions and proofs are given in detail in \cite[\S4]{Ta}.

\begin{defn}
A subset, say \(C\), of a translation surface \((X,\omega)\) is said to be \textit{convex} if for any pair of points in \(C\), any geodesic segment joining them is entirely contained in \(C\).
%A subset $C$ of a translation surface $(X,\omega)$ is said to be \emph{convex} if, for any two points $x,y\in C$ joined by a geodesic segment $I$, the entire geodesic segment $I$ is contained in $C$. 
The \textit{core} of \((X,\omega)\)
%, denoted by $\mathcal{C}(X,\omega)$, 
is defined as the convex hull of the zeros of \(\omega\), \textit{i.e.} the smallest closed convex set containing all the zeros.    
\end{defn}

\noindent The following lemma is contained in \cite[Propositions 4.4 and 4.5]{Ta}. We refer to this paper for the proof. 

\begin{lem}\label{lem:retract}
Let \((X,\omega)\) be a translation surface and assume $\omega$ has at least one pole. Then its core is a topological retract of the surface \(X\) (punctured at the poles of $\omega$). Each connected component of the complement of the core is a topological disk containing exactly one pole called \textit{polar domain}. The boundary of each polar domain is formed by finitely many saddle connections and, by convexity, each interior angle of a polar domain is at least $\pi$.
%Let $(X,\omega)$ be a translation surface such that $\omega$ has at least one pole. Then $\mathcal{C}_{(X,\omega)}$ is a topological retract of surface $X^{\ast}$ (punctured at the poles of $\omega$). Each connected component of $X \setminus \mathcal{C}_{(X,\omega)}$ is a topological disk containing exactly one pole called \textit{polar domain}. The boundary of each polar domain is formed by finitely many saddle connections and, by convexity, each interior angle of a polar domain is at least $\pi$.
\end{lem}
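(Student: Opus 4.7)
The plan is to read off all four assertions from convexity of the core combined with Gauss--Bonnet on the complementary regions, invoking the specific model of a pole only at the end.

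First, I would describe $\partial\mathcal{C}_{(X,\omega)}$. Being the closed convex hull of a finite set of points in a flat geometry, $\mathcal{C}_{(X,\omega)}$ has a boundary that is locally length-minimizing away from zeros, hence locally geodesic there, and can bend only at zeros of $\omega$. So $\partial\mathcal{C}_{(X,\omega)}$ is a union of saddle connections in the sense of \S2.6. Convexity forces the interior angle at each such corner, measured from inside $\mathcal{C}_{(X,\omega)}$, to be at most $\pi$: otherwise two points on the incident edges could be joined by a geodesic leaving $\mathcal{C}_{(X,\omega)}$. Since a zero of order $k$ has total cone angle $(2k+2)\pi\ge 4\pi$, the complementary angle on the polar-domain side is at least $(2k+1)\pi>\pi$, while smooth boundary points see angle $\pi$ on both sides. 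This already settles the last assertion.

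Next, fix a connected component $D$ of $X\setminus\mathcal{C}_{(X,\omega)}$ and suppose it contains no pole. Then $\overline D$ is a compact flat surface with piecewise geodesic boundary and no interior singularities, so Gauss--Bonnet yields
\[
2\pi\chi(\overline D)=\sum_i(\pi-\alpha_i)\le 0,
\]
since every $\alpha_i\ge \pi$ from the previous step. For a compact connected orientable surface with non-empty boundary this forces $\chi(\overline D)=0$ with equality throughout, so $\overline D$ would be an annulus bounded by two smooth closed geodesics lying in $\partial\mathcal{C}_{(X,\omega)}$ and containing no zeros. Such loops, however, could be pushed slightly into $\mathcal{C}_{(X,\omega)}$ without losing any zero, violating minimality of $\mathcal{C}_{(X,\omega)}$ as the convex hull. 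Hence $D$ contains at least one pole.

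Once $D$ is known to contain a pole, the key local fact is that a double pole with zero residue has a neighborhood isometric to a neighborhood of $\infty$ in $\mathbb{C}$, so it contributes no curvature defect to Gauss--Bonnet. Treating each such pole in $D$ as an ordinary topological interior point, the inequality of the previous paragraph becomes $2\pi\chi(D\cup\{\mathrm{poles}\})\le 0$, which combined with the obvious upper bound $\chi\le 1$ valid for any connected surface forces $\chi=1$ and hence a unique pole in $D$; any additional pole would decrease $\chi$ by one. Finiteness of the number of saddle connections in $\partial D$ is then immediate: $\omega$ has only finitely many zeros, so $\partial D$ has only finitely many corners and therefore only finitely many edges. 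For the retraction statement, each $D\setminus\{\mathrm{pole}\}$ is a once-punctured topological disk whose metric near the puncture is a standard planar end, so the radial contraction in that planar coordinate extends continuously to a deformation retraction of $D\setminus\{\mathrm{pole}\}$ onto $\partial D$; gluing these along $\partial\mathcal{C}_{(X,\omega)}$ produces a deformation retraction of $X\setminus\{\mathrm{poles}\}$ onto $\mathcal{C}_{(X,\omega)}$. I expect the main technical obstacle to be careful Gauss--Bonnet bookkeeping at the planar ends, since these are not conical singularities but still affect the Euler characteristic accounting; everything else is convex geometry transplanted from the Euclidean plane to a flat surface with cone points.
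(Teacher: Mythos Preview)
The paper does not actually prove this lemma; it cites \cite[Propositions~4.4 and~4.5]{Ta} and moves on. So there is no in-paper argument to compare against, and I evaluate your proposal on its own merits.

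The central step --- using Gauss--Bonnet on a complementary component to pin down its topology --- does not work as you have written it. Your claim that a double pole ``contributes no curvature defect'' is incorrect. Near such a pole the metric is that of a neighbourhood of $\infty$ in the flat plane; if you truncate along a large flat circle $|z|=R$, that circle bounds the truncated polar domain on its \emph{finite} side, so its geodesic curvature integrates to $+2\pi$, and removing the disk drops $\chi$ by $1$. For a component containing $k$ poles of orders $p_{1},\dots,p_{k}$ (zero residue) the correct identity is
\[
\sum_{i}(\pi-\alpha_{i})\;=\;2\pi\,\chi(\hat D)\;-\;2\pi\sum_{j}p_{j},
\]
so $\alpha_{i}\ge\pi$ only yields $\chi(\hat D)\le\sum_{j}p_{j}$, which neither forces $\chi(\hat D)=1$ nor $k=1$. (Sanity check: one double pole with $\hat D$ a disk gives $\sum(\pi-\alpha_{i})=-2\pi$, exactly the angle sum $(\beta+2)\pi$ used in Proposition~\ref{prop:coreshapes}.) Separately, the sentence ``$\chi\le 0$ and $\chi\le 1$ forces $\chi=1$'' is logically incoherent as written.

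The pole-free case has the same defect: $\sum(\pi-\alpha_{i})\le 0$ gives only $\chi(\overline D)\le 0$, which allows negative values as well (a flat torus with a convex polygon removed has $\chi=-1$ and all boundary angles $\ge\pi$), so the jump to ``annulus'' is unjustified; and even in the annulus case, removing a collar of a smooth geodesic boundary from $\mathcal{C}$ need not leave a convex set, so your minimality argument does not contradict anything. Your derivation of $\alpha_{i}\ge\pi$ from convexity is fine in spirit (though the ``complementary angle $\ge(2k+1)\pi$'' line tacitly assumes only one core wedge at each vertex), and the retraction via the planar end is fine \emph{once} the disk claim is in hand. But the disk/one-pole assertion needs a different idea --- for instance, starting from the local model at a pole, enlarging the embedded infinite plane until it is bounded by saddle connections, and showing these maximal pole neighbourhoods exhaust the complement of the core --- rather than Gauss--Bonnet from the polar side.
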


\noindent The topological structure of differentials in the stratum $\Omega\mathcal{M}_1(1,1,-2)$ is simple enough to make possible a topological classification of cores, see Figures \ref{fig:emptytype}, \ref{fig:torustype} and \ref{fig:cyltype}. In the following we denote by \(b\) the number of boundary saddle connections of the polar domain and by \(t\) the number of triangles in any geodesic triangulation of the interior of the core. We remark that a boundary saddle connection is counted twice when two boundary edges are identified, see Figure~\ref{fig:cyltype} as a reference picture. \textit{A priori} it is not immediately clear that a geodesic triangulation of the interior of the core exists, but this has been proved in \cite[Lemma 2.2]{Ta2}. The following lemma is a consequence of the Euler characteristic formula on the torus. A formula for any meromorphic $1$-forms on a generic compact Riemann surface can be found in \cite{Ta}.

\begin{lem}\label{lem:coreformula}
For any translation surface of $\Omega\mathcal{M}_1(1,1,-2)$ the following relation holds: \(b+t=6\).
\end{lem} 

\noindent As a consequence of this formula we are able to provide a topological classification of cores, based on the possible values \(b\) (or equivalently \(t\)) may assume.

\begin{prop}\label{prop:coreshapes}
Let \((X,\omega)\) be a translation surface in the stratum \(\Omega\mathcal{M}_1(1,1,-2)\). Then the interior of the core can be:
\begin{enumerate}[label=\normalfont{(\roman*)}]
    \item a torus with a slit (we call this a \emph{torus type});
    \item a cylinder (\emph{cylinder type});
    \item empty (\emph{degenerate type}).
\end{enumerate}
\end{prop}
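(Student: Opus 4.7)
The plan is to combine a topological Euler-characteristic count with a Gauss-Bonnet analysis of the polar domain. By Lemma~\ref{lem:retract}, the core $\mathcal{C}:=\mathcal{C}_{(X,\omega)}$ deformation retracts onto $X^\ast$, hence $\chi(\mathcal{C})=\chi(X^\ast)=-1$. I would decompose $\mathcal{C}$ into its closed $2$-dimensional part $\overline{\mathcal{C}_2}$ (a compact surface with boundary of genus $g$ with $b$ boundary circles composed of saddle connections) and a collection of $a$ \emph{arms}, i.e.\ saddle connections having the polar domain on both sides, attached to $\overline{\mathcal{C}_2}$ at zero endpoints without introducing new vertices. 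Each arm lowers the Euler characteristic by $1$, so
\[
(2-2g-b)-a=-1\qquad\Longleftrightarrow\qquad 2g+b+a=3.
\]

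The nonnegative integer solutions are $(g,b,a)\in\{(0,0,3),(0,1,2),(0,2,1),(0,3,0),(1,0,1),(1,1,0)\}$. Three of them produce the three stated types: $(1,1,0)$ yields an interior that is the interior of a genus-one surface with one boundary circle, i.e.\ a torus with a slit; $(0,2,1)$ gives a cylindrical interior; and $(0,0,3)$ has no $2$D part, so the interior is empty. The case $(1,0,1)$ would require $\overline{\mathcal{C}_2}=X$, leaving no room for a polar domain. The case $(0,3,0)$ would make $\overline{\mathcal{C}_2}$ a pair of pants whose three boundary circles would each bound a distinct polar domain, contradicting the fact that there is only one pole (its polar domain being a topological disk with a single boundary circle by Lemma~\ref{lem:retract}).

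The main obstacle is excluding $(0,1,2)$, where $\overline{\mathcal{C}_2}$ is a topological disk and is a priori compatible with the existence of a single polar domain. I would apply Gauss-Bonnet to the polar disk $D$: the double pole with vanishing residue carries concentrated curvature $4\pi$ (equivalently cone angle $-2\pi$), so
\[
\sum_{i=1}^n(\pi-\theta_i)+4\pi=2\pi,\qquad\text{i.e.}\qquad\sum_{i=1}^n\theta_i=(n+2)\pi,
\]
where the $\theta_i$ are the interior angles at the $n$ corners of $\partial D$. Convexity of $\mathcal{C}$ forces $\theta_i\ge\pi$ (Lemma~\ref{lem:retract}), while $\theta_A+\theta_B\le 8\pi$ since each zero has cone angle $4\pi$; thus $n\le 6$. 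On the other hand, $\partial D$ traverses each of the $s$ boundary edges of $\overline{\mathcal{C}_2}$ once and each of the $a=2$ arms twice, giving $n=s+4\le 6$ and hence $s\in\{1,2\}$. If $s=2$, then $n=6$ forces $\theta_A=\theta_B=4\pi$, which makes the interior angles of the bigon $\overline{\mathcal{C}_2}$ at $A$ and $B$ vanish --- absurd. If $s=1$, applying Gauss-Bonnet directly to the monogon $\overline{\mathcal{C}_2}$ (with or without an interior zero) forces a negative interior angle, equally absurd. This rules out the last case and completes the classification.
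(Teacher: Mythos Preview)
Your approach is sound and genuinely different from the paper's. The paper counts the number of flat triangles $t=6-\beta$ in a triangulation of the core interior (from $\sum_i\theta_i=(\beta+2)\pi$ on the polar domain together with the total cone angle $8\pi$ at the two simple zeros), uses the zero-residue condition to get $\beta\ge2$, and then runs case-by-case over $t\in\{0,1,2,3,4\}$, ruling out $t=1,3$ by analysing how the corner angles of the polar polygon distribute between the two singularities. Your route instead classifies the core topologically via $\chi(\mathcal{C})=-1$ and a decomposition into a surface-with-boundary plus arms; this is a cleaner way to see directly why exactly three topological types of interior arise, and the exclusion of $(0,1,2)$ by Gauss--Bonnet on the monogon/bigon is a nice alternative to the paper's corner-by-corner bookkeeping.

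There is one gap to close. Your equation $(2-2g-b)-a=-1$ relies on the assertion that arm endpoints never introduce new vertices, i.e.\ that whenever $\overline{\mathcal{C}_2}\neq\emptyset$ both zeros already lie on it. You state this but do not justify it. If one zero, say $B$, meets only arms, then attaching the arms adds the vertex $B$ and the constraint becomes $2g+b+a=4$, yielding further candidates $(g,b,a)\in\{(1,1,1),(0,2,2),(0,3,1),(0,1,3)\}$. Each of these is in fact impossible, and your own toolkit disposes of them: since $B$ touches only arms its full cone angle $4\pi$ lies inside the polar domain, and combining $\sum_i\theta_i=(\beta+2)\pi$, $\theta_i\ge\pi$, and $\beta=s+2a$ with the remaining $4\pi$ budget at $A$ forces either $\beta>6$ or a nonpositive interior angle on $\overline{\mathcal{C}_2}$. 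So the repair is short, but you should either prove the ``no new vertices'' claim up front or explicitly enumerate and kill these extra cases.
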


\begin{proof}
Using Lemma~\ref{lem:coreformula}, we shall investigate possible values of \(b\). First note that since the residue at the double pole is zero, its polar domain cannot be bounded by a unique saddle connection, because its period would be nonzero. It readily follows that \(b\ge2\). We also notice that the case \(b=6\) corresponds to the degenerate case, see Figure \ref{fig:emptytype}. 

\begin{figure}[htp]
    \centering
    \includegraphics[width=0.6\linewidth]{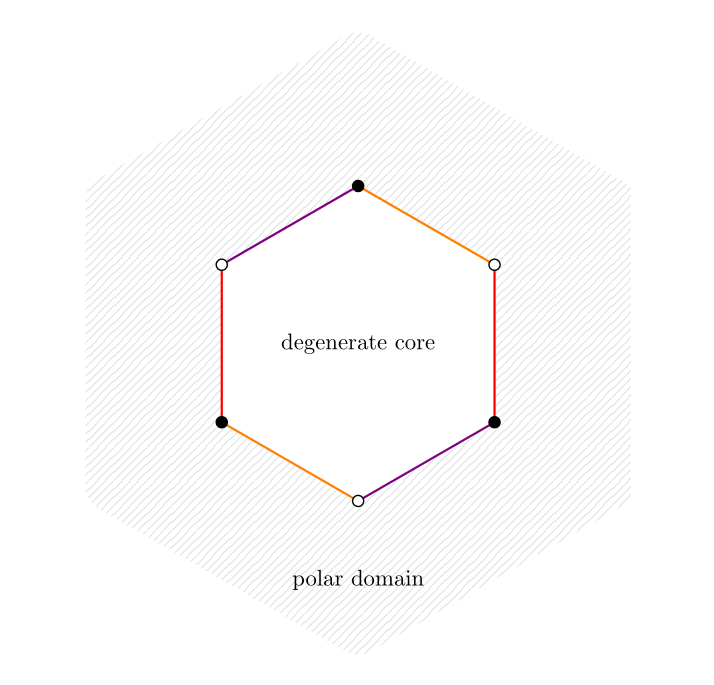}
    \caption{Translation surface in the marked stratum $\mathcal{H}_1(1,1,-2)$ with degenerate core. Such a structure can be realized by gluing sides with same color.}
    \label{fig:emptytype}
\end{figure}

\noindent Thus we only need to consider those cases corresponding to the following possible values of \(t=1,2,3,4\). We shall see only that two out four of these values yield a possible configuration of the core. We begin with the following one.

\smallskip

\textit{Case 1}. We first assume \(t=4\) which means \(b=2\). In this case, the polar domain is a copy of the complex plane with a slit. The two end points of the slit cannot be identified, for otherwise the cone angle is strictly bigger than $4\pi$. The boundary of the core consists of two parallel geodesic segments. If we glue these two segments together, we obtain a flat torus with two marked points. In other words the core is a torus with a slit along a geodesic segment. See Figure \ref{fig:torustype}.

\begin{figure}[htp]
    \centering
    \includegraphics[width=0.6\linewidth]{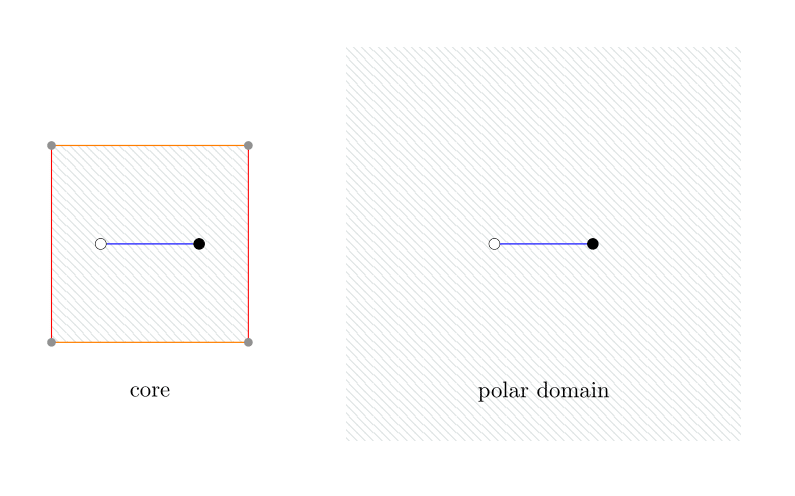}
    \caption{Translation surface in the marked stratum $\mathcal{H}_1(1,1,-2)$ whose core is a slit torus. On the left it is depicted the core and, on the right, it is depicted the domain of the pole. Note this latter is a slit copy of $\mathbb C$ whose point at infinity is the double pole of the resulting structure. In this case $b=2$ because, in the slit plane, there are two saddle connections. }\label{fig:torustype}
\end{figure}

\smallskip

\textit{Case 2}. We next assume \(t=2\), that is \(b=4\). In this case, the polar domain is a complex plane minus a convex quadrilateral $Q$. The interior of the core is either a quadrilateral, whose four sides connects to the polar domain, or a cylinder. The former cannot happen, since otherwise the surface is just a complex plane with four marked points. In the latter case, $Q$ is a parallelogram. Two opposite sides of $\mathbb{C} \setminus Q$ are identified, and the other two sides are connected to the ends of the cylinder. See Figure \ref{fig:cyltype}.

\begin{figure}[htp]
    \centering
    \includegraphics[width=0.6\linewidth]{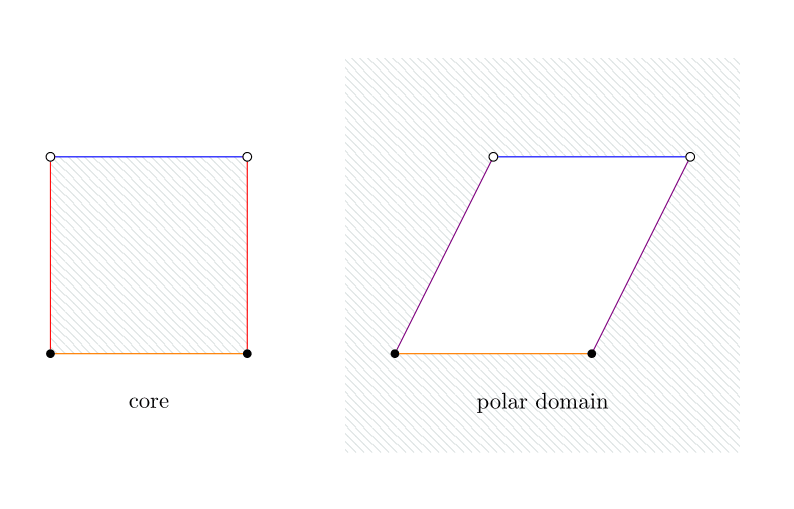}
    \caption{Translation surface in the marked stratum $\mathcal{H}_1(1,1,-2)$ whose core is a cylinder. Such a structure can be realized by gluing sides with same color. On the left it is depicted the core and, on the right, it is depicted the domain of the pole. As above, this latter is a slit copy of $\mathbb C$ whose point at infinity is the double pole of the resulting structure.}\label{fig:cyltype}
\end{figure}

\smallskip

\noindent We are left with the cases $t=1,3$ and we aim to show that these cannot happen. We consider first the following:

\textit{Case 3}. Assume \(t=1\) which means \(b=5\). The polar domain has five sides $A_{1},A_{2},A_{3},A_{4},A_{5}$; two being identified in the surface. These two identified sides cannot be adjacent (otherwise one of the two singularities would be a marked point). Without loss of generality, we assume they are $A_{1}$ and $A_{3}$. Thus, the sum of the angles corresponding to corners $A_{1}A_{2}$ and $A_{2}A_{3}$ is $3\pi$. These corners contribute to the same conical singularity $Z_{1}$. Since the two conical singularities have an angle of $4\pi$, no other corner can contribute to $Z_{1}$ (if there is an angle equal to $\pi$ among the corners $A_{3}A_{4}$, $A_{4}A_{5}$ or $A_{5}A_{1}$, then the triangle forming the interior of the core is degenerate because the slopes of two of its edges coincide). If the three last corners contribute to the other singularity $Z_{2}$, then the three vertices of the triangle forming the interior of the core should contribute to $Z_{2}$. Thus, the two ends of side $A_{2}$ should contribute to $Z_{2}$. This leads to a contradiction.

\smallskip

\textit{Case 4}. We finally assume \(t=b=3\). So the polar domain is a complex plane minus a triangle. The interior of the core is a pentagon, two sides of which are identified. The distribution of three corners in the polar domain among the two singularities must be ``2+1'', so one singularity has contribution strictly bigger than $3\pi$ from the polar domain (the total angle of the three corners is $5\pi$). On the other hand, in the interior of the core, the distribution of the five corners among the two singularities must be ``3+2''. In particular, the contribution to the total angle at either singularity from the core is at least $\pi$. But then one singularity has angle strictly bigger than $4\pi$, which is impossible.
\end{proof}

\noindent The following lemma relates the notions of volume and core. More precisely, it shows that the volume of a representation provides some constraint on the possible type of the core but fails to provide a complete classification. Indeed, there are translation surfaces with a cylindrical core whose volume can be positive, negative, or zero. Referring to Figure \ref{fig:cyltype} above, the volume will be positive (resp.\ negative, zero) if the area of the cylinder is greater than (resp.\ smaller than, equal to) that of the parallelogram removed from the pole's domain. Therefore, the volume provides only partial information about the type of the core.
%does not provide complete information about the core type but it just says what the core cannot be.

\begin{lem}\label{lem:VolConstraints}
Let \((X,\omega)\) be a translation surface in the stratum $\Omega\mathcal{M}_1(1,1,-2)$ with period character $\chi$. If $(X,\omega)$ has a core of \textit{torus type}, then $\textnormal{vol}(\,\chi\,)>0$. If $(X,\omega)$ has a core of degenerate type, then $\textnormal{vol}(\,\chi\,) \leq 0$. In this latter case, the volume is negative if and only if $(X,\omega)$ is obtained by removing a convex hexagon in an infinite plane and identifying pairs of opposite sides.
\end{lem}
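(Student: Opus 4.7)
The plan is to treat the torus and degenerate types separately, in each case realizing $Vol_\chi$ as the signed area of a planar polygon obtained from the translation structure.

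\textbf{Torus case.} By Proposition~\ref{prop:coreshapes} (and its proof), the core of $(X,\omega)$ is a flat torus carrying a slit between the two simple zeroes, to which the polar domain is glued along the slit. Cutting this torus along a symplectic basis $\{\alpha,\beta\}$ of its first homology (together with the slit) unfolds it into a flat parallelogram in $\mathbb{C}$ with side vectors $\chi(\alpha),\chi(\beta)$. Taking $(\alpha,\beta)$ positively oriented with respect to the complex orientation of $X$, the signed area of this parallelogram is strictly positive, and it equals $\mathfrak{Im}(\overline{\chi(\alpha)}\chi(\beta))=Vol_\chi$.

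\textbf{Degenerate case.} Proposition~\ref{prop:coreshapes} forces $\beta=6$, so the polar domain is $\mathbb{C}\setminus Q$ for a convex hexagon $Q$ (convexity by Lemma~\ref{lem:retract}). The six edges of $\partial Q$ must be identified pairwise by translations; Euler characteristic together with the requirement that the surface has genus one and two cone points of angle $4\pi$ should rule out every pairing pattern other than the matching of opposite sides. Hence $Q$ is a zonogon with three distinct edge vectors $v_1,v_2,v_3$ (the opposite edges being $-v_1,-v_2,-v_3$).

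\textbf{Volume via signed area.} Let $\delta_i$ be the saddle connection arising from the $i$-th pair of identified opposite sides, oriented consistently from one zero to the other, so that $\chi(\delta_1)=v_1$, $\chi(\delta_2)=-v_2$, $\chi(\delta_3)=v_3$. Set $\alpha=\delta_1-\delta_2$ and $\beta=\delta_2-\delta_3$; a check on the cyclic order of $\delta_1,\delta_2,\delta_3$ at a common zero confirms that $\{\alpha,\beta\}$ is a positively oriented symplectic basis of $H_1(X^\ast,\mathbb{Z})$. Then $\chi(\alpha)=v_1+v_2$ and $\chi(\beta)=-v_2-v_3$, and the shoelace formula gives
\[
A(Q)=\mathfrak{Im}(\overline{v_1}v_2)+\mathfrak{Im}(\overline{v_1}v_3)+\mathfrak{Im}(\overline{v_2}v_3)
\]
for the signed area of $Q$ (traversed counterclockwise). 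A direct expansion of $\mathfrak{Im}(\overline{\chi(\alpha)}\chi(\beta))$ then yields $Vol_\chi=-A(Q)$. Convexity of $Q$ gives $A(Q)\geq 0$, so $Vol_\chi\leq 0$; equality holds precisely when $Q$ collapses onto a line (which corresponds to the absolute periods lying in a real line), whereas $Vol_\chi<0$ occurs exactly when $Q$ is a genuine two-dimensional convex hexagon.

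\textbf{Main obstacle.} The two genuinely delicate steps are (i) confirming that the constructed basis $(\alpha,\beta)$ is positively oriented, so that the sign of $Vol_\chi$ comes out as $-A(Q)$ rather than $+A(Q)$; and (ii) ruling out every pairwise identification of $\partial Q$ other than opposite-side matching, which requires a careful case analysis of how the six polar-domain angles at the corners of $Q$ distribute between the two zeros (each necessarily of cone angle exactly $4\pi$).
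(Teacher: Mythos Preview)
Your approach is correct and essentially the same as the paper's: for the torus case both arguments unfold the core to a parallelogram of positive signed area, and for the degenerate case both identify the polar domain as the complement of a convex hexagon with opposite sides glued and then read off $Vol_\chi$ as minus a planar area (you use the full hexagon area $A(Q)$, the paper uses the triangle on alternate vertices, which has area $A(Q)/2=-Vol_\chi/2$). The paper does carry out explicitly the case analysis you flag as obstacle~(ii), ruling out the gluing words $aabbcc$, $aabccb$, and $abcacb$ by counting singularities and angle contributions; the orientation check you flag as obstacle~(i) is, as in your sketch, left essentially implicit in the paper.
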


\begin{proof}
Let \((X,\omega)\) be a translation surface and let us assume first that its core is of torus type. According to \textit{Case 1} of Proposition \ref{prop:coreshapes} this is a slit torus. We consider a pair, say $(\alpha,\beta)$, of loops disjoint from the slit that form a symplectic basis of $\text{H}_{1}(X,\,\mathbb{Z})$. By gluing the two slides of the slit we obtain a flat torus \(T\). Notice that the same pair $(\alpha,\beta)$ defines a symplectic basis of \(\textnormal{H}_1(T,\,\mathbb Z)\). Since the torus has positive area, it follows that \(\textnormal{vol}(\,\chi\,)>0\).

\smallskip

\noindent We next assume \((X,\omega)\) has a core of degenerate type, meaning that its interior is empty. Then $(X,\omega)$ is obtained by identification of sides in the boundary of the unique polar domain which is a topological disk. It follows from the computation of Euler characteristic that the polar domain has six edges and then the core is formed by three saddle connections (where both sides of each appear in the boundary of the polar domain).

\smallskip

\begin{figure}[htp]
    \centering
        \includegraphics[width=0.75\linewidth]{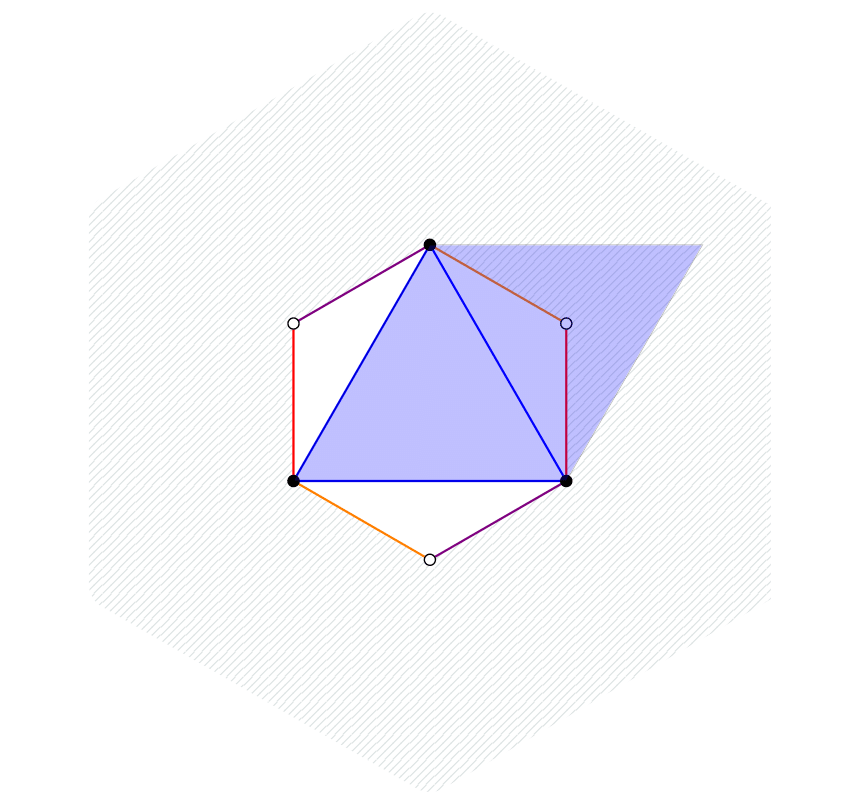}
    \caption{A surface of degenerate type, with a parallelogram of area \(\textnormal{vol}(\,\chi\,)\). It can be easily seen that the area of the blue parallelogram equals the area the hexagon.}
    \label{fig:degenerate}
\end{figure}

\smallskip

\noindent Up to cyclic permutation and relabeling, we can single out four different gluing patterns given by the following words: 
\begin{equation}
    abcabc,\quad aabbcc,\quad aabccb \quad \text{ and }\quad abcacb
\end{equation}

\noindent Gluing pattern corresponding to words $aabbcc$ and $aabccb$ imply the existence of more than two conical singularities. In the case of pattern $abcacb$, four angular sectors of the boundary of the polar domain contribute to one singularity while the two others contribute to the second one. As a consequence of Lemma~\ref{lem:retract} the magnitude of each angular sector in the boundary of the polar domain is at least $\pi$ so the four angles corresponding to the same conical singularity (of total angle $4\pi$ since it is a simple zero) have a magnitude equal to $\pi$. It follows that the distribution of angles is $(\pi,\pi,2\pi,\pi,\pi,2\pi)$. This corresponds to a slit in the infinite plane and some identified sides are then on the same side of the slit. They are identified by a rotation of angle $\pi$ instead of a translation. This is impossible in a translation structure. 

\smallskip

\noindent Therefore, we obtain $(X,\omega)$ by identifying three pairs of opposite sides (gluing pattern corresponding to word $abcabc$). It is obtained from the complement in $\mathbb{C}$ of a topological hexagon $H$ by identification of pairs of opposite sides. Since the total angle of each singularity is $4\pi$ and every angular sector is of magnitude at least $\pi$ (see again Lemma~\ref{lem:retract}), the magnitude of each angular sector belongs to interval $[\pi,2\pi]$. Besides, the total angle of three consecutive angular sectors in the boundary of the polar sector has to be exactly $4\pi$ (by symmetry). It follows that $H$ is a convex hexagon. 

\smallskip

\noindent The three vertices corresponding to the same singularity define a triangle of area $-\frac{1}{2} \textnormal{vol}(\,\chi\,)$, see Figure \ref{fig:degenerate}. Hence it follows that $\textnormal{vol}(\,\chi\,) \neq 0$.
\end{proof}

%\noindent According to Lemma \ref{lem:VolConstraints}, if a Chambers of cylinder type are not discussed in Lemma~\ref{lem:VolConstraints} since we can find such chambers in isoresidual leaves whatever the sign of $Vol_{\chi}$ is (see Section~\ref{sec:chamcyltype}).

\medskip

\section{Isoperiodic foliation of \texorpdfstring{\(\Omega\mathcal M_1(1,1,-2)\)}{M1(1,1,-2)}}\label{sec:kernel_foliation}

\noindent In \S\ref{ssec:overview} we defined isoperiodic fibers as moduli spaces that parametrize translation surfaces with the same signature and period character. In the present section we aim to classify isoperiodic leaves in the stratum \(\Omega\mathcal M_1(1,1,-2)\). Before going through that we wish to provide some more details about isoperiodic fibers in the most general setting for the sake of completeness, see \S\ref{ssec:isofib} and \S\ref{ssec:relper} below. From \S\ref{sub:classification} we shall always assume \(\mu=(1,1,-2)\). 

\subsection{Period map and isoperiodic foliations}\label{ssec:isofib} Besides strata, there is another type of subspaces in $\Omega\mathcal{M}_{g,n}$ which is worth studying, called {\em isoperiodic fibers}. These are defined as the (projections of) non-empty fibers of the so-called \textit{period map}, which associates an abelian differential $\omega$ to its period character, see the map \eqref{permap} below. Unfortunately, for every $n\ge0$, such a map is not well-defined on $\Omega\mathcal M_{g,n}$, see the discussion in \cite[Section \S4.1.1]{Nag} for $n=0$. In order to have a well-defined period map we need to consider a suitable covering space that we denote by $\mathcal S_{g,n}$.

\medskip

\noindent It follows from the classical theory that the orbifold universal cover of $\mathcal M_{g,n}$ is biholomorphic to the Teichm\"uller space $\mathcal T_{g,n}$.
The fiber over each point $X$ consists of all possible identifications of the fundamental group of $X$ with that of a reference topological surface $S_{g,n}$. Under this perspective, the covering group turns out to be isomorphic to the mapping class group $\modul$. In what follows we only need the information of the identification at the homology level, thus we consider the quotient of the Teichm\"uller space $\mathcal T_{g,n}$ by the subgroup $\mathcal I\,(S_{g,n}\,)\subset\modul$ of elements acting trivially on $\shomolzn$. We then define the following space:
\begin{equation}
    \mathcal S_{g,n}=\frac{\mathcal T_{g,n}}{\mathcal I(\,S_{g,n}\,)}.
\end{equation}

\begin{rmk} For $n=0$, the group $\mathcal I\,(S_{g,n}\,)\subset\modul$ is known as the \emph{Torelli group} and the space $\mathcal S_{g,n}$ is thus named as the \emph{Torelli space}. 
\end{rmk}

\noindent Every point in $\mathcal S_{g,n}$ corresponds to an equivalent class of pairs $(X,\, m)$, where $X$ is an unmarked complex structure on $S_{g,n}$ and $m\colon\shomolzn\longrightarrow {\rm H}_1(X,\, \mathbb Z)$ is an identification. Here two pairs $(X,\,m_X)$ and $(Y,\,m_Y)$ are equivalent if there exists a biholomorphic map of marked Riemann surfaces, say $f\colon X \longrightarrow Y$, such that $m_Y=f_*\circ m_X$. The mapping class group $\modul$ acts on $\mathcal S_{g,n}$ by precomposition on the marking. Such an action yields a covering map
\begin{equation}\label{eq:covprojmgn}
    \mathcal S_{g,n}\longrightarrow \mathcal M_{g,n}
\end{equation}

\noindent by construction. Using this projection map, the moduli space of differentials $\Omega\mathcal M_{g,n}\longrightarrow \mathcal M_{g,n}$ pulls back to a bundle $\Omega\mathcal S_{g,n}\longrightarrow \mathcal S_{g,n}$, where $\Omega\mathcal S_{g,n}$ denotes the moduli space of \textit{homologically marked translation surfaces} or \textit{homologically marked differentials}. The period map is then defined as the association
\begin{equation}\label{permap}
    \text{Per}\colon\Omega\mathcal{S}_{g,n}\longrightarrow \text{Hom}\Big(\,\shomolzn,\,\mathbb{C}\,\Big)
\end{equation}
that maps a marked translation surface $(X,\omega,m)$ to its period character.

\medskip

\noindent For holomorphic differentials on compact Riemann surfaces (\textit{i.e.} $n=0$) the image of the period map has been studied by Haupt in \cite{OH} and subsequently rediscovered by Kapovich in \cite{KM2} using Ratner theory. In their work \cite{BJJP}, Bainbridge-Johnson-Judge-Park have provided necessary and sufficient conditions for a representation to be realized in a connected component of a prescribed stratum. Around the same time, in the same spirit of \cite{KM2}, Le Fils has provided in \cite{fils} necessary and sufficient conditions for a representation to arise as the character in a given stratum of holomorphic differentials with an independent and alternative approach. For meromorphic differentials on compact Riemann surfaces, equivalently holomorphic differentials on punctured complex curves (\textit{i.e.} $n\ge1$), the image of the period mapping has been recently determined in \cite{FarGup} (a special case where all zeros and poles are at the punctures), in \cite{CFG} for strata of meromorphic differentials and subsequently extended by \cite{CF} to connected components of strata. We can finally state the main object of study of the present paper.

\begin{defn}\label{def:isofibre}
    An \textit{isoperiodic fiber} is defined as the non-empty preimage of a representation $\chi$ via the period map \eqref{permap}. These fibers clearly project to $\Omega\mathcal M_{g,n}$ and then define a foliation that, with a little abuse of notation, we may still call isoperiodic.
\end{defn}

\noindent Since isoperiodic fibers and strata are both subspaces of $\Omega\mathcal M_{g,n}$, it is natural to inquire about their intersections and this is in fact the main scope of the present paper. The period map \eqref{permap} just defined naturally restricts to a mapping
\begin{equation}\label{eq:respermap}
    \text{Per}(\,\mu\,)\colon\Omega\mathcal{S}_g(\,\mu\,)\longrightarrow \text{Hom}\Big(\,\shomolzn,\,\mathbb{C}\,\Big)
\end{equation}
for every signature $\mu$. A full characterization of the image of $\text{Per}(\,\mu\,)$ can be found in \cite[Theorems B, C, D]{CFG}. In fact, a representation $\chi$ can be realized in a stratum $\Omega\mathcal M_g(\,\mu\,)$ if and only if it can be realized in the marked stratum $\Omega\mathcal{S}_g(\,\mu\,)$, see \cite[Section 1.3]{CF} for more details. As a consequence the isoperiodic foliation on \(\Omega\mathcal M_{g,n}\) restricts to every stratum to an \textit{isoperiodic foliation with fixed signature}. In order to distinguish them from those in \(\Omega\mathcal M_{g,n}\) we introduce the following terminology.

\begin{defn}\label{def:isoleaf}
For a given signature \(\mu\) an \textit{isoperiodic leaf of signature \(\mu\)} %\cm{Yongquan: an isoperiodic leaf of signature $\mu$?}
    is defined as a connected component of the (non-empty) preimage of a representation \(\chi\) via the restricted period map \eqref{eq:respermap}. Each leaf projects to $\Omega\mathcal M_{g,n}$ and then defines a leaf that, with a little abuse of notation, we may still call isoperiodic.
\end{defn}

\noindent Note that the stratification of \(\Omega\mathcal M_{g,n}\) naturally yields a stratification of the isoperiodic fiber in Definition \ref{def:isofibre} into leaves enumerated by partitions of \(2g-2\). In particular, given a representation \(\rho\in\text{Hom}\Big(\,\shomolzn,\,\mathbb{C}\,\Big)\), the space of isoperiodic forms with same signature can be defined as in section \S\ref{ssec:overview}, namely:

\smallskip

\begin{equation}
    \mathfrak {L}(\,\rho,\,\mu\,)=\left\{\,\,(X,\omega)\in\Omega\mathcal M_{g}(\,\mu\,)\,\,\,\Big|\,\,
    \begin{aligned}
        &\,\,\,\rho=f^*\chi, \textnormal{ for some } f\in\textnormal{Homeo}(S_{g,n},\,X)\\
        &\,\,\textnormal{ where } \chi \textnormal{ is the period character of } (X,\omega) 
    \end{aligned}\,\,
    \right\}.
\end{equation}

\smallskip

\noindent We also recall for the reader that in what follows we mainly consider the marked leaf \(\mathfrak{ML}(\,\rho,\,\mu\,)\), which turns out to be easier to navigate. In principle we do not know whether \(\mathfrak{ML}(\,\rho,\,\mu\,)\) is connected or not and hence the term \textit{leaf} may seem inappropriate. In Theorem~\ref{thm:Unique}, the moduli space of differentials that realize a given period character is connected and therefore coincide with an isoperiodic leaf.

\smallskip

\subsection{Period atlas on strata}\label{ssec:relper}
As already alluded to several times above, every pair \((X,\omega)\in\Omega\mathcal{M}_{g,n}\) yields a representation \(\chi\) called the period character. Topologically $X$ can be seen as the connected sum of a closed surface $S_g$ of genus $g$ and a sphere $S_{0,n}$ with $n$ punctures. Thus its homology admits a splitting $\textnormal{H}_1(X,\mathbb{Z})=\textnormal{H}_1(S_g,\mathbb{Z})\,\oplus\,\textnormal{H}_1(S_{0,n},\mathbb{Z})$. Given a symplectic basis $\{\alpha_1,\beta_1,\dots,\alpha_g,\beta_g\}$ of the first part $\textnormal{H}_1(S_g,\,\mathbb Z)$, the integrals of $\omega$ along this collection of curves provide a system of $2g$ local coordinates called \textit{absolute periods}, and the residues of $\omega$ computed around the poles of $\omega$ (which by assumption are punctures of $X$) provide additional \(n-1\) coordinates.

\smallskip

\noindent Generally, absolute periods and residues do not determine a translation surface even locally. In order to have a local determination of a translation surface one has to consider more parameters. These correspond to the so called \textit{relative periods}, which are not encoded by the period character $\chi$. 

\begin{rmk}
    To keep track of relative periods we consider the relative homology group \( \textnormal{H}_1(X, Z(\omega), \mathbb{Z}) \). An independently interesting question is to determine which representations of relative homology arise as relative period characters of some translation structure on the given surface. For holomorphic differentials, this problem was posed by Filip in \cite{Fil} and independently solved by Chen-Faraco \cite{CF2} and Le Fils \cite{fils2}.
\end{rmk}

\noindent For a translation surface $(X,\omega)$, we denote by $Z(\,\omega\,)=\{z_1,\ldots,z_k\}$ the set of zeros of $\omega$. If we choose the last zero $z_k$ as a base-point, the relative periods are then defined as the integrals
\begin{equation}
    \int_{z_i}^{z_{k}} \omega \quad \textnormal{ for } i=1,\dots,k-1,
\end{equation}
along arcs $\gamma_i$ joining $z_i$ and $z_k$. Marking the punctures as \(p_1,\dots,p_n\), we denote by $\delta_i$ a peripheral loop around the puncture \(p_i\). The family of curves $\big\{\,\alpha_{1},\dots,\beta_{g},\gamma_{1},\dots,\gamma_{k-1},\delta_1,\dots,\delta_{n-1}\,\big\}$ provides a basis for the relative homology group $\textnormal{H}_1(X\,,\,Z(\,\omega\,),\,\mathbb Z)$. The period map as defined in \eqref{eq:respermap} extends to a mapping 
\begin{equation}\label{eq:perresmpa}
    \textnormal{Per}(\,\mu\,)\,\times\,\textnormal{Res}(\,\mu\,)\colon\Omega\mathcal S_g(\,\mu\,)\longrightarrow \mathbb{C}^{2g+k+n-2},
\end{equation}
where \(\textnormal{Res}(\,\mu\,)\) is the map that associates to each structure the \(n-\)tuple of its residues at the punctures. Since the image of the mapping  \(\textnormal{Res}(\,\mu\,)\) has been completely determined in \cite{GT}, it readily follows that the image of \(\textnormal{Per}(\,\mu\,)\,\times\textnormal{Res}(\,\mu\,)\) is also completely understood. Explicitly, such a map is defined as
\begin{equation}
    (X,\omega) \longmapsto \left(\,\int_{\alpha_1}\omega\,,\dots,\int_{\beta_g}\omega \,,\int_{\gamma_1}\omega\,,\dots,\int_{\gamma_{k-1}}\omega,\,\int_{\delta_1}\omega\,,\dots,\int_{\delta_{n-1}}\omega\,\right).
\end{equation}
\noindent It can be shown that this map provides the desired system of local coordinates around $(X,\omega)$. On the other hand, a representation \(\rho\) is completely determined by the images of the loops \(\big\{\,\alpha_{1},\dots,\beta_{g},\gamma_{1},\dots,\gamma_{k-1}\,\big\}\). Therefore an isoperiodic leaf \(\mathfrak{L}(\,\rho,\,\mu\,)\) is locally homeomorphic to \(\mathbb C^{k-1}\). This means that two structures in the same leaf only differ by their relative periods and, in fact, one structure may be deformed into another by changing the relative periods and by keeping fixed the absolute periods. Notice that if \(k=2\), as in our case, an isoperiodic leaf is locally homeomorphic to \(\mathbb C\). In what follows we shall show that each marked leaf \(\mathfrak{ML}(\,\rho,\,\mu\,)\) in the marked stratum \(\mathcal H_1(1,1,-2)\) naturally carries a translation structure, c.f.\ \S\ref{ssec:transurfa}.

\medskip

\subsection{Classification of period characters}\label{sub:classification}

We now consider the stratum $\Omega\mathcal{M}_1(1,1,-2)$. We begin with the following:

\begin{lem}\label{lem:tricannotbereal}
Let \((X,\omega)\in\Omega\mathcal M_1(1,1,-2)\) be a translation surface with period character \(\chi\). Then \(\textnormal{Im}(\,\chi\,)\) is either a lattice of \(\mathbb C\) or a nontrivial subgroup of a real line of \(\mathbb C\) up to rotation. 
\end{lem}

\begin{proof}
We first recall that \(\shomolzoo \cong \textnormal{H}_1(\,\textnormal{T},\,\mathbb Z\,)\cong \mathbb Z^2\). Since \(\textnormal{Im}(\,\chi\,)\) is the image of a group isomorphic to \(\mathbb Z^2\) into \(\mathbb{C}\) under a homomorphism, it suffices to show that it cannot be trivial. This follows from \cite[Theorem B]{CFG}.
\end{proof}

\begin{rmk}
A different argument for Lemma \ref{lem:tricannotbereal} goes as follows. Let $(X,\omega)\in \mathcal{H}_1(1,1,-2)$ and let $\chi$ be its period character. If the period of every closed loop is trivial; \textit{i.e.} $\chi$ is the trivial representation, then there is no closed saddle connection and the oriented saddle connections from \(\pto B\) and \(\pto W\) have the same period (recall that \(\pto B\) and \(\pto W\) are the marked zeros of $\omega$). All the saddle connections thus have the same direction, and there is no flat triangle inside the core. Consequently, the surface is just formed by the polar domain, two conical singularities and three saddle connections, see \S\ref{ssec:core}. At each zero, two saddle connections with the same period are separated by an angle which is an integer multiple of $2\pi$. Since zeros are simple the number of saddle connections between them is at most two and this lead to a contradiction.
\end{rmk}

\noindent Based on Lemma~\ref{lem:tricannotbereal} we can thus distinguish four types of isoperiodic leaves as follow. For a given representation \(\rho\) we shall say that a leaf \(\mathfrak{L}(\,\rho,\,\mu\,)\) is
\begin{enumerate}[label=\arabic*.]
    \item \textit{non-arithmetic} if \(\textnormal{Im}(\,\rho\,)\) is dense in a real line of $\mathbb{C}$;
    \smallskip
    \item  \textit{arithmetic} if \(\textnormal{Im}(\,\rho\,)=\mathbb{Z}a\) with $a \in \mathbb{C}^{\ast}$. In this case, we can always find a closed simple loop whose absolute period is zero.
\end{enumerate}

\noindent Notice that in both cases \(\textnormal{vol}(\,\rho\,)=0\). There are other two cases depending on the sign of the volume: a leaf \(\mathfrak{L}(\,\rho,\,\mu\,)\) is

\begin{enumerate}[label=\arabic*.]
    \smallskip
    \item[3.] \textit{positive} if \(\textnormal{Im}(\,\rho\,)\) is a lattice and the image of a symplectic basis of the homology is a direct basis of $\mathbb{C}$ (positive volume);
    \smallskip
    \item[4.] \textit{negative} if \(\textnormal{Im}(\,\rho\,)\) is a lattice and the image of a symplectic basis of the homology is an indirect basis of $\mathbb{C}$ (negative volume).
\end{enumerate}

\noindent By using the natural \(2-\)fold covering \(\mathcal H_1(1,1,-2)\longrightarrow \Omega\mathcal{M}_1(1,1,-2)\) it is natural to extend such a terminology as follows. 

\begin{defn}
    A marked leaf \(\mathfrak{ML}(\,\rho,\mu\,)\) is said to be \textit{positive, negative, arithmetic or non-arithmetic} if the corresponding leaf \(\mathfrak{L}(\,\rho,\,\mu\,)\) in the unmarked stratum is. 
\end{defn}

%\smallskip

\subsection{Isoperiodic leaves in \(g=1\)} One of the advantages of working with the signature \(\mu = (1,1,-2)\) is that all Abelian differentials in the stratum \(\mathcal{H}_1(\,\mu\,)\) have a double pole with zero residue. Hence the map in \eqref{eq:perresmpa} reduces to \(\textnormal{Per}(\,\mu\,)\colon\Omega\mathcal S_1(\,\mu\,)\longrightarrow \mathbb{C}^{2}\cong\textnormal{Hom}\Big(\,\shomolzoo,\,\mathbb{C}\,\Big)\), \textit{i.e.} a representation is uniquely determined by its absolute periods. Action of the symplectic group on the homology induces an action on representations as well.
%We now consider the action of the symplectic group both on the domain and target of \(\textnormal{Per}(\,\mu\,)\).
Recall that for \(g = 1\) the symplectic group coincides with the well-known \(\slz\). Two representations in the same orbit have the same absolute periods and therefore the same images. Although the converse is not generally true for \(g \ge 2\), in our case we can state the following fact.

\begin{prop}\label{prop:iso}
    Two representations with equal non-zero volume \(\rho_1\) and \(\rho_2\) are in the same \(\slz\)-orbit if and only if their images coincides. As a consequence, the space of isoperiodic forms \(\mathfrak L(\,\rho,\mu\,)\) is uniquely determined by the lattice \(\Gamma=\textnormal{Im}(\,\rho\,)\), \textit{i.e.} \(\mathfrak L(\,\rho_1,\mu\,)=\mathfrak L(\,\rho_2,\mu\,)\) if and only if \(\,\textnormal{Im}(\,\rho_1\,)=\textnormal{Im}(\,\rho_2\,)\).
\end{prop}

\begin{rmk}
    For \(g\ge2\), it is possible to find translation surfaces with the same absolute periods and different volumes. As a consequence, these structures do not belong to the same isoperiodic leaf because they have different period characters. Thus the group of periods is generally not sufficient to determine an isoperiodic leaf.
\end{rmk}

\begin{proof}
    One direction is clear: If two representations are in the same \(\slz\)-orbit then their images must coincide and have the same volume. This is in fact true in a more general setting. Let us assume two representations have the same group \(\Gamma\) of absolute periods and assume they both have non-zero volume. According to Lemma \ref{lem:tricannotbereal}, \(\Gamma\) is a lattice and, up to \(\glplus\), we may assume \(\Gamma=\ziz\). Thus we only need to show that if two representations, say \(\rho_1\) and \(\rho_2\), are such that \(\textnormal{Im}(\,\rho_1\,)=\textnormal{Im}(\,\rho_2\,)=\ziz\)  and \(\textnormal{vol}(\,\rho_1\,)=\textnormal{vol}(\,\rho_2\,)\), then they are in the same \(\slz\) orbit. Let \(\{\,\alpha,\,\beta\,\}\) be any basis in homology and suppose
    \begin{equation}
        \rho_1(\,\alpha\,)=a_1+ib_1, \quad \rho_1(\,\beta\,) =c_1+id_1,  \quad \rho_2(\,\alpha\,)=a_2+ib_2, \quad \rho_2(\,\beta\,) =c_2+id_2,
    \end{equation}
    where \(a_i,b_i,c_i,d_i\in\mathbb Z\). Since both matrices
    \begin{equation}
        \begin{pmatrix}
            a_1 & c_1\\
            b_1 & d_1
        \end{pmatrix}, \quad 
        \begin{pmatrix}
            a_2 & c_2\\
            b_2 & d_2
        \end{pmatrix}
    \end{equation}
    are invertible over the integers and then they are related by a matrix in \(\slz\). The result follows.
\end{proof}

\subsection{Isoperiodic leaves are translation surfaces}\label{ssec:isoleaftrans} We focus on isoperiodic leaves \(\mathfrak{ML}(\,\rho,\mu\,)\) in the stratum with marked zeros $\mathcal{H}_1(1,1,-2)$. Recall that zeros are labeled as \(\pto B\) and \(\pto W\). As a consequence of our discussion in Section \S\ref{ssec:relper}, two structures in the same leaf are distinguished by their relative period. The crucial observation here is that there is \textit{no} canonical choice of a saddle connection joining \(\pto B\) and \(\pto W\), or even a consistent choice throughout the leaf. Different saddle connections yield different relative periods and thus provide different local coordinates for the same structure. What we show below is that these coordinates always differ by some constant \(c\in\mathbb C\) and hence they define a translation structure on each isoperiodic leaf.

%This lack is what permits us to claim the existence of a translation structure on a leaf.%\cm{Yongquan: the lack of canonical relative period means we don't have a global period coordinate chart, but does not mean it has no translation structure, at least to me?}\cm{Guillaume: I agree. We should say it differently. Even if there were a canonical relative period, we would have a translation structure corresponding to an exact 1-form.} \textcolor{red}{You're both right, I think I was misunderstood. What I meant it was that by changing the relative period the local coordinate just change with a translation. I change it as I do not want misleading sentences.}

\begin{rmk}
     Any path on \((X,\omega)\) having \(\pto B\) and \(\pto W\) as end-points has at least one geodesic representative in its homotopy class. Since two paths in the same \textit{relative} homotopy class have the same period we can always compute the relative period along the geodesic representative of the given path.
     %\textcolor{red}{Gianluca: Any such class of paths from one conical singularity to the other has a representative which is a chain of saddle connections, \textit{i.e.} geodesic segments, one of which is between the black zero and the white one. Therefore, there is at least one of these relative homotopy class represented by a saddle connection. ... what do we mean?}
\end{rmk}

\smallskip

\noindent Let \((X,\omega)\) be a structure in \(\mathcal H_1(1,1,-2)\) and let \(\gamma_1\) be any path joining \(\pto B\) and \(\pto W\). Its relative period, say \(r_1\)%$r(\,\gamma_1\,)$
, provides a local complex coordinate in the isoperiodic leaf \(\mathfrak{ML}(\,\chi,\,\mu\,)\). Here we use \(\chi\) to signify that we are considering the leaf containing \((X,\omega)\). If we choose any other path, say \(\gamma_2\) with relative period %\(r(\gamma_2)\)
\(r_2\), then their difference %$r(\gamma_1)-r(\gamma_2)$ 
\(r_1-r_2\) is the absolute period of the closed loop \(\alpha\) homotopic to $\gamma_1 \cup \gamma_2^{-1}$. On the other hand the absolute period of \(\alpha\) is fixed all along the isoperiodic leaf. As a consequence, the choice of a path $\gamma$ results in a coordinates system defined up to a constant, up to a translation in $\mathbb{C}$. These coordinates thus define a \textit{translation atlas} in the isoperiodic leaf.

\smallskip

\noindent Let \(\rho\colon\shomolzoo\longrightarrow \mathbb C\) be a representation and assume \(\textnormal{Im}(\,\rho\,)\) is a lattice. Then we can show that
%\subsection{Isoperiodic leaves: cover structure}
%In an isoperiodic leaf where the absolute period group $\Gamma$ is a lattice, 
the period coordinates of the leaf are defined up to a period in \(\textnormal{Im}(\,\rho\,)\).
%$\Gamma$. 
Consequently, we have
\begin{prop}
Suppose \(\textnormal{Im}(\,\rho\,)\) is a lattice in $\mathbb{C}$. Then a translation structure on the leaf \(\mathfrak{ML}(\,\rho,\,\mu\,)\) is induced by a covering of torus $\mathbb{C}/\textnormal{Im}(\,\rho\,)$ ramified over the lattice point.
\end{prop}
\noindent Moreover, the conical singularities on the leaf correspond to translation surfaces obtained by degeneration, \textit{i.e.} shrinking a saddle connection between the two zeros. See later sections for details.

\smallskip

\noindent Because of the action of $\textnormal{GL}^{+}(2,\mathbb{R})$, every generic leaf is conjugated to one where $\Gamma$ is normalized to $\mathbb{Z}[\,i\,]$. In particular, the ramified covering over the square torus endows such isoperiodic leaves with a structure of square-tiled translation surface.

\smallskip

\subsection{Walls-and-chambers structure of isoperiodic leaves}\label{sub:WallsAndChambers} We now turn into the description of leaves as alluded to in Section \S\ref{ssec:isostrucandconfgeo}. We have already seen above that every translation surface \((X,\omega)\in\Omega\mathcal M_1(1,1,-2)\) is essentially determined by its core and we already proved that there are three different kinds of cores. The purpose of the present section is to study the behavior of the core under small deformations. We introduce the following:

\begin{defn}\label{def:transitional}
    A translation surface \((X,\omega)\) is said to be \textit{transitional} if the shape of its core changes under small deformations.
\end{defn}

\noindent The following claim provides a characterization of transitional translation surface. 

\begin{prop}\label{prop:transitionalcharac}
    A translation surface is transitional if there exist two consecutive saddle connections on the boundary of its domain of pole forming an angle of $\pi$. Moreover, the subset of transitional translation surfaces is a possibly disconnected closed subset of co-dimension one in each marked leaf \(\mathfrak{ML}(\,\rho,\,\mu\,)\).
\end{prop}

\noindent We refer to \cite[\S4]{Ta} for the proof and further details. Proposition \ref{prop:transitionalcharac} enables us to decompose every leaf as follows.

\begin{defn}[Wall-chamber decomposition]
    Let \(\mathfrak{ML}(\,\rho,\,\mu\,)\) be a leaf in the marked stratum. A \textit{wall} is defined as a maximal connected subset of transitional translation surfaces. A \textit{chamber} is defined as a connected component of the complement of the union of walls.
\end{defn}

\noindent Notice that each chamber is an open subset and every pair of structures in the same chamber have by definition the same kind of core. The possible shapes have been classified in Proposition~\ref{prop:coreshapes}. Therefore, we decompose each leaf into:

\begin{itemize}
    \item \textit{chambers of torus type};
    \item \textit{chambers of cylinder type};
    \item \textit{chambers of degenerate type}.
\end{itemize}

\noindent We recall from Section \S\ref{ssec:isoleaftrans} that each leaf in \(\mathcal H_1(1,1,-2)\) has a natural translation structure. Such a structure naturally restricts to a translation structure on each chamber. The next step is to show that each chamber is isometric to an open subset of \(\mathbb C\). Moreover, we aim to show that on every leaf \(\mathfrak{ML}(\,\rho,\,\mu\,)\) the conical singularities of its translation structures correspond to translation surfaces in the lower stratum \(\Omega\mathcal{M}_1(2,-2)\).

\smallskip

\subsection{The stratum $\Omega\mathcal{M}_1(2,-2)$}
We recall the walls-and-chambers structure of the stratum $\Omega\mathcal{M}_1(2,-2)$. This was extensively discussed in \cite{Ta1}. For this, note that there is a natural $\mathbb{C}^{\ast}$-action on each stratum by scaling the meromorphic differential. A \textit{projectivized} stratum is the quotient of a stratum by this action. The projectivized stratum $\mathbb{P}\Omega\mathcal{M}_1(2,-2)$ is biholomorphic to the modular curve $X_{1}(2)$. The modular curve $X_{1}(2)$ is a complex curve of genus zero with two cusps and one orbifold point of order $2$. The (projectivized) wall is an arc connecting one cusp to itself, dividing $X_{1}(2)$ into two chambers:
\begin{itemize}
    \item One chamber contains the other cusp of $X_{1}(2)$. This chamber consists of translation surfaces with cylindrical cores, so we call it a chamber of \textit{cylinder type}. Translation surfaces in this chamber can be constructed by gluing a cylinder to a copy of the complex plane along a slit. The cusp contained in this chamber corresponds to an ``infinitely long'' cylinder.
    \item The other chamber contains the orbifold point. This chamber consists of translation surfaces whose core has empty interior, so we call it a chamber of \textit{degenerate type}. Translation surfaces in this chamber can be constructed by removing a parallelogram from a copy of the complex plane, and then identifying opposite sides. The orbifold point corresponds to the case where the parallelogram is a square.
\end{itemize}

\noindent Translation surfaces in the wall can be constructed similarly as degenerate types, by removing a ``degenerate parallelogram'', whose sides are all parallel. 

\subsection{Isoperiodic leaves: conical singularities}\label{ssec:singkerleaves} As already discussed, each isoperiodic leaf \(\mathfrak{ML}(\,\rho,\,\mu\,)\) admits a natural translation atlas that turns the leaf into a translation surface. As such, an isoperiodic leaf may admit conical singularities. We aim to show that these singularities correspond to translation surfaces obtained as limits of degenerating sequences.

\smallskip

\noindent Whenever the length of some saddle connection tends to zero, a translation surface degenerates. Let $(X,\omega)\in \mathcal{H}_1(1,1,-2)$ be a translation surface with period character $\chi$ and let \(\Gamma=\textnormal{Im}(\,\chi\,)\). Any deformation inside an isoperiodic leaf \(\mathfrak{ML}(\,\rho,\,\mu\,)\) keeps the lengths of closed saddle connections fixed. Therefore, the only saddle connections that can shrink are those between the two conical singularities \(\pto B\) and \(\pto W\). 

\smallskip

\noindent If two saddle connections between \(\pto B\) and \(\pto W\) shrink simultaneously, then they must have the same relative period all along the deformation because their difference is a closed curve whose absolute period remains constant under the deformation. Notice that their union is a simple loop, say $\gamma$, of period zero. Consequently, the angles at the two singularities between them is an integer multiple of $2\pi$. Since conical singularities of translation surfaces of $\mathcal{H}_1(1,1,-2)$ have an angle of $4\pi$, at most two saddle connections can shrink simultaneously. In the case $\Gamma$ is not a cyclic group, then this happens only if the homology class of $\gamma$ is trivial. In this case, the degeneration disconnects the surface into flat plane -- the polar domain -- and a flat torus. Such a degeneracy occurs only in the chamber of torus type, see \S\ref{sec:posleaves} for a precise definition. The local model of this degeneration is just a punctured disk. We compactify it by adding a marked point.

\smallskip

\noindent For an arithmetic real leaf the picture is quite different from the one above, as now there are homologically nontrivial loops with period zero. Note that there exists a simple closed loop $\gamma$ so that $\Gamma=\chi(\gamma)\mathbb{Z}$. If two saddle connections shrink simultaneously in such isoperiodic leaves, the torus gets pinched and degenerates to a translation surface of genus zero with a pole of order two and two marked points connected by a saddle connection of period $\chi(\gamma)$. The local model of this degeneration is also a punctured disk (because of the symmetries). Once again, we compactify it by adding a marked point.

\smallskip

\noindent Otherwise, only one saddle connection shrinks. In this case, the limiting translation surface remains a torus. Algebraically, if the two zeros of a differential in the stratum $\mathcal{H}_1(1,1,-2)$ collide, then we get a differential with only one zero in the minimal stratum $\Omega\mathcal{M}_1(2,-2)$. Conversely, on a translation surface with a single zero of order two, we can perform a surgery called \textit{breaking up}, which splits the zero into two simple zeros connected by an arbitrarily small saddle connection, see \cite{Bo,EMZ} for details. This surgery consists of replacing a small disk around the zero of order two, see Figures \ref{fig:breakzer} and \ref{fig:splitlocmodel}.

\begin{figure}[htp]
    \includegraphics[width=0.875\linewidth]{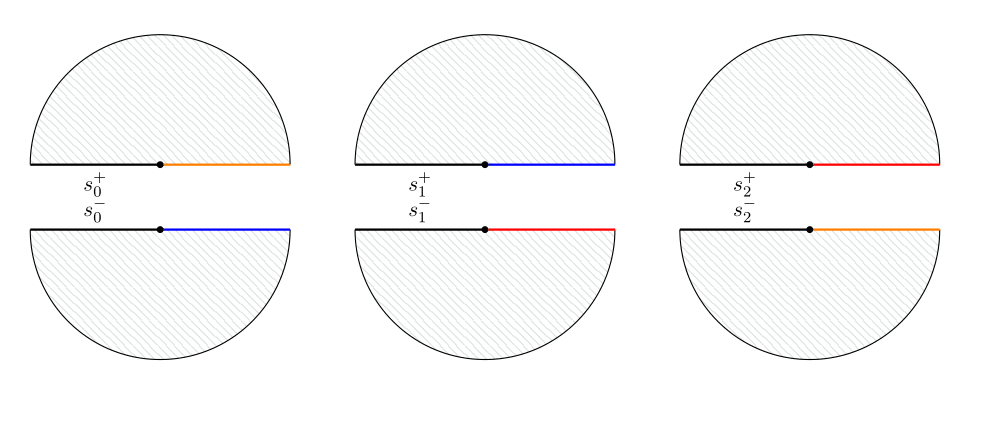}
    \caption{An $\varepsilon$-neighborhood of a zero of order $2$. In this picture $s_i^+$ is identified with $s_i^-$. In the same fashion sides with the same color are identified.}
    %\caption{Breaking up a zero of order two into two zeros of order one (figure courtesy of Corentin Boissy.}
\label{fig:breakzer}
\end{figure}

\begin{figure}[ht]
    \centering
    \includegraphics[width=0.875\linewidth]{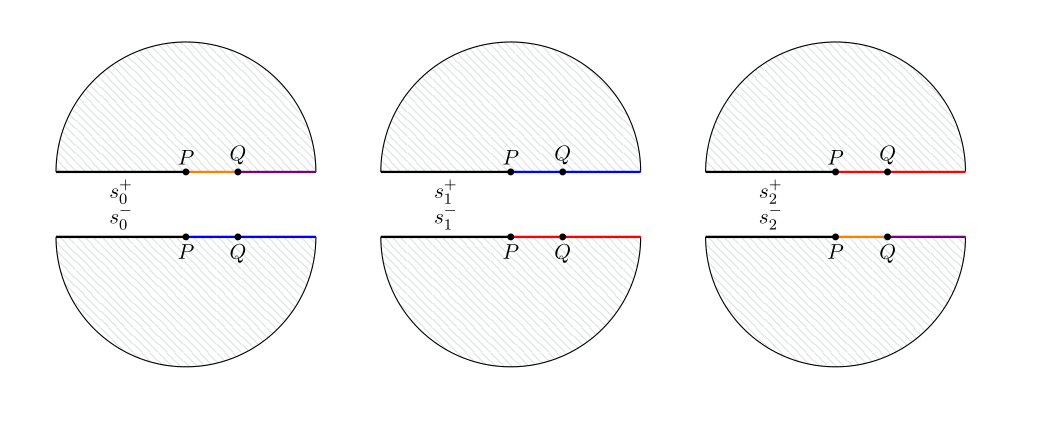}
    \caption{New labelling for breaking up a zero of order $2$ in two zeros of order $1$. Even in this picture $s_i^+$ is identified with $s_i^-$ and sides with the same color are also identified.}
    \label{fig:splitlocmodel}
\end{figure}

\smallskip

\noindent By using the period of the saddle connection as a continuous parameter of an analytic deformation, we can see that every translation surface of $\mathcal{H}_1(1,1,-2)$ that can be obtained as the deformation of some translation surface in the stratum $\Omega\mathcal{M}_1(2,-2)$. The surgery is local and does not affect the absolute periods. What we get is a cover of order three of the pointed disk as a local model for the degeneration. We compactify by adding a conical singularity of angle $6\pi$. The position of the conical singularities of an isoperiodic leaf in $\mathbb{P}\Omega\mathcal{M}_1(2,-2)$ depend on its type.

\begin{prop}
In a real leaf, conical singularities belong to the wall of $\Omega\mathcal{M}_1(2,-2)$. In positive leaves, conical singularities belong to the chamber of cylinder type. Finally, in negative leaves, conical singularities belong to the chamber of degenerate type.
\end{prop}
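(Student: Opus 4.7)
The plan is to exploit the fact that \emph{absolute periods are preserved under the degeneration of a saddle connection between the two simple zeroes}. Consequently, if $(X_t,\omega_t)$ is a one-parameter family of surfaces in $\mathcal{H}(1,1,-2)$ converging, as $t\to 0$, to a surface $(X_0,\omega_0)$ in $\mathcal{H}(2,-2)$, the period characters $\chi_t$ all coincide and, in particular, the volume $\mathrm{Vol}_\chi$ is constant along the degeneration. Hence it suffices to identify, in each chamber of $\mathcal{H}(2,-2)$, the sign of $\mathrm{Vol}_\chi$, and then match this sign with the type of the leaf.

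The key lemma to prove is therefore the analogue of Lemma~\ref{lem:VolConstraints} for $\mathcal{H}(2,-2)$, namely: a surface in the cylinder chamber has $\mathrm{Vol}_\chi>0$; a surface in the degenerate chamber has $\mathrm{Vol}_\chi<0$; and a surface in the discriminant has $\mathrm{Vol}_\chi=0$. For the cylinder chamber, one chooses as a symplectic basis the waist curve $\alpha$ of the cylinder together with a curve $\beta$ crossing the cylinder and going around the slit in the complementary plane; a direct computation shows that $\mathfrak{Im}(\overline{\chi(\alpha)}\chi(\beta))$ equals the area of the cylinder, which is strictly positive. For the degenerate chamber, the surface is obtained from $\mathbb{C}$ by removing a (non-degenerate) parallelogram $P$ and identifying opposite sides; exactly as in the proof of Lemma~\ref{lem:VolConstraints}, choosing as symplectic basis the two loops associated to the two pairs of identified sides yields $\mathrm{Vol}_\chi=-\mathrm{Area}(P)<0$. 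Finally, for the discriminant, the same construction applies but with a \emph{collapsed} parallelogram (all sides collinear), of zero area, giving $\mathrm{Vol}_\chi=0$.

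With this trichotomy at hand, the conclusion is immediate. If the leaf is positive, then $\mathrm{Vol}_\chi>0$, so the degenerated surface sits in the cylinder chamber. If the leaf is negative, then $\mathrm{Vol}_\chi<0$, so the degenerated surface sits in the degenerate chamber. If the leaf is real (arithmetic or not), then $\Gamma$ is contained in a real line and a direct computation gives $\mathrm{Vol}_\chi=\mathfrak{Im}(\overline{\chi(\alpha)}\chi(\beta))=0$, so the degenerated surface lies in the discriminant.

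The main obstacle is really the verification of the trichotomy for $\mathcal{H}(2,-2)$, for which the cleanest route is to recall the explicit polygonal models already recorded in \S\ref{sec:translation} (cylinder plus slit; complex plane minus a parallelogram; complex plane minus a degenerate parallelogram) and to compute $\mathrm{Vol}_\chi$ on a well-chosen symplectic basis in each case. Once this is done, the volume is a continuous invariant of the degeneration, and the proposition follows without additional work.
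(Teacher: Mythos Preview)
Your proposal is correct and follows essentially the same route as the paper: both arguments rest on the observation that the absolute period character (and hence $\mathrm{Vol}_\chi$) is unchanged under the breaking-up surgery, and then compute the sign of $\mathrm{Vol}_\chi$ on an explicit symplectic basis in each of the three loci of $\mathcal{H}(2,-2)$ (cylinder chamber, degenerate chamber, discriminant). Your organization of the argument as a clean trichotomy $\mathrm{Vol}_\chi>0$, $<0$, $=0$ is slightly more streamlined than the paper's case-by-case discussion, but the content is the same; the only cosmetic point is that your appeal to Lemma~\ref{lem:VolConstraints} is by analogy rather than literal citation, since that lemma treats the hexagon in $\mathcal{H}(1,1,-2)$ while here you need the simpler parallelogram in $\mathcal{H}(2,-2)$.
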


\begin{proof}
The absolute period group $\Gamma$ of a conical singularity of an isoperiodic leaf is the same as that of the other differentials of the leaf (breaking up zeros is a local surgery). For translation surfaces in a stratum with a single zero, saddle connections are necessarily closed and thus their periods generate $\Gamma$.

\smallskip

\noindent In $\Omega\mathcal{M}_1(2,-2)$, elements of $\Gamma$ are real collinear if and only if the translation surface has a degenerate core formed by a pair of saddle connections with the same direction. This locus coincides with the wall of $\Omega\mathcal{M}_1(2,-2)$.

\smallskip

\noindent For an element in the chamber of cylinder type of $\Omega\mathcal{M}_1(2,-2)$, a symplectic basis of the homology is a pair of oriented loops, say $\alpha$ and $\beta$ with an intersection number of $+1$. We may choose $\alpha$ as a closed geodesic of the cylinder and $\beta$ as a loop crossing the cylinder (and turning around the slit). The image of their periods in $\mathbb{C}$ is clearly a direct basis. Therefore, such conical singularities can only appear in positive leaves. Conversely, the neighborhood of any such differential in $\mathcal{H}_1(1,1,-2)$ (obtained by breaking up) lies on a positive leaf.

\smallskip

\noindent Finally, for an element in the chamber of degenerate type of $\Omega\mathcal{M}_1(2,-2)$, the translation surface obtained from a flat plane with a parallelogram removed by gluing opposite sides (it contains exactly two saddle connections). Let $\alpha$ and $\beta$ be two simple closed loops, each of which crosses one saddle connection and then turns around the parallelogram removed. The period of each loop equals that of the closed saddle connection it does not cross. We check easily that if the two loops intersect positively, their periods in $\mathbb{C}$ are negatively oriented.
\end{proof}

\subsection{Isoperiodic leaves are hyperbolic Riemann surfaces}

For the purpose of proving this fact, we exhibit non-constant holomorphic maps from any isoperiodic leaf to the moduli space $\mathcal{M}_{1,1}$ of elliptic curves. Recall that $(\,-\,,-2\,)$ is used to denote the union of strata of signature $(1,1,-2)$ and $(2,-2)$.

\begin{lem}
For any isoperiodic leaf $\mathfrak L$ in either the marked or unmarked stratum $\Omega\mathcal M_1(\,-\,,-2\,)$,
%= \mathcal{H}(1,1,-2) \cup \mathcal{H}(2,-2)$, 
there is a non-constant holomorphic map $\phi_{\mathfrak L}$ from $\mathfrak L$ to the moduli space $\mathcal{M}_{1,1}$ of elliptic curves.
\end{lem}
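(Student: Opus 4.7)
The natural candidate for $\phi_L$ is the forgetful map $(X,\omega)\mapsto (X,p)$, where $p$ is the unique double pole of $\omega$. Since the period atlas on $\overline{\mathcal{H}}$ is compatible with the underlying complex structure, this is the restriction to $L$ of a holomorphic forgetful morphism $\overline{\mathcal{H}}\to \mathcal{M}_{1,1}$; it extends across the conical singularities of $L$ because, by \S\ref{ssec:singkerleaves}, these points parametrize surfaces in $\mathcal{H}(2,-2)$, which still carry a well-defined pointed elliptic curve. Holomorphicity of $\phi_L$ is therefore immediate, and the substantive point is non-constancy.

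To prove non-constancy, my plan is to exploit the Weierstrass parametrization of $\overline{\mathcal{H}}$ from \S\ref{sec:translation}: every $(X,\omega)\in\overline{\mathcal{H}}$ with $p$ placed at the origin has the form $X=\mathbb{C}/\Lambda$, $\omega=(a\wp_\Lambda+b)\,dz$ with $a\in\mathbb{C}^*$, $b\in\mathbb{C}$. Given a basis $(\omega_1,\omega_2)$ of $\Lambda$ with corresponding quasi-periods $(\eta_1,\eta_2)$ of $-\zeta_\Lambda$, the absolute periods of $\omega$ are $\chi_i=-a\eta_i+b\omega_i$, and Legendre's relation $\omega_2\eta_1-\omega_1\eta_2=2\pi i$ solves this linear system as
\[ a=\frac{\omega_1\chi_2-\omega_2\chi_1}{2\pi i},\qquad b=\frac{\eta_1\chi_2-\eta_2\chi_1}{2\pi i}. \]
Normalizing $a=1$ through the $\mathbb{C}^*$-rescaling $(\Lambda,a,b)\mapsto(c\Lambda,ca,b/c)$ that accounts for isomorphism of elliptic curves, the leaf $L$ becomes parametrized by the complex affine line $\{\omega_1\chi_2-\omega_2\chi_1=2\pi i\}$ with the real locus $\omega_2/\omega_1\in\mathbb{R}$ removed. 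Assuming $\chi_1\neq 0$ (possible up to a symplectic change of basis, thanks to Lemma~\ref{lem:tricannotbereal}), the modulus
\[ \tau=\frac{\omega_2}{\omega_1}=\frac{\chi_2}{\chi_1}-\frac{2\pi i}{\chi_1\omega_1} \]
is a manifestly non-constant holomorphic function of $\omega_1$, tracing a continuous curve in $\mathbb{C}\setminus\mathbb{R}$ that cannot be contained in a single discrete $\textnormal{SL}(2,\mathbb{Z})$-orbit. Thus $[\tau]\in\mathcal{M}_{1,1}$ varies non-trivially along $L$.

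For the unmarked stratum, $\phi_L$ depends only on the pointed elliptic curve $(X,p)$ and not on the labeling of the two zeroes, so it is invariant under the zero-swapping involution and descends to the unmarked leaf. I expect the main technical obstacle to be a careful verification that the Weierstrass parametrization above really exhausts $L$ -- that every element of $L$ arises from a solution of the system for some basis of $\Lambda$, with the $\textnormal{SL}(2,\mathbb{Z})$-ambiguity of $\chi$ described in Theorem~\ref{thm:Unique} correctly absorbed -- after which the explicit formula for $\tau$ gives non-constancy in one line.
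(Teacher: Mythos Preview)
Your argument is correct but takes a different, more explicit route than the paper. The paper's proof is a two-line contradiction: if $\phi_L$ were constant with value $\mathbb{C}/\Lambda$, then $L$ would sit inside the two-dimensional space $V=\langle dz,\wp_\Lambda\,dz\rangle$ of meromorphic $1$-forms on that fixed curve; since isoperiodic deformations move along the subspace of \emph{exact} $1$-forms in $V$, and $V$ contains no nontrivial exact form, $L$ would reduce to a point. This sidesteps entirely the technical obstacle you flag, because it never needs to parametrize $L$ or invoke Legendre's relation.

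Your approach, by contrast, actually constructs the map to $\mathcal{T}_{1,1}$ (via the formula $\tau=\chi_2/\chi_1-2\pi i/(\chi_1\omega_1)$) rather than merely showing $\phi_L$ is non-constant. This is essentially the content of \S\ref{sec:conf_geom} of the paper, where the same Weierstrass-period computation is used to identify generic leaves biholomorphically with $\mathcal{T}_{1,1}$. So your argument buys more information at the cost of more bookkeeping, and the ``technical obstacle'' you mention (that every point of $L$ arises from your parametrization) is genuine but routine---it amounts to the uniqueness of the Weierstrass form and the fact that the normalization $a=1$ kills the $\mathbb{C}^*$-ambiguity in $\Lambda$. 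For the lemma as stated, the paper's exact-form argument is cleaner; your computation would be better placed when the explicit biholomorphism is actually needed.
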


\begin{proof}
For each translation surface of $\Omega\mathcal M_1(\,-\,,-2\,)$, we forget the location of the zeros of the differential and mark the double pole in the underlying elliptic curve. In this way, we can define a holomorphic map from $\Omega\mathcal M_1(\,-\,,-2\,)$ to $\mathcal{M}_{1,1}$. Since the period map defines holomorphic charts and the isoperiodic foliation is a holomorphic foliation, for each isoperiodic leaf $\mathfrak{L}=\mathfrak{L}(\,\rho,\mu\,)$, we obtain a holomorphic map $\phi_{\mathfrak L}$ from $\mathfrak L$ to $\mathcal{M}_{1,1}$. If $\phi_{\mathfrak L}$ were constant to some complex structure $\mathbb{C}/\Lambda$, then $\mathfrak L$ would have been contained in the two-dimensional vector space $V$ of meromorphic $1$-forms on $\mathbb{C}/\Lambda$ with at most one double pole at the marked point, generated by $dz$ and $\wp dz$. Since deformations inside $L$ are isoperiodic, then $\mathfrak L$ is an affine complex line of $V$ modeled on the subspace of exact $1$-forms in $V$. Since there is no nontrivial exact $1$-form in $V$, we obtain a contradiction and $\phi_{\mathfrak L}$ is a non-constant holomorphic map.
\end{proof}

\begin{cor}\label{cor:hyp}
Any isoperiodic leaf $\mathfrak{L}(\,\rho,\mu\,)$, either in the marked or unmarked stratum $\Omega\mathcal M_1(\,-\,,-2\,)$, admits a non-constant bounded holomorphic function.
\end{cor}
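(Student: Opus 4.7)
The plan is to upgrade the non-constant holomorphic map $\phi_{L} : L \to \mathcal{M}_{1,1}$ furnished by the preceding lemma to a map with values in a cover of $\mathcal{M}_{1,1}$ that visibly admits non-constant bounded holomorphic functions, and then pull back. The natural target is a quotient of the Teichm\"{u}ller space $\mathcal{T}_{1,1} \cong \mathbb{H}$, which is the orbifold universal cover of $\mathcal{M}_{1,1}$.

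\smallskip

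\noindent I would construct the lift using the natural marking provided by the period character $\chi$. Since $\chi$ is constant along $L$ by definition of the isoperiodic foliation, a compatible homological marking of $H_{1}(X^{\ast}, \mathbb{Z})$ fixed at a base-point of $L$ can be parallel-transported along any path; transport along a loop returns a marking that again realizes $\chi$. Hence the monodromy takes values in the stabilizer $\mathrm{Stab}(\chi) \subset \mathrm{SL}(2, \mathbb{Z})$ of $\chi$ under the linear action of $\mathrm{SL}(2, \mathbb{Z})$ on $\mathrm{Hom}(H_{1}(X^{\ast}, \mathbb{Z}), \mathbb{C}) \cong \mathbb{C}^{2}$. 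This produces a well-defined holomorphic lift $\Phi_{L} : L \to \mathbb{H}/\mathrm{Stab}(\chi)$; its composition with the projection $\mathbb{H}/\mathrm{Stab}(\chi) \to \mathcal{M}_{1,1}$ is $\phi_{L}$, so $\Phi_{L}$ is non-constant.

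\smallskip

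\noindent A direct analysis shows that $\mathrm{Stab}(\chi)$ is very small: any $A \in \mathrm{SL}(2, \mathbb{Z})$ fixing a non-zero $\chi \in \mathbb{C}^{2}$ must have $1$ as an eigenvalue, so $A$ is either the identity or a non-trivial unipotent matrix (in particular, $-I$ does not belong to $\mathrm{Stab}(\chi)$). Thus $\mathrm{Stab}(\chi)$ is either trivial or an infinite cyclic group generated by a parabolic element. Correspondingly, $\mathbb{H}/\mathrm{Stab}(\chi)$ is biholomorphic either to $\mathbb{H}$ itself or to the punctured unit disk $\mathbb{D}^{\ast}$ (the parabolic case identifies with $\mathbb{D}^{\ast}$ via $\tau \mapsto e^{2\pi i \tau / N}$ for a suitable $N$). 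Both admit obvious non-constant bounded holomorphic functions---the Cayley transform $\tau \mapsto (\tau - i)/(\tau + i)$ for $\mathbb{H}$, and the inclusion $\mathbb{D}^{\ast} \hookrightarrow \mathbb{D}$ for $\mathbb{D}^{\ast}$---so pulling any such function back via the non-constant $\Phi_{L}$ produces the desired function on $L$.

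\smallskip

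\noindent The main technical point will be verifying the well-definedness of the lift $\Phi_{L}$, i.e.\ that parallel transport of the marking has monodromy contained in $\mathrm{Stab}(\chi)$. This rests on two ingredients: $\chi$ is globally fixed on $L$ (the defining property of the leaf), and the mapping class group $\mathrm{Mod}(S_{1,1})$ acts faithfully on $H_{1}$ as $\mathrm{SL}(2, \mathbb{Z})$. Once this is settled, the rest of the argument is essentially formal.
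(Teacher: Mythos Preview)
Your argument is correct and follows the same underlying idea as the paper's proof---lift $\phi_L$ through the uniformization of $\mathcal{M}_{1,1}$ by the disk---but you supply the justification that the paper's one-line proof omits. The paper simply asserts that ``$\phi_L$ lifts to a non-constant holomorphic map to $\mathbb{D}$''; your use of the period character to control the monodromy of the homological marking, together with the computation that $\mathrm{Stab}(\chi)$ is trivial or cyclic parabolic, is exactly what is needed to make that lifting claim rigorous (and in fact anticipates the explicit maps to $\mathcal{T}_{1,1}$ and $\mathbb{H}/\langle z\mapsto z+1\rangle$ that the paper constructs later in \S\ref{sec:conf_geom}).
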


\begin{proof}
Since $\mathcal{M}_{1,1}$ is biholomorphic to a quotient of the complex disk $\mathbb{D}$, $\phi_{\mathfrak L}$ lifts to a non-constant holomorphic map to $\mathbb{D}$.
\end{proof}

\medskip

\section{Chambers of cylinder   type}\label{sec:chamcyltype}

\noindent In this section we give a detailed description of chambers of cylinder type appearing in every isoperiodic leaf of $\mathcal{H}_1(1,1,-2)$. According to the proof of Proposition~\ref{prop:coreshapes}, a translation surface $(X,\omega)$ of cylinder type has a polar domain with four boundary saddle connections, two of which being glued on the sides of a cylinder (made of two flat triangles), see Figure \ref{fig:cyltype}. The only possible gluing pattern implies that the remaining two sides are glued to each other by a translation. As a consequence the polar domain is the complement of a parallelogram, say \(Q\).

\smallskip

\subsection{Description of the chambers}\label{ssec:chambdes} For readers' convenience, we go through the construction in greater detail, allowing us to provide a description of chambers of cylinder type. Let \(\rho\colon\shomolzoo\longrightarrow \mathbb C\) be a non-trivial representation and let \(\{\,\alpha,\,\beta\,\}\) be two generators that represent a pair of simple closed curves that intersect once (up to homotopy). We shall assume \(\{\,\alpha,\,\beta\,\}\) to be a positively oriented basis, \textit{i.e.} their intersection number is equal to one. Throughout this section we set \(u=\rho(\,\alpha\,)\) and \(v=\rho(\,\beta\,)\) respectively and also \(\Gamma=\textnormal{Im}(\,\rho\,)\). Notice that the complex numbers \(u,v\) cannot both be zero because the representation is assumed to be non-trivial. Moreover, up to replacing \(\{\,\alpha,\,\beta\,\}\) with another suitable pair of generators, we may assume \(u,v\) are both non-zero.

\smallskip

\noindent Notice that a finite cylinder, as a translation surface,  can be constructed by identifying \textit{only one} pair of opposite sides of some parallelogram, say \(P\). In our constructions below it shall be convenient to regard complex numbers as vectors. Keeping this in mind, let \(P\) be \textit{any} parallelogram whose sides are given by \(u\) and another non-zero vector, say \(w\), such that \(\mathfrak{Im}(\,\overline{u}w\,)>0\). We introduce the following terminology.

\begin{defn}
     Let \(C\) be a (finite) cylinder constructed from a parallelogram \(P\) whose sides are given by a pair of vectors \(u,w\in\mathbb C\) such that \(\mathfrak{Im}(\,\overline{u}w\,)>0\). We shall say that \(C\) has period \(u\) (respectively \(w\)), if it is constructed by identifying those sides parallel to \(w\) (respectively \(u\)).
\end{defn}
%are not collinear \cm{Gian: and P has positive oriented area?}. 

%\begin{rmk}\label{rmk:smallpert}
%    Note that any small change in \(w\) still gives a structure of cylinder type. It then follows that these structures form an open subset of the leaf.
%\end{rmk}

\begin{rmk}[Coloring]\label{rmk:coloring}
      In previously shown figures, we have tacitly used a specific coloring system for oriented sides of polygons (see Figures \ref{fig:emptytype}, \ref{fig:torustype} and \ref{fig:cyltype}). We now describe this system more formally due to its importance in what follows. In this work, the sides of the polygons will be colored according to the following criterion:
      \begin{itemize}
          \item two sides are colored the same only if they lie on opposite sides of the polygon(s) to which they belong (not necessarily the same one) and differ by a translation.
          \smallskip
          \item Furthermore, at most one pair of sides share the same color.
      \end{itemize}
      Through this coloring, we can identify sides with the same color and obtain the desired surfaces.
\end{rmk}

\begin{figure}[htp]
    \centering
    \includegraphics[width=0.875\linewidth]{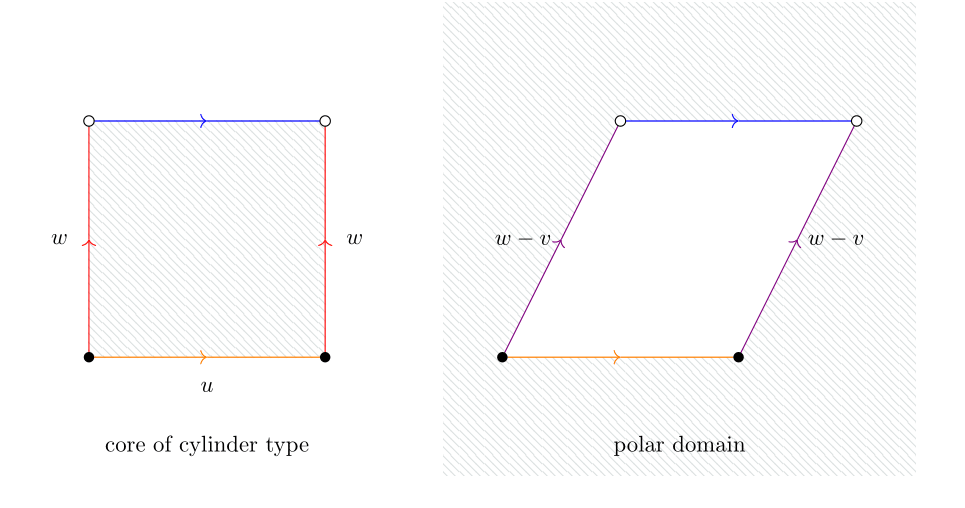}
    \caption{General construction of a surface of cylinder type whose sides are paired and then glued if the have the same color. The two singularities are distinguished by black or white. For an oriented side signified by an arrow, the letter that appears nearby denotes the complex number representing that vector. See also Figure \ref{fig:usepa} for the resulting surface.}
    \label{fig:cylinder_type}
\end{figure}

\noindent We remove from \(\mathbb C\) the interior of a parallelogram \(Q\) whose sides are given by \(u, w-v\). Notice that here we do not have any flexibility here; \(Q\) is completely determined by \(u,v,w\). We then glue \(P\) and \(\mathbb C\setminus \textnormal{int}(\,Q\,)\) by translations as shown in Figure~\ref{fig:cylinder_type}. The resulting structure is a translation surface $(X,\omega)$ in \(\Omega\mathcal M_1(1,1,-2)\). We show it has the period character \(\chi=\rho\). Once the sides of \(P\) labeled with the same color (those colored in red in Figure \ref{fig:cylinder_type}) are identified, we get a cylinder with period \(u\) by construction. This results in a simple closed curve with period \(u\) in $X$. We denote by \(\gamma_1\) the path coming from the identified sides of \(P\). By construction, it has relative period \(w\). Similarly, we denote by \(\gamma_2\) the path coming from the identified sides of \(Q\) with same color. By construction, it has relative period \(w-v\). The closed loop \(\gamma_1\,\cup\,\gamma_2^{-1}\) is thus a simple closed curve with period \(v\). This provides the second desired generator for \(\shomolzoo\). 

\smallskip

\noindent Finally it remains to mark the resulting zeros to turn the structure just realized into a structure in the marked stratum \(\mathcal H_1(1,1,-2)\). In order to distinguish these zeros we first observe that, by construction, there is an oriented saddle connection with relative period \(w\). We then label the starting zero as \(\pto B\) and the other one as \(\pto W\) (black and white). We shall consistently rely on this convention.

\begin{rmk}
    In fact, we will orient every saddle connection joining the two zeros from \(\pto B\) to \(\pto W\). In particular, changing the marking of the zeros reverses the orientation of all such saddle connections. 
\end{rmk}

\smallskip

\noindent It turns out that the choice of \(w\) determines the final structure, and it follows from our construction the existence of some saddle connection joining the two singularities with relative period \(w\). The following proposition gives a classification of chambers of cylinder type, where within each chamber the relative period $w$ serves as a local coordinate.

\begin{prop}\label{prop:cylclass}
Let \(\rho\) be a non-trivial representation and let \(\mathfrak{ML}(\,\rho,\,\mu\,)\) be the isoperiodic fiber in the marked stratum \(\mathcal H_1(\,\mu\,)\), where \(\mu=(1,1,-2)\). Let \(\Gamma=\textnormal{Im}(\,\rho\,)\) and let \(u\in\Gamma\setminus\{\,0\,\}\) be the absolute period of some simple closed curve. The following holds.
%In stratum $\mathcal{H}(1,1,-2)$, we consider the locus formed by translation surfaces realizing a nonzero period character $\chi$ (up to a symplectic change of basis) and a nontrivial absolute period $u \in \Gamma \setminus \lbrace{ 0 \rbrace}$ such that $u$ is the period of a primitive homology class. Then
\begin{itemize}
    \item[1.] If \(\textnormal{vol}(\,\rho\,)\neq0\), or \(\textnormal{vol}(\,\rho\,)=0\) and \(\Gamma\) is dense in a real line in \(\mathbb C\), then there is a unique chamber, denoted by \(\textnormal{C}(\,u\,)\), that parametrizes all translation surfaces whose core is a cylinder of period \(u\). 
    %formed by translation surfaces of cylinder type realizing character $\rho$ and such that the appropriately oriented core geodesic of the cylinder realizes a homology class $\gamma$ of period $\rho(\gamma)=u$;
    \smallskip
    \item[2.] If \(\Gamma=a\,\mathbb{Z}\) for some \(a\neq0\) and hence \(u=ka\)%for some nonzero integer $k$
    , then for every integer \(l\in\mathbb Z\) such that \(0\le l<|\,k\,|\) and $\gcd(\,l,k\,)=1$, there is a unique chamber \(\textnormal{C}(\,k,l\,)\) that parametrizes all translation surfaces whose core is a cylinder of period \(u\).
    %formed by translation surfaces of cylinder type realizing character $\chi$ and such that the appropriately oriented core geodesic of the cylinder realizes a homology class $\gamma$ of period $\chi(\gamma)=u$, and $v=la$ is the period of another homology class $\eta$, where $(\gamma,\eta)$ forms a symplectic basis of the homology group.
\end{itemize}
Moreover, in the translation structure of the isoperiodic leaf containing $\textnormal{C}(\,u\,)$ or $\textnormal{C}(\,k,l\,)$, the chamber is a half-plane defined by inequality \(\mathfrak{Im}(\,\overline{u}\,w\,)>\max(\,0,\textnormal{vol}(\,\rho\,)\,)\).
\end{prop}

\begin{proof}
Let \(u=\rho(\,\alpha\,)\) and \(v=\rho(\,\beta\,)\).
As a consequence of our discussion above, a translation surface of cylinder type is completely determined by the absolute periods \(u,v\) and the relative period \(w\).

\smallskip

\noindent If \(\textnormal{vol}(\,\rho\,)\neq0\), or \(\textnormal{vol}(\,\rho\,)=0\) and \(\Gamma\) is dense in a real line in \(\mathbb C\), then \(\Gamma\) is a rank-2 $\mathbb{Z}$-module. In this case there is exactly one chamber \(\textnormal{C}(\,u\,)\). In fact, if \(v_1,\,v_2\in\Gamma\) are two periods representing simple closed curves such that \(\mathfrak{Im}(\,\overline u v_1\,)=\mathfrak{Im}(\,\overline u v_2\,)\) then \(v_1-v_2\in u\,\mathbb Z\). In other words the volume \(\textnormal{vol}(\,\rho\,)\) determines \(v\) up to an element of \(u\,\mathbb Z\). Note that modifying $v$ up to an element in $u\,\mathbb{Z}$ amounts to a change of basis, so the resulting chamber remains the same. Moreover, \(w\) provides a global parameter in this chamber in the following sense: It is possible to find some open set in \(\mathbb C\) such that for each value of \(w\) in such a set yields a unique translation surface with core of cylinder type and desired absolute and relative periods. We shall determine this maximal open set later together with the arithmetic case.

%We first consider the case of arithmetic groups that turns out to be more subtle.

%Moreover, $v$ is defined up to elements of $u\,\mathbb{Z}$ and is required to satisfy the equality $\mathfrak{Im}(\bar{u}v)=Vol_{\chi}$, because $(u,v)$ is the image of a symplectic basis. This condition fixes $v$ up to $u\,\mathbb{Z}$ when $\Gamma$ is not a cyclic group.

\smallskip

%\noindent On the other hand, when $\Gamma=a\,\mathbb{Z}$ and $u=ak$ for some integer $k$, for each integer $0\le l<|k|$, the chamber $CC_{u,v}$ with $v=al$, up to $u\,\mathbb{Z}$ is distinct from each other. Indeed, for a fixed parameter $z$, the height of the cylinder $z-v$ is distinct, even up to an element $u\mathbb{Z}$.

\noindent We assume next \(\Gamma\) is arithmetic, \textit{i.e.} we suppose \(\Gamma=a\,\mathbb{Z}\) where \(a\in\mathbb C^*\). Thus \(u=ak\) for an integer \(k\). Since \(\langle u,\,v\rangle=a\,\mathbb Z\), we have \(v=al\) where \(l\) is an integer such that \(\gcd(\,k,\,l\,)=1\). We may assume without loss of generality that \(0 \le l <|\,k\,|\) as a direct consequence of the Euclid's division Lemma. Notice that this is allowed because it corresponds to a suitable change of generators for \(\shomolzoo\). For every integer \(l\) that satisfies these properties there exists a unique chamber denoted by \(\textnormal{C}(\,k,\,l\,)\). Once the value of \(w\) is fixed, then there are \(\varphi(\,|\,k\,|\,)\) pairwise non-isometric parallelograms \(Q\) with sides \(u,w-v\), where \(\varphi\) denotes the Euler's totient function. More precisely, each value of \(l\) yields a unique parallelogram and two distinct values determine different parallelograms. In fact, it can be seen that these are pairwise non-isometric because they have diagonals of different lengths; where with diagonal here we just mean any segment that wraps on the cylinder with period \(mu+w\) for some \(m\in\mathbb Z\).

\smallskip

\noindent In our construction the only constraint we have is that the areas of parallelogram \(P\) and \(Q\) must not vanish. These conditions amount to the following two inequalities:
\begin{equation}
    \mathfrak{Im}\big(\,\overline{u}\,w\,\big)>0 \qquad \text{ and } \qquad \mathfrak{Im}(\,\overline{u}\,(w-v) \,)>0.
\end{equation}

\noindent Notice that \(\mathfrak{Im}(\,\overline{u}\,(w-v) \,)>0\) is equivalent to \(\mathfrak{Im}(\,\overline{u}\,w \,)>\textnormal{vol}(\,\rho\,)\). These conditions combined together yields to \(\mathfrak{Im}(\,\overline{u}\,w\,)>\max\big(\,0, \textnormal{vol}(\,\rho\,)\,\big)\). This an open set in \(\mathbb C\) and, in fact, the maximal open subset in which the parameter \(w\) can be taken. Thus translation surfaces with a cylinder of absolute period \(u\) form a half-plane in an isoperiodic fiber realizing character \(\rho\).
\end{proof}

\smallskip

\noindent In the next subsection \S\ref{ssec:algpersp}, we shall provide an algebraic perspective of Proposition \ref{prop:cylclass} for arithmetic representations. For the moment we continue with the description of chambers of cylinder type.
%As a direct consequence we have a description of chambers associated to opposite values periods. Before stating such a description we need to fix some further terminology for our convenience.
Recall that all structures in the present section are realized by gluing a parallelogram \(P\) and a domain arising from the exterior of some other parallelogram in \(\mathbb C\). This domain, seen in the Riemann sphere \(\cp\) is itself a parallelogram with the same sides. In what follow we shall denote this domain as \(\Omega\) (thus \(\,\Omega\,\cup\,Q=\cp\,\)). The following holds.

\begin{cor}\label{cor:oppositechambers}
    Let \(\rho\) be a non-trivial representation and let \(\Gamma=\textnormal{Im}(\,\rho\,)\). Let \(u\in\Gamma\setminus\{\,0\,\}\) be the absolute period of some simple closed curve, Then the chambers \(\textnormal{C}(\,u\,)\) and \(\textnormal{C}(\,-u\,)\) are distinct in \(\mathfrak{ML}(\,\rho,\,\mu\,)\) and project to the same chamber in \(\mathfrak{L}(\,\rho,\,\mu\,)\) via the forgetful map.
\end{cor}

\begin{proof}
        As a direct consequence of Proposition \ref{prop:cylclass}, for every \(u\in\Gamma\setminus\{\,0\,\}\) its corresponding chamber is identified with 
        \begin{equation}
            \textnormal{C}(\,u\,)\cong\Big\{\,w\in\mathbb C\,\,|\,\, \mathfrak{Im}(\overline{u}\,w)>\max\big(\,0, \textnormal{vol}(\,\rho\,)\,\big)\,\,\Big\}
        \end{equation}
        In the same fashion \(\textnormal{C}(\,-u\,)\) is identified with
        \begin{equation}\label{eq:cylchambnegminusone}
            \textnormal{C}(\,-u\,)\cong\Big\{\,w\in\mathbb C\,\,|\,\, \mathfrak{Im}(-\overline{u}\,w)>\max\big(\,0, \textnormal{vol}(\,\rho\,)\,\big)\,\,\Big\};
        \end{equation}
        however a simple manipulation shows that this latter set can be also written as 
        \begin{equation}\label{eq:cylchambnegminustwo}
            \textnormal{C}(\,-u\,)\cong\Big\{\,w\in\mathbb C\,\,|\,\, \mathfrak{Im}(\overline{u}\,w)<\min\big(\,0, -\textnormal{vol}(\,\rho\,)\,\big)\,\,\Big\}.
        \end{equation}
        As a consequence \(\textnormal C(\,u\,)\) and \(\textnormal{C}(\,-u\,)\) are open and disjoint subsets of \(\mathbb C\). We may notice that \(w\in\textnormal{C}(\,u\,)\) if and only if \(-w\in\textnormal{C}(\,-u\,)\). We are going to show that for every value \(w\in\textnormal{C}(\,u\,)\) along with its opposite \(-w\) yield two translation surfaces in the marked stratum \(\mathcal H_1(1,1,-2)\) that are distinguished only by the marking of zeros. For this purpose, let \((X_+,\,\omega_+)\) be the translation surface with relative period \(w\). Recall that this is realized by gluing a parallelogram \(P_+\) with sides \(u,w\) and a parallelogram \(\Omega_+\subset\cp\) with sides \(u, w-v\). In the same fashion, let \((X_-,\,\omega_-)\) be the translation surface with relative period \(-w\). This latter is realized by gluing a parallelogram \(P_-\) with sides \(u,-w\) and a parallelogram \(\Omega_-\subset\cp\) with sides \(u, v-w\). Notice that the following relations hold:
        \begin{equation}
            P_+\,=\,P_- + w \quad \text{ and } \quad \Omega_+\,=\,\Omega_-+w-v.
        \end{equation}
        As a consequence \((X_+,\,\omega_+)\) and \((X_-,\,\omega_-)\) are the same translation surface. It remains to show they have different marking. By construction, there is a saddle connection, say \(\delta_+\), in \((X_+,\,\omega_+)\) with relative period \(w\) and, similarly, there is a saddle connection \(\delta_-\) in \((X_-,\,\omega_-)\) with relative period \(-w\). These saddle connections develop to parallel segments on the complex plane and, up to shift one of them by using a translation, we may assume they overlap. Recall that, according to our convention the starting point is marked with \(\pto B\) and the other is marked with \(\pto W\). Since their relative periods are opposite then these segments point to opposite directions and hence the corresponding structures have different markings as desired. See Figure \ref{fig:markings}.
        
\begin{figure}[htp]
    \centering
    \includegraphics[width=0.75\linewidth]{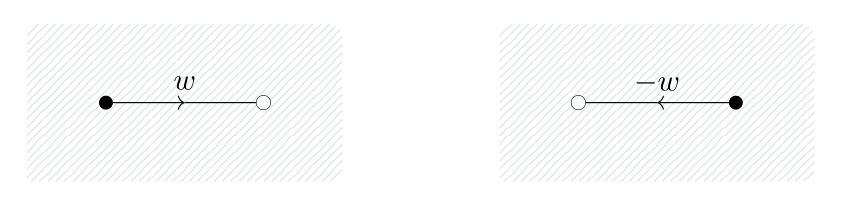}
    \caption{On the left the marking in \((X_+,\,\omega_+)\) and on the right the marking in \((X_-,\,\omega_-)\).}
    \label{fig:markings}
    \end{figure}

        \noindent Thus \(\textnormal C(\,u\,)\) and \(\textnormal{C}(\,-u\,)\) are distinct chambers in \(\mathfrak{ML}(\,\rho,\,\mu\,)\) and both project to the same chamber \(\mathfrak{L}(\,\rho,\,\mu\,)\) via the forgetful map. 
\end{proof}

\begin{rmk}\label{rmk:markdeterminesys}
    The intuitive idea behind the Corollary \ref{cor:oppositechambers} is that our construction also realizes a simple non-separating curve with period \(u\) that separates the zeros. The orientation determines the marking of the zeros. In other words, one could say that the black-marked point is to the right of the curve, while the white-marked point is to the left. Changing the orientation of the curve also changes the marking of the zeros. See Figures \ref{fig:usepa} and \ref{fig:usepa2}.
\end{rmk}

\begin{figure}[htp]
    \centering
    \includegraphics[width=1\linewidth]{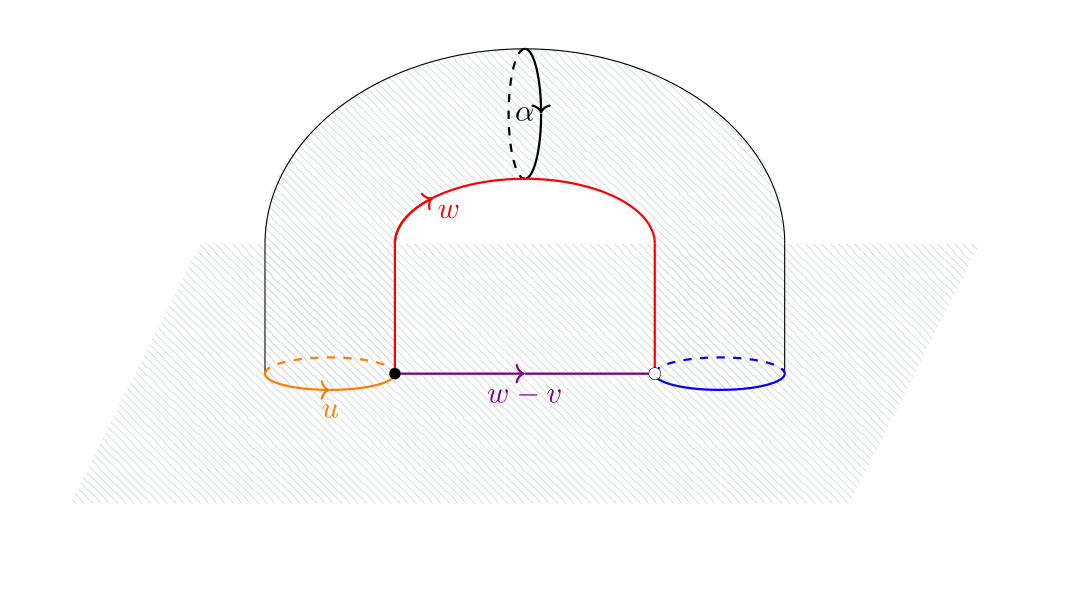}
    \caption{This figure represents the structure obtained by gluing the polygons as described in Figure \ref{fig:cylinder_type}. We can see the existence of a curve labeled with \(\alpha\) that separates the zeros. We then label as \(\pto B\) the zero on its right and with \(\pto W\) the other zero that lies on its left.}
    \label{fig:usepa}
\end{figure}

\begin{figure}[htp]
    \centering
    \includegraphics[width=1\linewidth]{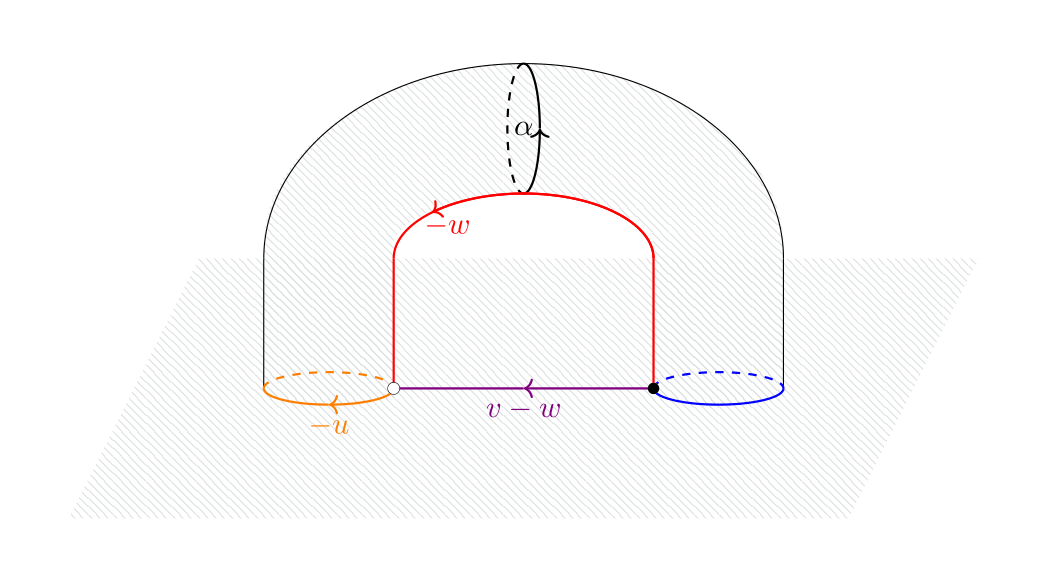}
    \caption{The resulting structure in the chamber \(\textnormal{C}(\,-u\,)\). Even in this case we single out a simple closed non-separating curve with period \(-u\) such that, according to its orientation, \(\pto B\) lies on its right and the \(\pto W\) on its left.}
    \label{fig:usepa2}
\end{figure}

\noindent We now focus again on the arithmetic case. We wish to present a more algebraic argument to explain the aforementioned subtlety that chambers \(\textnormal{C}(\,k,\,l\,)\) and \(\textnormal{C}(\,k,\,m\,)\) are different if \(m-l \notin k\mathbb Z\). 

\medskip

\subsection{An algebraic perspective for arithmetic representations.}\label{ssec:algpersp} Let \(\rho_1,\,\rho_2\colon\shomolzoo\to\mathbb C\) be two representations such that \(\textnormal{Im}(\,\rho_1\,)=\textnormal{Im}(\,\rho_2\,)=a\,\mathbb Z\), where \(a\in\mathbb C^\ast\). For simplicity we assume \(a=1\) since the generic case works \textit{mutatis mutandis}. Fix a set of generators, say \(\{\,\alpha,\,\beta\,\}\) -- recall that we assume this basis to be positively oriented. Without loss of generality we assume the following holds:
\begin{equation}
    \rho_1(\,\alpha\,)\,=\,\rho_2(\,\alpha\,)=k \qquad \rho_1(\,\beta\,)=l \qquad \textnormal{ and } \qquad \rho_2(\,\beta\,)=m. \ 
\end{equation}
Each one of these representations arises as the period character of some translations surface both with core of cylinder type. We show that these representations do not lie in the same \(\spzz\)-orbit unless \(m-l\in k\,\mathbb Z\). In fact, since \(\gcd(\,k,\,l\,)=1\) there exist \(b,d\in\mathbb Z\) such that \(bk\,+\,dl=m\). These coefficients are far from be unique. On the other hand, there exist a solution with \(d=1\) if and only if \(m-l\in k\,\mathbb Z\). Thus the desired conclusion holds. %Now, \(\rho_1\) can be realized as the period character of some translation surface with core of cylinder type, hence it lies in the chamber \(\textnormal{C}(\,k,\,l\,)\). Similarly, \(\rho_2\) can be realized as the period character of some translation surface with core of cylinder type and this latter lies in the chamber \(\textnormal{C}(\,k,\,m\,)\). 
As a consequence, chambers \(\textnormal{C}(\,k,\,l\,)\) and \(\textnormal{C}(\,k,\,m\,)\) must belong to different marked leaves unless \(m-l\in k\,\mathbb Z\) and this case they necessarily coincide -- recall the assumption \(0\le l,m\,< |\,k\,|\).

\medskip

\subsection{Change of coordinates}\label{subsec:changeofcoord}
Before we move on to the description of degeneration of structures towards a wall in a cylinder chamber, it is worth pointing out a natural change of coordinates on each cylinder chamber.

\smallskip

\noindent Observe that the triple of parameters $(u,v,w)$ used in our construction above describes the same structure in $\mathcal{H}_1(1,1,-2)$ as the triple $(u,v+ku,w+ku)$. Indeed, the new parameter amounts to a change of symplectic basis for $\shomolzoo$ and a choice of new saddle connection connecting the two ends of the cylindrical core. While the choice is fixed if we fix the parameter $v$, in later parts it is convenient sometimes to use a different one.

\smallskip

\subsection{Degeneration towards a wall}\label{sec:degtowalls} We finally provide a description of degeneration inside chambers of cylinder type. Recall that inside an isoperiodic leaf \(\mathfrak{ML}(\,\rho,\,\mu\,)\) all structures have the same absolute periods. Therefore, in the light of what has been shown above, a structure with core of cylinder type is thus completely determined by a parameter \(w\in\mathbb C\).

\smallskip

\noindent For clarity, let us assume as at the beginning of \S\ref{ssec:chambdes} that \(\{\,\alpha,\,\beta\,\}\) is a positively oriented basis for \(\shomolzoo\) and that \(u=\rho(\,\alpha\,)\) and \(v=\rho(\,\beta\,)\). According to Proposition \ref{prop:cylclass}, the chamber of cylinder type \(\textnormal{C}(\,u\,)\) is identified with the open set 
\(\big\{\,w\in\mathbb C\,\,|\,\, \mathfrak{Im}(\overline{u}\,w)>\max\big(\,0, \textnormal{vol}(\,\rho\,)\,\big)\, \big\}\). Small perturbations of \(w\) provide structures of the same type, \textit{i.e.} structures whose core is still a cylinder. In the present section we aim to describe how structures degenerate when \(w\) tends toward the boundary \(\partial C(\,u\,)\). Notice that the boundary of \(\textnormal C(\,u\,)\) is identified with \(\big\{\,w\in\mathbb C\,\,|\,\, \mathfrak{Im}(\overline{u}\,w)=\max\big(\,0, \textnormal{vol}(\,\rho\,)\,\big)\, \big\}\) which is a closed subset of \(\mathbb C\) of real codimension one. The main statement of the present section is the following:

\begin{prop}\label{prop:wallchar}
    In the notation above, if \(w\in\partial\textnormal{C}(\,u\,)\) then the resulting structure is transitional. Moreover, \(\partial\textnormal{C}(\,u\,)\) is identified with the wall that separates \(\textnormal{C}(\,u\,)\) from the rest of the isoperiodic leaf \(\mathfrak{ML}(\,\rho,\,\mu\,)\).
\end{prop}

\smallskip

\noindent In the remaining part of this section we investigate how a structure degenerates when \(w\) tends toward the boundary \(\partial\textnormal{C}(\,u\,)\). Proposition \ref{prop:wallchar} will directly follows from this inspection. We first recall how structures with core of cylinder type are realized. Given \(u,v\in\mathbb C\) and the additional parameter \(w\), we first realize a parallelogram \(P\) with sides \(u,w\) and then the domain of the pole \(\Omega\subset\cp\) by removing a parallelogram \(Q\) with sides \(u, w-v\). Then \(P\) and \(\Omega\) are glued as already described several times to get the desired structure. We draw attention to the following crucial points. First, the identity
\begin{equation}\label{eq:volidentity}
    \mathfrak{Im}(\,\overline{u}\,w\,) \,-\, \mathfrak{Im}(\,\overline{u}\,(w-v)\,)\,=\,\textnormal{vol}(\,\rho\,).
\end{equation}
holds. Moreover, \(\textnormal{vol}(\,\rho\,)\) does not change under deformations inside a leaf and hence it remains constant along degenerations towards the boundary \(\partial\textnormal{C}(\,u\,)\). Since both \(\mathfrak{Im}(\,\overline{u}\,w\,)\) and \(\mathfrak{Im}(\,\overline{u}\,(w-v)\,)\) are positive in the chamber, a structure degenerates when at least one of these quantities tends to zero. Thus three mutually disjoint situations may occur when \(w\in\partial\textnormal{C}(\,u\,)\). 
\begin{itemize}
    \item[1.] If \(\textnormal{vol}(\,\rho\,)>0\) then \(\mathfrak{Im}(\,\overline{u}\,(w-v)\,)=0\) and hence \(\Omega\) is the complement of some degenerate parallelogram in \(\mathbb C\) with sides \(u, w-v\) aligned, see Figure \ref{fig:wallcaseone}. We remark that the angle between $u$ and $w-v$ may be $0$ or $\pi$, which affects the relative positions of \(\pto B\) and \(\pto W\) along the degenerate parallelogram.
    \smallskip
    \item[2.] If \(\textnormal{vol}(\,\rho\,)<0\) then \(\mathfrak{Im}(\,\overline{u}\,w\,)=0\). In this case there is no parallelogram \(P\) to glue and \(\Omega\) is the complement of some parallelogram \(Q\) arising from a degenerate hexagon with two pairs of consecutive sides aligned. See Figure \ref{fig:wallcasetwo}. In this case we may observe that \(w\in\mathbb R\,u\).% \(w\in\langle\,u\,\rangle\otimes\mathbb R\).
    \smallskip
    \item[3.] If \(\textnormal{vol}(\,\rho\,)=0\) then \(\mathfrak{Im}(\,\overline{u}\,(w-v)\,)=0\) and \(\mathfrak{Im}(\,\overline{u}\,w\,)=0\) necessarily. In this case \(\Omega\) is the complement of a degenerate hexagon in \(\mathbb C\) with two triples of sides aligned.
\end{itemize}

\smallskip

\begin{figure}[htp]
    \centering
    \includegraphics[width=1\linewidth]{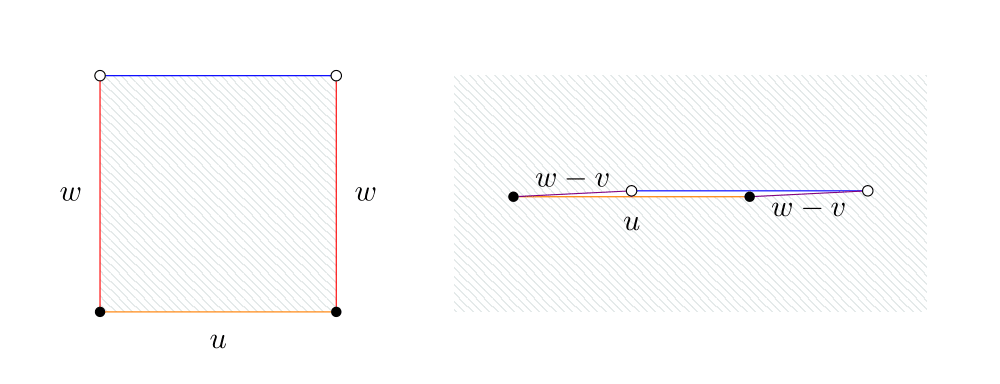}
    \caption{In the right side of the picture we show how the parallelogram \(Q\) degenerates when \(w\) approaches the boundary of \(\textnormal C(\,u\,)\) and the volume is positive. The sides are not perfectly overlapped to make the figure more comprehensible. When \(w-v\) tends to zero, then the domain of the pole is just a slit copy of \(\mathbb C\).}
    \label{fig:wallcaseone}
\end{figure}

\noindent In all cases listed above there are at least two saddle connections aligned, namely they form an angle of magnitude \(\pi\). Therefore all structures with relative period \(w\in\partial\textnormal{C}(\,u\,)\) are transitional as in Definition \ref{def:transitional}.

\smallskip

\noindent It is worth mentioning that some degenerations may yield structures in the minimal stratum \(\Omega\mathcal M_1(2,-2)\). Indeed, this happens when some saddle connection between \(\pto B\) and \(\pto W\) shrinks to a point.
Based on our construction, such a saddle connection may come from the sides of the parallelogram $Q$ with relative period $w-v$, or a geodesic path connecting two ends of the cylindrical core coming from $P$, which may have relative period $w-k\,u$ for any $k\in\mathbb{Z}$.
We now investigate when such a saddle connection can shrink to zero case-by-case according to the list above (hence we use the same enumeration).
\begin{itemize}
    \item[1.] If \(\textnormal{vol}(\,\rho\,)>0\), then \(\Omega\) is the complement of a segment when $w\in\partial C(u)$, while \(P\) is still a parallelogram. Saddle connections on the cylindircal core still have definite size. To get a structure in \(\Omega\mathcal M_1(2,-2)\) we must have $w-v=0$, when one pair of sides of $Q$ shrinks to zero. See Figure \ref{fig:wallcaseone} for an illustration.
    \smallskip
    \item[2.] If \(\textnormal{vol}(\,\rho\,)<0\) then the parallelogram \(P\) degenerates, while \(\Omega\) is the complement of a parallelogram \(Q\) with sides \(u,w-v\). Since saddle connections on \(P\) have periods $w-k\,u$, one of them can shrink to zero if and only if $w=k\,u$ for some $k\in\mathbb{Z}$. See Figure \ref{fig:wallcasetwo} and its caption for \(k=0\).
    \smallskip
    \item[3.] If \(\textnormal{vol}(\,\rho\,)=0\) then both $w=v$ and $w=k\,u$ are possible. In the non-arithmetic case, these correspond to distinct values. In the arithmetic case where $\Gamma=a\mathbb{Z}$, say for the chamber $C(s,l)$, since we choose $u=s a$ and $v=a l$ with $0\le l<|s|$, the values $w=v$ and $w=k\,u, k\in\mathbb{Z}$ are also distinct. For all of the cases, \(\Omega\subset\cp\) is the complement of a degenerate parallelogram in \(\mathbb C\) and the resulting structure lies in \(\Omega\mathcal M_1(2,-2)\).
\end{itemize}

\smallskip

\begin{figure}[htp]
    \centering
    \includegraphics[width=0.875\linewidth]{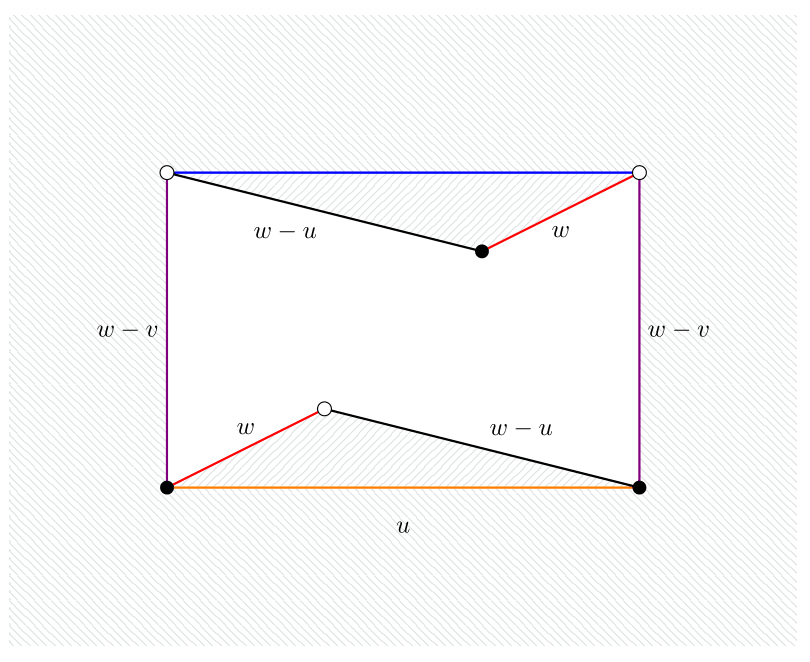}
    \caption{In this picture, we show the realization of translation surfaces with a core of the cylinder type from a different perspective. The two triangles shaded with a different pattern, glued along the black-colored edge, determine the parallelogram \(P\). When \(w\) tends to become real collinear with \(u\), the parallelogram \(P\) degenerates into a segment, and consequently, \(Q\) becomes a degenerate hexagon with two pairs of aligned sides with periods \(w,\,w-u\). Therefore, the structure is transitional. Furthermore, we may notice that if \(w\) tends to zero then \(Q\) degenerates to a parallelogram in \(\mathbb C\) with non-empty interior. Recall that all saddle connections are oriented from \(\pto B\) to \(\pto W\) by our convention.}
    \label{fig:wallcasetwo}
\end{figure}

\noindent This concludes the study of degeneration of structures towards walls and hence the proof of Proposition \ref{prop:wallchar}.

\medskip

%\noindent Finally, we note that the boundary line of chamber $CC_{u}$ or $CC_{u,v}$ (in the direction $u\,\mathbb{R}$) corresponds to the degeneration of:\begin{itemize} \item cylinder $P$ (if $Vol_{\chi}<0$);  \item parallelogram $Q$ (if $Vol_{\chi}>0$); \item both of them (if $Vol_{\chi}=0$). \end{itemize}

\section{Positive leaves}\label{sec:posleaves}
\noindent In this section, we give a complete description of those loci \(\mathfrak{ML}(\,\rho,\,\mu\,)\) where \(\textnormal{vol}(\,\rho\,)\) is positive. We begin with the following observation that will simplify our later arguments. Let \(\rho_1,\,\rho_2\colon\shomolzoo\longrightarrow \mathbb C\) be two representations with positive volume. Then there exists \(A\in\glplus\) such that \(\rho_2\,=\,A\cdot\rho_1\). Such a linear transformation \(A\) determines an homeomorphism, say \(F_A\), of leaves 
    \begin{equation}
        F_A\colon\mathfrak{ML}(\,\rho_1,\,\mu\,)\longrightarrow \mathfrak{ML}(\,\rho_2,\,\mu\,),
    \end{equation}

\noindent and the crucial point is the following:

\begin{prop}\label{prop:posleafpres}
    \(F_A\) preserves the structure of leaves, \textit{i.e.} it maps chambers to chambers and walls to walls. 
\end{prop}

\begin{proof}
    This preliminary result is a consequence of our constructive approach. Recall that structures of torus or cylinder type are obtained by gluing a copy of \( \mathbb{C} \) with a torus or a cylinder, respectively. A structure with a degenerate core, on the other hand, is obtained by removing a suitable hexagon from \( \mathbb{C} \) and gluing the sides of the complement appropriately. In any case, we are dealing with polygons in the plane, and the period character is determined by these polygons. 
    
    \noindent Let us focus on structure of torus type. The torus we glue to the plane is obtained from a parallelogram, say \(P\) with \(\textnormal{area}(\,P\,)>0\), and the periods are determined by this parallelogram. If we deform the given parallelogram with a transformation say, \( A\in\glplus \), we then obtain a new one with positive area equal to \(\det(\,A\,)\,\cdot\,\textnormal{area}(\,P\,)\). Therefore, if a structure \((X,\omega)\) of torus type is realized by gluing a parallelogram \(P\) and a copy of \(\mathbb C\) along a slit, say \(w\), as shown in Figure \ref{fig:torus_type}, then \(A\cdot(X,\omega)\) arises from the gluing of \(\mathbb C\) (simply because \(A\cdot\mathbb C=\mathbb C\)) with \(A\cdot P\) along a slit \(A(\,w\,)\). Thus \(A\cdot(X,\omega)\) is still of torus type. The same argument applies \textit{mutatis mutandis} to structures of cylinder type. As a consequence, \(F_A\) maps chambers to chambers and walls to walls thus preserving the leaf structure.
\end{proof}

\noindent Therefore, in what follows we may assume \(\textnormal{Im}(\,\rho\,)=\mathbb{Z}[\,i\,]\) for simplicity.

\subsection{Realizing structures of torus type}\label{ssec:reltorusstruc}
In \S\ref{ssec:core} we have already given a glimpse of how to realize a structure of torus type, see Figure \ref{fig:torustype}. As in \S\ref{ssec:chambdes}, let \(\rho\colon\shomolzoo\longrightarrow \mathbb C\) be a non-trivial representation and let \(\{\,\alpha,\,\beta\,\}\) be two generators that represent a pair of simple closed curves that intersect only once (up to homotopy). We shall assume \(\{\,\alpha,\,\beta\,\}\) to be a positively oriented basis, \textit{i.e.} their intersection number is equal to one. Throughout this section let \(u,v\in\mathbb C\) denote \(u=\rho(\,\alpha\,)\) and \(v=\rho(\,\beta\,)\) respectively. Notice that %\(u,v\) 
they cannot be both zero because the representation is assumed to be non-trivial. Moreover, since we are assuming \(\textnormal{vol}(\,\rho\,)>0\), we may deduce that \(u,v\) are both non-zero for every suitable set of generators \(\{\,\alpha,\,\beta\,\}\). We emphasizes the fact that we are assuming \(\textnormal{Im}(\,\rho\,)=\mathbb{Z}[\,i\,]\), and hence we suppose \(\mathfrak{Im}(\,\overline u v\,)=1\).

\smallskip

\noindent We now realize a translation surface \((X,\omega)\in\mathcal{H}_1(1,1,-2)\) of torus type and then use this construction to provide a description of chambers of torus type. In short, a surface of torus type is constructed as follows: Take a flat torus and a copy of the complex plane, cut both of them along parallel slits and then glue them together via translations. More precisely, we first realize a parallelogram \(P\) with sides \(u,v\) -- it has positive volume because \(\mathfrak{Im}(\,\overline u v\,)>0\). By gluing the opposite sided with translation we get a flat torus with integral periods and volume one. We now recall the following observation from standard covering theory.

\begin{rmk}
    %In the first place we point out that, up to translations, we may assume parallelogram \(P\) has the bottom left vertex at the origin of \(\mathbb C\). Thus the parallelograms \(z\,+\,P\) for \(z\in\mathbb{Z}[\,i\,]\) form a tiling of \(\mathbb C\) and we may regard \(P\) as a fundamental domain for the action of \(\mathbb{Z}[\,i\,]\). 
    There is a natural covering map \(\pi_P\colon \mathbb C\longrightarrow {\rm T}=\mathbb C\,/\,\mathbb{Z}[\,i\,]\). Let \(\ell\subset\mathbb C\) be any line passing through the origin. %and let \(\delta\) be its projection
    Depending on whether the slope of \(\ell\) is rational, its projection 
    %\(\delta \) 
    may be a closed geodesic curve or a line that wraps on the torus. In the first case, there is a pair \((p,q)\in\ell\,\cap\,\mathbb Z^2\) such that \(\gcd(\,p,\,q\,)=1\) and \(\ell\) has slope \(p/q\). We also remark that such a curve has length equal to \(||\,(\,p,q\,)\,||\).
\end{rmk}

\noindent In its simplicity this remark is what we need to describe chambers of torus type. Let \({\rm T}\) be the torus arising from the parallelogram \(P\) once its sides are identified by translation and mark the point arising from the identification of the vertices as \(\pto B\). Let \(s\) be an oriented geodesic segment starting from \(\pto B\) and mark the other extremal point as \(\pto W\). This segment has a well-defined slope because its lifts via \(\pi_P\) all belong to the same pencil of parallel lines in \(\mathbb C\). We thus have the following dichotomy. If the slope is irrational, then \(s\) can be arbitrarily long. On the other hand, if the slope is rational, its length is bounded above by \(||\,(\,p,q\,)\,||\) for some appropriate vector \((p,q)\in\mathbb Z^2\), since \(s\) is part of a simple closed curve. 

\smallskip 

\begin{figure}[ht!]
    \centering
    \includegraphics[width=0.875\linewidth]{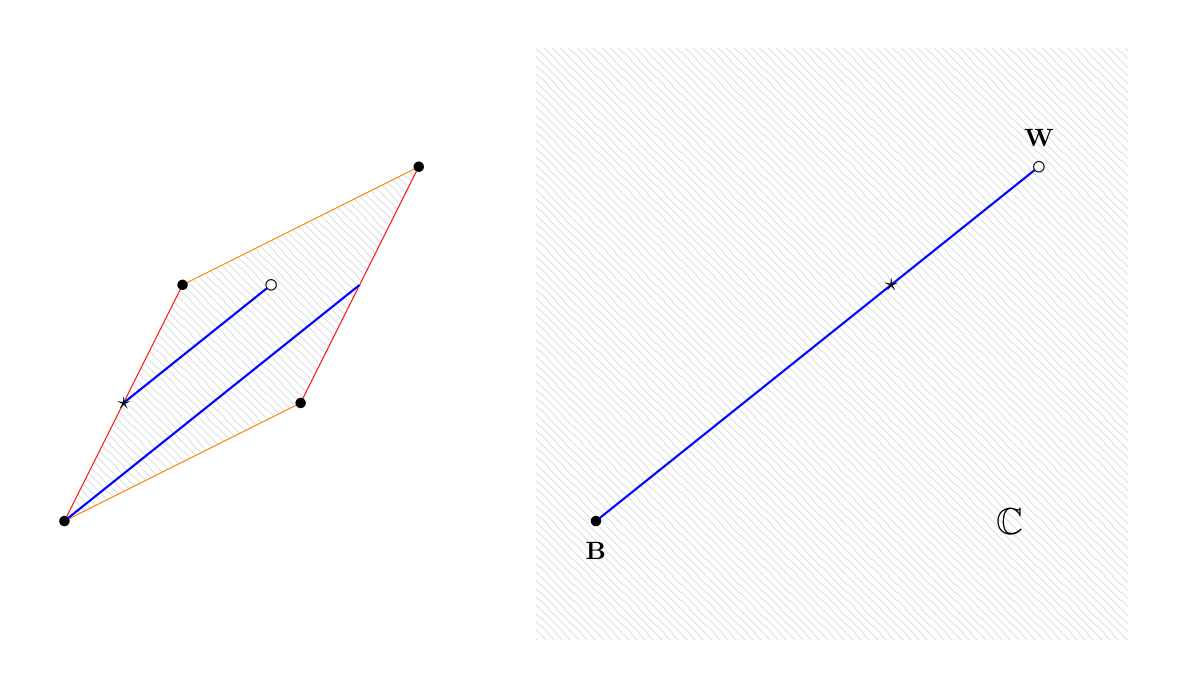}
    \caption{A surface of torus type, with a long slit. In Figure \ref{fig:torustype} we already showed how to realize a structure of torus type. Here we aim to show how this kind of structures can be made when a slit has length greater than the lengths of the sides of the parallelogram. It is then clear that for countably many directions -- those with rational slope -- the lengths must be bounded by the length of the diagonal with the same rational slope.}
    \label{fig:torus_type}
\end{figure}

\noindent We note that \( \rm T \) is itself a translation surface and, as such, admits a developing map, see \S\ref{ssec:transurfa}. The image of \( s \) under the developing map defines an oriented segment, say \( \sigma \), which has the same length, orientation, and slope; see Figure~\ref{fig:torus_type}. Slit both \({\rm T}\) and \(\mathbb C\) along \(s\) and \(\sigma\) respectively and then glue the resulting sides by using translations as follows. According to their orientations, once the segments are slit, we may distinguish a left and a right side. Thus the left side of \(s\) is glued with the right side of \(\sigma\) and, in the same fashion, the right side of \(s\) is glued with the left side of \(\sigma\). The resulting structure is a translation surface, say \((X,\omega)\), in the marked stratum \(\mathcal H_1(1,1,-2)\) with the marking given by the choice of the orientation of \(s\). In fact, as an artifact of the construction, it is not difficult to show that a change in orientation corresponds to a change in the marking. By construction \((X,\omega)\) is a structure of torus type and it has the prescribed absolute periods. Moreover, the zeros of \((X, \omega)\) arise from the identification of the extremal points of \(s\) with those of \(\sigma\), which we still label as \(\pto B\) and \(\pto W\) with a slight abuse of notation. By construction, there are two saddle connections, say \(\delta_1,\,\delta_2\), joining these points, having the same orientation consistent with that of \(s\) — and thus of \(\sigma\) — and we denote the corresponding relative period by \(w\). We then have that,
\begin{equation}
    w\,=\,\int_\sigma dz\,=\,\int_{\delta_i} \omega.
\end{equation}

\noindent In what follows all structures of torus type are realized in this way and we shall refer to this section when needed.

%\smallskip

\subsection{Chamber of torus type}\label{ssec:toruschamb} Consequently, we can determine all the possible values that \( w \) may assume. Indeed, although \( \mathbb{C} \) can be slit along a segment of any length and direction, there are certain obstructions on the possible choices of \( s \) in \( \rm T \). We define \(\mathcal T\) as the following subset of \(\mathbb C\) shown in Figure~\ref{fig:torus_chamber}.
\begin{equation}
    \displaystyle\mathcal{T}\,=\,\mathbb C^{\ast}\,\setminus \,\,\bigcup_{\mathclap{\substack{(p,q)\,\in\,\mathbb Z^2\\ \gcd(\,p,\,q\,)=1}}} \left\{\, t\,(\,p\,+\,iq\,)\,|\, t \ge 1\, \right\},
\end{equation}

%\smallskip

\begin{figure}[ht!]
    \centering
    \includegraphics[width=0.875\linewidth]{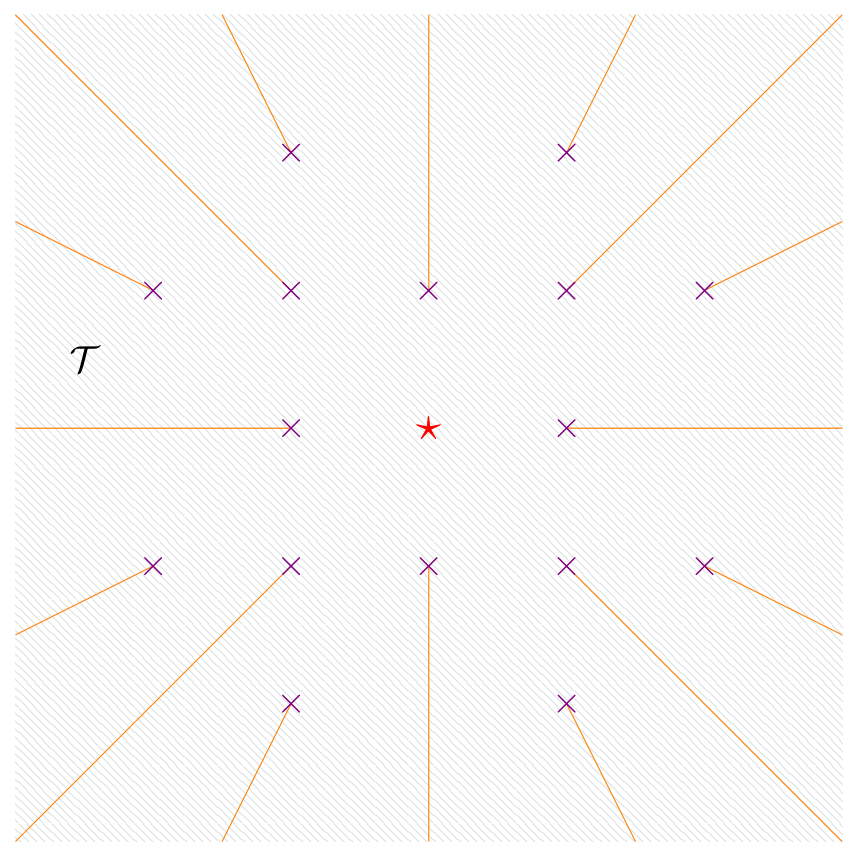}
    \caption{Representation of \(\mathcal T\). The red star denoted the origin and it has a special meaning. Indeed, it is the only point that corresponds to a particular kind of degeneration. All points marked with a violet cross \(\times\) denote structures of cylinder type in the minimal stratum \(\Omega\mathcal M_1(2,-2)\). The reader may compare with symbols used in Figure \ref{fig:isoleaf}. We shall see this is a chamber, indeed a chamber of structures of torus type, and the orange lines are the walls.
    }
\label{fig:torus_chamber}
\end{figure}

\noindent where \(\mathbb C^{\ast}\) denotes \(\mathbb C\setminus \{\,0\,\}\). In what follows we shall say that \(w\in\mathbb C^*\) is a \textit{radial point} if it belongs to the complement of \(\mathcal T\). We will also see in \S\ref{ssec:degtorustowardzero} how the structures deform when we move towards the origin in \(\mathcal T\), by shrinking the slit segment to a point.
%Indeed, so far we have tacitly assumed that the segments were not degenerate and thus had positive length. In what follows we shall say that \(w\in\mathbb C^*\) is a \textit{radial point} if it belongs to the complement of \(\mathcal T\). We will see in the next section, see \S\ref{ssec:degtorustowardzero}, how the structures deform by degenerating a segment until it becomes a point.

\smallskip

\noindent It is clear from \S\ref{ssec:reltorusstruc} that for a given value in \(\mathcal T\) we can always realize a structure of torus type in the marked stratum \(\mathcal H_1(1,1,-2)\). The converse is also true: every structure of torus type can be decomposed into a torus and a copy of the complex plane. The main result of the present section is the following statement.

\begin{prop}\label{prop:toruschamb}
    Let \(\mu=(1,1,-2)\). Then there is a one-to-one correspondence between the following sets:
    \begin{equation}
        \mathfrak T(\,\rho\,) =
        \left\{\,\, \begin{array}{cc}
            \textnormal{Translation surfaces of} \\
            \textnormal{torus type in } \mathfrak{ML}(\,\rho,\,\mu\,)
        \end{array}
        \,\,\right\}\,\,\mathrel{\longleftrightarrowTilde}\,\,\mathcal T.
    \end{equation}
\end{prop}

\smallskip

\begin{proof}
    It is sufficient to show that a structure of torus type with prescribed absolute periods is uniquely determined by the relative period \(w\). Thus, it is sufficient to show that two structures in \(\mathfrak T(\,\rho\,)\) have the same relative period if and only if they are isometric by means of translations in local charts. In the first place we notice that as a consequence of Proposition \ref{prop:coreshapes} every structure in \(\mathfrak{T}(\,\rho\,)\) arises from the gluing of some flat torus \((\,\rm T,\,\xi\,)\) and a copy of \((\mathbb C,\,dz)\) along parallel slits such that 
    \begin{equation} w\,=\,\int_s\,\xi\,=\,\int_{\sigma}\,dz. \end{equation} 
    Since absolute periods are fixed every element in \(\mathfrak T(\,\rho\,)\) defines the same flat torus \((\,\rm T,\,\xi\,)\in\mathcal M_1\), even if \(\rm T\) may arise from different parallelograms on \(\mathbb C\). As a consequence, it is easy to show that two structures in \(\mathfrak T(\,\rho\,)\) define the same translation surface if and only if they have the same relative period.
\end{proof}

\noindent We define \(\mathfrak{T}(\,\rho\,)\) as a \textit{chamber of torus type}, and in light of the result just shown, there exists a unique such chamber for each isoperiodic fiber \(\mathfrak{ML}(\,\rho,\,\mu\,)\). This is open, connected, in fact path-connected, and it has the homotopy type of a circle. Moreover the parameter \(w\) provides a global coordinate in this chamber as for every value \(w\in\mathcal T\) there exists a unique translation surface of torus type in \(\mathfrak T(\,\rho\,)\). In the next sections, we shall study how a sequence of structures degenerates when the value of \(w\) tends to a value in the complement of \(\mathcal{T}\). A sequence of structures of torus type can degenerate in three different ways:  
\begin{enumerate}
    \item[1.] Their cores may degenerate to a cylinder, and hence, in the limit, we obtain a structure of cylinder type in \( \mathcal{H}_1(1,1,-2) \), 
    \item[2.] the zeros collapse into a single zero of higher order, and the resulting surface lies in the minimal stratum \( \Omega\mathcal{M}_1(2,-2) \). 
\end{enumerate}
    Note that in both cases, the underlying topological surface does not change.% but there is only a degeneration of the geometric structures. 
\begin{enumerate}
    \item[3.] The topological surface tends to a nodal curve formed by a torus and a sphere identified at a point.  
\end{enumerate}

\noindent In what follows we shall consider the first two cases together and the third separately. We shall begin with this latter. See Figure \ref{fig:structuresinposleaves}.

\smallskip

\begin{figure}[htp]
    \centering
    \includegraphics[width=1\linewidth]{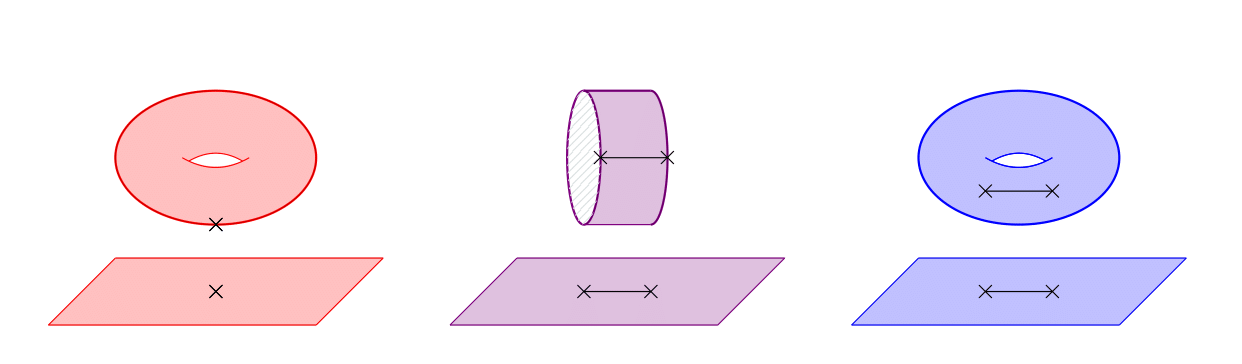}
    \caption{The rightmost picture shows a structure of torus type just described. The others represent possible degenerations toward the origin and walls.}
    \label{fig:structuresinposleaves}
\end{figure}

\smallskip

\subsection{Degeneration towards the origin}\label{ssec:degtorustowardzero}

\noindent As discussed above, every translation surface of torus type arises from the gluing of some torus and a copy of \((\mathbb C,\,dz)\) along parallel slits of period \(w\) and hence of lengths \(|\,w\,|\). In this section, we describe how a given structure degenerates as \( w \to 0 \). Notice that, \emph{a priori}, the limiting structure may depend on the direction along which \( w \) approaches zero. We shall see that this is not the case. We first highlight the following observation. 

\begin{rmk}\label{rmk:starconvex}
    \(\mathcal T\) is star-connected, in fact for all \( w \in \mathcal T \) there is a line segment from \( 0 \) to \( w \) entirely contained in \( \mathcal T \). 
\end{rmk}

\noindent Let \((X_o,\,\omega_o)\) be a translation surface of torus type. According to Proposition \ref{prop:toruschamb}, the structure is obtained by gluing a torus, say \((\,\rm T,\,\xi\,)\) and a copy of \(( \mathbb{C},dz) \) along a slit. Assume \(w\in\mathcal T\) is the relative period of such a  slit and let \(r\subset \mathbb C\) be the segment connecting \(w\) with the origin. Let \(\tau\colon [\,0,\,1\,]\to \mathbb C\) be a parametrization of \(r\) so that \(\tau(\,t\,)=(1-t)w\). Then define \((X_t,\,\omega_t)\) be the translation surface obtained by gluing \((\,\rm T,\,\xi\,)\) and a copy of \(( \mathbb{C},dz) \) along a slit of relative period \(\tau(\,t\,)\) and hence of length \(|\,\tau(\,t\,)\,|\). We thus have a sequence of structures that, as \( t \) approaches one, degenerate to a nodal curve because the slits along which the torus and the plane are glued become shorter and shorter until they degenerate to a point. See Figure \ref{fig:structuresinposleaves}. This nodal curve, in particular, does not depend from which direction \(w\) approaches zero. 

\smallskip

\noindent We finally note that this nodal curve does not belong to \(\mathfrak{ML}(\,\rho,\,\mu\,)\). As a consequence, \(\mathfrak{ML}(\,\rho,\,\mu\,)\) is not complete. To obtain the metric completion of \(\mathfrak{ML}(\,\rho,\,\mu\,)\) we thus need to add this additional point.

%However this structure belongs to its closure 
%of \(\mathfrak{ML}(\,\rho,\,\mu\,)\) 
%and hence to its metric completion once we equipped it with a suitable metric structure.

\medskip

\subsection{Degeneration towards slits}
We next discuss how a given structure degenerates as \(w\) tends to some value of the form \(t\,(\,p+iq\,)\) where \((p,\,q)\in\mathbb Z^2\) is a primitive pair and \(t\ge1\). We shall distinguish two cases as follows:
\begin{itemize}
    \item[\textnormal{i}.] \(w\longrightarrow t\,(\,p+iq\,)\) for \(t>1\); and we shall refer to this case as \textit{degeneration towards radial points}. Otherwise,
    \smallskip
    \item[\textnormal{ii}.] \(w\longrightarrow p+iq\); and we refer to this case as \textit{degeneration towards primitive lattice points}.
\end{itemize}
These cases correspond to those degenerations enumerated with \((\,1\,)\) and \((\,2\,)\) in the list of \S\ref{ssec:toruschamb} respectively. We shall begin with the easier latter case.

\subsubsection{Degeneration towards a primitive lattice point}\label{sssec:deglatticepoint} 
Let \(w_\infty=\,p\,+\,iq\,\) and let \(\textnormal{B}_{\varepsilon}(\,w_\infty\,)\) be an open ball containing no point of the lattice \(\mathbb Z[\,i\,]\) other than \(w_\infty\). Define \(\textnormal{U}(\,\varepsilon, \,w_\infty\,)=\textnormal{B}_\varepsilon(\,w_\infty\,)\,\cap\,\mathcal T\). Notice that this is equivalent to defining \(\textnormal{U}(\,\varepsilon, \,w_\infty\,)\) as \(\textnormal{B}_{\varepsilon}(\,w_\infty\,)\setminus\left\{\, t\,(\,p\,+\,iq\,)\,|\, t \ge 1\, \right\}.\) Let \((\,w_n\,)\subset\textnormal{U}(\,\varepsilon, \,w_\infty\,)\) be a sequence converging to \(w_\infty\) and define \((\,X_n,\,\omega_n\,)\in\mathcal H_1(1,1,-2)\) as the translation surface of torus type realized for the parameter $w_n$ as described in \S\ref{ssec:reltorusstruc}.

\medskip

\noindent We may assume for simplicity that the torus \(\textnormal T\) is realized from the unit square by identifying opposite sides. Recall that the corners are all identified and we label by \(\pto B\) the resulting point in the torus. 

\medskip

\noindent Let \(s_n\subset \textnormal{T}\) be an oriented segment starting from \(\pto B\) and ending at \((\,\mathfrak{Re}(\,w_n\,),\mathfrak{Im}(\,w_n\,)\,)\). Then, as \(n\to\infty\), the segments \(s_n\) converge to a segment \(s_\infty\) with relative period equal to \(w_\infty\) by construction, see Figure \ref{fig:sequenceofsegments}. The extremal point of \(s_n\) other than \(\pto B\) converges to \(\pto B\) and hence the limit is a simple closed curve on \(\textnormal T\). Since the limit does not depend on the direction along which \(w_n\) tends to \(w_\infty\), the limiting structure \((\,X_\infty,\,\omega_\infty\,)\) is well-defined. It remains to determine this structure explicitly.

\begin{figure}[htp]
    \centering
    \includegraphics[width=0.675\linewidth]{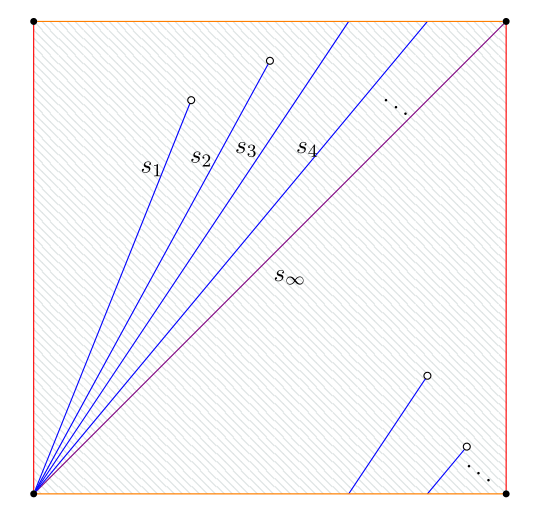}
    \caption{In blue a sequence of segments converging to \(w_\infty=1\,+\,i\). In violet we draw the limit segment. Notice that as \(w_n\rightarrow w_\infty\) the white point tends to the black point.}
    \label{fig:sequenceofsegments}
\end{figure}

\smallskip

\noindent Let \(\alpha=s_\infty\) and let \(\beta\) be another simple closed curve such that \(\{\,\alpha,\,\beta\,\}\) is a positively oriented basis. As usual, we denote by \(v\) the period of \(\beta\). The crucial observation here is that we can realize the torus \(\textnormal T\) by identifying the opposite sides of a parallelogram \(P\) with sides given by the vectors \(w_\infty,\,v\). In fact, \(P\) and the unit square are related by some transformation \(A\in\spzz\) having as columns the integer vectors \((\,p,\,q\,)\) and \((\,x,\,y\,)\) where this latter pair is a solution of the Diophantine equation \(py\,-\,qx=1\). Let \((\,\mathbb C,\,dz\,)\) be a copy of the complex plane slit along a segment parallel to \(w_\infty\). Notice that the prescribed relative period \(w_\infty\) coincides with one of the sides of \(P\). Thus we glue \(P\) and \((\,\mathbb C,\,dz\,)\) as described in Case \((\,1\,)\) of \S\ref{sec:degtowalls} and the resulting structure belongs to the minimal stratum \(\Omega\mathcal M_1(2,-2)\) as claimed.

\medskip

\subsubsection{Degeneration towards a radial point}\label{sssec:difflimits} We now discuss how a structure degenerates when \(w\) approaches a ray \(r_{p\,q}=\left\{\, t\,(\,p\,+\,iq\,)\,|\, t \ge 1\, \right\}\). Notice that \(t=1\) corresponds to the case just considered above, so we assume now $t>1$. It is convenient to orient all rays so that they point towards infinity, which distinguishes a left and a right side. %Let \(w_\infty\,\in r_{p\,q}\) be a radial point and l
Let \((\,w_n\,)\subset \mathcal T\) be a sequence converging to some radial point %\(w_\infty\) 
and define \((\,X_n,\,\omega_n\,)\in\mathcal H_1(1,1,-2)\) as the translation surface of torus type realized for the parameter $w_n$ as described in \S\ref{ssec:reltorusstruc}. Unlike the case above, the limit structure \textit{depends} on the direction of convergence. Indeed, we distinguish two different limits depending on whether the sequence approaches \(r_{p\,q}\) from the left or the right. In this notation the main result of the present section is the following:

\begin{prop}\label{prop:difflimits}
Let \(u=p+iq\in\mathbb Z\,+\,i\mathbb Z \) be a primitive lattice point and let \(r_{p\,q}\) be the ray emanating from \(u\) with slope equal to \(q/p\). Let \((w_n)\subset \mathcal T\) be a sequence converging to a radial point, say \(w_\infty\in r_{p\,q}\), and then define \((\,X_n,\,\omega_n\,)\) as the structure of torus type realized by slitting \(w_n\). Then, \((X_n,\,\omega_n)\) converges to a structure of cylinder type \((\,X_\infty,\,\omega_\infty\,)\). In particular, if \(\arg(\,w_n\,)<\arg(\,u\,)\) then \((\,X_\infty,\,\omega_\infty\,) \,\in\,\partial\textnormal{C}(\,u\,)\); otherwise if \(\arg(\,w_n\,)>\arg(\,u\,)\) then \( (\,X_\infty,\,\omega_\infty\,) \,\in\,\partial\textnormal{C}(\,-u\,) \).
\end{prop}

\smallskip

\noindent The remaining part of this section is devoted to the proof of this statement. Note that no sequence \( (w_n) \subset \mathcal{T} \) converging to a radial point satisfies \( \arg(\,w_n\,) = \arg(\,u\,) \), so the proposition above provides a complete description of the limits of sequences.

\smallskip

\noindent We first introduce a generic situation that we will use to better understand this degeneration. Let \(w\in\mathcal T\) be a value sufficiently close to a ray \(r_{p\,q}\); how close this should be will be clear from the context. Let \(P\) be a parallelogram with sides given by the integral vectors \(u=p+iq\) and \(v=x+iy\) such that \(py\,-\,qx=1\), and define \(C\) to be the cylinder with period \(u\) (\emph{i.e.} \(C\) is obtained from \(P\) by identifying sides parallel to \(v\)). The vertices of \(P\) are then identified in pairs into two marked points; one for each boundary component. We label these points as \(\pto B_1\) and \(\pto B_2\). Note that once the boundaries are identified with a translation, \(\pto B_1\) and \(\pto B_2\) merge into a marked point on the resulting torus that we usually mark with \(\pto B\). We next define \(s\) as the geodesic curve emanating from \(\pto B_i\) with slope \(\mathfrak{Im}(\,w\,)/\mathfrak{Re}(\,w\,)\) and length \(|\,w\,|\). Up to changing the labeling of the points \(\pto B_1\) and \(\pto B_2\), we may assume \(s\) emanates from \(\pto B_1\) if \(\arg(\,w\,)>\arg(\,u\,)\), and from \(\pto B_2\) otherwise; see Figure \ref{fig:segmentswithoppositeslopes}. Notice that since \(w\notin r_{p\,q}\), the arguments cannot be equal. 

\begin{figure}[!ht]
    \centering
    \includegraphics[width=0.825\linewidth]{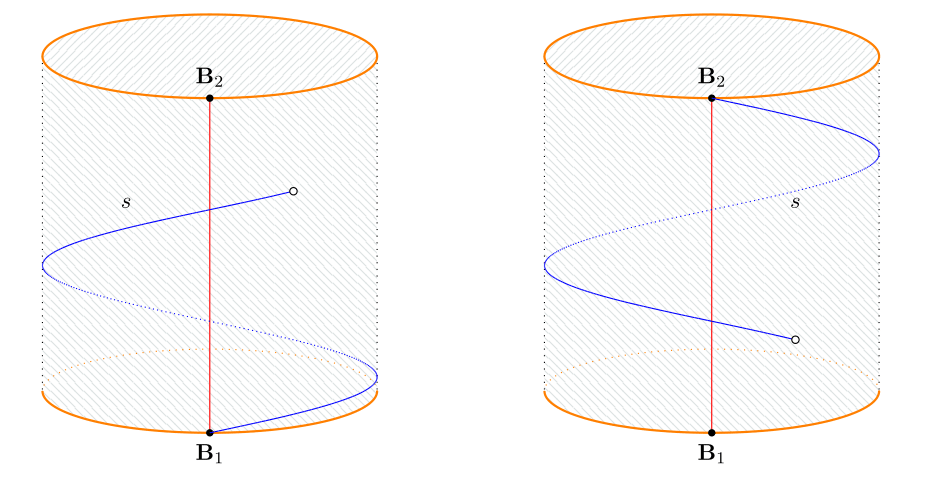}
    \caption{On the left is depicted the case \(\arg(\,w\,)>\arg(\,u\,)\) whereas on the right is depicted the the case \(\arg(\,w\,)<\arg(\,u\,)\).}
    \label{fig:segmentswithoppositeslopes}
\end{figure}

\smallskip

\noindent In both cases, we use the geodesic curve \(s\) to decompose \(P\) into a finite collection of polygons and then rearrange them in a different way. The resulting polygon has the shape of a \textit{tomahawk}, see Figure \ref{fig:cutandpaste}, and provides the same torus \(\rm T\), making it easier to understand how the structure degenerates as \(w\) approaches the ray \(r_{p\,q}\). 

\smallskip
\begin{figure}[htp]
    \centering
    \includegraphics[width=1\linewidth]{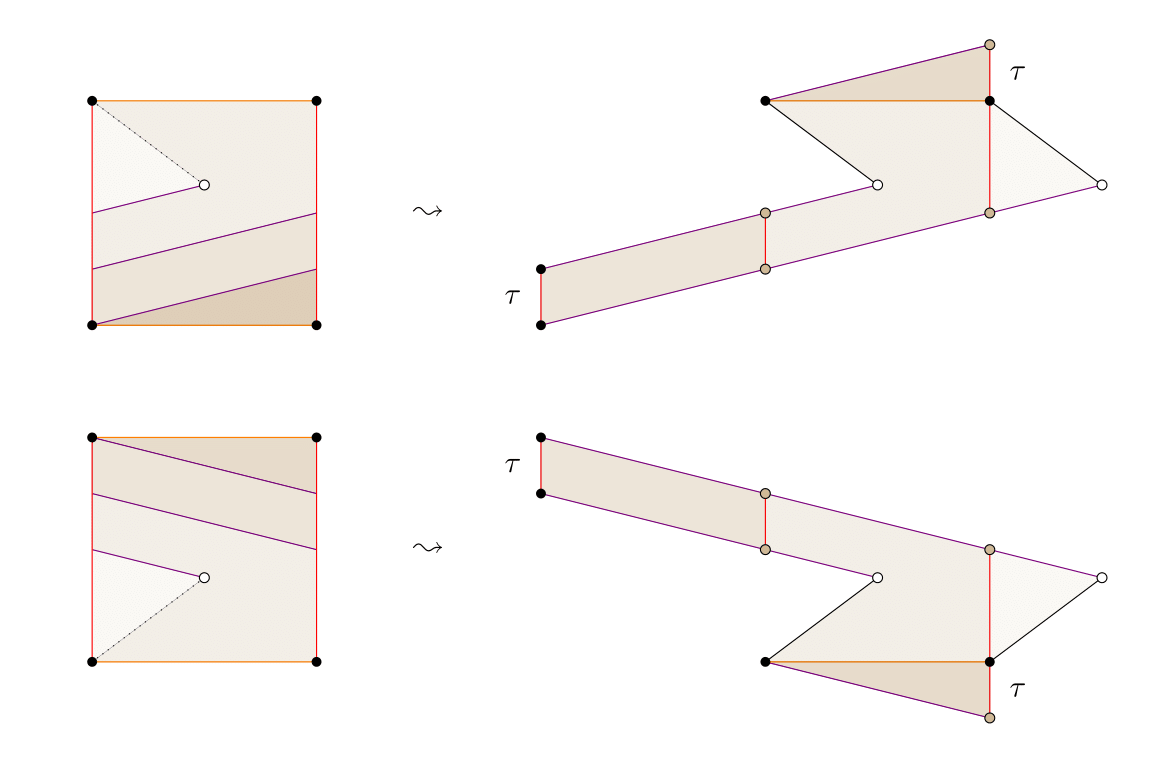}
    \caption{Tomahawks. The picture shows that if \(w\) approaches the ray \(r_{p\,q}\) from different sides then tomahawks are reflected. As a consequence, the degenerated structures turn out to be different.}
    \label{fig:cutandpaste}
\end{figure}

\noindent As we shall see, the two cases are distinguished since the resulting tomahawks are not isometric and define structures that differ not just in terms of marking. The rearrangement also reveals a new invariant for the flat structure on the torus. On both tomahawks we can find a pair of edges labeled with \(\tau\), whose length is proportional to the torsion of the geodesic curve \(s\) that wraps around \(C\), see Figures \ref{fig:segmentswithoppositeslopes} and \ref{fig:cutandpaste}. Notice that, in both cases, the tomahawk has a side with period \(w\). Thus it can be glued by translations on a copy of \((\,\mathbb C,\,dz\,)\) slit along a segment of period \(w\). The resulting translation surface lies in \(\mathcal H_1(1,1,-2)\). In order to understand the degeneration when the parameters \(w\) approaches a ray it is sufficient to see how this latter structure degenerates. 

\smallskip

\noindent Let \((w_n)\subset \mathcal T\) be a sequence converging to some \(w_\infty\in r_{p\,q}\). This sequence yields a sequence of geodesic curves \((s_n)\subset C\) all starting from the same marked point \(\pto B_i\) with slopes tending to \(q/p\). As a consequence, their respective torsions \(\tau_n\) tend to zero, and the geodesic curves \((s_n)\) tend to a curve that overlaps the boundary component of \(C\) containing \(\pto B_i\). 

\begin{rmk}
    Since \(w_\infty\in r_{p\,q}\), we can always write it as \(k\,(\,p+iq\,)+t\,(\,p+iq\,)=(\,k\,+\,t\,)\,u\), where \(k\) is a positive integer and \(0\le t < 1\). Notice that the case \((\,k,t\,)=(1,0)\) is ruled out because \(w_\infty\) is not a primitive lattice point.
\end{rmk}

\noindent For every \(n\) we decompose \(\rm T\) using \(s_n\) as above, creating a sequence of tomahawks. In the limit, as the length of $\tau$ goes to zero, this sequence converges to a parallelogram, say \(P_\infty\), with sides given by \(u\) and \(z\) where \(z=t\,u+ v\) or \(z=t\,u- v\) depending on whether \(\arg(\,w\,)<\arg(\,u\,)\) or \(\arg(\,w\,)>\arg(\,u\,)\) respectively; see bottom and top portions of Figure \ref{fig:cutandpaste} respectively.

\smallskip

\noindent We now describe how to glue this parallelogram to a copy of \((\,\mathbb C,\,dz\,)\). For this purpose let \(\sigma_\infty\) be a segment with period \(w_\infty\) then we may glue the parallelogram \(P_\infty\) as described in \S\ref{sec:degtowalls} for representations with positive volume (the role of \(w\) in \S\ref{sec:degtowalls} here is played by \(z\,\)). More precisely, by distinguishing the two cases just discussed in parallel, we have the following cases:

\begin{itemize}
    \item[1.] \(\arg(\,w\,)<\arg(\,u\,)\). In this case the resulting structure has a saddle connection of period \(z=tu+v\) given by one pair of sides of \(P_\infty\). We set \(\mathfrak u=u\), \(\mathfrak w=z=tu+v\) and \(\mathfrak v=\mathfrak w +\mathfrak u -w_\infty=v+(\,1-k\,)u\). Note that $\mathfrak u$ and $\mathfrak w-\mathfrak v$ are in the same direction. So we then proceed as in \S\ref{sec:degtowalls} with these parameters.
    For later reference, by change of coordinates discussed in \S\ref{subsec:changeofcoord}, this set of parameters gives the same structure as $\mathfrak{u}'=u, \mathfrak{w}'=(k+t-1)u+v, \mathfrak{v}'=v$. 
    \smallskip
    \item[2.] \(\arg(\,w\,)>\arg(\,u\,)\) In this case the resulting structure has a saddle connection of period \(z=tu-v\) given by one pair of sides of \(P_\infty\). In this case we set \(\mathfrak{u,v,w}\) as follows: \(\mathfrak u=-u\), \(\mathfrak w=z=tu-v\) and \(\mathfrak v=\mathfrak w - \mathfrak u-w_\infty=-v-(\,k-1\,)u\). Note that $\mathfrak u$ and $\mathfrak w-\mathfrak v$ are in the opposite direction. Once again we proceed as in \S\ref{sec:degtowalls}. For later reference, by change of coordinates discussed in \S\ref{subsec:changeofcoord}, this set of parameters gives the same structure as $\mathfrak{u}'=-u, \mathfrak{w}'=(k+t-1)u-v, \mathfrak{v}'=-v$. 
\end{itemize}

%\begin{itemize}
%    \item[1.] \(\arg(\,w\,)<\arg(\,u\,)\). In this case the resulting structure has a saddle connection of period \(z=tu+v\) given by one pair of sides of \(P_\infty\). We set \(\mathfrak u=u\), \(\mathfrak w=z=tu+v\) and \(\mathfrak v=\mathfrak w +\mathfrak u -w_\infty=v+(\,1-k\,)u\). It is easy to see that \(\mathfrak u\) and \(\mathfrak w-\mathfrak v\) are parallel. So we can proceed as in \S\ref{sec:degtowalls}.
%    \smallskip
%    \item[2.] \(\arg(\,w\,)>\arg(\,u\,)\) In this case the resulting structure has a saddle connection of period \(z=tu-v\) given by one pair of sides of \(P_\infty\). In this case we set \(\mathfrak{u,v,w}\) as follows: \(\mathfrak u=-u\), \(\mathfrak w=-z=v-tu\) and \(\mathfrak v=\mathfrak w + \mathfrak u-w_\infty=-v-(\,k-1\,)u\). In this case \(\mathfrak u\) is parallel to \(\mathfrak w+\mathfrak v\). Once again we proceed as in \S\ref{sec:degtowalls}.
%\end{itemize}

\smallskip

\noindent Notice that here we used gothic letters in order to avoid confusions with the current notation and thus they should be understood as the corresponding letters in \S\ref{sec:degtowalls}. Two remarks are now in order.

\begin{rmk}\label{rmk:difflimits}
    The distinct choices for \( \mathfrak{u} \) in the two cases can be explained by keeping track of the orientations of the saddle connections. Notably, both the saddle connection with period $\mathfrak w$ and the one with period $\mathfrak w-\mathfrak v$ should point from $\pto B$ to $\pto W$. Consequently, the limiting structure belongs to \( \partial\textnormal{C}(\,u\,) \) if \(\arg(\,w\,)<\arg(\,u\,)\), and belongs to \( \partial\textnormal{C}(\,-u\,) \) if \(\arg(\,w\,)>\arg(\,u\,)\). Thus the limits are different.

    \noindent This can also be seen in the following way.
%It remains to show that the limit depends on which direction \(w\) approaches the ray \(r_{p\,q}\). 
The structures just realized have different relative periods, we have \(z=tu+v\) in one case and \(z=tu-v\) in the other case. We may easily notice that these values do not differ by any element in \(\mathbb Z[\,i\,]\). Moreover, they agree if and only if \(v=0\) which is not allowed.
\end{rmk}

%\begin{rmk}\label{rmk:difflimits}
%    The reason why \( \mathfrak{u} \) is parallel to \( \mathfrak{w+v} \) can be explained as follows. In \S\ref{sec:degtowalls}, we studied how a structure of cylinder type degenerates when the parameter \( w \) approaches a value on the boundary \( \partial\textnormal{C}(\,u\,) \). This clearly holds for every value of \( u \). Therefore, if we were dealing with \(\textnormal{C}(\,-u\,)\), we would obtain the same description with the necessary adjustments. Thus, if we replace \( u \) with its opposite \( -u \) then \( v \) should be replaced with \( -v \), and hence \( w - v \) should be replaced with \( w - (-v) = w + v \). Therefore, this means that the limiting structure belongs to \( \partial\textnormal{C}(\,u\,) \) if \(\arg(\,w\,)<\arg(\,u\,)\) or it belongs to \( \partial\textnormal{C}(\,-u\,) \) if \(\arg(\,w\,)>\arg(\,u\,)\). Thus the limits are different.
%\end{rmk}

\begin{rmk}
    Recall that a chamber of cylinder type \(\textnormal{C}(\,u\,)\) is defined as \( \{\,w\in\mathbb{C} \mid \mathfrak{Im}(\,\overline{u}\,w\,) > 1\,\}\); this is uniquely determined by \(u\), and \(w\) provides a global parameter on the whole chamber that extends to its boundary, defined by the equation \(\mathfrak{Im}(\,\overline{u}\,w\,) = 1\).  Seen as a subspace of \(\mathbb{C}\), the half-plane \(\textnormal{C}(\,u\,)\) is not affected by different choices of \(v\) of the form \(v' = mu + v\), where \(m \in \mathbb{Z}\). In fact, as discussed in \S\ref{subsec:changeofcoord}, different choices of \(v\) yield different global parameters of \(\textnormal{C}(\,u\,)\), say \(w\) and \(w'\), that differ by a translation, \textit{i.e.}, \(w' = w + mu\).
    This justifies our choices of $\mathfrak v$ in the list above.
    %Furthermore, this remark is not specific to the positive volume case but applies to every kind of chamber of cylinder type.
\end{rmk}

\smallskip

\noindent With these remarks we conclude the proof of Proposition \ref{prop:difflimits}.

\smallskip

\begin{figure}[!ht]
    \centering
    \includegraphics[width=1\linewidth]{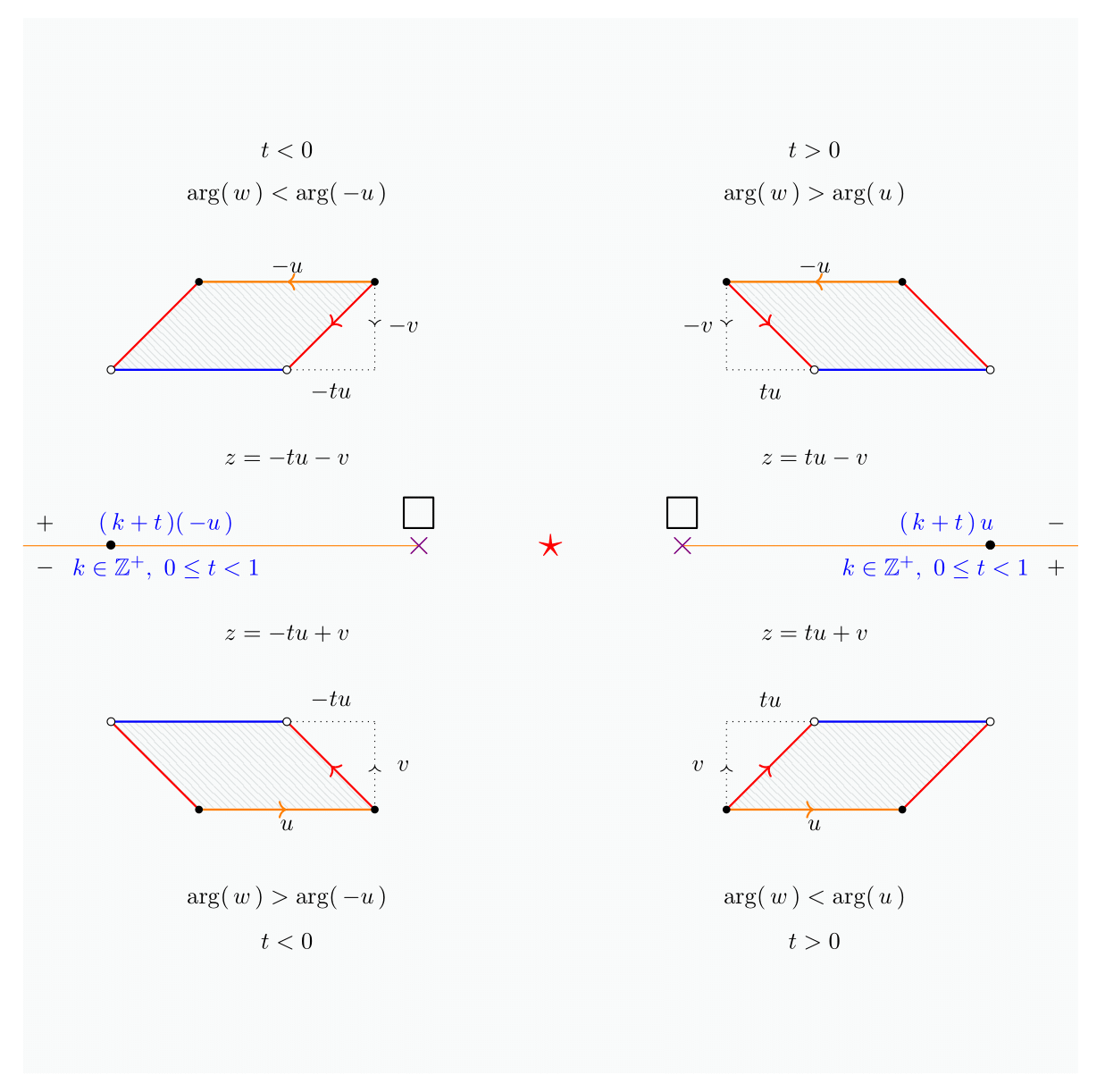}
    \caption{This figure, summarize in a schematically and direct way how a structure of torus type degenerates, as described in Section \ref{sssec:difflimits}. The red star at the center represents the nodal curve described in \ref{sssec:deglatticepoint}. The purple crosses are primitive lattice points, and we have already observed that if a sequence \( w_n \) tends to such values, the associated sequence of structures converges to a translation surface in the minimal stratum \(\Omega\mathcal M_1(2,-2)\) and are labeled with a square. The \( \pm \) signs represent the sides of the orange rays, which in the text are denoted as \( \pm r_{1\,0} \) For typographical reasons, only two opposite rays are represented.}
    \label{fig:degtoradpoints}
\end{figure}

\subsection{Geometry of positive leaves}\label{ssec:genposleaf}
We are now ready to describe the geometry of positive leaves. In the previous section we have given a detailed description of the chamber of torus type, parametrized by the open set \(\mathcal T\subset \mathbb C^*\) and how a sequence of structures degenerates when the parameter \(w\) approaches \(\partial\, \mathcal T\). We have seen that, unless \(w\) tends to a primitive element in \(\mathbb Z[\,i\,]\), the degenerate structure is a translation surface of cylinder type in the marked stratum \(\mathcal H_1(1,1,-2)\). Moreover, if we prolong the deformation beyond the wall we then fall in a chamber of cylinder type just described in \S\ref{sec:chamcyltype}. The whole leaf is thus obtained by gluing chambers of cylinder type to the chamber of torus type. 

\smallskip

\noindent The way of gluing chambers of cylinder type to the chamber of torus type is based on Proposition \ref{prop:difflimits} proved in the previous section. In the same notation as above, let \(u=p+iq\) be a primitive lattice point and let \(r_{p\,q}\) be the ray leaving from \(u\) with slope \(q/p\). We shall denote by \(-r_{p\,q}\) the ray leaving from \(-u\) defined as \(-r_{p\,q}=\{\,t\,(p+iq)\,|\,t\le-1\,\}\). Notice that this ray is symmetric to \(r_{p\,q}\) with respect to the origin. Recall that all rays are oriented so that they point towards the infinity. Hence we can distinguish left and right side. In \S\ref{sssec:difflimits}, we have shown that if a sequence \( (w_n) \) tends to a radial point of a ray \( r_{p\,q} \) from the right, \textit{i.e.} if \( \arg(\,w_n\,) < \arg(\,u\,) \), then the limit belongs to \( \partial\textnormal{C}(\,u\,) \). Conversely, if it tends from the left, \textit{i.e.} if \( \arg(\,w_n\,) > \arg(\,u\,) \), then the limit belongs to \( \partial\textnormal{C}(\,-u\,) \). In the same fashion, if a sequence tends to a radial point of a ray \( -r_{p\,q} \) from the right, \textit{i.e.} if \( \arg(\,w_n\,) < \arg(\,-u\,) \), then the limit belongs to \( \partial\textnormal{C}(\,-u\,) \). Conversely, if it tends from the left, \textit{i.e.} if \( \arg(\,w_n\,) > \arg(\,-u\,) \), then the limit belongs to \( \partial\textnormal{C}(\,u\,) \). Figure \ref{fig:degtoradpoints} summarize these degenerations.

\subsubsection{How to glue chambers}\label{sssec:mainthmposleaf} The way of gluing chambers of cylinder type to the chamber of torus type is now a consequence of the following observations here stated in form of Lemmata. We begin with the following one whose proof is a simple computation.

\begin{lem}\label{lem:boundaychamber}
    Let \(u=p+iq\) be a primitive lattice point and let \(\textnormal{C}(\,u\,)\) be the corresponding chamber of cylinder type. Then its wall \(\partial \textnormal{C}(\,u\,)\) is a straight line with slope equal to \(q/p\).
\end{lem}

\begin{proof}
    Let \(u\in \mathbb Z[\,i\,]\) be a primitive lattice point and let \(\textnormal C(\,u\,)\) the corresponding chamber of cylinder type. Recall that, as a consequence of Proposition \ref{prop:posleafpres}, we assume \(\textnormal{Im}(\,\rho\,)=\mathbb Z[\,i\,]\). Thus, the wall \(\partial\textnormal{C}(\,u\,)\) is defined by
    \begin{equation}
        \partial\textnormal{C}(\,u\,)=\,\Big\{\,w\in\mathbb C\,\,|\,\, \mathfrak{Im}\,\big(\,\overline{u}\,w\,\big)=1 \,\Big\}.
    \end{equation}
    Set \(u=p+iq\) and \(w=x+iy\). A direct computation shows that \(\mathfrak{Im}\big(\,\overline{u}\,w\,\big)=py-qx\). Thus \(\partial\textnormal{C}(\,u\,)\) is the straight line defined by the equation \(-qx+py=1\) which has slope equal to \(q/p\) as desired.
\end{proof}

\noindent Therefore \(\partial\textnormal{C}(\,u\,)\) is an affine subspace of dimension one. Recall that if \(v\in\mathbb Z[\,i\,]\) satisfies \(\mathbb Z[\,i\,]=\mathbb{Z}u\oplus\mathbb{Z}v\), then \(ku+v\in\partial\textnormal{C}(\,u\,)\) for every integer \(k\). In particular we have already observed that \(v\in\partial\textnormal{C}(\,u\,)\), see \S\ref{sec:degtowalls}. We introduce an affine frame on \(\partial\textnormal{C}(\,u\,)\) such that \(v\) has coordinate \(s=0\) and oriented in such a way \(\partial\textnormal{C}(\,u\,)\) bounds the chamber on its left. The following lemma is easy to establish and the proof is left to the reader.

\begin{lem}\label{lem:afframe}
    There exist a unique affine framing on \(\partial\textnormal{C}(\,u\,)\) such that if \(w=su+v\) then its affine coordinate is \(s\). Moreover this framing extends to an affine framing \((s,r)\) on \(\overline{\textnormal{C}(\,u\,)}\) and \(\textnormal{C}(\,u\,)\) corresponds to the open set determined by the condition \(r>0\).
\end{lem}

\noindent In what follows we shall rely on this affine framing. We can now state a third Lemma which is a direct corollary of the constructive proof of Proposition \ref{prop:wallchar}, Proposition \ref{prop:difflimits} and Lemma \ref{lem:afframe} . For our convenience, in the following statement we set \(\lambda=k+t\).

\begin{lem}\label{lem:gluinglem}
    Let \(u\) be a primitive lattice number and let \((w_n)\in\mathcal T\) be a sequence converging to a radial point \(w_\infty\in\,\big\{\,\lambda\,u\,|\,\lambda\ge1 \,\big\}\) from the right, \textit{i.e.} \( \arg(\,w_n\,) < \arg(\,u\,) \). Then the limit structure is a translation surface of cylinder type in \(\partial\textnormal{C}(\,u\,)\) with parameter \(s=\lambda-1\). Similarly, if \((w_n)\in\mathcal T\) be a sequence converging to a radial point \(w_\infty\in\,\big\{\,\lambda\,u\,|\,\lambda\ge1 \,\big\}\) from the left, \textit{i.e.} \( \arg(\,w_n\,) > \arg(\,u\,) \), then the limit structure is a translation surface of cylinder type in \(\partial\textnormal{C}(\,-u\,)\) with parameter \(s=1-\lambda\). In both cases, if \(\lambda=1\) the resulting structure is of cylinder type and lies in the minimal stratum \(\Omega\mathcal M_1(2,-2)\).
\end{lem}

\noindent These Lemmata tell us how to glue chambers of cylinder type to the chamber of torus type. We begin with the following: 

\begin{rmk}\label{rmk:boundaryofchamtor}
    The chamber of torus type \(\mathcal T\) is an open set in \(\mathbb C^*\) whose boundary \(\partial\mathcal T\) is given by an isolated point (that fills the origin) and a collection of countably many rays leaving from primitive lattice points. From every lattice point \(u=p+iq\) leave exactly two rays, say \(r_{p\,q}^{\pm}\), each one of which parametrizes the limits structures obtained from degenerations from the left or right. In what follows we shall agree that \(r_{p\,q}^+\) and \(r_{p\,q}^-\) bounds \(\mathcal T\) on the right and left respectively -- this explains the labels ``\(\pm\)'' in Figure \ref{fig:degtoradpoints}. Notice that on both of these rays leaving from \(u\) there is a point labeled with \(\lambda\ge1\) that represent different structures. We finally observe that the completion of \(\mathcal T\) is not \(\mathbb C\) nor homeomorphic to it.
\end{rmk}

\noindent We then glue chambers of cylinder type to the \textit{completion} of \(\mathcal T\) as follows. Let \(u\) be any primitive lattice point and let \(\textnormal C(\,u\,)\) be the corresponding chamber of cylinder type. As a consequence of Lemma \ref{lem:boundaychamber}, its boundary \(\partial\textnormal C(\,u\,)\) is an affine subspace of \(\mathbb C\), namely a straight line for the Euclidean metric. According to this orientation, the boundary can be seen as the union of two half-rays, say \(\partial\textnormal{C}(\,u\,)^{\pm}\), where ``\(+\)'' labels the half-ray parametrized by non-negative values of \(s\) and ``\(-\)'' the half ray parametrized by non-positive values. Recall that \(v\) is parametrized with zero according to the choice of the affine frame. We glue the half-ray \(r_{p\,q}^+\) with \(\partial\textnormal{C}(\,u\,)^+\) and similarly we glue the half-ray \(-r_{p\,q}^-\) with \(\partial\textnormal{C}(\,u\,)^-\). The identification is made so that:
\begin{itemize}
    \item for every \(s\ge0\), the point of \(\partial\textnormal C(\,u\,)^+\) parametrized by \(s\) is glued to the point of \(\,r_{p\,q}^+\) parametrized by \(\lambda-1\); and
    \smallskip
    \item for every \(s\le 0\), the point of \(\partial\textnormal C(\,u\,)^-\) parametrized by \(s\) is glued to the point of \(-r_{p\,q}^-\) parametrized by \(1-\lambda\).
\end{itemize}

\noindent Notice that gluing the chamber of cylinder type \(\textnormal{C}(\,-u\,)\) to \(\mathcal T\) along their boundary as described above involves the rays \(r_{p\,q}^-\) and \(-r_{p\,q}^+\). By applying the same process for every chamber of cylinder type we obtained a topological surface, say \(\mathfrak S\), that parametrizes all structures of torus type with prescribed absolute periods and their degenerations. In other words, \(\mathfrak S\) turns out to be bigger than the desired isoperiodic fiber because it also parametrizes those structures that lie in the minimal strata \(\Omega\mathcal M_1(2,-2)\) and the nodal curve discussed in \S\ref{ssec:degtorustowardzero}. In the next section we shall study the geometry of this space and determine the desired isoperiodic fiber. Lemma \ref{lem:gluinglem} ensures that after this gluing a continuous deformation inside any chamber can be prolonged beyond any wall in a continuous way. As a consequence, every structure in a chamber of cylinder type is thus connected by a path to a structure into the chamber of torus type. Therefore, the resulting space \(\mathfrak S\) is path-connected and hence connected. 

\smallskip

\subsubsection{Translation structure on positive leaves} We now describe the geometry of positive leaves. Recall that we are assuming that \(\textnormal{Im}(\,\rho\,)=\mathbb Z[\,i\,]\). First, we observe that each chamber of cylinder type, along with its wall, forms a closed half-plane in the complex plane \(\mathbb{C}\). According to Remark \ref{rmk:boundaryofchamtor}, the completion \(\overline{\mathcal{T}}\) cannot be regarded as a polygon nor as a subspace of \(\mathbb C\). However it still carries a translation structure with geodesic boundary and corner points of magnitude \(2\pi\) at the primitive lattice points. Lemma~\ref{lem:boundaychamber} and Lemma~\ref{lem:gluinglem} ensure that these gluings are performed via translations, thereby equipping \(\mathfrak{S}\) with a translation structure, namely it is a Riemann surface equipped with an abelian differential, say \(\omega\). The following holds.

\begin{lem}
    \((\,\mathfrak S,\,\omega\,)\) is a translation surface with infinitely many zeros of order two. Furthermore, a point of \(\mathfrak S\) is a zero of \(\omega\) if and only if it corresponds to a translation surface in the minimal stratum \(\Omega\mathcal M_1(2,-2)\).
\end{lem}

\begin{proof}
    Every point in \(\mathcal T\) is a regular point for the translation structure induced by the embedding \(\mathcal T \hookrightarrow\mathbb C\) and hence a regular point for \((\,\mathfrak S,\,\omega\,)\). These points correspond to translation surfaces of torus type realized by gluing a torus and a copy of the complex plane along a slit and we have already seen that all structures that thus arise belong to \(\mathcal H_1(1,1,-2)\).
    \smallskip
    
    \noindent In the same fashion, given any primitive lattice points \(u\), every point of the chamber of cylinder type \(\textnormal{C}(\,u\,)\) is a regular point for the translation structure determined by the embedding \(\textnormal{C}(\,u\,)\hookrightarrow \mathbb C\) and hence a regular point of \(\mathfrak S\). In \S\ref{sec:chamcyltype} we have seen that all of these points correspond to structures that belong to \(\mathcal H_1(1,1,-2)\). Therefore, the zeros of \(\omega\), if any, must arise from the identification of \(\partial\mathcal T\) with the boundaries of chambers of cylinder type \(\partial\textnormal{C}(\,u\,)\).
    \smallskip
    
    \noindent Let \(u\) be any primitive lattice point. According to Lemma \ref{lem:gluinglem} every point of \(\overline{\mathcal T}\) of the form \(\lambda\,u\), with \(\lambda>1\), is identified with the boundary point of \(\partial\textnormal{C}(\,u\,)\) or \(\partial\textnormal{C}(\,-u\,)\) of the form \((\lambda-1)u+v\) or \(-(\lambda-1)u-v\), where \(v\in\mathbb Z[\,i\,]\) is such that \(\langle\,u,\,v\,\rangle\) is a positively oriented basis for \(\mathbb Z[\,i\,]\). As an artifact of our definitions and constructions they both are regular boundary points and represent the same transitional structure in \(\mathcal H_1(1,1,-2)\). Thus, once identified, they determine a regular point on \(\mathfrak S\).
    \smallskip
    
    \noindent We finally consider the case \(\lambda=1\). We have already seen that \(w=\pm u\in\partial \overline{\mathcal T}\), \(v\in\partial\textnormal{C}(\,u\,)\) and \(-v\in\partial\textnormal{C}(\,-u\,)\) all represent a translation surface in the minimal stratum \(\Omega\mathcal M_1(2,-2)\), see \S\ref{sec:degtowalls} and \S\ref{sssec:deglatticepoint}. In fact, they all represent the same translation surface, see Figure \ref{fig:degtoradpoints} for \(t=0\). These points are all identified according to the gluing process described in \S\ref{sssec:mainthmposleaf}. Both \(\pm v\) are regular boundary points of their respective chambers of cylinder type. On the other hand, both \(\pm u\) are corner points, and their boundary angles each amount to \(2\pi\). Thus, once these points are identified, they form a branch point with angle \(6\pi\), namely a zero of order two for the resulting translation surface.
\end{proof}

\subsubsection{Topology of \(\mathfrak{ML}(\,\rho,\mu\,)\)}\label{sssec:profuniquepos} By design, there is a natural embedding \(\imath\colon\mathcal{T} \cup \{\,0\,\} \xhookrightarrow{\quad} \mathfrak{S}\), which defines a distinguished point that we label as \(o\). This point is a regular point of \(\mathfrak{S}\) and corresponds to the nodal curve arising from the degeneration described in \S\ref{ssec:degtorustowardzero}. The isoperiodic fiber is thus identified with the subset
\begin{equation}
    \mathfrak{ML}(\,\rho,\mu\,)\,=\,\mathfrak S\,\setminus\,\Big(\,\{\,o\,\}\,\cup\,\{\,\textnormal{zeros of}\,\omega\,\}\,\Big).
\end{equation}

\noindent In what follows we shall denote by \(\textnormal{deg}(\,\rho,\,\mu\,)=\Big(\,\{\,o\,\}\,\cup\,\{\,\textnormal{zeros of}\,\omega\,\}\,\Big)\), \textit{i.e.} the subset of degenerated structures arising as limits of degenerating sequences. As a consequence of our construction, there exists a unique isoperiodic leaf realizing a non-trivial representation \(\rho\) satisfying the condition \(\textnormal{vol}(\,\rho\,) > 0\). We also observe that the subset \(\textnormal{deg}(\,\rho,\,\mu\,) \subset \mathfrak{S}\) consists of isolated points. Consequently, the space \(\mathfrak{ML}(\rho, \mu)\) is path-connected and therefore connected and \(\mathfrak S\) is its metric completion. %This proves Theorem~\ref{thm:Unique} in the case of positive leaves.

\smallskip

\subsection{Topology of positive marked leaves} The study of positive leaves developed so far has allowed us to draw conclusions about their geometric properties. Now, we aim to understand their topology. We already know that they are connected surfaces, and here we seek to determine their type. 

\smallskip 

\noindent To understand the topology of the leaves, it is first necessary to recall some key concepts. In \S\ref{sec:chamcyltype}, we observed that the marking of zeros on a structure of cylinder type is equivalent to determining an oriented simple curve, which can be completed into a system of generators in homology, see Remark \ref{rmk:markdeterminesys}. Changing the opposite marking defines the opposite system of generators, namely the same curve with the opposite orientations. It is not had to see that the same observation extends to structures of torus type. 

\begin{lem}\label{lem:marktorus}
    Marking the zeros on a torus-type structure in the stratum \(\Omega\mathcal{M}_1(1,1,-2)\) is equivalent to determining an oriented simple closed curve that separates the zeros, such that one lies on the left and the other on the right with respect to the orientation of the curve.
\end{lem}

\subsubsection{\(\mathfrak S\) is symmetric} In the present section we aim to show the existence of a symmetric of order two. As a consequence of Lemma \ref{lem:marktorus}, if a given marking determines a curve, say \(\alpha\) with period \(u\), the opposite marking determines the curve \(\alpha^{-1}\), which has period \(-u\). Consequently, two opposite points in \(\mathcal{T}\) define structures that differ only by the marking of the zeros. Therefore, the natural involution given by \(w \mapsto -w\) identifies structures with different markings, and the quotient map is nothing more than the restriction of the forgetful map to \(\mathcal{T}\). Let us consider the natural embedding \(\imath\colon\mathcal{T} \cup \{\,0\,\} \xhookrightarrow{\quad} \mathfrak{S}\) defined in \S\ref{sssec:profuniquepos} and the action of \(\mathbb Z_2\) on \(\mathbb C\) given by rotations about the origin. Notice that \(\mathcal T\,\cup\,\{\,0\,\}\) is invariant under this action. Then the following holds. 

\begin{lem}\label{lem:involution}
    Let \(\mathfrak F\colon\mathfrak{ML}(\,\rho, \mu\,)\to \mathfrak{L}(\,\rho, \mu\,)\) be the forgetful map and consider its restriction to \(\imath\,\big(\,\mathcal T\,\cup\,\{\,0\,\}\,\big)\). Then the map \(\imath\,\big(\,\mathcal T\,\cup\,\{\,0\,\}\,\big) \to \mathfrak F\,\circ\,\imath\,\big(\,\mathcal T\,\cup\,\{\,0\,\}\,\big)\) is \(\mathbb Z_2-\)invariant and it naturally extends to a \(\mathbb Z_2-\)invariant map on \(\partial\mathcal T\).
\end{lem}

\noindent We would like to further extend this involution to the whole surface \(\mathfrak S\). Given a primitive lattice point and its opposite, say \(\pm u\), the corresponding chambers of cylinder type \(\textnormal{C}(\,u\,)\) and \(\textnormal{C}(\,-u\,)\) are disjoint seen as open subset of \(\mathbb C\) and symmetric with respect to the origin \(0\in\mathbb C\). In fact, we have already observed that \(w\in\textnormal{C}(\,u\,)\) if and only if \(-w\in\textnormal{C}(\,-u\,)\). Moreover this symmetry extends to their boundaries. As a consequence of Remark \ref{rmk:markdeterminesys} and Lemmata \ref{lem:gluinglem} and \ref{lem:involution} we have the following:

\begin{cor}
    \((\mathfrak S,\,\omega)\) admits a symmetry of order two that preserves the subset \(\textnormal{deg}(\,\rho,\,\mu\,)\). Moreover, the quotient map restricted to \(\mathfrak{ML}(\,\rho, \mu\,)\) yields the forgetful map \(\mathfrak F\colon\mathfrak{ML}(\,\rho, \mu\,)\to \mathfrak{L}(\,\rho, \mu\,)\).
    %The forgetful \(\mathfrak F\colon\mathfrak{ML}(\rho, \mu)\to \mathfrak{L}(\rho, \mu)\) is induced by an involution that preserves the subset \(\big(\,\{\,o\,\}\,\cup\,\{\,\textnormal{zeros of}\,\omega\,\}\,\big)\).
    %Then the map \(\imath\,\big(\,\mathcal T\,\cup\,\{\,0\,\}\,\big) \to \mathfrak F\,\circ\,\imath\,\big(\,\mathcal T\,\cup\,\{\,0\,\}\,\big)\) extends to an involution of \(\mathfrak S\) and preserves the subset \(\big(\,\{\,o\,\}\,\cup\,\{\,\textnormal{zeros of}\,\omega\,\}\,\big)\). In particular, 
\end{cor}

\noindent Theorem \ref{thm:Unique} for leaves of positive volume is now a straightforward consequence of our discussion so far. Since the forgetful map is continuous the following holds.

\begin{cor}
    \(\mathfrak{L}(\,\rho,\mu\,)\) is path-connected and hence connected.
\end{cor}

\subsubsection{Chamber structure of \(\mathfrak{L}(\,\rho,\mu\,)\) and its geometry} The isoperiodic fiber \(\mathfrak{L}(\,\rho,\mu\,)\) also admits a decomposition into chambers and walls as the marked leaf \(\mathfrak{ML}(\,\rho,\mu\,)\). This can be directly realized as follows. We first note that the quotient of the chamber of torus type is a topological disk whose boundary is given by infinitely many slits, each originating and pointing towards infinity at a primitive lattice points with non-negative imaginary part. In other words it is homeomorphic to

\begin{equation}
    \displaystyle\mathcal{HT}\,=\,\left(\,\,\overline{\mathbb H}\,\setminus \,\,\bigcup_{\mathclap{\substack{(p,q)\,\in\,\mathbb Z^2,\,q \ge 0\\ \gcd(\,p,\,q\,)=1}}} \left\{\, t\,(\,p\,+\,iq\,)\,|\, t \ge 1\, \right\}\,\,\right)\,/\,\sim\,\,,
\end{equation}

\noindent where \(\mathbb H\) is the upper-half plane and \(\sim\) is an equivalence relation on \(\partial\mathbb H\) such that two points, say \((s_1,0)\) and \((s_2,0)\), are related by \(\sim\) if and only if they coincide or \(s_1=-s_2\) and \(0\le s_i \le1\). Let us recall our notation in use: \(r_{p\,q}=\{\, t\,(\,p\,+\,iq\,)\,|\, t \ge 1\,\}\) and we denote the sides of the slits with \(r_{p\,q}^\pm\) where, as in Remark \ref{rmk:boundaryofchamtor}, we shall agree that \(r_{p\,q}^+\) and \(r_{p\,q}^-\) bounds \(\mathcal{HT}\) on the right and left respectively. If \(u=p+iq\), we then glue the chamber of cylinder type \(\overline{\textnormal{C}}(\,u\,)\) as done in \S\ref{sssec:mainthmposleaf}. We shall denote by \(\mathfrak D\) the resulting surface. The following holds and it is easy to establish.

\begin{lem}
     \(\mathfrak{D}\) is homeomorphic to an open disk and admits a chamber decomposition.
\end{lem}

\noindent Regarding its complex structure, the surface must be either a complex plane or a complex disk. By Corollary~\ref{cor:hyp}, it follows that the surface is biholomorphic to a disk. We can easily notice that \(\mathfrak D\) naturally carries a half-translation structure determined by a quadratic differential, say \(q\), with a simple pole and infinitely many zeros of order one, that is conical singularities of angle \(3\pi\). By design the following holds.

\begin{cor}
    The forgetful map extends to a branch covering \(\overline{\mathfrak F}\colon\,(\,\mathfrak S,\,\omega\,)\longrightarrow(\,\mathfrak D,\,q\,)\) of degree two, with ramification points at \(\textnormal{deg}(\,\rho,\,\mu\,)\), that preserves the chamber decomposition; \textit{i.e.} it maps chambers to chambers and walls to walls. Moreover the following equation holds: \(\overline{\mathfrak F}^*q=\omega\). 
\end{cor}

\noindent By using this result, combined with the following lemma, we can conclude that a positive leaf is a Loch Ness Monster, \textit{i.e.} a surface of infinite genus with one end. This gives Theorem \ref{thm:LochNess} for positive leaves. Notice that the singularities in the metric completion of the leaf are located at the lattice points in the chamber of torus type and are therefore a discrete infinite set.

\begin{lem}\label{lem:double_cover}
Let $S$ be a topological surface, and let $\mathcal{P}$ denote a topological surface homeomorphic to the plane $\mathbb{C}$. Suppose there exists a branched double cover $\pi:S\to\mathcal{P}$, branched over a infinite, discrete set of points. Then $S$ is a Loch Ness Monster.
\end{lem}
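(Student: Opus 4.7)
The plan is to verify the two defining features of a \emph{Loch Ness Monster}, namely that $S$ is connected with exactly one end, and that $S$ has infinite genus; the conclusion then follows from the Ker\'ekj\'art\'o--Richards classification of noncompact surfaces. Throughout, let $B \subset \mathcal{P}$ denote the branch set, which by hypothesis is infinite and discrete. Since $\mathcal{P} \cong \mathbb{C}$ is $\sigma$-compact, discreteness of $B$ is equivalent to local finiteness: every compact subset of $\mathcal{P}$ meets $B$ in only finitely many points, and $B$ accumulates only at the end of $\mathcal{P}$. Moreover, $\pi$ is proper: the local models $z \mapsto z^{2}$ at branch points and the $2$-to-$1$ local model elsewhere yield finite fibres and compact preimages of compact sets.

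The main step is to prove that $S$ has exactly one end, with connectedness as a byproduct. Fix an exhaustion $D_{1} \subset D_{2} \subset \cdots$ of $\mathcal{P}$ by closed topological disks with $\bigcup_{n} D_{n} = \mathcal{P}$. Each complement $A_{n} := \mathcal{P} \setminus D_{n}$ is an open annular neighborhood of infinity, and since $B \cap D_{n}$ is finite while $B$ is infinite, $A_{n} \cap B$ is nonempty. The restriction $\pi \colon \pi^{-1}(A_{n} \setminus B) \to A_{n} \setminus B$ is an unramified double cover of a connected, locally path-connected space, classified by a homomorphism $\pi_{1}(A_{n} \setminus B) \to \mathbb{Z}/2\mathbb{Z}$. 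For any $b \in A_{n} \cap B$ a small loop around $b$ must map nontrivially, so this homomorphism is surjective, $\pi^{-1}(A_{n} \setminus B)$ is connected, and filling in the discrete branch locus gives that $\pi^{-1}(A_{n}) = S \setminus \pi^{-1}(D_{n})$ is connected. By properness, $\{\pi^{-1}(D_{n})\}$ is a compact exhaustion of $S$, so $S$ has exactly one end. The same monodromy argument applied to the global unramified cover $\pi^{-1}(\mathcal{P} \setminus B) \to \mathcal{P} \setminus B$ shows that $S$ is connected.

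The second step is to establish infinite genus via Riemann--Hurwitz applied to the compact branched double cover $\pi \colon \pi^{-1}(D_{n}) \to D_{n}$:
\[
\chi\bigl(\pi^{-1}(D_{n})\bigr) \;=\; 2\,\chi(D_{n}) - \#(B \cap D_{n}) \;=\; 2 - k_{n},
\]
where $k_{n} := \#(B \cap D_{n}) \to \infty$ as $n \to \infty$. The surface $\pi^{-1}(D_{n})$ is compact, connected, and orientable (orientation being inherited from $\mathcal{P}$), with one boundary component if $k_{n}$ is odd and two if $k_{n}$ is even (determined by the parity of the monodromy around $\partial D_{n}$). From $\chi = 2 - 2g - b$ one extracts that the genus is at least $(k_{n}-2)/2 \to \infty$, so $S$ contains compact subsurfaces of arbitrarily large genus and hence has infinite genus.

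Combining the two steps, $S$ is a connected orientable surface with one end and infinite genus, hence homeomorphic to the Loch Ness Monster by Ker\'ekj\'art\'o--Richards. The main obstacle I anticipate is the monodromy/connectedness argument of the first step, which rests on the covering-space formalism applied to an open subset of the plane minus a locally finite set and on controlling what happens when the discrete branch set is filled back in; local finiteness of $B$ is precisely the hypothesis that turns this into a routine verification rather than a delicate point-set exercise.
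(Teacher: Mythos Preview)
Your proof is correct and follows essentially the same approach as the paper: both establish one-endedness by observing that the preimage of any neighborhood of infinity in $\mathcal{P}$ is connected because it contains branch points (forcing nontrivial monodromy), and both deduce infinite genus via Riemann--Hurwitz. The only cosmetic difference is that the paper applies Riemann--Hurwitz to infinitely many disjoint disks each containing exactly two branch points (yielding disjoint cylinders and hence disjoint handles), whereas you apply it to a single exhausting family of disks with a growing number of branch points; either variant works equally well.
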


\begin{proof}
First we show that $S$ has one end. Suppose not. Since $\pi$ is a branched double cover and $\mathcal{P}$ is one-ended, $S$ has two ends, each covering the unique end of $\mathcal{P}$. Thus for some large enough simply connected compact set $K\subset\mathcal{P}$, the preimage of $\mathcal{P}-K$ consists of two connected components. But $\mathcal{P}-K$ contains ramification points of $\pi$, so its preimage must be connected, a contradiction.

\smallskip

\noindent Next we show that $S$ has infinite genus. Indeed, take a disk $D$ in $\mathcal{P}$ containing exactly two ramification points. Then $\pi^{-1}(D)$ is connected, and by Riemann-Hurwitz formula, has Euler characteristic $0$. It has either one or two boundary components. If it has one, then by capping off the boundary component we obtain a closed surface with Euler characteristic $1$, which cannot be. So $\pi^{-1}(D)$ has two boundary components, and is therefore a cylinder. There are infinitely many disjoint pairs of ramification points, so we obtain infinitely many handles of $S$. By classification of topological surfaces, $S$ is thus a Loch Ness Monster.
%take any sequence of nested connected open sets with compact boundary $U_1\supset U_2\supset U_3\supset\cdots$ escaping every compact set, the images $\pi(U_1)\supset\pi_1(U_2)\supset\cdots$ gives such a sequence for the plane $\mathcal{P}$.
\end{proof}

\subsection{Veech group} Finally, we can determine the Veech group of a positive leaf seen as a translation surface and hence prove Theorem~\ref{thm:Veech} for positive leaves. 

\begin{prop}\label{prop:Veechpositive}
The Veech group of a positive leaf is a conjugate of $\slz$.
\end{prop}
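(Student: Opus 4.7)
The plan is to establish the two inclusions $\mathrm{SL}(2,\mathbb{Z}) \subseteq V$ and $V \subseteq \mathrm{SL}(2,\mathbb{Z})$, where $V$ denotes the Veech group of the leaf. Since applying a matrix $h \in \glplus$ to a translation surface conjugates its Veech group by $h$, I would begin by normalizing the period group: choose $h$ so that $h \cdot \Gamma = \mathbb{Z}[\,i\,]$. With this normalization, it suffices to identify $V$ on the nose with the copy of $\mathrm{SL}(2,\mathbb{Z})$ sitting inside $\mathrm{SL}(2,\mathbb{R})$ via the standard identification $\mathbb{C} \cong \mathbb{R}^2$ with $\Gamma$ as the standard integer lattice.

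For $\mathrm{SL}(2,\mathbb{Z}) \subseteq V$, I would exploit the compatibility of the $\glplus$-action with period characters: acting by $g$ sends the leaf realizing $\chi$ to the leaf realizing $g \circ \chi$. For $g \in \mathrm{SL}(2,\mathbb{Z})$, viewed as a $\mathbb{Z}$-linear automorphism of $\Gamma = \mathbb{Z}[\,i\,]$, the characters $\chi$ and $g \circ \chi$ differ by a symplectic change of basis of $H_1(X^*, \mathbb{Z})$. Theorem~\ref{thm:Unique} then guarantees that the two leaves coincide, so $g$ induces an affine self-homeomorphism of the leaf with derivative $g$, placing $g$ in $V$.

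For the reverse inclusion, I would take an affine self-map $f$ with derivative $g \in \glplus$ and analyze its interaction with the chamber decomposition from Sections~\ref{sec:kernel_foliation} and~\ref{sec:posleaves}. A positive leaf contains a unique chamber of torus type, which carries all conical singularities of angle $6\pi$, along with infinitely many chambers of cylinder type, each a half-plane with no conical singularity. Since $f$ must permute the singular set, it must send the torus chamber to itself. In the coordinate $\alpha$ on the torus chamber, the unique marked point arising from the torus-pinching degeneration (there is exactly one such point per leaf because $\Gamma$ is non-cyclic) sits at the origin, so its topological uniqueness forces $f(0) = 0$. Hence $f$ restricts to a linear map $z \mapsto gz$ on the torus chamber, now identified with $\mathbb{C}$ minus slits. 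Because the conical singularities are precisely the primitive elements of $\Gamma$ and must be permuted by $f$, we conclude $g \cdot \Gamma = \Gamma$, so $g \in \mathrm{GL}(2,\mathbb{Z})$; orientation preservation then yields $g \in \mathrm{SL}(2,\mathbb{Z})$.

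The main obstacle is justifying that $f$ restricts to a genuine linear map on the torus chamber. This reduces to two canonicity statements, both detectable from the intrinsic translation geometry of the leaf: the torus chamber is the unique chamber containing any conical singularities, and the origin is the unique point added in metric completion. Once these are in hand, the rest of the argument is just bookkeeping with the stabilizer of the lattice $\mathbb{Z}[\,i\,]$ inside $\glplus$.
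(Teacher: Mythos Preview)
Your overall strategy matches the paper's, which gives a two-line argument (the leaf's translation structure is determined by $\Gamma$ alone, so its Veech group is exactly the stabilizer of $\Gamma$ in $\glplus$) and then sketches your torus-chamber route as an alternative.

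However, both of the ``canonicity statements'' you isolate at the end are false as written, and this leaves a gap. First, the conical singularities lie on the common boundary of the torus chamber and the adjacent cylinder chambers, not in the interior of the torus chamber; so the torus chamber is not singled out by ``containing'' them. Second, the metric completion adds not only the origin but also every conical point at a primitive element of $\Gamma$ (these parametrize surfaces in $\mathcal{H}(2,-2)$, hence lie outside the leaf). The correct statement is that the origin is the unique \emph{regular} point of the completion not already in the leaf, and this does force $f(0)=0$. But even granting that, you have not argued that $f$ preserves the torus chamber as a subset --- and since the chamber decomposition is defined via the core of the parametrized surfaces rather than by the intrinsic translation geometry of the leaf, there is no a priori reason it should. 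Without this, you cannot conclude that $f(\alpha)=g\alpha$ holds in the global chamber coordinate $\alpha$, and your step from ``$f$ permutes singularities'' to ``$g$ permutes primitive elements of $\Gamma$'' is unjustified. The cleanest repair bypasses the chamber entirely: by the cover structure of Section~\ref{sec:kernel_foliation}, the translation holonomy group of the leaf is exactly $\Gamma$, and any affine self-map must preserve it, forcing $g\Gamma=\Gamma$. This is in effect the paper's primary argument.
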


\begin{proof}
The action of \(\glplus\) on the (marked or unmarked) stratum commutes with the period character. Therefore, the action of an element \(g\in\glplus\) takes one positive leaf with absolute period group $\Gamma$ to another positive leaf whose absolute period group is \(g\Gamma\). Since the translation structure of the leaf is characterized by its absolute period group, the Veech group is exactly the stabilizer of \(\Gamma\) in \(\glplus\), which is a conjugate of $\slz$. This can also be proved by looking at the torus chamber: any element in the Veech group must leave the set of primitive elements in \(\Gamma\) invariant.
\end{proof}

\section{Negative leaves}\label{sec:negleaves}
\noindent In this section, we give a complete description of negative leaves, namely those loci \(\mathfrak{ML}(\,\rho,\,\mu\,)\) where \(\textnormal{vol}(\,\rho\,)\) is negative. Like in \S\ref{sec:posleaves}, we may observe that given two representations, say \(\rho_1,\,\rho_2\colon\shomolzoo\longrightarrow \mathbb C\) with negative volume there exists \(A\in\glplus\) such that \(\rho_2\,=\,A\cdot\rho_1\). Such a linear transformation \(A\) determines an homeomorphism, say \(F_A\), of leaves 
    \begin{equation}
        F_A\colon\mathfrak{ML}(\,\rho_1,\,\mu\,)\longrightarrow \mathfrak{ML}(\,\rho_2,\,\mu\,),
    \end{equation}

\noindent and the crucial point is the following:

\begin{prop}\label{prop:negleafpres}
    \(F_A\) preserves the structure of leaves, \textit{i.e.} it maps chambers to chambers and walls to walls. 
\end{prop}

\noindent The same argument used to show Proposition \ref{prop:posleafpres} works \textit{mutatis mutandis}. Therefore, in what follows we shall assume \(\textnormal{Im}(\,\rho\,)=\mathbb{Z}[\,i\,]\) for simplicity. In \S\ref{sec:chamcyltype} we have described chambers of cylinder type with negative volume. In the present section we deal with chambers of degenerate type. 
%Here, $\Gamma$ is a lattice and the image of a symplectic basis of the homology is an indirect basis of $\mathbb{C}$. Once again, up to $\glplus$-action, we may assume $\Gamma=\mathbb{Z}[\,i\,]$. Chambers of cylinder type are already described in Section \S\ref{sec:chamcyltype} and we have seen that chambers of torus type only exist in positive leaves, see Lemma~\ref{lem:VolConstraints}. In the present section we focus on chambers of degenerate type.

\subsection{Realizing structures of degenerate type}\label{ssec:realdegcorestruc} In \S\ref{ssec:core} we have already given a glimpse of how to realize a structure of degenerate type, see Figure \ref{fig:degenerate}. As in \S\ref{ssec:chambdes}, let \(\rho\colon\shomolzoo\longrightarrow \mathbb C\) be a non-trivial representation and let \(\{\,\alpha,\,\beta\,\}\) be two generators that represent a pair of simple closed curves that intersect only once (up to homotopy). We shall assume \(\{\,\alpha,\,\beta\,\}\) to be a positively oriented basis, \textit{i.e.} their intersection number is equal to one. Throughout this section let \(u,v\in\mathbb C^*\) denote \(u=\rho(\,\alpha\,)\) and \(v=\rho(\,\beta\,)\) respectively. Notice that \(u,v\) cannot be both zero because the representation is assumed to be non-trivial. We emphasizes the fact that we are assuming \(\textnormal{Im}(\,\rho\,)=\mathbb{Z}[\,i\,]\), and hence we suppose \(\mathfrak{Im}(\,\overline u\, v\,)=-1\) (the negative sign here comes from the assumption that the volume is negative).

\smallskip

\noindent We first describe in details how to realize a structure with core of degenerate type for the readers' convenience. We recall in the first place that a translation surface is of degenerate type if it arises from the identification of pairs of opposite sides in the complement of a \textit{convex} hexagon in the infinite plane, see Lemma~\ref{lem:VolConstraints} and the bottom part of Figure~\ref{fig:choiceofw}. In fact, a concave hexagon yields a structure of cylinder type, for instance see Figure \ref{fig:wallcasetwo}. It is worth mentioning that the sequence of sides is invariant under small deformations and \(\glplus\) action. Before continuing we need to introduce the following conventions for our convenience.

\medskip

\textit{Convention.} We recall the following convention that we tacitly used so far. As usual we shall use bold Latin letters to denote elements of \(\mathbb{C}\) as points in the plane and then use italic letters to denote the elements of \(\mathbb{C}\) as complex numbers or vectors. Therefore, \( \pto P + v \) represents the point obtained by translating \( \pto P \) %in the direction determined 
by \( v \).

\medskip

\textit{Convention.} Throughout the present section and whenever it will be convenient we shall denote triangles with symbols \(\Delta\) and \(\nabla\). Although the former is of general use the latter may seem unusual. The reason of this choice will be clear later on along our discussion. In both cases, if \(a,b\,\in\mathbb C\) are two non-zero and non real-collinear complex numbers, we denote by \(\nabla(\,a,\,b\,)\) or \(\Delta(\,a,\,b\,)\) a triangle in \(\mathbb C\) isometric to the unique triangle with vertices at \(\{\,0,\,a,\,b\,\}\). 

\medskip

\noindent Let \(\pto B\in\mathbb C\) be any point and consider \(\pto B\,+\, u\) and \(\pto B\,+\, v\). Since \(u,\,v\) are nonzero and distinct, these points are pairwise distinct. Moreover, since \(\mathfrak{Im}(\,\overline u\, v\,)=-1\neq0\) these points are not aligned and they thus correspond to the vertices of some triangle, say \(\nabla(\,u,\,v\,)\), with signed area equal to \(-1/2\). Let \(w\in\mathbb C\) and then define another collection of points as follows:
\begin{equation}
    \pto W=\pto B\,+\,w, \quad \pto W\,+\,v=\pto B\,+\,(\,v\,+\,w\,), \quad \pto W\,-\,u=\pto B\,+\,(\,-u\,+\,v\,+\,w\,).
\end{equation}

\noindent It is not hard to see that \(\pto W,\,\pto W\,+\,v,\,\pto W\,-\,u\) determine a triangle isometric to \(\nabla(\,u,\,v\,)\). We next define the following polygonal, say \(\mathcal C(\,w\,)\), of segments
\begin{equation}\label{eq:chain}
    \pto B\,\longmapsto\,\pto W\,\longmapsto\,
    \pto B\,+\,u\,\longmapsto\,\pto W\,+\,v\,\longmapsto\,
    \pto B\,+\,v\,\longmapsto\,\pto W\,-\,u\,\longmapsto\,
    \pto B.
\end{equation}

\noindent Notice that for \(w=0\) this chain degenerates to a quadrilateral. In principle, \(\mathcal{C}(\,w\,)\) may be self-intersecting and thus may not necessarily bound any polygon. Moreover, if it bounds a polygon, namely a hexagon, then this may be concave or even a parallelogram with two pairs of aligned sides. Notice that the shape of \(\mathcal{C}(\,w\,)\) depends only on \(w \in \mathbb{C}^*\). In fact, since \(u\) and \(v\) are given, the points \(\pto B\,+\,u\) and \(\pto B\,+\,v\) are uniquely determined by the choice of the starting point \(\pto B\). In the same fashion, \(\pto W, \pto W\,+\,v,\) and \(\pto W\,-\,u\) all depend on the choice of \(\pto B\) and \(w \in \mathbb{C}^*\). However, if we replace \(\pto B\) with any other point, say \(\pto B + z\) for some \(z \in \mathbb{C}\), then the resulting chain starting from \(\pto B\, +\, z\) coincides with \(\mathcal{C}(\,w\,)\, +\, z\), as every point of the chain is shifted by the same translation. Therefore, the shape of the chain, and consequently its property of bounding an embedded polygon, depends only on the choice of \(w\). We shall determine later on necessary and sufficient conditions on \(w\) to ensure that \(\mathcal C(\,w\,)\) bounds an embedded hexagon, say \(H(\,u,\,v,\,w\,)\subset \mathbb C\subset\cp\) with no pairs of sides aligned. In the case \(w\) is any of such values, then \(\Omega=\cp\setminus\,\textnormal{int}\,\Big(\,H(\,u,\,v,\,w\,)\,\Big)\) is a domain of the pole whose boundary is the polygonal \(\mathcal C(\,w\,)\) made of three pairs of parallel sides for the standard Euclidean metric by design. %By design \(H(\,u,\,v,\,w\,)\) has three pairs of parallel sides 
Parallel edges of \(\partial\Omega\) can be glued by using translations and the resulting space is a translation surface of degenerate type in \(\mathcal H_1(1,1,-2)\) with absolute period prescribed by \(\rho\).

\smallskip

\subsection{Embedded hexagons}\label{ssec:embdhex} To understand the chambers of structures with a degenerate core, it is first necessary to determine for which values of \(w\) the chain described above defines a convex hexagon without pairs of aligned sides. We determine explicit conditions and from these we can directly deduce the openness of chambers of degenerate type. We begin with the following standard fact from Euclidean geometry whose proof is left to the reader.

\begin{lem}\label{lem:baselemma}
    Let \(H\subset \mathbb C\) be a hexagon with piecewise geodesic boundary with respect to the standard Euclidean metric. Then \(H\) is convex with no pairs of sides aligned if and only if all interior angles have magnitude less than \(\pi\). 
\end{lem}

\begin{rmk}
    The fact that a chamber of degenerate type is open can be deduced from the following heuristic argument. Suppose that for a given value of \(w\), the chain \(\mathcal{C}(\,w\,)\) described above defines a convex hexagon without pairs of aligned sides. This condition is equivalent to requiring that all interior angles have a measure strictly less than \(\pi\). It can be observed that small perturbations of \(w\) lead to small perturbations of the chain \(\mathcal{C}(\,w\,)\) and, in particular, sufficiently small perturbations produce hexagons whose interior angles all remain less than \(\pi\). Therefore, the choice of \(w\) is an open condition.
\end{rmk}

\begin{figure}[!ht]
    \centering
    \includegraphics[width=0.75\linewidth]{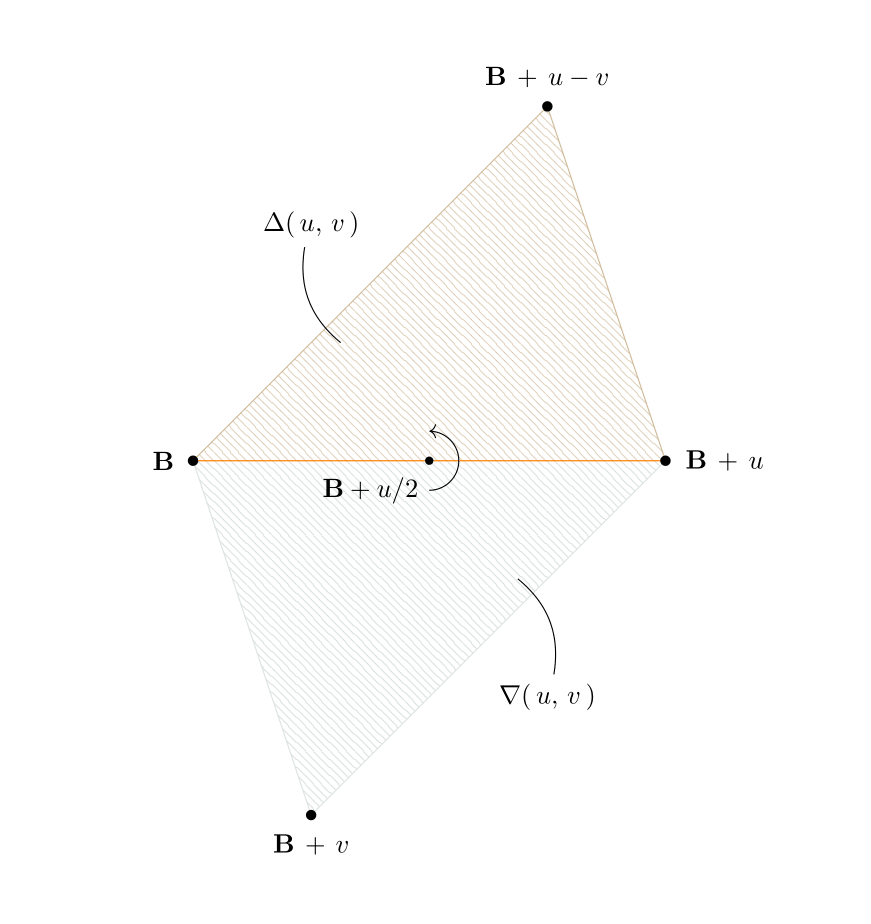}
    \caption{Schematic representation of triangles \(\nabla(\,u,\,v\,)\) and \(\Delta(\,u, v\,)\) along with the mid point of the common edge drawn in orange.}
    \label{fig:triangles}
\end{figure}

\noindent We first recall our notation for the readers' convenience. A pair of complex numbers, say \(u,\,v\in\mathbb{C}^*\), that are not real-collinear determine a triangle, say \(\nabla(\,u,\,v\,)\), in the plane, whose area is equal to one-half of \(\mathfrak{Im}(\overline{u}\,v)\). The choice of \(w\in\mathbb{C}\) uniquely determines a polygonal path as in \eqref{eq:chain}. Both the triangle and the polygonal path do not depend on the choice of the initial point, as mentioned above, and therefore we can assume that it is the origin. We define a triangle \(\Delta(\,u, v\,)\subseteq\mathbb{C}\) with vertices at \(\{\,\pto B,\, \pto B\,+\,u,\,\pto B\,+\,u- v\,\}\). Notice that \(\nabla(\,u,\,v\,)\) and \(\Delta(\,u, v\,)\) have one edge in common and they are related by a rotation of order two about the midpoint of the common edge, see Figure \ref{fig:triangles}. This justify the notation adopted according to our convention.

%\noindent With the notation above and the objects already introduced, we state the following characterization

\begin{prop}\label{prop:polychar}
    Let \(\textnormal{\textbf{W}}\in\mathbb C\) and let \(w\) be the relative period of the oriented edge \(\overline{\textnormal{\textbf{B}}\textnormal{\textbf{W}}}\). Then \(\mathcal C(\,w\,)\), the polygonal determined by \(w\), bounds a convex hexagon if and only if \(\,\textnormal{\textbf{W}} \in\Delta(\,u, v\,)\). Moreover, the hexagon is convex without pairs of sides aligned if and only if \(\,\textnormal{\textbf{W}}\) belongs to the interior of \(\Delta(\,u, v\,)\).
\end{prop}
 
\begin{proof}
    Let \(u,v\in\mathbb C^*\) be any pair of non-real collinear and let \(\nabla(\,u,\,v\,)\) and \(\Delta(\,u,\,v\,)\) be the triangles defined as above. Notice that they are both non-degenerate. Every point \(\pto W\in\Delta(\,u,\,v\,)\) yields a partition of \(\Delta(\,u,\,v\,)\) into three sub-triangles: \(\Delta_1\) with vertices at \(\{\,\pto B,\,\pto B\,+\,u,\,\pto W\,\}\), \(\Delta_2\) with vertices at \(\{\,\pto B\,+\,u,\,\pto B\,+\,u-v,\,\pto W\,\}\), and \(\Delta_3\) with vertices at \(\{\,\pto B,\,\pto W,\,\pto B\,+\, u-v\,\}\), see the top row of Figure \ref{fig:choiceofw}. Notice that if \(\pto W\in\partial\Delta(\,u,\,v\,)\) then some of these triangles are degenerate (in fact, there are at most two non-degenerate triangles, possibly one if \(\pto W\in\{\,\pto B,\,\pto B\,+\,u,\,\pto B\,+\, u-v\,\}\)). We remark that we shall describe in later sections how these cases correspond to possible degenerations.

\begin{figure}[!ht]
    \centering
    \includegraphics[width=0.925\linewidth]{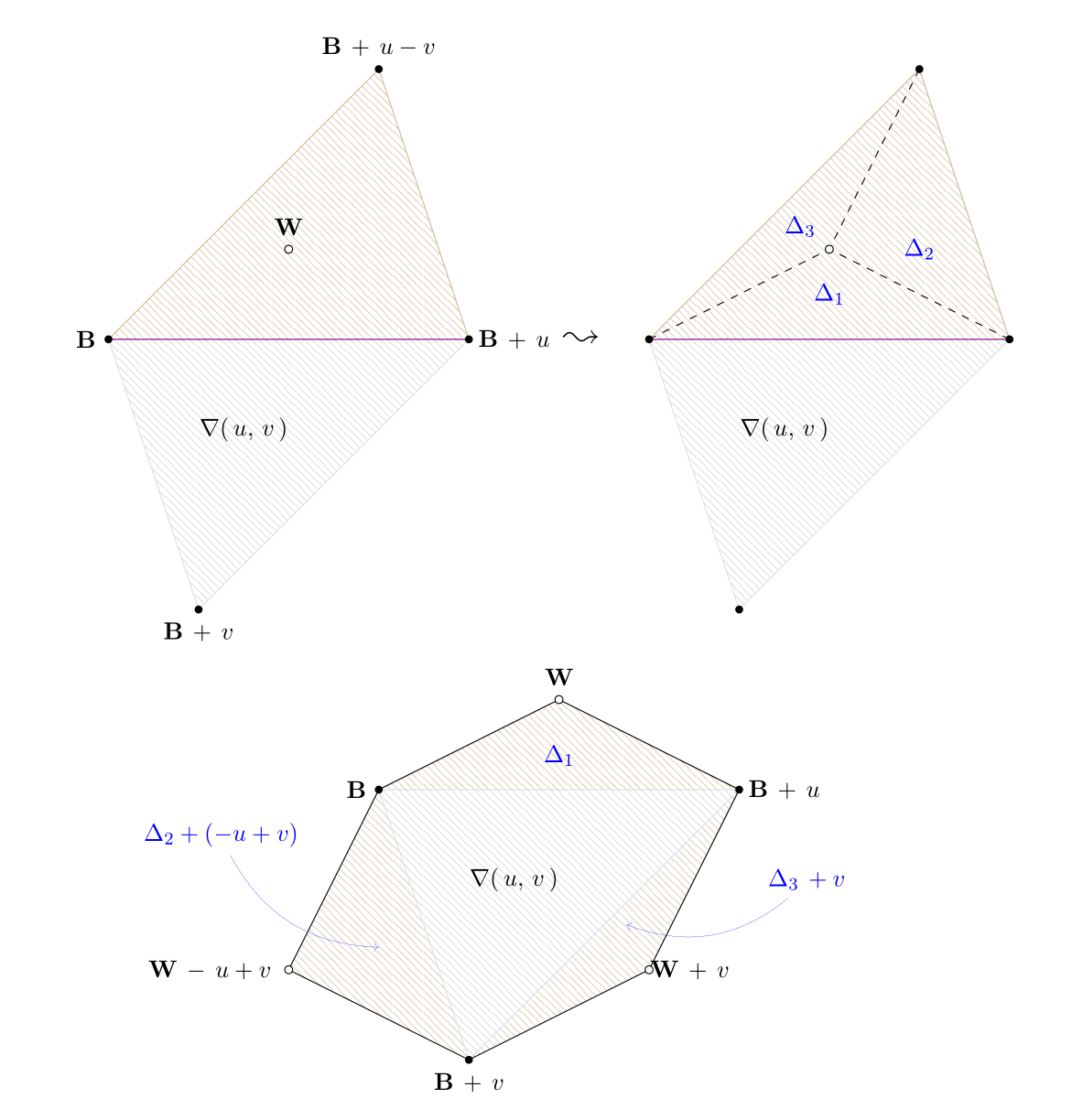}
    \caption{A choice of \( w \) inside the triangle \(\Delta(\,u,\,v\,)\) uniquely determines the polygonal curve \( \mathcal{C}(\,w\,) \). Indeed, any choice of \( \pto W \) within the triangle induces a subdivision of the triangle into three sub-triangles, which, when appropriately rearranged, form a convex hexagon. Moreover, it is easy to deduce how the shape changes if \( \pto W \in \partial \Delta(\,u,\,v\,) \).}
    \label{fig:choiceofw}
\end{figure}
\smallskip

\noindent %Every \(w\) in the interior of \(\Delta(\,u,\,v\,)\) single out three triangles. Since these none of them i say \(\Delta_1,\,\Delta_2,\,\Delta_3\) such that the 
%Every \( \pto W \) in the interior of \( \Delta(\,u,\, v\,) \) determines three sub-triangles, each with interior angles of magnitude less than \( \pi \) because they are all non-degenerate. According to our notation above we denote them as \(\Delta_1,\,\Delta_2,\,\Delta_3\).
The three triangles \(\Delta_1,\,\Delta_2,\,\Delta_3\) as defined above for each \( \pto W \) in the interior of \( \Delta(\,u,\, v\,) \) are non-degenerate.
We then define \(H(\,u,\,v,\,w\,)\) as the hexagon
\begin{equation}                
H(\,u,\,v,\,w\,)\,=\,\nabla(\,u,\,v\,)\,\cup\,\Delta_1\,\cup\,\left(\,\Delta_2\,+\,(\,v-u\,)\,\right)\,\cup\,\left(\,\Delta_3\,+\,v\,\right),
\end{equation}
see the bottom part of Figure \ref{fig:choiceofw}, and it is easy to observe that its boundary is realized by the chain of segments \(\mathcal C(\,w\,)\) as in \eqref{eq:chain}.
% with \(0\)\cm{Yongquan: is 0 here necessary? why not just stick to B?} as the starting point.
Since \(\Delta_1,\,\Delta_2,\,\Delta_3\) are non-degenerate, the angles at \(\pto W,\,\pto W\,+\,v\) and \(\pto W\,-\,u+v\) all have magnitude less than \(\pi\). Since the edges are parallel in pairs we easily conclude that all angles have magnitude less than \(\pi\). Thus \(H(\,u,\,v,\,w\,)\) is a convex hexagon with no pair of sides aligned. If \( \pto W \in \partial\Delta(\,u,\,v\,) \), then at least one of the triangles \( \Delta_i \), with \( i \in \{1,2,3\} \), must be degenerate and, at the same time, at least one must be non-degenerate. There is only one degenerate triangle if and only if \( \pto W \notin \{\,\pto B,\,\pto B\,+\,u,\,\pto B\,+u-v\,\} \), and in this case, \(\mathcal C(\,w\,)\) bounds a convex hexagon with two pairs of aligned sides. Finally, in the case that \( \pto W \in\{\,\pto B,\,\pto B\,+\,u,\,\pto B\,+\,u-v\,\} \), \(\mathcal C(\,w\,)\) degenerates into a polygonal consisting of only four segments and it bounds a parallelogram.

\smallskip

\noindent Conversely, let \(u,v\in\mathbb C^*\) be a pair of non-real collinear complex numbers and let \(w\in\mathbb C\). Let \(\mathcal C(\,w\,)\) be the polygonal of segments defined as in \eqref{eq:chain}. %and we assume without loss of generality that \(\pto b_1=0\)
Clearly, if \( \pto W \in \{\,\pto B,\,\pto B\,+\,u,\,\pto B\,+\,u-v\,\} \), then \( \mathcal C(\,w\,) \) is made up of only four segments and therefore bounds a parallelogram and not a hexagon. Let us assume \( \mathcal C(\,w\,) \) bounds a non-degenerate hexagon, say \(H(\,u,\,v,\,w\,)\).
%, then the polygonal is as follows:\begin{equation}\label{eq:chain2} \pto 0\,\longmapsto\,\pto w\,\longmapsto\,\pto u\,\longmapsto\,\pto w+v\,\longmapsto\,\pto v\,\longmapsto\,\pto w-u+v\,\longmapsto\,\pto 0. \end{equation}
In the hexagon defined by this polygonal, we can identify the triangle \( \nabla(\,u,\,v\,) \) with vertices at \( \{\,\pto B,\,\pto B\,+\,u,\,\pto B\,+\,v\,\} \), which is necessarily non-degenerate because \( u, v \) are not real-collinear. Furthermore, inside \(H(\,u,\,v,\,w\,)\) we can identify three additional triangles \( \Delta'_1,\,\Delta'_2,\,\Delta'_3 \). Up to relabeling the triangles, we can assume that \( \Delta'_1 \) has vertices at \( \{\,\pto B,\,\pto B\,+\,u,\,\pto W\,\} \) and set \( \Delta_1=\Delta'_1 \). Similarly, we can assume that \( \Delta'_2 \) has vertices at \( \{\,\pto B,\,\pto B\,+\,v,\,\pto W\,-\,u+v\,\} \) and set \( \Delta_2=\Delta'_2\,+\,(u-v) \). Finally, \( \Delta'_3 \) has vertices at \( \{\,\pto B\,+\,u,\,\pto B\,+\,v,\,\pto W\,+\,v \,\} \) and we set \( \Delta_3=\Delta'_3\,-\,v \). Using the classical rules of Euclidean geometry, it is not difficult to show that \( \Delta_1,\,\Delta_2,\,\Delta_3 \) define the triangle \( \Delta(\,u,\,v\,) \) and that \( \pto W\in\textnormal{int}\,\big(\,\Delta(\,u,\,v\,) \,\big) \) if and only if no triangle is degenerate, see Figure \ref{fig:choiceofw}. In fact, if any one of the triangles is degenerate, then one of the vertices in \( \{\,\pto W,\,\pto W\,+\,v,\,\pto W\,-\,u+v\,\} \) belongs to \( \partial \nabla(\,u,\,v\,) \), and as a consequence, its translate belongs to \( \partial \Delta(\,u,\,v\,) \).
\end{proof}

\smallskip

\noindent As a straightforward corollary we can derive conditions on \(w\in\mathbb C\) for the hexagon \(H(\,u,\,v,\,w\,)\) to be embedded and non-degenerate. %We define \(\Delta(\,u,v\,)\) as a chamber of degenerate type.

\begin{cor}
    The polygonal \(\mathcal C(\,w\,)\) bounds a convex hexagon if and only if the following conditions hold simultaneously:
    \begin{equation}\label{eq:boundarycond}
        \mathfrak{Im}\,\big(\,\overline u\,w\,\big)\ge0, \qquad \mathfrak{Im}\,\big(\,(\overline{w-v})\,u\,\big)\ge0 \qquad \textnormal{and} \qquad \mathfrak{Im}\,\big(\,\overline{w}\,(u-v)\,\big)\ge0.
    \end{equation}
    Moreover, \(\mathcal C(\,w\,)\) bounds a convex hexagon with no pairs of side aligned if and only if all of the inqualities above are strict.
\end{cor}

\begin{rmk}
    It is fundamental to observe that the construction above is general in the sense that it holds for any pair \(u,v\in\mathbb{C}^*\) that are not real-collinear. Different pairs clearly define different triangles. In our setting, \(u\) and \(v\) are the absolute periods of two simple curves with intersection index one, meaning they are the absolute periods of a positively oriented basis of homology, and different bases give rise to different triangles. We must take all of these into account.
\end{rmk}

%\smallskip

\subsection{Chambers of degenerate type}\label{sec:chambdegtype}
We could define \(\Delta(\,u,v\,)\) as the chamber of degenerate type associated to the pair \(\{\,u,v\,\}\). However, it is not difficult to see that some other pairs could give rise to a chamber parametrizing the same structures. For example, starting with any pair of sides of \(\Delta(\,u,v\,)\), one can go through the construction above to obtain isometric triangles that parametrize the same chamber. We now make this statement precise.
%We may be tempted to define \(\Delta(\,u,v\,)\) as a chamber of degenerate type associated to the pair \(\{\,u,v\,\}\). On the other hand, our choice of \(\Delta(\,u,v\,)\) is purely conventional as the triangles \(\Delta(\,u,v\,)\,+\,v\) and \(\Delta(\,u,v\,)\,-\,u+v\) can be both equally entitled as chambers of degenerate type associated to the pair \((\,u,v\,)\) because they all parametrize the same structures of degenerate type. In fact, it is straightforward to show that \(\pto W \in \Delta(\,u,v\,)\) if and only if \(\pto W\,+\,u\in \Delta(\,u,v\,)\,+\,v\). Similarly, \(\pto W \in \Delta(\,u,v\,)\) if and only if \(\pto W\,-\,u+v\in \Delta(\,u,v\,)\,-\,u+v\). We show that these triangles actually arise from different pairs. We first introduce with the following terminology.

\begin{defn}\label{def:charpair} 
    Let \(\rho\colon\shomolzoo\longrightarrow \mathbb C\) be a representation and let \(\Gamma=\textnormal{Im}(\,\rho\,)\). A \textit{characteristic triple} is a triple \((\,u,v,z\,)\) of absolute period in \(\Gamma\) such that \(u-v-z=0\) and each pair \(\{\,u,v\,\}\), \(\{\,-v,u-v\,\}\) and \(\{\,v-u,-u\,\}\) is a (ordered) pair of absolute periods of a positively oriented basis of homology. 
\end{defn}

\noindent The next claim is quite immediate and it can be deduced from the proof of Proposition \ref{prop:polychar}. As a direct consequence we will derive Proposition \ref{prop:degchamb} below. The following claims hold.

\begin{lem}
    Let \((\,u,v,z\,)\) be a characteristic triple. Then \(\Delta(\,u,v\,)\), \(\Delta(\,-v,u-v\,)\) and \(\Delta(\,v-u,-u\,)\) are all isometric and parametrize the same structures of degenerate type. Moreover, if we choose base points so that \(\nabla(\,u,v\,)\), \(\nabla(\,-v,\,u-v\,)\), and \(\nabla(\,v-u,\,-u\,)\) overlap, then \(\Delta(\,-v,u-v\,)=\Delta(\,u,v\,)\,-\,u+v\) and \(\Delta(\,v-u,-u\,)=\Delta(\,u,v\,)\,+\,v\)
\end{lem}
%\cm{Yongquan: these triangles are based at what points?}

\begin{proof}[Sketch of the proof]
    Let \((\,u,v,z\,)\) be a characteristic triple and let us consider first the pair \((\,u,v\,)\). Then every point \(\pto B\in\C\) yields the triangles \(\nabla(\,u,v\,)\) and \(\Delta(\,u,v\,)\). Let us now consider \((\,-v,u-v\,)\) and set \(\pto B'=\pto B\,+\,v\). Then \(\nabla(\,-v,u-v\,)=\nabla(\,u,v\,)\) and \(\Delta(\,-v,u-v\,)=\Delta(\,u,v\,)\,-\,u+v\). Similarly, we may consider \((\,v-u,-u\,)\) and set \(\pto B''=\pto B\,+\,u\). Then \(\nabla(\,v-u,-u\,)=\nabla(\,u,v\,)\) and \(\Delta(\,v-u,-u\,)=\Delta(\,u,v\,)\,+\,v\). 
\end{proof}

\begin{prop}\label{prop:degchamb}
Let \(\rho\) be a representation with negative volume and let \(\mathfrak{ML}(\,\rho,\,\mu\,)\) be the isoperiodic fiber in the marked stratum \(\mathcal H_1(\,\mu\,)\), where \(\mu=(1,1,-2)\). Then there exists a unique chamber of degenerate type for every characteristic triple \((\,u,v,z\,)\) whose closure can be identified with \(\Delta(\,u,v\,)\). Equivalently, it can be identified with \(\Delta(\,-v,u-v\,)\) or \(\Delta(\,v-u,-u\,)\).
\end{prop}

\begin{defn}\label{def:degchamb}
     We denote a chamber of degenerate type by \(\Delta(\,u,v,u-v\,)\).
\end{defn}

\noindent From now on, we shall use \(\nabla(\,u,\,v,\,u-v\,)\) to denote any of \(\nabla(\,u,v\,)\), \(\nabla(\,-v,\,u-v\,)\), or \(\nabla(\,v-u,\,-u\,)\), as they all represent the same triangle defined using different bases. In contrast to Definition \ref{def:degchamb} we refer to \(\nabla(\,u,\,v,\,u-v\,)\) as an \textit{anti-chamber}, since it is image of a chamber of degenerate type under a rotation of angle \(\pi\); we refer once again to Figure \ref{fig:triangles}. In what follows each of these triangles is regarded as a particular \textit{configuration} of the anti-chamber. In fact, it will sometimes be convenient to refer to a specific configuration determined by a characteristic triple, \textit{c.f.} \S\ref{ssec:jumpchamb}. %Clearly, analogous considerations apply to the other two triangles, with the appropriate modifications.

\smallskip

%\noindent Although we have reduced the study to the case of representations with image in \(\mathbb{Z}[\,i\,]\) and negative volume equal to \(-1\), we have stated the definition and the respective proposition in purely general terms for simplicity, thus avoiding a more complicated statement. 

\subsection{Changing the marking}\label{ssec:chancingthemarking}
The structures just realized naturally come equipped with a marking on their zeros determined by the choice of marking the starting point of the chain \(\mathcal C(\,w\,)\) with \(\pto B\). In the present section we analyze what happens if we had chosen the opposite basis in homology \textit{i.e.} \(\{\,\alpha^{-1},\,\beta^{-1}\,\}\). Clearly, the representation being fixed, this new basis yields a new pair of non-zero complex numbers, that is \(\{\,-u,-v\}\). 

\smallskip

\noindent We thus proceed in the same way: We label the starting point of the chain \eqref{eq:chain} with \(\pto B\) and then determine the parameter space for the point \(\pto W\). It can be shown that a choice of \(w\in\mathbb C\) yields a polygonal \(\mathcal C(\,w\,)\) as in \eqref{eq:chain} that bounds a convex embedded hexagon if and only if \(-w \in \Delta(\,u,v,u-v\,)\). The key observation in this case is that if the oriented segment \(\overline{\textnormal{\textbf{B}}\textnormal{\textbf{W}}}\) has relative period \(w\) in the former construction now it has opposite relative period. As a consequence this latter structure realized by using \(\{\,-u,\,-v\}\) in place of \(\{\,u,\,v\}\) must have a different marking. See Figure \ref{fig:marking}. The following statement holds.

\begin{figure}[!ht]
    \centering
    \includegraphics[width=1\linewidth]{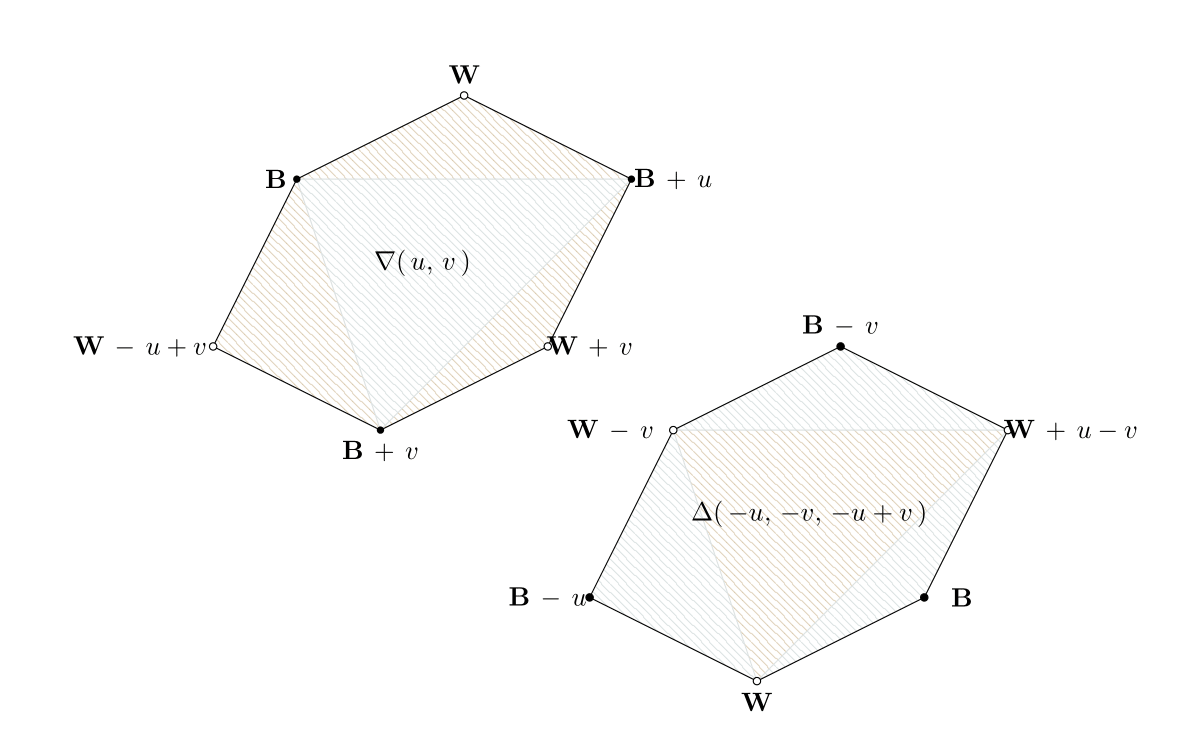}
    \caption{Different markings determine isometric hexagons that do not differ by a translation. Clearly, these hexagon are indistinguishable once we forget the marking. The top left hexagon has been realized by using \(\{\,u,v\,\}\) whereas the bottom right has been realized by using \(\{\,-u,-v\,\}\).}
    \label{fig:marking}
\end{figure}

%\smallskip

%\noindent Let \(\pto W\in\mathbb C\) be any point and let \(\nabla(\,u,\,v\,)\) be consider the triangle with vertices at \(\{\,\pto W,\,\pto W\,-\,u,\,\pto W\,-\,v\,\}\). Any choice of \(w\in\mathbb C\) thus determines a polygonal \(\mathcal C(\,w\,)\) as in \eqref{eq:chain}. Even in this case such a polygonal may be self-intersecting and it may not bound a convex hexagon. We thus want to determine conditions for this polygonal to be embedded.  

%\noindent At this point, we can proceed in two ways: we can either study the case \textit{ex novo} in detail, as done in the previous sections, and deduce conditions on \(w\), or we can rely on the previously studied case. For the sake of brevity and to avoid unnecessary repetition, we opt for the second approach. The key observation in this case is that if the oriented segment \(\overline{\textnormal{\textbf{B}}\textnormal{\textbf{W}}}\) has relative period \(w\) then the oriented segment \(\overline{\textnormal{\textbf{W}}\textnormal{\textbf{B}}}\) has the opposite relative period. The following statement holds. 

\begin{prop}\label{prop:oppmarkingdeg}
    Every choice of a basis in homology yields a well-defined marking. Conversely, the choice of a marking yields a preferred basis in homology. Moreover, opposite markings correspond to opposite bases in homology.
\end{prop}

\begin{proof}
    The first claim is already subsumed in \S\ref{ssec:embdhex}. The representation \(\rho\) being fixed, every basis in homology yields a pair of non-zero complex numbers \(\{\,u,v\,\}\) and the hence a triangle \(\Delta(\,u,v,u-v\,)\). Any choice of a base point \(\pto B\in\C\), and \(w\in\nabla(\,u,v,u-v\,)\) determine a convex hexagon \(H(\,u,v,w\,)\) bounded by a chain as in \eqref{eq:chain}. The corners of this chain being already labeled yields a marking.
    \smallskip

    \noindent Conversely, suppose we are given a convex hexagon whose sides are parallel in pairs. There are thus two ways to label the vertices with the letters \(\pto B\) and \(\pto W\) in an alternating fashion. In fact, there are only two possible markings. The vertices marked with \(\pto B\) in one of the two sequences determine the anti-chamber \(\nabla(\,u,v,u-v\,)\), which in turn determines a basis \(\{\,u, v\,\}\), and hence a basis in homology. One can observe that the two sequences determine isometric anti-chambers that differ by a rotation of \(\pi\). Therefore, different markings produce distinct, opposite bases in homology.
\end{proof}

%that determining the starting point of a saddle connection, having fixed its endpoint, results in determining the end point of a saddle connection with opposite relative. What one can observe is that a point \(\pto B\) determines a convex hexagon, possibly with two pairs of sides aligned, if and only if it belongs to a triangle with vertices at \(\{\pto W,\,\pto W\,+\,v,\, \pto W\,-\,u+v\}\). It can be checked that this is isometric to \(\Delta(\,u,\,v\,)\).

\smallskip

\noindent Therefore, choosing the marking means choosing the vertices of the reference anti-chamber \(\nabla(\,u,\,v,\,u-v\,)\) used to construct the convex hexagon. As can also be seen from the Figure \ref{fig:marking}, the two hexagons are clearly isometric and differ by a translation, but as marked structures they are different because no translation maps the white points to the black ones. As a direct consequence of Proposition \ref{prop:oppmarkingdeg} we have the following:

\begin{cor}\label{cor:degchambproj}
\(\Delta(\,u,\,v,\,u-v\,)\) and \(\Delta(\,-u,\,-v,\,-u+v\,)\) are distinct chambers of degenerate type in the marked leaf \(\mathfrak{ML}(\,\rho,\mu\,)\) and project to the same chamber in the unmarked leaf \(\mathfrak{L}(\,\rho,\mu\,)\) via the forgetful map.
\end{cor}

\smallskip

\subsection{Degeneration towards the boundary of \(\Delta(\,u,\,v,\,u-v\,)\)}
Let \(u,v\in\mathbb C^*\) be two non-$\mathbb{R}$-collinear complex numbers such that \(\mathfrak{Im}(\,\overline uv\,)=-1\) and let \(\Delta(\,u,\,v,\,u-v\,)\subset \mathbb C\) be a chamber of degenerate type as defined in \S\ref{sec:chambdegtype}. In the present section we aim to determine the behavior of a sequence of structures of degenerate type that converges toward the boundary of \(\Delta(\,u,\,v,\,u-v\,)\). The boundary of the triangle is a \(1\)-simplex consisting of three vertices and three edges. We thus divide the study into two cases: in the first, we assume that the sequence converges to a vertex of the simplex, while in the second, we assume that it converges to a point in the interior of an edge. We remark that aspects of these degenerations have already been discussed in the previous section.

\subsubsection{Degeneration towards the vertices of \(\Delta(\,u,\,v,\,u-v\,)\)} Let \((\,\pto W_n\,)\subset \Delta(\,u,\,v,\,u-v\,)\) be a sequence converging to any one of the vertices of \(\Delta(\,u,\,v,\,u-v\,)\) and let \(w_n\) be the relative period of the oriented segment \(\overline{\textnormal{\textbf{B}}\textnormal{\textbf{W}}}_n\). Then three distinct degenerations may happen according to the limit point. For simplicity we list them as follows. See Figure \ref{fig:degcorner} for the second case listed below:
\begin{itemize}
    \item[1.] The limit point is \(\pto B\). In this case \(w_n\longrightarrow 0\) and the polygonal \(\mathcal C(\,w_n\,)\) defined as in \eqref{eq:chain} degenerates to a parallelogram bounded by the chain
    \begin{equation}\label{eq:chaindeg1}
    \pto B\,      \longmapsto\, \pto B\,+\,u\,  \longmapsto\,
    \pto B\,+\,v\,\longmapsto\, \pto B\,-\,u+v\,\longmapsto\,
    \pto B.        
    \end{equation}
    because \(\pto B=\pto W\).
    \smallskip
    \item[2.] The limit point is \(\pto B\,+\,u\). In this case \(w_n\longrightarrow u\) and the polygonal \(\mathcal C(\,w_n\,)\) defined as in \eqref{eq:chain} degenerates to a parallelogram bounded by the chain
    \begin{equation}\label{eq:chaindeg2}
    \pto B\,        \longmapsto\, \pto B\,+\,u\,  \longmapsto\,
    \pto B\,+\,u+v\,\longmapsto\, \pto B\,+\,v\,\longmapsto\,
    \pto B.        
    \end{equation}
    because \(\pto W=\pto B\,+\,u\). 
    \smallskip
    \item[3.] The limit point is \(\pto B\,+\,u-v\). In this latter case \(w_n\longrightarrow u-v\) and the polygonal \(\mathcal C(\,w_n\,)\) defined as in \eqref{eq:chain} degenerates to a parallelogram bounded by the chain
    \begin{equation}\label{eq:chaindeg3}
    \pto B\,      \longmapsto\, \pto B\,+\,u-v\,  \longmapsto\,
    \pto B\,+\,u\,\longmapsto\, \pto B\,+\,v\,    \longmapsto\,
    \pto B.        
    \end{equation}
    because \(\pto W=\pto B\,+\,u-v\). 
    \smallskip
\end{itemize}

\begin{figure}[!ht]
    \centering
    \includegraphics[width=1\linewidth]{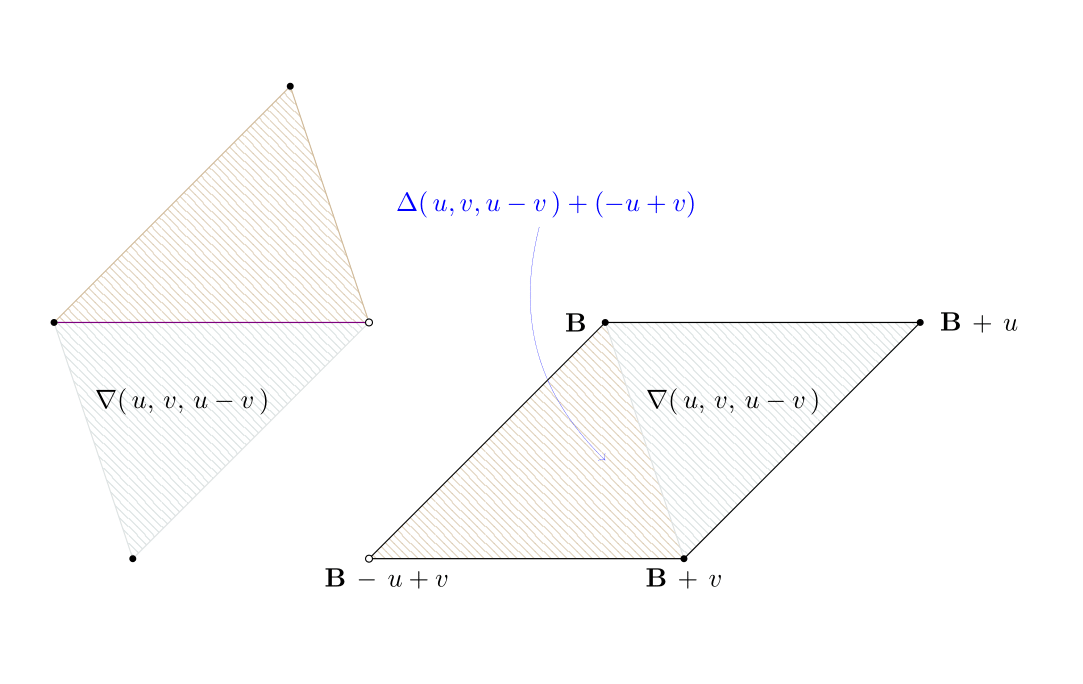}
    \caption{Degeneration in \(\Omega\mathcal M_1(1,1,-2)\).}
    \label{fig:degcorner}
\end{figure}

\smallskip

\noindent In all cases listed above, by gluing opposite sides by using translations, it is easy to see that we get a structure in the minimal stratum \(\Omega\mathcal M_1(2,-2)\) with empty core and hence negative volume.

\smallskip

\begin{figure}[!htp]
    \centering
    \includegraphics[width=1.125\linewidth]{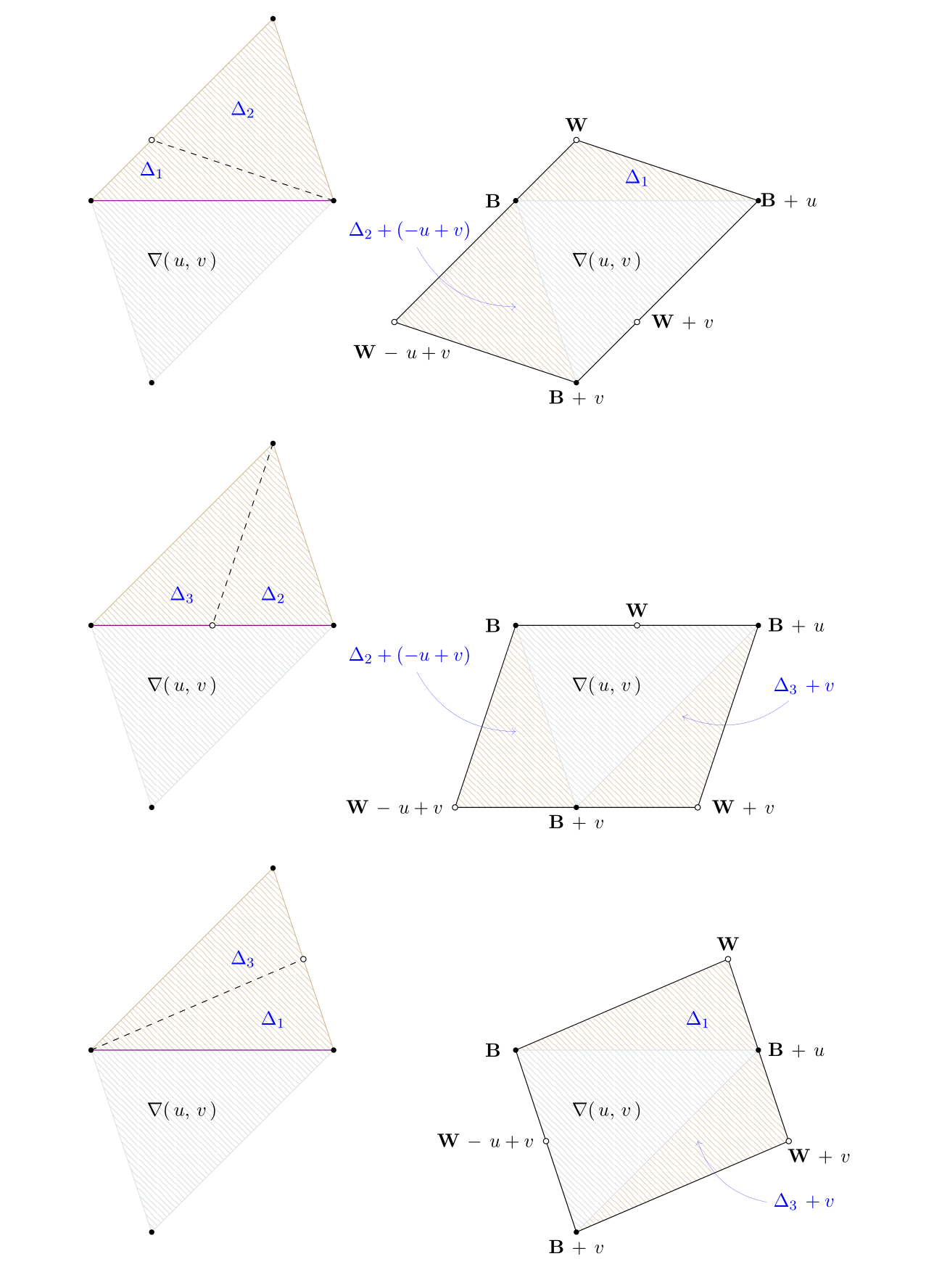}
    \caption{Degenerations towards the boundary of \(\partial\Delta(\,u,\,v\,)\).}
    \label{fig:degboundary}
\end{figure}

\subsubsection{Degeneration towards the edges of \(\Delta(\, u,\,v,\,u-v\,)\)} We next consider degenerations towards to edges of the triangle \(\Delta(\,u,\,v,\,u-v\,)\). We introduce the following angles: We define \(\theta_i\) to be the interior angle of \(\Delta_i\) at \(\pto W\). Clearly \(\theta_1+\theta_2+\theta_3=2\pi\). The following holds.

\begin{lem}\label{lem:anglelem}
    A sequence \((\,\textnormal{\textbf{W}}_n\,)\) converges to the boundary point of \(\Delta(\,u,\,v,\,u-v\,)\) which is not a vertex if and only if exactly one of the angles \(\theta_1,\,\theta_2,\,\theta_3\) converges to \(\pi\).
\end{lem}

\begin{proof}
    It is easy to observe that if \(\pto W\) belongs to \(\partial\Delta(\,u,\,v,\,u-v\,)\) and it is not a vertex then one and only one of the inequalities in \eqref{eq:boundarycond} is an equality. As a consequence two vectors are aligned and they form an angle of magnitude \(\pi\).
\end{proof}

\noindent Whenever exactly one of the angles \(\theta_i\) equals \(\pi\), the polygonal chain \eqref{eq:chain} defines a hexagon with two pairs of aligned sides. We remove the interior of this hexagon and, after gluing the sides appropriately via translations, the resulting structure belongs to the marked stratum \(\mathcal{H}_1(1,1,-2)\). By construction, the domain of the pole features two consecutive saddle connections forming an angle of \(\pi\), as illustrated in Figure~\ref{fig:degboundary}. Thus, the resulting structure is transitional, in the sense of Definition~\ref{def:transitional} and Proposition~\ref{prop:transitionalcharac}, and therefore lies at the boundary between two distinct chambers.

\medskip

\subsection{Beyond the boundary of \(\Delta(\,u,\,v,\,u-v\,)\).}\label{ssec:beyondboundary} We now allow \(\pto W\) to cross the boundary of a chamber of degenerate type \(\Delta(\,u,\,v,\,u-v\,)\) and observe how the corresponding structure further degenerates. In fact, although it is clear from the previous section that converging to the boundary results in a structure on the boundary of a chamber of cylinder type (cf.\ Figure~\ref{fig:wallcasetwo}), it is not immediately apparent to which chamber these structures belong. In this section, we aim to determine which chamber is accessed.

\smallskip

\begin{figure}[!ht]
    \centering
    \includegraphics[width=1\linewidth]{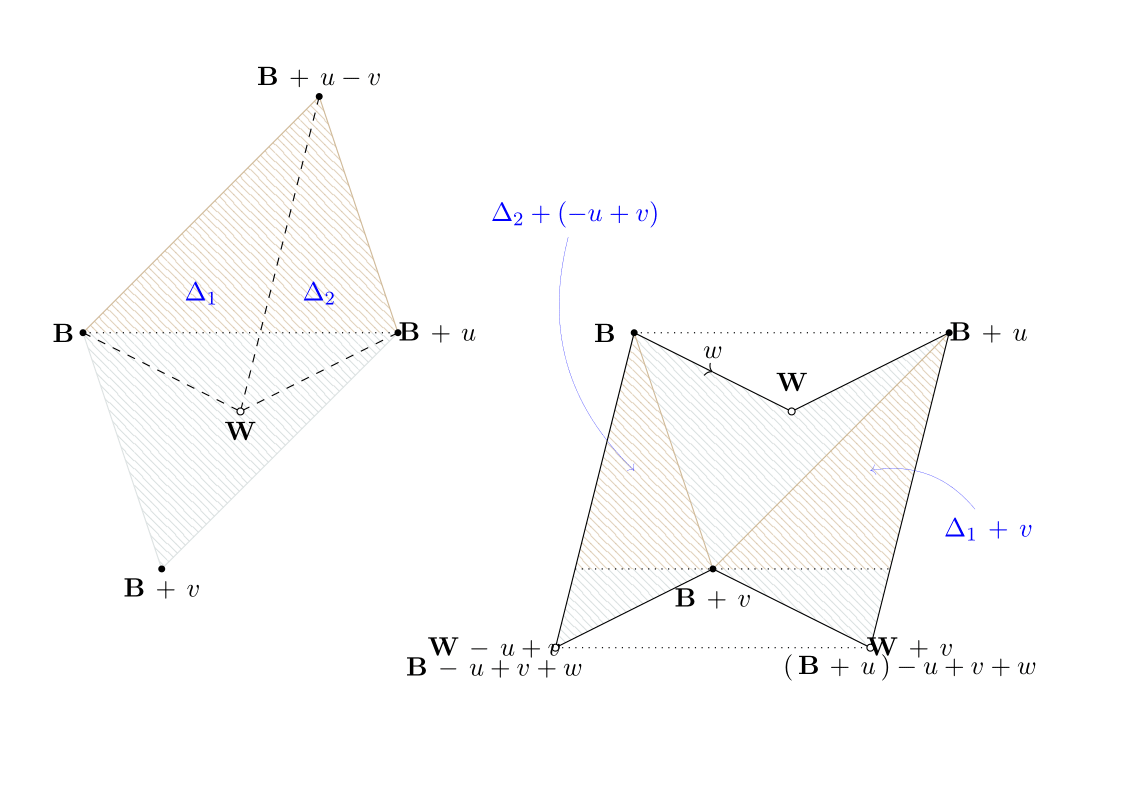}
    \caption{Concave hexagon with two opposite angles greater than \(\pi\) that arises when \(\pto W\) belongs to \(\nabla(\,u,\,v\,)\). On the right, if one cut along the dotted segment connecting $\pto B$ and $\pto B+u$ and the one connecting $\pto W-u+v$ and $\pto W+v$, we obtain two excess triangles that glue to be a parallelogram.}
    \label{fig:concavehex}
\end{figure}

\noindent We consider the case \(\pto W\) crosses the edge with period \(u\), namely the edge of \(\Delta(\,u,\,v,\,u-v\,)\) that joins the points \(\pto B\) and \(\pto B\,+\,u\). The other cases works \textit{mutatis mutandis} and hence can be easily understood. The basic idea is the same as the one used above in the previous sections. The point \( \pto W \) determines a concave quadrilateral, say \(\textnormal Q(\,u,v,w\,)\), bounded by the chain
\begin{equation}
    \pto B\,\longmapsto\,\pto W\,\longmapsto\,\pto B\,+\,u\,\longmapsto\,\pto B\,+\,v\,\longmapsto \pto B.
\end{equation}
The point \(\pto W\) also yields two triangles, say \( \Delta_1 \) and \( \Delta_2 \), which, once translated, together with \( \textnormal Q(\,u,v,w\,) \) form a concave hexagon with two angles greater than \( \pi \) (see Figure~\ref{fig:concavehex}). However, unlike the previous cases, these triangles now overlap with \( \nabla(\,u,\,v,\,u-v\,) \). As we will see, the overlapping region has a well-defined meaning. 

\smallskip

\noindent We shift these \(\Delta_1\) and \(\Delta_2\) as shown in Figure \ref{fig:concavehex} and the resulting shape is a concave hexagon, say \(H(\,u,v,w\,)\) with two opposite angles greater than \(\pi\). We thus consider the unbounded region bounded by the perimeter of \(H(\,u,v,w\,)\) and glue the opposite by using translation as usual to get a translation surface in \(\mathcal H_1(1,1,-2)\) with relative period \(w\) and fixed marking.

\smallskip

\noindent We claim that this gives a structure of cylinder type in the chamber \(\textnormal{C}(\,-u\,)\), with parameter \(w\). Indeed, the resulting structure can also be obtained by removing the interior of a parallelogram, say \(Q\), with sides \(u\) and \(-u+v+w\) (see the outermost outline for the shape on the right in Figure~\ref{fig:concavehex}), and gluing another parallelogram \(P\) with sides \(u\) and \(w\) (obtained by combining the two excess triangles on the right in Figure~\ref{fig:concavehex}). Since \(\mathfrak{Im}(\,\overline{u}\,w\,)<0\), then the resulting structure belongs to \(\textnormal{C}(\,-u\,)\), see \eqref{eq:cylchambnegminusone} and \eqref{eq:cylchambnegminustwo}. Observe that the exterior of the right side in Figure \ref{fig:concavehex} resembles the shadowed area in Figure \ref{fig:wallcasetwo} (which depicts the case \(\mathfrak{Im}(\,\overline{u}\,w\,)>0\,\)).

\smallskip

\begin{figure}[!ht]
    \centering
    \includegraphics[width=1\linewidth]{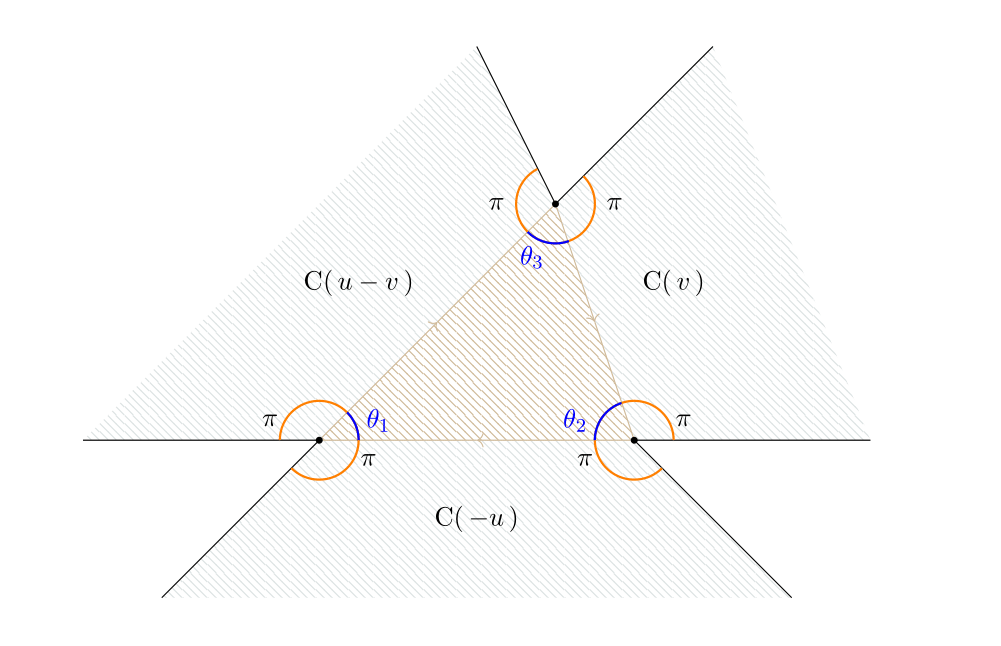}
    \caption{A chamber of degenerate type along with adjacent chambers of cylinder type. In the next section we aim to complete the picture by filling the blank parts.}
    \label{fig:degchambandadj}
\end{figure}

\smallskip

\noindent The same kind of reasoning applies \textit{mutatis mutandis} when \(\pto W\) cross the other boundaries of \(\Delta(\,u,\,v,\,u-v\,)\). In particular it is possible to see that if \(\pto W\) crosses the edge with period \(u-v\) that joins \(\pto B\) and \(\pto B\,+\,u-v\) then the structure degenerates to a cylinder type structure in \(\textnormal{C}(\,u-v\,)\) and, similarly, if \(\pto W\) crosses the edge with period \(v\) namely the edge that joins \(\pto B\,+\,u-v\) and \(\pto B\,+\,u\) then the structure degenerates to a cylinder type structure in \(\textnormal{C}(\,v\,)\). We thus have a description of any chamber of degenerate type with its adjacent chambers of cylinder type. See Figure \ref{fig:degchambandadj}.

\medskip

\subsection{Jumping from one chamber of degenerate type to another}\label{ssec:jumpchamb} In the present section we investigate how the triangles \(\nabla(\,u,\,v,\,u-v\,)\) and \(\Delta(\,u,\,v,\,u-v\,)\) change if we change the symplectic basis and its consequences. In order to abridge the notation, it is more convenient to work with a particular configuration \textit{e.g.} \(\nabla(\,u,v\,)\). Since the same discussion holds for any configuration the following arguments works for every chamber of degenerate type.

\smallskip

\noindent Let us recall our notation from \S\ref{ssec:realdegcorestruc}. We are given a non-trivial representation \(\rho\colon\shomolzoo\longrightarrow \mathbb C\) be a with negative volume equal to \(-1\) and given  two generators that represent a pair of simple closed curves that intersect only once, say \(\{\,\alpha,\,\beta\,\}\), we define \(u=\rho(\,\alpha\,)\) and \(v=\rho(\,\beta\,)\) respectively. Recall that \(u,v\) are non-zero because the representation has non zero volume. 

\subsubsection{The effect of changing basis} Two symplectic basis are always related by an element in \(\slz\). Since this latter is generated by the matrices
\begin{equation}\label{eq:gen}
    T=\begin{pmatrix}
        1 & 1\\
        0 & 1
    \end{pmatrix} \quad \text{ and } \quad
    S=\begin{pmatrix}
        0 & -1\\
        1 & 0
    \end{pmatrix}
\end{equation}
in the following it will be often sufficient to consider basis of the form \(\{\,\alpha,\,\alpha^k\,\beta\,\}\) for any \(k\in\mathbb Z\) because all other cases directly follow, \textit{e.g.} see \S\ref{ssec:adjacency}. In fact the matrix \(S\) in \eqref{eq:gen} just acts as a rotation of order four. This leads us to consider triangles \(\nabla(\,u,\,ku+v\,)\) and thus their respective opposite \(\Delta(\,u,\,ku+v\,)\). It is not hard to see that \(\nabla(\,u,\,ku+v\,)\) is a triangle bounded by the chain
\begin{equation}
    \pto B\,      \longmapsto\, \pto B\,+\,u\,  \longmapsto\,
    \pto B\,+(\,ku+v\,)\longmapsto\, \pto B,       
\end{equation}
that is it has the same basis as \(\nabla(\,u,\,v\,)\) but the vertex \(\pto B\,+\,v\) is now replaced with \(\pto B\,+\,ku+v\). Following the same argument of \S\ref{sec:chambdegtype}, \(\Delta(\,u,\,ku+v\,)\) is also a chamber of degenerate type. See Figure \ref{fig:changeofbasis}.

\begin{figure}[!ht]
    \centering
    \includegraphics[width=1\linewidth]{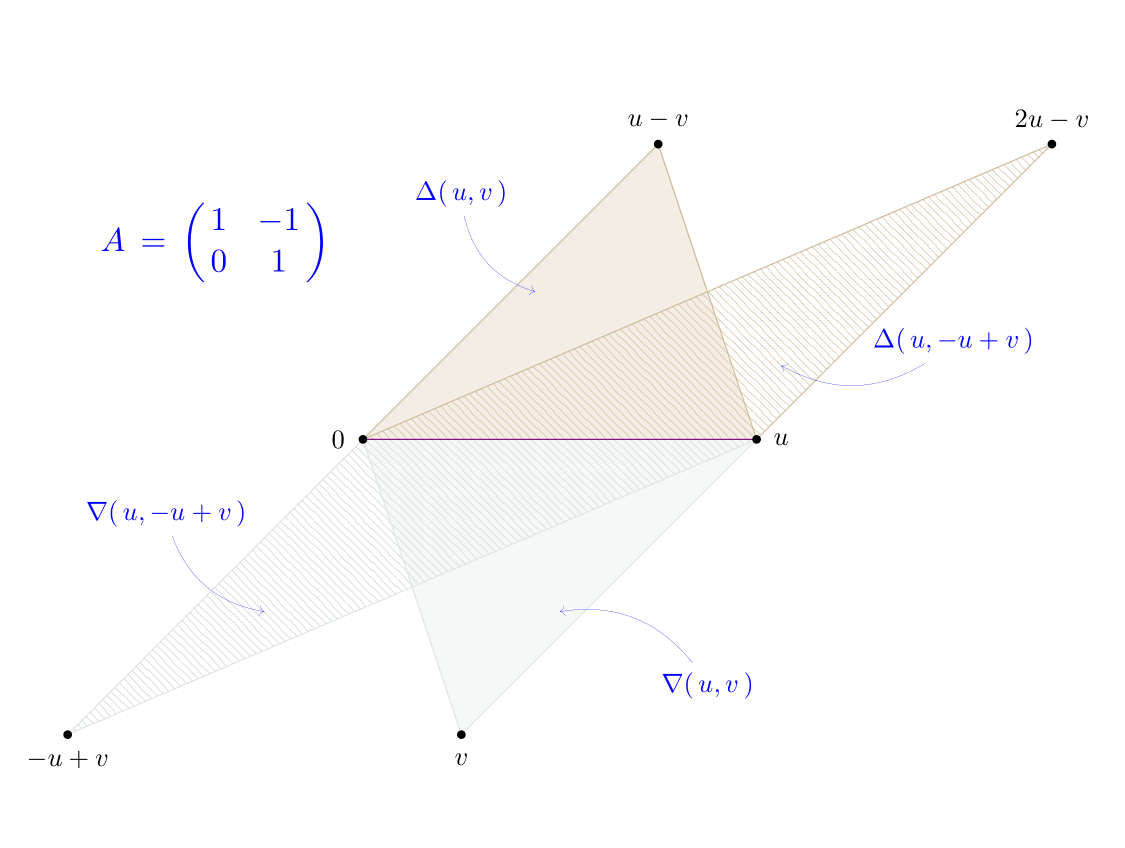}
    \caption{How triangles change under a change of basis in homology. The figure depicts the geometric interpretation of the change of basis given by \(\{\,u,v\,\} \rightarrow \{\,u, -u+v\,\}\) determined by the matrix \(A\). Here the point \(\pto B\) is the origin. Notice that, being \(\{\,u,\,v\,\}\) negatively oriented, the mapping \(A\) shears backswords the lower half-plane.}
    \label{fig:changeofbasis}
\end{figure}

\noindent  We recall for the reader's convenience that \(T\) as in \eqref{eq:gen} acts as a horizontal shear maps on \(\C\) and then two chambers of degenerate type of the form \(\Delta(\,u,\,ku+v\,)\) and \(\Delta(\,u,\,lu+v\,)\), for some \(k,l\in\mathbb Z\), differ by a horizontal shear, \textit{i.e.} \(T^{l-k}\).

\begin{rmk}\label{rmk:notisodegchamb}
    As artifact of our constructions, we may notice that \(\Delta(\,u,v\,)\) and \(\Delta(\,u,\,ku+v\,)\) do \textit{not} parametrize the same structures. In fact \(\Delta(\,u,v\,)\) parametrizes all structures of degenerate type arising by removing the interior of some convex hexagon that contains an embedded copy of \(\nabla(\,u,v\,)\), \textit{e.g.} see Figure \ref{fig:choiceofw}. Similarly, \(\Delta(\,u,\,ku+v\,)\) parametrizes all structures of degenerate type arising by removing the interior of some convex hexagon that contains an embedded copy of \(\nabla(\,u,\,ku+v\,)\). Since \(\nabla(\,u,v\,)\) and \(\nabla(\,u,\,ku+v\,)\) do not differ by a translation unless \(k=0\) -- in fact \(\nabla(\,u,\,ku+v\,)\) and \(\nabla(\,u,\,lu+v\,)\) are not isometric when \(k\neq l\) -- it follows that these chambers parametrize different structures.
\end{rmk}

\noindent We will resume from this point in \S\ref{ssec:adjacency}. In the next two paragraphs, we will consider a generic change of basis.

\smallskip

\subsubsection{Topology} Remark \ref{rmk:notisodegchamb} is not specific to changes of basis such as \(\{\,u,v\,\} \rightarrow \{\,u, ku+v\,\}\); in fact, it holds in the most general setting \(\{\,u,v\,\} \rightarrow \{\,A(\,u\,), A(\,v\,)\,\}\) where \(A\in\slz\). %The only caveat is that two triangles may be isometric if the matrix \(A\) is a rotation. In any case, two triangles differ by a translation only if \(A\) is the identity. 
We thus derive Proposition \ref{prop:degchambpara} that illuminates how negative leaves differ from positive leaves.

\smallskip

\noindent There is a natural action of \(\slz\) on the set of chambers of degenerate type as follows. We first consider a triangle \(\nabla(\,u,\,v\,)\) and an element \(A\in\slz\), the \(\slz\) action is defined as 
\begin{equation}\label{eq:slzact}
    A\cdot\nabla(\,u,\,v\,)\,=\,\nabla(\,A(\,u\,),\,A(\,v\,)\,)\,.
\end{equation}

\noindent We cannot expect this action to be faithful. Recall from \S\ref{sec:chambdegtype} that an anti-chamber has different configurations arising from different pairs determined by a particular characteristic triple; \textit{i.e.} two pairs \(\{\,u,\,v\,\}\) and \(\{\,A(\,u\,),\,A(\,v\,)\,\}\) may define the same anti-chamber for some matrix \(A\). We define \(R=STST\in\slz\) and it can be seen with a direct check that it has order three. Moreover, it is straightforward to check that
\begin{equation}
    \{\,u,\,v\,\} \underset{R}{\longrightarrow} \{\,v-u,\,-u\,\} \underset{R}{\longrightarrow} \{\,-v,\,u-v\,\} \underset{R}{\longrightarrow} \{\,u,\,v\,\}.
\end{equation}

\noindent Since \(\{\,u,v\,\}\), \(\{\,-v,u-v\,\}\) and \(\{\,v-u,-u\,\}\) all arise from the same characteristic triple \((\,u,\,v,\,u-v\,)\), it follows that the \(\slz\) action defined in \eqref{eq:slzact} has a finite kernel of order three generated by \(R\). In other words \(R\) fixes every anti-chamber by changing the its configuration (see final comments in \S\ref{sec:chambdegtype}).

%\begin{rmk}\label{rmk:antirottwo} We recall for the readers' convenience that, for every pair \(\{\,u,v\,\}\), the anti-chambers \(\nabla(\,u,\,v,\,u-v\,)\) and \(\nabla(\,-u,\,-v,\,v-u\,)\) differ by a rotation of order two. \end{rmk}

\smallskip

\noindent We now study the action of \(\slz\) on chambers of degenerate type. As an artifact of our constructions the following lemmata hold.

\begin{lem}\label{lem:degchambactone}
    Let \(\Delta(\,u,v\,)\) be a chamber of degenerate type and let \(\Delta(\,A(\,u\,),A(\,v\,)\,)\) be the chamber of degenerate type obtained by replacing the basis \(\{\,u,v\,\}\) with \(\{\,A(u),A(v)\,\}\) with \(A\in\slz\). Then
    \begin{equation}
        \Delta(\,A(\,u\,),A(\,v\,)\,)\,=\,A\cdot\Delta(\,u,v\,).
    \end{equation}
    In particular \(w\in\Delta(\,u,v\,)\) if and only if \(A(\,w\,)\in\Delta(\,A(\,u\,),A(\,v\,)\,)\). Moreover, \(\Delta(\,A(\,u\,),A(\,v\,)\,)\,\) and \(\,A\cdot\Delta(\,u,v\,)\) differ by a translation if and only if \(A\in\langle\,R\,\rangle\).
\end{lem}
%\noindent As a direct consequence we have the following

\begin{lem}\label{lem:degchambacttwo}
    Let \(\Delta(\,u,v\,)\) be a chamber of degenerate type and let \(H(\,u,v,w\,)\) be the convex hexagon determined by some \(w\in\Delta(\,u,\,v\,)\). For every \(A\in\slz\) the following equality holds:
    \begin{equation}
        H(\,A(\,u\,),A(\,v\,),A(\,w\,)\,)\,=\,A\cdot H(\,u,v,w\,)\,.
    \end{equation}
\end{lem}

\medskip

\begin{figure}[!ht]
    \centering
    \includegraphics[width=1\linewidth]{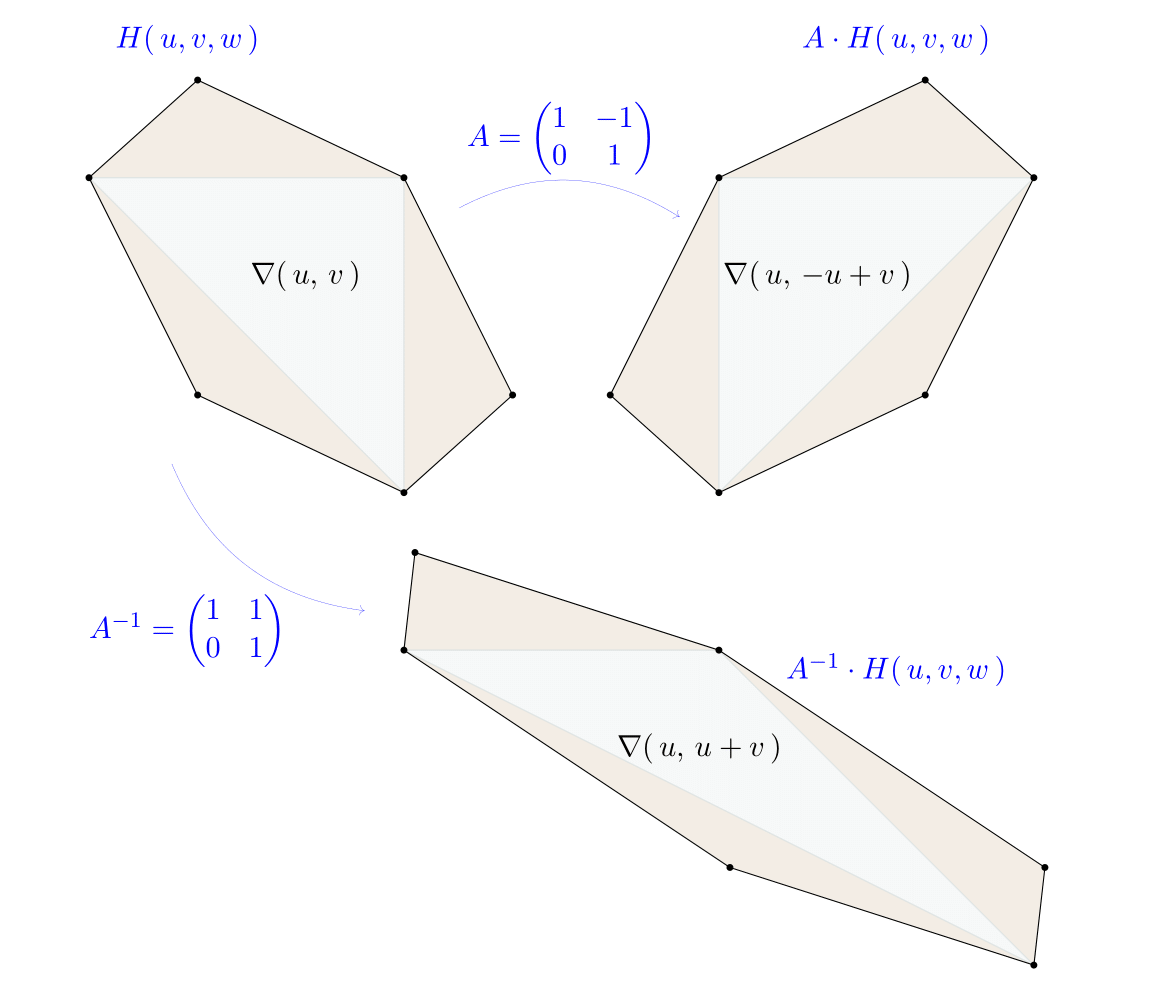}
    \caption{Deformation of the hexagon \(H(\,u,v,w\,)\) under the matrices \(A\) and its inverse.}
    \label{fig:transf}
\end{figure}

\noindent The proofs of Lemmata \ref{lem:degchambactone} and \ref{lem:degchambacttwo} are easy to establish and follow by using simple arguments of Euclidean geometry; see Figure \ref{fig:transf}. As a direct consequence we have the following:

\begin{prop}\label{prop:degchambpara}
    In a negative marked leaf, %\(\mathfrak{ML}(\,\rho,\mu\,)\) 
    the subspace of structures of degenerate type has infinitely many connected components distinguished by the elements of \(\slz\) up to configuration. In other words there is a one-to-one correspondence between the sets 
    \begin{equation}\label{eq:chambcorr}
        \mathfrak{C}\colon\left\{\,\, 
        \begin{array}{cc}
            \textnormal{Chambers of} \\
            \textnormal{degenerate type} 
        \end{array}
    \,\,\right\}\,\,\mathrel{\longleftrightarrowTilde}\,\,\frac{\,\slz\,}{\langle\,R\,\rangle}\,\,.
    \end{equation}
\end{prop}

\smallskip 

\noindent The one-to-one correspondence \(\mathfrak C\) in \eqref{eq:chambcorr} is far from unique. In fact, it depends on the choice of a basis in homology, and the absence of a preferred basis a priori means that no canonical correspondence exists. Proposition \ref{prop:degchambpara} and Corollary \ref{cor:degchambproj} together imply the following:

\begin{cor}
    In a negative unmarked leaf, %\(\mathfrak{ML}(\,\rho,\mu\,)\) 
    the subspace of structures of degenerate type has infinitely many connected components distinguished by the elements of \(\,\pslz\) up to configuration. In other words there is a one-to-one correspondence between the sets 
    \begin{equation}\label{eq:chambcorrtwo}
        \mathfrak{C}\colon\left\{\,\, 
        \begin{array}{cc}
            \textnormal{Chambers of} \\
            \textnormal{degenerate type} 
        \end{array}
    \,\,\right\}\,\,\mathrel{\longleftrightarrowTilde}\,\,\frac{\,\pslz\,}{\langle\,R\,\rangle}\,\,.
    \end{equation}
\end{cor}

\smallskip

\noindent \noindent In summary, this final corollary captures the geometric symmetry and underlying intuition that the anti-chambers \(\nabla(\,u,\,v,\,u-v\,)\) and \(\nabla(\,-u,\,-v,\,v-u\,)\) are related by a rotation of order two.

%The mapping \(\mathfrak C\) is also invariant under the action of \(\slz\) as follows: \begin{equation}\mathfrak C\,\Big(\,A\cdot\,\Delta(\,u,v\,)\, \Big) = A^{-1}\cdot\mathfrak C\,\Big(\,\Delta(\,u,v\,)\, \Big)\end{equation}

\medskip

\subsection{Chambers of degenerate type adjacent to a chamber of cylinder type}\label{ssec:adjacency} In order to describe the geometry and topology of negative leaves it is first convenient to determine which chambers of negative type are adjacent to a given chamber of cylinder type. The main statement of the present section is the following:

\begin{prop}\label{prop:chambadjacency}
    Let \(\{\,u,v\,\}\) be a basis in homology. Then the chamber of cylinder type \(\textnormal{C}(\,-u\,)\) is adjacent to all and only chambers of degenerate type of the form \(\Delta(\,u,\,ku+v,\,(\,1-k\,)u-v\,)\), where \(k\in\mathbb Z\).
\end{prop}

\smallskip

\begin{proof} One direction is a direct consequence of our study so far. Our discussion in \S\ref{ssec:beyondboundary} applies to every chamber of the form \(\Delta(\,u,\,ku+v,\,(\,1-k\,)u-v\,)\), as a consequence every such a chamber is adjacent to the chamber \(\textnormal{C}(\,-u\,)\). We only need to show that no other chamber can be directly reached from \(\textnormal{C}(\,-u\,)\). We first provide a partition of the boundary of the chamber \(\textnormal{C}(\,-u\,)\).

\medskip

\noindent Let \(\{\,u,v\,\}\) be a basis in homology and consider first the chamber of cylinder type \(\textnormal{C}(\,-u\,)\). Recall that its boundary \(\partial\textnormal{C}(\,-u\,)\) is the straight line 
\(\{\,w\in\mathbb C\,\,|\,\, \mathfrak{Im}(\,\overline{u}\,w\,)=0\,\}=\mathbb R\,u\). We introduce on the boundary a coordinate \(t\) such that \(w=tu\). The subset of integral points, \textit{i.e.} \(t\in\mathbb Z\), yields a partition of the boundary into infinitely many segments of the same length equal to \(|\,u\,|\). According to our study in \S\ref{sec:degtowalls}, every integral point parametrizes a structure in the minimal stratum \(\Omega\mathcal M_1(2,-2)\) whereas all other points parametrize transitional structures in \(\Omega\mathcal M_1(1,1,-2)\), see Figure \ref{fig:transstruct}. We introduce the following notation: We define \(e_k\) as the sub-segment of \(\partial\textnormal{C}(\,-u\,)\) bounded by the integral points \(ku\) and \((k+1)u\). 

\smallskip

\begin{figure}[!ht]
    \centering
    \includegraphics[width=1\linewidth]{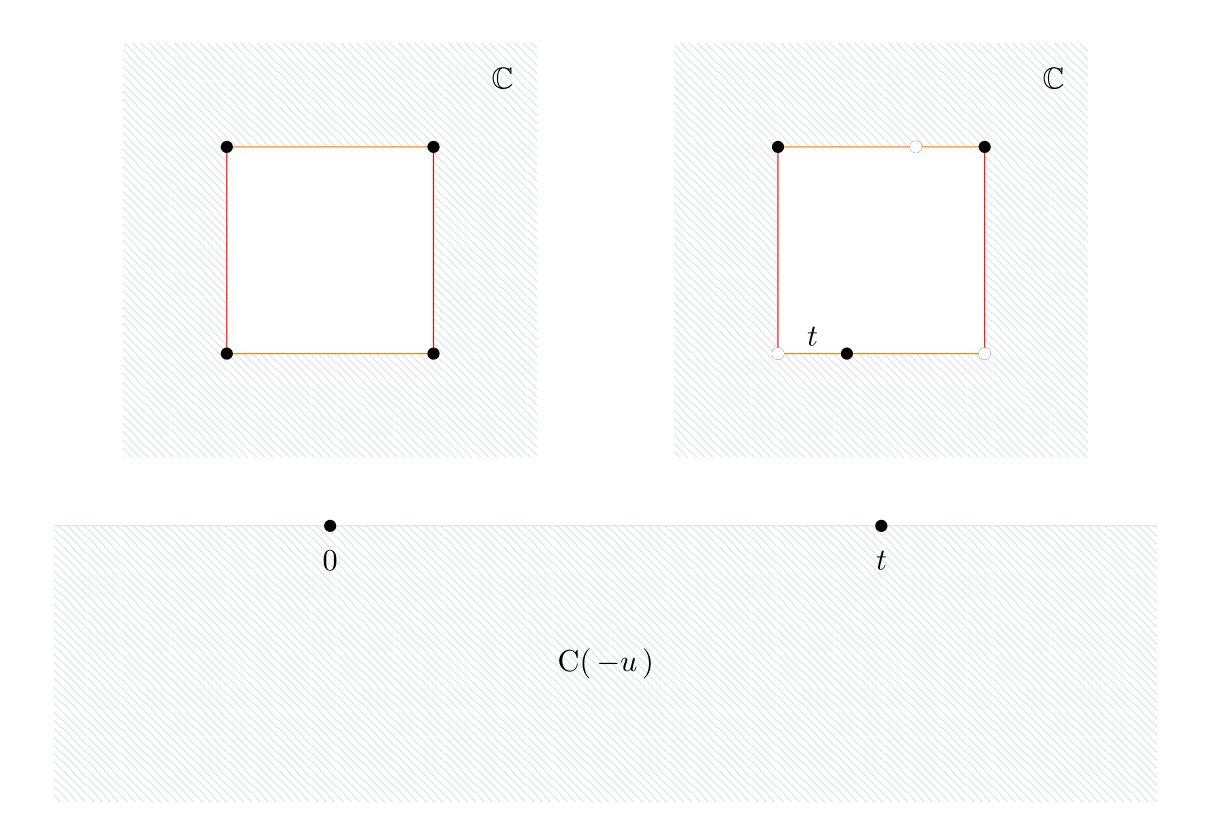}
    \caption{Transitional structures on the boundary of \(\textnormal{C}(\,-u\,)\). At \(t=0\) we have a structure in the minimal stratum. For \(t\in\mathbb R\setminus \mathbb Z\) we have structures in the principal stratum.}
    \label{fig:transstruct}
\end{figure}

\noindent It is sufficient to show that \(\Delta(\,u,\,ku+v,\,(\,1-k\,)u-v\,)\), for \(k\in\mathbb Z\), is adjacent to \(\textnormal{C}(\,-u\,)\) along the edge \(e_k\). In fact, fix any value \(k\in\mathbb Z\). Let \(\pto B\in\C\) be any point, let \(t\in[\,0,1\,]\) be any value and define \(\pto W=\,tu\). According to our notation, we define \(H(\,u,\,kv+u,\,tu\,)\) the degenerate hexagon bounded by the chain 
\begin{equation}
    \pto B\,\longmapsto\,\pto W\,\longmapsto\,
    \pto B\,+\,u\,\longmapsto\,\pto W\,+\,ku\,+\,v\,\longmapsto\,
    \pto B\,+\,ku\,+\,v\,\longmapsto\,\pto W\,-\,u\,\longmapsto\,
    \pto B
\end{equation}

\noindent as shown in the middle of Figure \ref{fig:degboundary}. The parameter \(t\) determines \(\pto W\) uniquely and, clearly, if \(t\in\{\,0,1\,\}\) then either \(\pto W=\pto B\) or \(\pto W=\pto B\,+\,u\) and the hexagon is a genuine quadrilateral. Assume \(t\in\, (\,0,1\,)\). The key observation here is that \(H(\,u,\,ku+v,\,tu\,)\) is the same degenerate hexagon we would obtain provided by Proposition \ref{prop:wallchar} for representations of negative volume in \S\ref{sec:degtowalls}, ( see case \(2\) ). In fact, we can set 
\begin{equation}
    \mathfrak u=-u, \quad \mathfrak v=-(\,ku+v\,) \,\,\text{ and }\,\, \mathfrak w=\,(\,t-1\,)\,u
\end{equation}
where we use gothic letters as in \S\ref{sssec:difflimits} to avoid any confusion with the current notation, see Figure \ref{fig:comparisondeghex}. As a consequence, since this holds for every \(k\in\mathbb Z\), it follows that \(\text{C}(\,-u\,)\) gets the access to all and only chambers of the form \(\Delta(\,u,\,ku+v,\,(\,1-k\,)u-v\,)\) and hence the desire conclusion follows.
\end{proof}

\begin{figure}[!ht]
    \centering
    \includegraphics[width=1\linewidth]{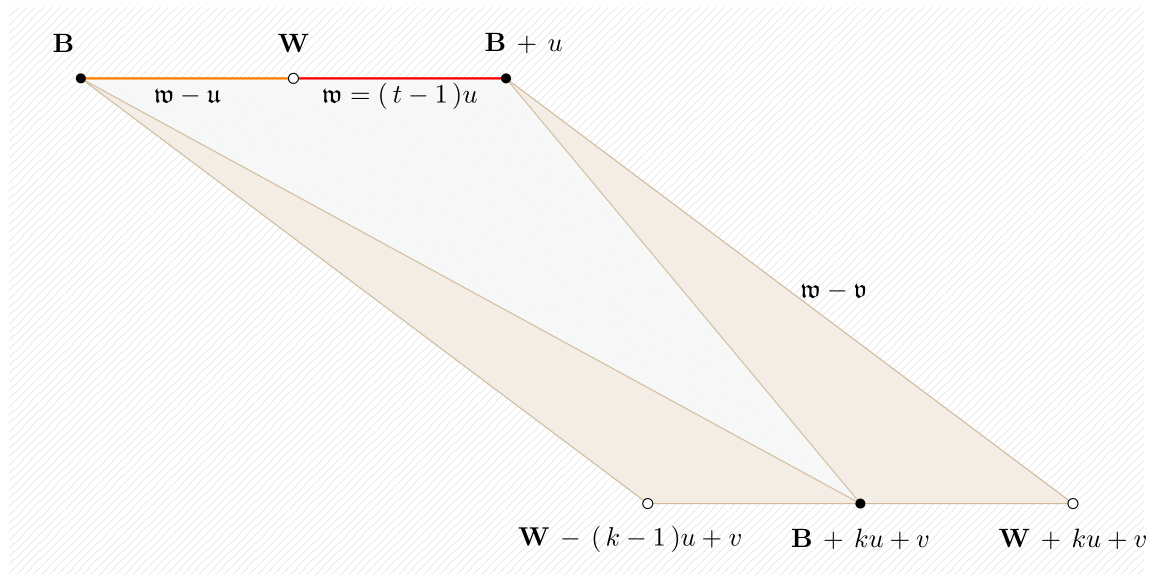}
    \caption{This picture compare the same transitional structure obtained as degeneration of structures of cylinder type and degeneration of structures of degenerate type. Note that we have used both notations to make the comparison easier to understand. Recall that all saddle connections are oriented from \(\pto B\) to \(\pto W\) by convention in the marked stratum.}
    \label{fig:comparisondeghex}
\end{figure}

\noindent Proposition \ref{prop:chambadjacency} combined with Lemma \ref{lem:degchambactone} yield the following straightforward corollary that completes the description of the adjacency of chambers of degenerate type with chambers of cylinder type.

\begin{cor}\label{cor:adjacency}
    Let \(A\in\slz\), then the chamber of cylinder type \(\textnormal{C}(\,-A(\,u\,)\,)\) is adjacent to all and only chambers of degenerate type of the form \(\Delta(\,A(\,u\,),\,A(\,ku+v\,)\,)\).
\end{cor}

%\noindent We now recall that the starting point \(\pto B\) can be any point of the complex plane since our constructions and resulting structures are invariant under translation. Therefore, for 

\smallskip

\subsection{Geometry of negative leaves}\label{ssec:gennegleaf}
We are now ready to describe the geometry of negative leaves. In the present section we combined the results obtained in \S\ref{ssec:beyondboundary} and \S\ref{ssec:adjacency} to provide a full description of the geometry of negative leaves. Unlike what we have done in \S\ref{sec:posleaves}, we first provide a description of unmarked leaves to deduce the geometry of marked leaves which is much easier.

\smallskip

\begin{figure}[!ht]
    \centering
    \includegraphics[width=1\linewidth]{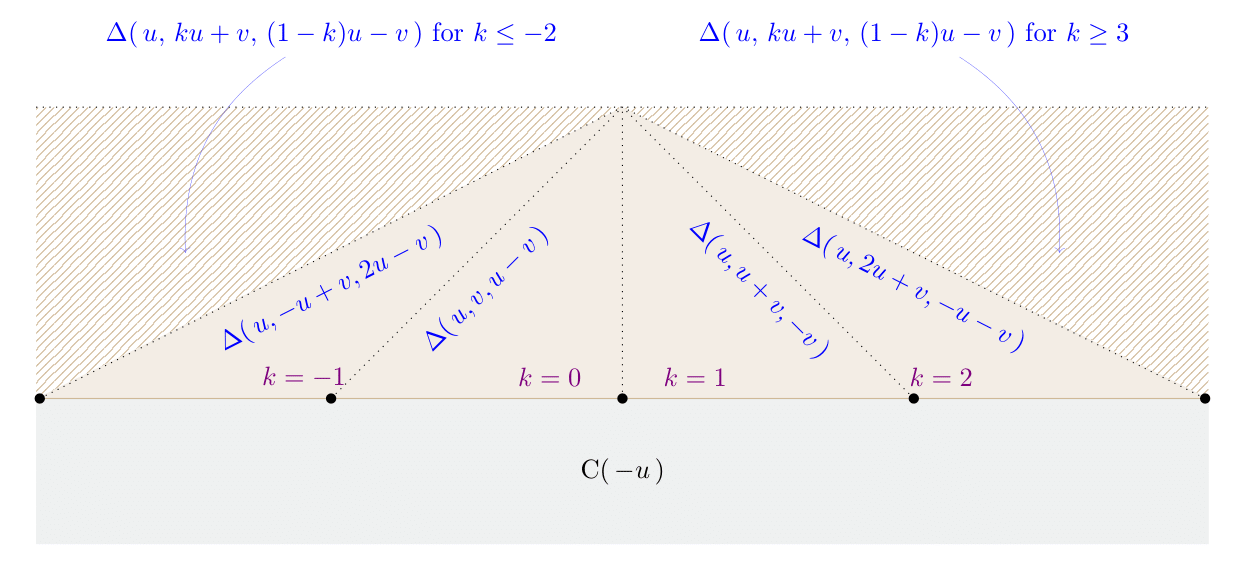}
    \caption{The beige-colored shaded area is entirely filled with chambers of degenerate type. The black-marked points correspond to structures in the minimal stratum. The figure also illustrates that two shear-adjacent chambers have overlapping edges, which are drawn with dotted lines to indicate that they \emph{cannot} be identified.}
    \label{fig:degchambadj}
\end{figure}

\smallskip

\noindent From \S\ref{ssec:adjacency} we already know that a chamber of cylinder type \(\textnormal{C}(\,-u\,)\) is adjacent to all and only chambers of degenerate type of the form \(\Delta(\,u,\,ku+v,\,(1-k)u+v\,)\), see Figure \ref{fig:degchambadj}. We introduce the following terminology.

\begin{defn}
    Two chambers of degenerate type, say \(\Delta_1\) and \(\Delta_2\), are \textit{shear-related} if and only if there is a characteristic triple \((\,u,\,v,\,u-v\,)\) and two integers \(k,\,h\in\mathbb Z\) such that
    \[ \Delta_1\,=\,\Delta(\,u,\,ku+v,\,(1-k)u-v\,)\quad \text{and}\quad \Delta_2\,=\,\Delta(\,u,\,hu+v,\,(1-h)u-v\,). \] In particular, we shall say two chambers are \textit{shear-adjacent} if \(h=k+1\).
\end{defn}

\noindent Two shear-adjacent chambers satisfy a desirable property, as stated in the following lemma, which will be useful later on. The proof is a straightforward application of classical Euclidean geometry and is left to the reader.

\begin{lem}\label{lem:shearadj}
    Two shear-adjacent chambers of degenerate type can be arranged in the Euclidean plane, by means of a translation, to form a Euclidean triangle.
\end{lem}

\noindent The relevant consequence of this lemma is that any two consecutive chambers of degenerate type, adjacent to the same chamber of cylinder type, are shear-adjacent and together form a Euclidean triangle as shown in Figure \ref{fig:degchambadj}. In this presentation two shear-adjacent chambers have an edge in common, but the corresponding walls bordering the two chambers cannot be identified because they parametrize different transitional structures. Moreover, we have already observed in \S\ref{ssec:beyondboundary} that every chamber of degenerate type gets the access to chambers of cylinder type (unless the degeneration passes through a structure in the minimal stratum). 

\smallskip

\noindent In \S\ref{ssec:beyondboundary} and \S\ref{ssec:adjacency} above, we provided local descriptions of a negative leaf. In \S\ref{ssec:beyondboundary}, we showed that a chamber of degenerate type is adjacent to three chambers of cylinder type. In \S\ref{ssec:adjacency}, instead, we showed that a chamber of cylinder type is adjacent to infinitely many chambers of degenerate type, which are obtained by deforming (via shear) a given chamber. Although these descriptions were given for marked structures, it is important to observe that they also hold for unmarked structures. Moreover, by forgetting the marking, it becomes straightforward to determine a collective description that takes into account the ones given previously. The core idea is the following: given a chamber \(\textnormal{C}(\,u\,)\), we glue all the adjacent chambers of degenerate type as shown in Figure \ref{fig:degchambadj}. Then, between any two adjacent chambers, we glue the chamber of cylinder type determined by the common side. By iterating this process indefinitely, at the limit we obtain the a surface \(\mathfrak D\) which we will show is homeomorphic to a plane, see Figure \ref{fig:negleaf}, and parametrizes all structures of degenerate type with prescribed absolute periods and their degenerations. Like in \S\ref{sec:posleaves}, \(\mathfrak D\) turns out to be bigger than the desired isoperiodic fiber because it also parametrizes those structures that lie in the minimal strata \(\Omega\mathcal M_1(2,-2)\). The desired isoperiodic leaf is thus given by
\begin{equation}
    \mathfrak L(\,\rho,\,\mu\,)\,=\,\mathfrak D\setminus \mathfrak L(\,\rho,\,(2,\,-2)\,).
    %\textnormal{deg}(\,\rho,\,\mu\,),
\end{equation}
%where \(\textnormal{deg}(\,\rho,\,\mu\,),\) is defined as the subset of degenerated structures, that is \(\mathfrak L(\,\rho,\,(2,\,-2)\,)\).

\smallskip

\begin{figure}[!ht]
    \centering
    \includegraphics[width=1\linewidth]{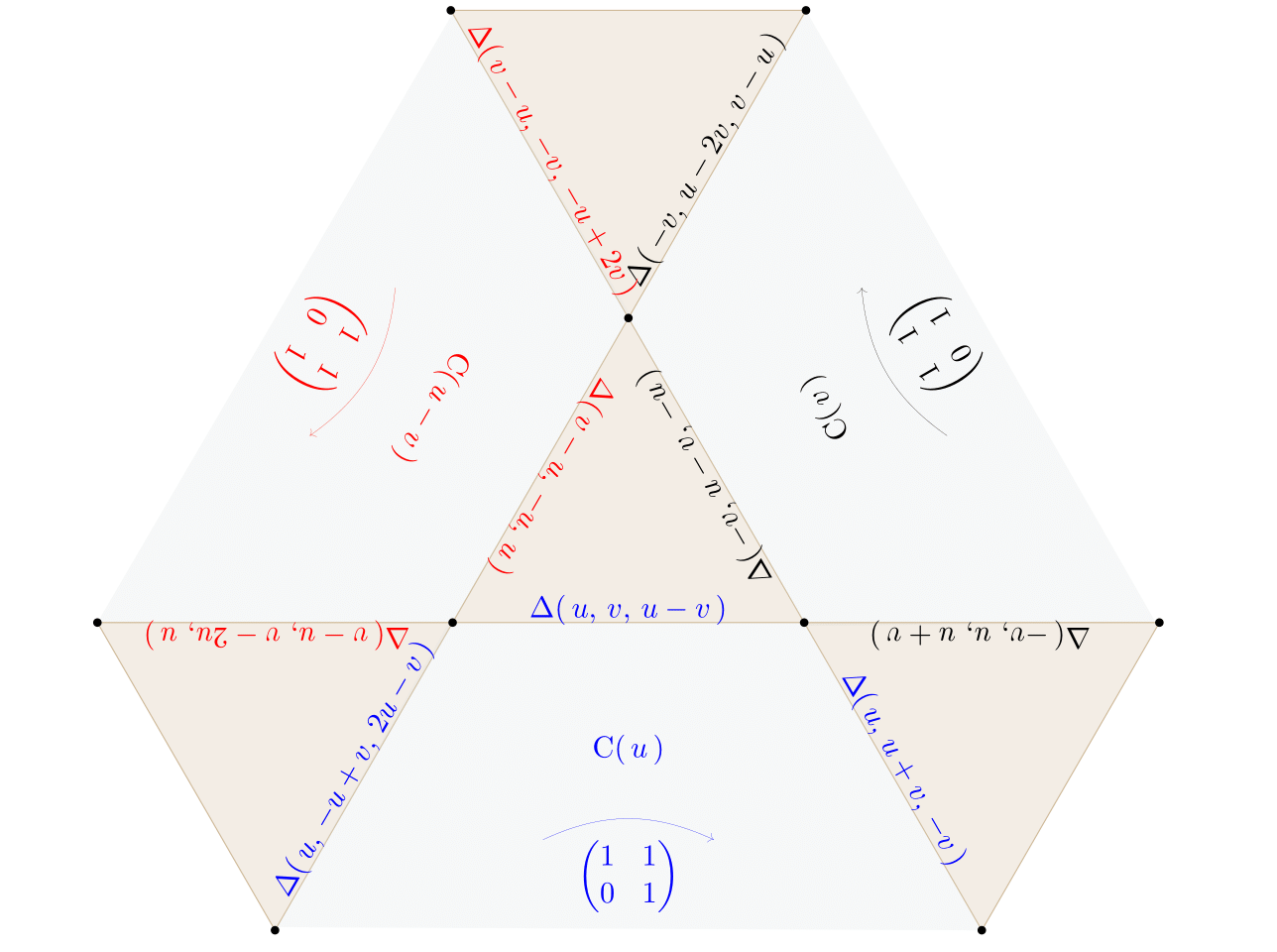}
    \caption{Local structure of an unmarked negative leaf. As above, the pale blue areas are chambers of cylinder type, and the beige areas are chambers of degenerate type. Each chamber of degenerate type is labeled with all the possible configurations that describe it. The different colors show how a chamber changes under the action of the matrix mentioned above, depending on the chosen configuration. %Recall that it is possible to switch from one configuration to another via the matrix \(R = STST\), which has order three. 
    The same pattern repeats by taking any chamber of degenerate type as the central chamber.}
    \label{fig:negleaf}
\end{figure}

\smallskip

\noindent It is straightforward to see that every unmarked negative leaf supports a Euclidean geometry with conical singularities of angle \(3\pi\), corresponding to structures in the minimal stratum. Indeed, recall that given a chamber of cylinder type \(\textnormal{C}(\,u\,)\), the boundary points of the form \(ku\), with \(k \in \mathbb{Z}\), correspond to structures in the minimal stratum. The argument above shows that each such point is adjacent to two chambers of cylinder type, each contributing an angle of \(\pi\), and to two shear-adjacent chambers. As a consequence of Lemma \ref{lem:shearadj}, these latter chambers contribute an additional angle of \(\pi\). Therefore, the points corresponding to structures in the minimal stratum are conical singularities of angle \(3\pi\). The following claim is a consequence of our discussion so far.

\begin{prop}
    Let \(\rho\) be a representation with negative volume. Then the metric completion of \(\mathfrak L(\,\rho,\mu\,)\) is connected and homeomorphic to a disk. Equivalently, every non-trivial representation with negative \(\rho\) is realized by a unique isoperiodic leaf \(\mathfrak L(\,\rho,\mu\,)\).
\end{prop}

\begin{proof}
    We only need to show that \(\mathfrak D\) is homeomorphic to a plane. A simple argument is as follows. In every chamber of degenerate type, we draw three segments between the center of the triangle and the three corners. We denote by \(\mathcal G\) the graph obtained in \(\mathfrak D\). We have already observed above that every conical singularity of \(\mathfrak D\) is incident to exactly two chambers of degenerate type. It follows that \(\mathcal G\) is a connected graph where every conical singularity is of valency two while every centroid is of valency three. Choosing an arbitrary conical singularity as the root, \(\mathcal G\) becomes an infinite binary tree. Each connected component of \(\mathfrak D\setminus \mathcal G\) is a chamber of cylinder type, a half-plane, with a third of triangle, corresponding to a part of a chamber of degenerate type cut out by two medians, glued on each boundary edge. In other words, each connected component of  \(\mathfrak D\setminus \mathcal G\) is a (topological) half-plane. It follows that \(\mathfrak D\) is homeomorphic to a plane.
\end{proof}

\smallskip 

\noindent The geometry and topology of marked leaves \(\mathfrak{ML}(\,\rho,\mu\,)\) is now understood. Let \(\mathfrak F\colon \mathfrak{ML}(\,\rho,\mu\,)\longrightarrow \mathfrak{L}(\,\rho,\mu\,)\) be the forgetful map and recall this is branched at the conical singularities. Lemma \ref{lem:double_cover} applies and hence we have the following:

\begin{cor}
    Let \(\rho\) be a representation with negative volume. Then the metric completion of \(\mathfrak{ML}(\,\rho,\mu\,)\) is connected and homeomorphic to the Loch Ness monster surface.
\end{cor}

\noindent This gives Theorem~\ref{thm:LochNess} for negative leaves. It remains to determine the Veech group of a marked negative leaf seen as a translation surface. It is not hard to see that the argument used to show Proposition \ref{prop:Veechpositive} works \textit{mutatis mutandis} even for negative leaves.  

\begin{prop}\label{prop:veechneg}
The Veech group of a negative leaf is a conjugate of \(\slz\).
\end{prop}

\noindent We conclude the present section with the following:

\begin{rmk}
Despite similar descriptions of positive and negative leaves, it is worth mentioning that positive and negative leaves are generally not isometric to each other.
\end{rmk}

\section{Non-arithmetic real leaves}\label{sec:nonarith}

\noindent In this section, we begin the study of the geometry and topology of isoperiodic leaves associated with representations of zero volume. As already hinted at earlier, we distinguish between arithmetic and non-arithmetic representations of zero volume, depending on whether the image of the representation is discrete or dense in a real line. We begin with non-arithmetic representations and will focus on the arithmetic ones in \S\ref{sec:arithm}.

\smallskip

\noindent In what follows it will be sufficient to consider representations \(\rho\colon\shomolzoo\longrightarrow \C\) such that \(\textnormal{Im}(\,\rho\,)\subset \mathbb R\), by a similar argument as in \S\ref{sec:posleaves} and \ref{sec:negleaves} that allowed us to restrict to representations of unit volume. In this case, any Euclidean rotation induces a homeomorphism of leaves that preserves the cellular structure in chambers and walls. We shall define these leaves as \textit{non-arithmetic real leaves}.

\medskip

 \subsection{Algebraic limit versus geometric limit}\label{ssec:alggeolimit} One of the advantages of distinguishing the non-discrete case from the discrete one is the possibility of determining the geometry and topology of non-arithmetic leaves as the geometric limit of isoperiodic negative leaves. More precisely the leading question of the present section is the following.

\smallskip

 \begin{quote}
     \textit{Let \(\rho_t\) be a sequence of representations with negative volume converging to a non-arithmetic representation \(\rho_\infty\) and let \(\mathfrak{ML}(\,\rho_t,\mu\,)\) be the sequence of negative leaves. Does it converge to \(\mathfrak{ML}(\,\rho_\infty,\mu\,)\) in some sense?}
 \end{quote}

\smallskip

\noindent The scope of this section is therefore to show that non-arithmetic leaves can be interpreted as the degenerative limit of a suitable sequence of negative leaves that preserves the cellular structure in chambers and walls. 

\begin{rmk}
    Notice that it is also possible to provide a direct description of these leaves which turns out to be quite similar to that of negative leaves.
\end{rmk}

\smallskip

\subsection{Contraction flow}\label{ssec:contraction}
In the present subsection, we define a one-dimensional flow on moduli space, which also gives a one-dimensional action on representations. Given a representation with zero volume \(\rho_\infty\), using this flow we will construct a sequence of representations \(\rho_t\) with negative volume converging to \(\rho_\infty\).

\smallskip

\noindent We start with the following notion. Let \(\theta\in\rp\) be any direction such that \(\theta\neq0\). We define the \(\theta-\)\textit{horizontal contraction flow} the monoid \(\{\,g(\,\theta,\,t\,)\,|\,t\ge0\,\}\) of \(\textnormal{GL}^{+}(2,\mathbb{R})\) conjugated to the monoid
    \begin{equation}
       \left\{\,\begin{pmatrix}\,1& \\ &e^{-t}\,\end{pmatrix}\,\big|\,t\ge0\,\right\}
    \end{equation}
contracting exponentially along direction \(\theta\) while preserving the horizontal direction. This then defines a one-dimensional flow on \(\Omega\mathcal M_1(\,-\,,-2)\) and \(\mathcal H_1(\,-\,,-2)\) (cf.\ \S\ref{subsec:glaction}).
This also gives an one-dimensional action on representations by post composition.
\smallskip

%Given a representation with zero volume \(\rho_\infty\), in the present subsection we determine a sequence of representations \(\rho_t\) with negative volume converging to \(\rho_\infty\). 

\noindent Let \(\rho_\infty\colon\shomolzoo\longrightarrow \C\) be a representation with \(\textnormal{Im}(\,\rho_\infty\,)=\mathbb{Z}\,u\,+\,\mathbb{Z}(\lambda\,u\,)\), where \(\lambda\in\mathbb R\).
As mentioned before, we may assume $u\in\mathbb{R}$ and in fact $u=1$. Note that the image of $\rho_\infty$ is dense in $\mathbb{R}$ if and only if $\lambda\notin\mathbb{Q}$.
Let \(\rho_o\colon\shomolzoo\longrightarrow \C\) be a representation with \(\textnormal{Im}(\,\rho_o\,)=\mathbb{Z}\,u\,\oplus\,\mathbb{Z}\,v\,\cong\mathbb Z[\,i\,]\).
%Without loss of generality we assume \(u=1\) so that the image of \(\textnormal{Im}(\,\rho_\infty\,) < \mathbb R\). We observe that \(\textnormal{Im}(\,\rho_\infty\,)\) is a dense subgroup of \(\mathbb R\) if and only if \(\lambda\notin\mathbb Q\). 
Without loss of generality, we may further assume \(v=-i\) so that $\rho_o$ is a representation with negative volume.
%\noindent In order to define the desired sequence we introduce the following notion. Let \(\theta\in\rp\) be any direction such that \(\theta\neq0\). We define the \(\theta-\)\textit{horizontal contraction flow} the monoid \(\{\,g(\,\theta,\,t\,)\,|\,t\ge0\,\}\) of \(\textnormal{GL}^{+}(2,\mathbb{R})\) conjugated to the monoid
%    \begin{equation}
%       \left\{\,\begin{pmatrix}\,1& \\ &e^{-t}\,\end{pmatrix}\,\big|\,t\ge0\,\right\}
%    \end{equation}
%contracting exponentially along direction \(\theta\) while preserving the horizontal direction.
We construct a sequence of representations as follow. Let \(\theta\in\,(\,0,\,\pi\,)\) be the unique value such that \(\lambda=-\cot(\,\theta\,)\), then
\begin{equation}
    \rho_t\,=\,g(\,\theta,\,t\,)\,\rho_o.
\end{equation}

\noindent By design the following properties hold: \(\rho_t(\,u\,)=u\) for every \(t\ge0\). Moreover, a simple argument from trigonometry and linear algebra shows that
 \(\rho_t(\,v\,)\longrightarrow \lambda\,u=\rho_\infty(\,v\,)\) for \(t\longrightarrow \infty\). As a consequence of our argument we have the following:

 \begin{prop}
     Every subgroup \(\Gamma\subset \mathbb R\) arises by contracting the lattice \(\mathbb Z[\,i\,]\) along a non-horizontal direction \(\theta\). Moreover \(\Gamma\) is dense in \(\mathbb R\) if and only if \(\cot(\,\theta\,)\notin\mathbb Q\).
 \end{prop}

\noindent In the next subsection we study how chambers of degenerate type and chambers of cylinder type degenerate under the action of \(\theta-\)horizontal contraction flows. We shall also assume \(\cot(\,\theta\,)\notin\mathbb Q\).

\subsection{Degenerations of chambers}\label{ssec:nonarithchambdeg} To understand how a negative leaf degenerates under the action of the contracting flow, it is sufficient to understand how the individual chambers of cylinder type and degenerate type degenerate. These have well-defined geometric shapes, as studied in previous chapters: half-planes in the first case, and triangles in the second.

\smallskip

\noindent Recall that the matrix $g(\,\theta,\,t\,)$ induces a homeomorphism between leaves $\mathfrak{ML}(\,\rho_o,\,\mu\,)$ and $\mathfrak{ML}(\,\rho_t,\,\mu\,)$ preserving the chamber structure (see Proposition~\ref{prop:negleafpres}). Each chamber of $\mathfrak{ML}(\,\rho_o,\,\mu\,)$ has a global coordinate $w$, and it is easy to see that $w_t:=g(\,\theta,\,t\,).w$ provides a global coordinate on the corresponding chamber of $\mathfrak{ML}(\,\rho_t,\,\mu\,)$. We can use $w_t$ to embed the chambers into $\mathbb{C}$ as half planes, and consider the Gromov-Hausdorff limits of these chambers as subset of $\mathbb{C}$ when $t\to\infty$. In the following, when we talk about the limit of any sequence of geometric objects, we refer to the Gromov-Hausdorff limit.

\smallskip

\noindent Recall that for a primitive element \(z\in\mathbb Z[\,i\,]\) (i.e.\ \(z=au+bv\) where \(a,b\in\mathbb Z\) are coprime), the cylinder chamber $\textnormal{C}(z)$ has boundary $\mathbb{R}\,z$ in the coordinate $w$. Now
    \begin{equation}
        \rho_t(\,z\,)=a\,\rho_t(\,u\,)+b\,\rho_t(\,v\,)\longrightarrow au+\lambda bu\,=\,(\,a\,+\,\lambda\,b\,)\,u \,:=\,z_\infty\quad \text{ for } \quad t\longrightarrow\infty. \qedhere
    \end{equation} 
Thus it is easy to see
%Since the action of the contraction flow is given by affine maps of the plane that preserve the horizontal direction while contracting along another direction, we will deduce the following results. To state the following lemmata, we will refer to the notation introduced in \S\ref{ssec:contraction}.

\begin{lem}\label{lem:nadegone}
    %Let \(\rho\) be a non-arithmetic representation and let \(\mathfrak{ML}(\,\rho,\,\mu\,)\) be the correspondinf leaf. 
    The limit of $g(\,\theta,\,t\,).\textnormal{C}(\,z\,)$ as $t\longrightarrow\infty$ is the chamber $\textnormal{C}(\,z_\infty\,)$. Moreover, for any point $w\in\partial\,\textnormal{C}(\,z\,)$, the structure of degenerate type corresponding to $w_t=g(\,\theta,t\,).w$ limits to the one corresponding to $w_\infty=\lim_{t\to\infty}w_t$ on $\partial\textnormal{C}(\,z_\infty\,)$.
\end{lem}

\noindent Similarly, since each chamber of degenerate type in a negative leaf is a compact triangle, using the same idea above one can show
\begin{lem}\label{lem:nadegtwo}
    The limit of any chamber of degenerate type under the action of $g(\,\theta,\,t\,)$ is a degenerate triangle of zero area with horizontal sides. Moreover, no boundary side shrinks provided \(\cot(\,\theta\,)\notin\mathbb Q\).
\end{lem}

%\begin{proof}[Sketch of proofs of Lemmata \ref{lem:nadegone} and \ref{lem:nadegtwo}]
%    The proofs of these lemmas rely on the same observation, namely how a generic primitive element degenerates. This can be seen from the following straightforward computation. Let \(z\in\mathbb Z[\,i\,]\) be a primitive element, that is \(z=au+bv\) where \(a,b\in\mathbb Z\) are coprime. Then
%    \begin{equation}
        %\rho_t(\,z\,)=a\,\rho_t(\,u\,)+b\,\rho_t(\,v\,)\longrightarrow au+\lambda bu\,=\,(\,a\,+\,\lambda\,b\,)\,u \quad \text{ for } \quad t\longrightarrow\infty. \qedhere
 %   \end{equation} 
%\end{proof}

\noindent Finally, to obtain the gluing pattern of chambers on a non-arithmetic real leaf, note that points on the boundary of chambers of a negative leaf has a well-defined limit under the action of $g(\,\,\theta,\,t\,)$. In particular,
\begin{lem}\label{lem:nadegthree}
    The \(\theta-\)horizontal contraction flow preserves the gluing pattern of chambers for every \(\theta\).
\end{lem}

\noindent As a consequence of Lemma \ref{lem:nadegthree} if two chambers are adjacent during the contraction, they will remain adjacent in the limit. Therefore, the non-arithmetic leaves \(\mathfrak{L}(\,\rho,\,\mu\,)\) of unmarked structures can be constructed in the same way as the negative ones, by gluing together chambers of cylinder type according to a precise criterion. Note that, strictly speaking, there are no chambers of degenerate type, as these are themselves degenerate triangles and hence correspond to segments on the boundary of some chamber of cylinder type. Nevertheless, we include them in the description because these segments encode essential information about the gluing pattern. We can now describe the geometry and topology of an unmarked non-arithmetic real leaf.

%\begin{rmk}
% In the description above, we have repeatedly used the word ``convergence'' without specifying its precise meaning. What can be shown is that, during the contraction, the limiting chamber arises as the limit of a sequence of spaces converging in the Gromov–Hausdorff topology.
%\end{rmk}

\begin{prop}
    Let \(\rho\) be a non-arithmetic representation. Then the metric completion of \(\mathfrak L(\,\rho,\mu\,)\) is connected and homeomorphic to a disk. Equivalently, every non-arithmetic representation \(\rho\) is realized by a unique isoperiodic leaf.
\end{prop}

\begin{proof}
Let \(\rho\) be a non-arithmetic representation and let \(\mathfrak L(\,\rho,\,\mu\,)\) be its isoperiodic leaf with metric completion \(\mathfrak D\). The representation \(\rho\) can be obtained as a limit of the \(\theta-\)horizontal contraction flow applied to a negative representation \(\rho_{o}\). As Theorem~\ref{thm:Unique} has been proved in \S\ref{sec:negleaves}
for negative leaves, \(\rho_{o}\) is realized by a unique leaf \(\mathfrak L(\,\rho_o,\mu\,)\) whose metric completion is a disk \(\mathfrak D_o\). We deduce from Lemma~\ref{lem:nadegthree} that the chambers of \(\mathfrak D_o\) and \(\mathfrak D\) have the same adjacency pattern. Homeomorphisms between individual chambers assemble to a global homeomorphism between \(\mathfrak D_o\) and \(\mathfrak D\) which is therefore connected and homeomorphic to a disk.
\end{proof}

\noindent Even in this case the geometry and topology of marked leaves \(\mathfrak{ML}(\,\rho,\mu\,)\) is now understood. Let us consider the forgetful map once again \(\mathfrak F\colon \mathfrak{ML}(\,\rho,\mu\,)\longrightarrow \mathfrak{L}(\,\rho,\mu\,)\) and recall once again that this is branched at the conical singularities. Lemma \ref{lem:double_cover} applies and hence we have the following:

\begin{cor}
    Let \(\rho\) be a non-aritmethic representation. Then the metric completion of \(\mathfrak{ML}(\,\rho,\mu\,)\) is connected and homeomorphic to the Loch Ness monster surface.
\end{cor}

\noindent This gives Theorem~\ref{thm:LochNess} for non-arithmetic leaves. 

\begin{rmk}
In \cite[Theorem 9]{KLS}, it is proved for each dense subgroup $\Gamma \subseteq \mathbb{R}$ with two generators, there are exactly two connected leaves for which the absolute period group coincides with $\Gamma$. There is no contradiction with Theorem~\ref{thm:Unique} because $\Gamma$ is the image of exactly two period characters, up to change of basis.
\end{rmk}

\subsection{Veech group} We finally determine the Veech group the Veech group of a non-arithmetic real leaf seen as a translation surface. We first need a number-theoretic lemma. 

\subsubsection{Some preliminaries} To set the stage for the lemma, we begin by recalling some standard and well-known results. Let \( D \geq 2 \) be a square-free integer. The ring of integers of the quadratic field \( \mathbb{Q}(\sqrt{D})\, \) is given by \( \mathbb{Z}[\,\gamma\,] \), where
\[
\gamma =
\begin{cases}
\sqrt{D} & \text{if } D \equiv 2,3 \pmod{4}, \\
\displaystyle\frac{1 + \sqrt{D}}{2} & \text{if } D \equiv 1 \quad \pmod{4},
\end{cases}
\]
is the standard generator of the ring. For any element \( x = s + t\sqrt{D} \) with \( s, t \in \mathbb{Q} \), its conjugate is defined by \( \overline{x} = s - t\sqrt{D} \), and its norm is given by \( N(x) = x \overline{x} \). The norm is multiplicative; that is, \( N(xy) = N(x)N(y) \). Recall that \( x \in \mathbb{Z}[\,\gamma\,] \) if and only if \( N(x) \in \mathbb{Z} \), and that \( x \) is a unit in \( \mathbb{Z}[\,\gamma\,] \) if and only if \( N(x) = \pm 1 \). The group of units in \( \mathbb{Z}[\,\gamma\,] \) is generated by \(-1\) and a fundamental unit \( u > 0 \).

\subsubsection{Number-theoretic lemma} Let \(\tau = (\,t,\, l,\, m\,)\in\mathbb Z^3\) such that \(t,\, m \neq 0\) and \(\gcd(\,t,\, l,\, m) = 1\). Consider the \(\mathbb{Z}\)-module defined by \(\Gamma = \langle\,t,\,l+m\,\gamma\,\rangle\subset \mathbb Q(\sqrt{D}\,)\). For any positive real number \(a\), if \(a\Gamma \subseteq \Gamma\), then \(a\) acts as a matrix with respect to the basis \(\{\,t,\, l + m\gamma\,\}\). We denote by \(\det(\,a\,)\) the determinant of this matrix.

\begin{lem}\label{lem:quadratic}
Consider the group
\[ G_{D,\tau}:=\Big\{\,a\in\mathbb{R}_+\,\,\big|\,\,a\,\Gamma\,=\,\Gamma, \,\,\det(a)\,=\,1\,\Big\}.
\]
The following statements hold:
\begin{enumerate}[label=\normalfont{(\arabic*)}]
    \item \( G_{D,\tau} \) is a subgroup of the units in \( \mathbb{Z}[\,\gamma\,] \);
    \smallskip
    \item Given a nontrivial element \( \eta = \alpha + \beta\gamma \) with \( N(\,\eta\,) = 1 \), \( G_{D,\tau} \) contains \(\eta\) if and only if \( m \) divides \( \beta \), and \( t \) divides \( (\beta/m) \cdot N(l + m\gamma) \);
    \smallskip
    \item Let \( k \) be the smallest non-negative integer such that \( u^k \in G_{D,\tau} \). Then \( G_{D,\tau} = \langle \,u^k \,\rangle \).
\end{enumerate}
\end{lem}

\begin{proof}
Set \( N(\,\Gamma\,) := \min\Big\{\, |\,N(x)\,| \,:\, x \in \Gamma,\, x \neq 0 \,\Big\} \). Clearly, \( N(\,\Gamma\,) > 0 \), and for any \( a \in \mathbb{Q}(\,\sqrt{D}\,) \), we have 
\begin{equation}
     N(\,a\,\Gamma\,) = |\,N(a)\,| \cdot N(\,\Gamma\,).
\end{equation}

\noindent This immediately implies that every element in \( G_{D,\tau} \) is a unit.

\medskip

\noindent For the second statement, we first note that
\[
\eta \cdot t = \alpha t + \beta t \gamma = \left(\, \alpha - \frac{\beta\,l}{m} \,\right)t + \frac{\beta\,t}{m}\big(\, l + m\gamma \,\big).
\]
For this expression to lie in \( \Gamma \), we must have \( m \mid \beta t \) and \( m \mid \beta l \). If \( \gcd(\,m,\,\beta\,) \neq |\,m\,| \), then both \( t \) and \( l \) share a nontrivial factor with \( m \), contradicting the assumption \( \gcd(\,t,\,l,\,m\,) = 1 \). Hence, \( m \mid \beta \). Write \( \beta = \beta_0 m \) for some integer \( \beta_0 \).

\medskip

\noindent Next, compute \( \eta \cdot \big(\, l + m\gamma \,\big) \). If \( \gamma = \sqrt{D} \), we have:
\begin{align*}
\eta \cdot \big(\, l + m\gamma \,\big)
&= \alpha l + \beta m \gamma^2 + \big(\, \beta l + \alpha m \,\big)\gamma \\
&= \alpha l + \beta m D + \big(\, \beta l + \alpha m \,\big)\gamma \\
&= -\beta_0\,N(\,l + m\gamma\,) + \big(\, \beta_0 l + \alpha \,\big)\big(\, l + m\gamma \,\big).
\end{align*}

\noindent If instead \( \gamma = \dfrac{1 + \sqrt{D}}{2} \), then:
\begin{align*}
\eta \cdot \big(\, l + m\gamma \,\big)
&= \alpha l + \beta m \left(\, \gamma + \frac{D - 1}{4} \,\right) + \big(\, \beta_0 l + \alpha + \beta \,\big) m\gamma \\
&= \alpha l + \beta_0 m^2 \cdot \frac{D - 1}{4} + \big(\, \beta_0 l + \alpha + \beta \,\big)\big(\, l + m\gamma \,\big) - \big(\, \beta_0 l + \alpha + \beta \,\big)l \\
&= -\beta_0\,N(\,l + m\gamma\,) + \big(\, \beta_0 l + \alpha + \beta \,\big)\big(\, l + m\gamma \,\big).
\end{align*}

\noindent In both cases, for this to lie in \( \Gamma \), we must have \( t \mid \beta_0\,N(\,l + m\gamma\,) \).

\medskip

\noindent Now consider the determinant. When \( \gamma = \sqrt{D} \), we compute:
\begin{align*}
\det(\eta)
&= \big(\, \alpha - \beta_0 l \,\big)\big(\, \beta_0 l + \alpha \,\big) + \beta_0^2\,N(\,l + m\gamma\,) \\
&= \alpha^2 - \beta_0^2 l^2 + \beta_0^2\,N(\,l + m\gamma\,) \\
&= N(\,\eta\,) - \beta_0^2\,N(\,l + m\gamma\,) + \beta_0^2\,N(\,l + m\gamma\,) = 1.
\end{align*}

\noindent If \( \gamma = \dfrac{1 + \sqrt{D}}{2} \), we get:
\begin{align*}
\det(\eta)
&= \big(\, \alpha - \beta_0 l \,\big)\big(\, \beta_0 l + \alpha + \beta \,\big) + \beta_0^2\,N(\,l + m\gamma\,) \\
&= \alpha^2 + \alpha \beta - \beta_0^2 l^2 - \beta \beta_0 l + \beta_0^2\,N(\,l + m\gamma\,) \\
&= N(\,\eta\,) - \beta_0^2\,N(\,l + m\gamma\,) + \beta_0^2\,N(\,l + m\gamma\,) = 1.
\end{align*}

\noindent Thus, when the divisibility conditions are satisfied, we conclude that \( \eta \in G_{D,\tau} \).

\medskip

\noindent Finally, for the third statement, recall that all positive units form a cyclic group generated by the fundamental unit \( u \). Therefore, \( G_{D,\tau} \) is generated by \( u^k \), where \( k \ge 0 \) is the smallest integer such that \( u^k \in G_{D,\tau} \).
\end{proof}

\begin{rmk}
    Note that if \( G_{D,\tau} \) contains a nontrivial element, say \( \eta \), then \( \eta^2 \) is always of norm \( 1 \), so the lemma above covers all cases.
\end{rmk}

\subsubsection{Veech groups} We use Lemma \ref{lem:quadratic} to prove the main result of the present subsection, namely we show the following characterization result. Recall that for a non-arithmetic representation \(\rho\), its image \(\textnormal{Im}(\,\rho\,)=\langle\,u,\,\lambda\,u\,\rangle\) and \(\theta=\arccot(\,\lambda\,)\).

\begin{prop}
The Veech group of a non-arithmetic real leaf is a conjugate of one of the following groups.
\begin{itemize}
    \item In the case \( \Gamma = t\,\mathbb{Z} + (l + m\gamma)\,\mathbb{Z} \), up to scaling, for some triple \( \tau = (\,t,\, l,\, m\,)\in\mathbb Z^3 \) with \( m \neq 0 \) and \( \gcd(\,t,\, l,\, m\,) = 1 \), and \( \gamma \) is the standard generator of the ring of integers in \( \mathbb{Q}(\sqrt{D}\,) \) for some square-free integer \( D \ge 2 \), then
    \[ V_{D,\tau} = \left\{\, \pm\begin{pmatrix} u & b \\ 0 & a \end{pmatrix}\,\, \big|\,\, a, b \in \mathbb{R},\, a > 0,\, u \in G_{D,\tau} \,\right\}. \]
    \smallskip
    \item Otherwise, if the ratios of periods do not all belong to a fixed quadratic number field then \[ V = \left\{\, \pm\begin{pmatrix} 1 & b \\ 0 & a \end{pmatrix} \,\, \big|\,\, a, b \in \mathbb{R},\, a > 0 \,\right\}. \]  
\end{itemize}
\end{prop}

\begin{proof}
Any element in the Veech group of a leaf whose absolute period group is contained in \( \mathbb{R} \) must preserve the real line. Such an element is a matrix of the form 
\[
    \begin{pmatrix} 
    a & b \\ 
    0 & c 
    \end{pmatrix}
\]
where \( a, c \in \mathbb{R}^\ast \) and \( b \in \mathbb{R} \). Moreover, elements of the Veech group must preserve the group \( \mathbb{Z} + \cot(\,\theta\,)\,\mathbb{Z} \). In other words, scaling by \( a \) must preserve the group \( \Gamma \). Unless \( \cot(\,\theta\,) \) satisfies a quadratic equation with integer coefficients, this forces \( a \in \{ \pm 1 \} \). We thus obtain precisely the group \( V \). Conversely, any element in \( V \) leaves each chamber invariant and fixes its boundary, hence lies in the Veech group. 

\smallskip

\noindent In the remaining cases, the ratios of absolute periods belong to a quadratic number field \( \mathbb{Q}(\sqrt{D}\,) \), with \( D \ge 2 \) square-free. Up to scaling of \( \chi \), we may assume \( \Gamma = t\,\mathbb{Z} \,+\, (\,l \,+\, m\gamma\,)\,\mathbb{Z} \). It follows from Lemma~\ref{lem:quadratic} that the coefficient \( a \) must lie in \( G_{D,\tau} \), provided \( a > 0 \). Conversely, any element in \( V_{D,\tau} \) simply permutes the chambers.
\end{proof}

\begin{rmk}
Note that it is possible for \( G_{D,\tau} = \{1\} \); for instance, choosing \( D = m = 3 \), then \( \beta \) is never divisible by \( 3 \), so the second group coincides with the first in such cases. On the other hand, Lemma~\ref{lem:quadratic} provides a criterion to construct parameters for which this group is nontrivial.
\end{rmk}

\bigskip

\section{Arithmetic real leaves}\label{sec:arithm}

\noindent We are left to consider arithmetic representations. In the present section we give a complete description of arithmetic real leaves associated to discrete representations \(\rho\colon\shomolzoo\longrightarrow \C\) such that \(\textnormal{Im}(\,\rho\,)\subset \mathbb R\). In this case, the absolute period group \(\Gamma=\,\lambda\, \mathbb{Z}\) for some nonzero real number \(\,\lambda\,\). Up to the \(\glplus\)-action, we may assume \(\Gamma=\mathbb{Z}\).

\smallskip

\noindent We have discussed chambers of cylinder type in an arithmetic real leaf in \S\ref{sec:chamcyltype}, and it is easy to see that chambers of degenerate type do not exist in such a leaf. Thus the main goal of the present section is to show how chambers of cylinder type are glued together.

\smallskip

\subsection{Chambers of cylinder type} Let \( \rho \) be an arithmetic representation, and let \( k \in\textnormal{Im}(\,\rho\,)=\mathbb{Z} \) be the absolute period of some simple closed curve. In \S\ref{sec:chamcyltype}, we have shown that chambers of cylinder type whose core curve has period $k$ are as many as positive integers that are coprime to \( k \). We have denoted each such chamber by \( \textnormal{C}(\,k,\,m\,) \). Although it was not stated explicitly, Corollary \ref{cor:oppositechambers} also applies in this case, \textit{mutatis mutandis}. In fact, there are two chambers in the marked stratum, each of which is a half plane bounded by \(\mathbb{R}\).

\begin{rmk}
    In the most general setting, if \(\textnormal{Im}(\,\rho\,)=\langle\,u\,\rangle\) with \(u\in\C^*\), then each chamber would be bounded by straight line \(\mathbb R\,u\).
\end{rmk}

\noindent One of them, denoted by \( \textnormal{C}^+(\,k,\,m\,) \), corresponds to the parameters \( u = k \) and \( v = m \), while the other, denoted by \( \textnormal{C}^-(\,k,\,m\,) \), corresponds to \( u = -k \) and \( v = -m \). In the coordinate \( w \), the chamber \( \textnormal{C}^+(\,k,\,m\,) \) is the upper half-plane, while \( \textnormal{C}^-(\,k,\,m\,) \) is the lower half-plane. They are images of each other under the map \( w \mapsto -w \), so we focus on \( \textnormal{C}^+(\,k,\,m\,) \).

\medskip

\subsection{How to realize a translation surface with arithmetic period character}\label{ssec:transarith} In this paragraph, we provide an explicit construction of translation surfaces with arithmetic period \textit{without cylinders}; that is, the structures obtained through this process are transitional. We introduce a specific notation that will be useful in the sequel. %For this purpose, let \(\rho\) be an arithmetic representation let \(\{\,\alpha,\,\beta\,\}\) be a basis in homology and suppose \(\rho(\,\alpha\,)=k\) and \(\rho(\,\beta\,)=m\). Since we are under the assumption \(\textnormal{Im}(\,\rho\,)=\mathbb Z\) then \(\gcd(\,k,\,m\,)=1\).

\smallskip

\noindent To simply our argument, we shall adopt the following notation. Given two positive integers $k,m$ that are coprime, let \( \ell_1,\,\ell_2,\,\ell_3 \ge 0 \) such that \(k=\ell_1\,+\,\ell_2\) and \(m=\ell_2\,+\,\ell_3\). Although \(\ell_i=0\) is admissible for at most two out of three parameters, let us assume for the moment that \(\ell_i\,>\,0\) for every \(i=1,2,3\). Let \( S(\,\ell_1,\,\ell_2,\,\ell_3\,) \) the translation surface of degenerate type constructed as follows. 

\smallskip

\noindent Let \(\textnormal{U}\subset \mathbb C\) be a half-plane with boundary given by a straight line \(r\) parallel to \(\mathbb R\), oriented from the left to the right, so that \(\textnormal{U}\) lies to the \textit{left} of the boundary. Let \(\pto P\in\partial\textnormal{U}\) be any point, define \(\pto Q=\pto P\,+\,\ell_1\) and consider the following chain:
\begin{equation}
    \pto P\, \longmapsto\, \pto Q\,\longmapsto\,\pto P\,+\,k\,\longmapsto\,\pto Q\,+\,m,
\end{equation}
where we are adopting the same convention introduced in \S\ref{sec:negleaves}. Notice that this chain is made by three horizontal segments, say \(e_1^+,\,e_2^+,\,e_3^+\), of lengths \( \ell_1,\,\ell_2,\,\ell_3 \) respectively by design. Denote by \(r_u^-\subset r\) the half-ray on the left of \(\pto P\) and denote by \(r_u^+\subset r\) the half-ray on the right of \(\pto Q\,+\,m\). In the same fashion we define another half-plane as follow. Let \(\textnormal{L}\subset \mathbb C\) be a half-plane with boundary \(r\), again oriented from the left to the right, so that \(\textnormal{L}\) now lies on \textit{right} of the boundary. Let \(\pto P\in\partial\textnormal{L}\) be any point and consider the following chain:
\begin{equation}
    \pto P\, \longmapsto\, \pto Q\,+\,m-k\,\longmapsto\,\pto P\,+\,m\,\longmapsto\,\pto Q\,+\,m.
\end{equation}
Notice that this chain is made by three horizontal segments, say \(e_3^-,\,e_2^-,\,e_1^-\), of lengths \( \ell_3,\,\ell_2,\,\ell_1 \) respectively by design. Like above, we denote by \(r_l^-\subset r\) the half-ray on the left of \(\pto P\) and denote by \(r_l^+\subset r\) the half-ray on the right of \(\pto Q\,+\,m\). These half-planes may be both regarded as non-compact pentagons with one ideal point at the infinity. We glue them as follow: We first identify \(r_u^+\) with \(r_l^+\) and, similarly, we identify \(r_u^-\) with \(r_l^-\). The resulting space is a homeomorphic to a slit plane. We then glue \(e_i^+\) with \(e_i^-\) for \(i=1,2,3\). The resulting space \( S(\,\ell_1,\,\ell_2,\,\ell_3\,) \) is homeomorphic to a once-punctured torus and has arithmetic period character by design; see Figure \ref{fig:transarith}. Since the edges \(e_1^{\pm},\,e_2^{\pm},\,e_3^{\pm}\) are all aligned the resulting surface has no cylinder and thus it belongs to the boundary of some chamber of cylinder type. In fact, in the next paragraph we shall see that these structures arise as deformation of cylinder type structure as realized in \S\ref{sec:chambdegtype}.

\begin{figure}[!ht]
    \centering
    \includegraphics[width=1\linewidth]{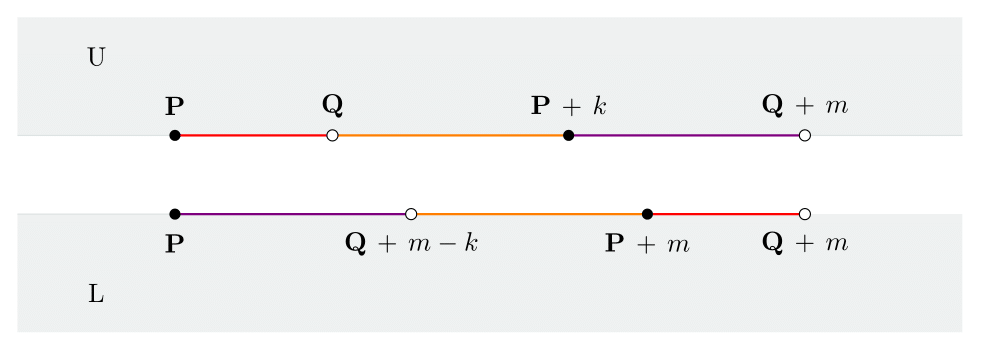}
    \caption{Realizing translation surfaces in \(\mathcal H_1(\,1,1,-2\,)\) with arithmetic period character and no cylinders. }
    \label{fig:transarith}
\end{figure}

\begin{rmk}\label{rmk:changemarkingarit}
    As we can see later, in this construction, coloring of the left-most point $\pto P$ does not determine the orientation. Indeed, the orientation is uniquely determined by the relative period \(w\), that is, by the orientation of the segment \(\overline{\pto B\pto W}\), and not by the relative position of \(\pto B\) and \(\pto W\) on the real line. If we endow \(\mathbb{R}\) with its usual orientation, then for a given marking of the zeros, the segment \(\overline{\pto B\pto W}\) either points to the left or to the right: both may happen on the boundary of the same chamber in the marked leaf.
    %the point \(\pto W\) lies to the right of \(\pto B\) if \(w > 0\), and similarly, the point \(\pto W\) lies to the left of \(\pto B\) if \(w < 0\). Changing the marking means reversing the orientation of the segment \(\overline{\pto B\pto W}\); therefore, under the new marking, the point \(\pto W\) lies to the right of \(\pto B\) if \(w < 0\), and to the left of \(\pto B\) if \(w > 0\).
\end{rmk}

\begin{rmk}
    Notice that if we allow exactly one of \(\ell_i\)'s to be zero, we obtain a surface in the minimal stratum \(\Omega\mathcal{M}_1(2,-2)\). In the case two out of three of \(\ell_i\)'s are zero, then we get a pinched torus.
\end{rmk}

\begin{rmk}
    We also notice that this construction is not specific to arithmetic representations but extends to non-arithmetic once by assuming \(\langle\,\ell_1+\ell_2,\,\ell_2+\ell_3\,\rangle\) is a dense subgroup of \(\mathbb R\).
\end{rmk}

\smallskip

\subsection{Degeneration towards boundary} Consider the chamber $\textnormal{C}^+(k,m)$. We determine how a sequence of structures of cylinder type in this chamber degenerates when it approaches to the boundary of the chamber. We distinguish two cases according to the following paragraphs. In both cases below we rely on our previous notation.

\smallskip

\subsubsection{Case \(m=0\)}\label{sssec:mis0}
We observe then that \(k=1\). First consider the surface with coordinate \(w=n+t+i\varepsilon\) realized as described in \S\ref{ssec:chambdes}, where \(n\ge0\) is an integer, \(0\le t<1\), and \(\varepsilon>0\). Letting \(\varepsilon\to0\), we obtain a sequence of surfaces of cylinder type that converges to a structure of degenerate type, and it is easy to see that such a structure is precisely \(S(\,t,\,1-t,\,n+t\,)\) realized as in \S\ref{ssec:transarith}.

\smallskip

\noindent For \(w=0\), the corresponding structure does not lie in the leaf it self, but its metric completion. It corresponds to a pinched torus: the surface is obtained by identifying two points on a complex plane. This gives the nodal curve in the Deligne-Mumford compactification of \(\mathcal{M}_{1,1}\).

\smallskip

\noindent Translation surfaces corresponding to coordinates with negative real part can be described similarly -- they are, in fact reflections of those with positive real part across a vertical line. Moreover, the nonzero integers on \(\mathbb{R}\) correspond to points in \(\Omega\mathcal{M}_1(2,-2)\).

\smallskip

\subsubsection{Case \(m\neq0\)}
In the chamber of cylinder type \(\textnormal{C}^+(\,k,\,m\,)\), consider the surface with coordinate \(w=t+i\varepsilon\) realized as described in \S\ref{ssec:chambdes}. Since \(u=k\) and \(v=m\) it must follows that \(w-v=t-m+i\varepsilon\). We have four sub-cases to discuss according to the mutual relations between \(m\) and \(t\).
\begin{itemize}
    \item[1.] Assume \(m-k<t<0\). Then \(-k<\mathfrak{Re}(\,w-v\,)<-m\). By taking \(\varepsilon\to0\), since the parallelogram with periods \(u\) and \(w\) degenerates, we obtain the surface \(S(\,|\,t\,|,\,k-m-|\,t\,|,\,m+|\,t\,|\,)\) -- in this case the segment $\overline{\pto B\pto W}$ points to the left.
    %the point \(\pto W\) lies on the left of \(\pto B\).
    \smallskip
    \item[2.] Assume \(0\,<t<\,m\). Then \(-m<\mathfrak{Re}(\,w-v\,)<0\). Letting \(\varepsilon\to0\), we get the surface of degenerate type \(S(\,k+t-m,\,m-t,\,t\,)\) -- in this case the segment $\overline{\pto B\pto W}$ points to the right.
    %in this case \(\pto W\) lies on the right of \(\pto B\).
\end{itemize}

\noindent There next two cases extend the previous ones. More precisely

\begin{itemize}
    \item[3.] Assume \(-(n+1)k+m\,<\,t\,<\,-nk+m\) for some integer \(n\ge1\). Then \(-(n+1)k<\mathfrak{Re}(w-v)<-nk\). Taking \(\varepsilon\to0\), we get the surface of degenerate type \(S(\,|\,t\,|,\,(n+1)k-m-|\,t\,|,\,m+|\,t\,|-nk\,)\) -- in this case the segment $\overline{\pto B\pto W}$ points to the left.
    %in this case the point \(\pto W\) lies on the left of \(\pto B\).
    Finally,
    \smallskip
    \item[4.] Assume \(nk+m\,<\,t\,<\,(n+1)k+m\) for some integer \(n\ge0\). Then \(nk<\mathfrak{Re}(w-v)<(n+1)k\). Taking \(\varepsilon\to0\), we get the surface of degenerate type \(S(\,t-m-nk,\,(n+1)k+m-t,\,t\,)\) -- in this case the segment $\overline{\pto B\pto W}$ points to the right.
    %in this case the point \(\pto W\) lies on the right of \(\pto B\).
\end{itemize}
\smallskip
We notice that the points \(nk+m\) for every integer \(n\) on the real line, together with $0$, correspond to surfaces in the minimal stratum \(\Omega\mathcal{M}_1(2,-2)\). %For \(m=-nk\) we have the same nodal curve mentioned in the former case in \S\ref{sssec:mis0}.

\begin{rmk}
    The description for the boundary of the chamber \(\textnormal{C}^-(\,k,\,m\,)\) can be obtained by taking \(w\mapsto -w\). Note that for surfaces of degenerate type on the boundary, taking \(w\mapsto -w\) yields the same surface but with a different marking of its zeros. See also Remark \ref{rmk:changemarkingarit}.
\end{rmk}

\smallskip

\subsection{Gluing of chambers} We now described how chambers of marked structures are glued together. In later parts, we will use this information to describe local flat geometry of marked leaves (e.g.\ singularities). We will not directly derive from this the global geometry and topology of marked leaves, but rather as in the previous sections, we first describe unmarked leaves, and then marked ones based on those.

\smallskip

\noindent Fix a pair of integers \((\,k,\,m\,)\) with \(0 \leq m < k\) and \(\gcd(\,k, m\,) = 1\). As discussed above, the corresponding chamber of cylinder type \(\textnormal{C}^+(\,k,\,m\,)\) is the upper half-plane bounded by the real line in the coordinate $w$. The boundary real line is divided by the points \(n k + m\) for \(n \in \mathbb{Z}\) and \(0\) into segments. We thus proceed step-by-step as follows.

\begin{itemize}
    \item[\textbf{1.}] First consider the segment \([\,m - k,\,0\,]\) on the boundary of \(\textnormal{C}^+(\,k,\,m\,)\). Let \(p = k - m\), and let \(r\) be the unique integer satisfying
    \[ \frac{2m - k}{k - m} < r \le \frac{m}{k - m},
    \]
    and set \(q = (r + 1)m - rk\). Then the chamber \(\textnormal{C}^-(\,p,\,q\,)\) has a boundary segment 
    \[ -[\, (r + 1)p + q,\, rp + q\,] = [\,m,\,k\,].
    \]
    One can easily check from our discussion in the last section that the corresponding surfaces on this segment match those on \([\,m - k,\,0\,]\) of \(\textnormal{C}^+(\,k,\,m\,)\).
    \smallskip
    \item[\textbf{2.}] Consider \([\, -(\,n+1\,)k + m,\,-nk + m\,]\) for some natural number \(n \ge 1\). Set \(p = (\,n+1\,)k - m\) and \(q = k\). The chamber \(\textnormal{C}^-(\,p,\,q\,)\) has a boundary segment \([\, -q,\,0\,] = [\, -k,\,0\,]\), where the corresponding flat surfaces match those on \([\, -(\,n+1\,)k + m,\,-nk + m\,]\) of \(\textnormal{C}^+(\,k,\,m\,)\).
    \smallskip
    \item[\textbf{3.}] For the segment \([\,0,\,m\,]\) (which does not exist for \(m = 0\)), set \(p = m\), \(q = (\,r + 1\,)m - k\), where \(r\) is the unique integer such that \(r \ge \frac{k}{m} - 1\) and \(r < \frac{k}{m}\). It is easy to see that the flat surfaces on the segment \([\, rk - m,\, (\,r+1\,)k - m\,] = [\,k - m,\,k\,]\) on the boundary of \(\textnormal{C}^-(\,p,\,q\,)\) match those on \([\,0,\,m\,]\) of \(\textnormal{C}^+(\,k,\,m\,)\). In fact, this is the ``inverse'' of the previous case.
    \smallskip
    \item[\textbf{4.}] For the segment \([\,nk + m,\,(n+1)k + m\,]\), where \(n\) is a nonnegative integer, set \(p = (\,n + 1\,)k + m\) and \(q = nk + m\). Then it is easy to see that the flat surfaces on the segment \([\,0,\,p - q\,] = [\,0,\,k\,]\) on the boundary of \(\textnormal{C}^-(\,p,\,q\,)\) match those on \([\,nk + m,\,(n+1)k + m\,]\) of \(\textnormal{C}^+(\,k,\,m\,)\). In fact, this is the ``inverse'' of the first case.
\end{itemize}

\smallskip

\noindent This description of gluing implies the following:

\begin{lem}
For every arithmetic representation \(\rho\) such that \(\textnormal{Im}(\,\rho\,)=\mathbb{Z}\), the marked real leaf \(\mathfrak{ML}(\,\rho,\mu\,)\) is connected. % up to a symplectic change of basis.
\end{lem}

\begin{proof}
Since \(\textnormal{C}^+(\,1,\,0\,)\) is glued to \(\textnormal{C}^-(\,1,\,0\,)\) along the segment \([\,{-}1,\,1\,]\), it suffices to show that, via successive gluings, we can reach any admissible pair \((\,k,\,m\,)\) starting from \((\,1,\,0\,)\). We observe that from any given pair \((\,k,\,m\,)\), it is possible to reach \((\,(\,n+1\,)k - m,\,k\,)\) for any natural number \(n\). We prove this by induction on \(m\). Since we can reach \((\,n+1,\,1\,)\) from \((\,1,\,0\,)\) for any natural number \(n\), the base case \(m = 1\) is satisfied.

\smallskip

\noindent Assume now that the statement holds for all \(m \le m_0\). Consider \(m = m_0 + 1\). Choose \(t \le m_0\) such that \(\gcd(\,t,\,m\,) = 1\). By the inductive hypothesis, the pair \((\,m,\,t\,)\) can be reached by gluing starting from \((\,1,\,0\,)\). Then, for any natural number \(n\), we can also reach the pair \((\,n m + (\,m - t\,),\,m\,)\), which gives all admissible pairs with second index \(m\). Therefore, the statement holds for \(m = m_0 + 1\) as well. By induction, any admissible pair of indices can be obtained via gluing, as desired.
\end{proof}

\subsection{Geometry of arithmetic real leaves} We now provide a description of the flat geometry of marked real leaves. The first result in this sense is the following:

\begin{lem}
The singularities of any arithmetic real leaf in the marked stratum \(\mathcal{H}_1(1,1,-2)\) are cone points of magnitude \(6\pi\).
\end{lem}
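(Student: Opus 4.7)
The plan is to apply the general local model analysis from \S\ref{ssec:singkerleaves}, classifying the degenerations that create cone-angle singularities inside the leaf, and then checking that only one kind of degeneration actually contributes such a singularity in the arithmetic real setting. Along any isoperiodic deformation in $\mathcal{H}(1,1,-2)$ the absolute periods are frozen, so the only saddle connections whose length can vary are those joining the two distinct zeros $B$ and $W$. Since each zero has cone angle $4\pi$, two such saddle connections can collapse together only if they bound a simple closed loop of vanishing period; hence a cone-angle singularity of the leaf must correspond to the limit of one of the two degenerations described below.

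The first possibility is the collapse of a single saddle connection between $B$ and $W$. The limit surface lies in $\mathcal{H}(2,-2)$, where $B$ and $W$ coalesce into a double zero of cone angle $6\pi$. Running the breaking-up surgery in reverse, nearby surfaces of $\mathcal{H}(1,1,-2)$ are parametrized by the holonomy of the small saddle connection, and this holonomy takes values in the punctured disk of cone angle $6\pi$ at the double zero. This realizes the leaf locally as a three-fold cyclic branched cover of a pointed disk, and the metric completion adds a conical singularity of angle $6\pi$.

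The second possibility is the simultaneous collapse of two saddle connections forming a simple closed loop $\delta$ of zero period. In the arithmetic case $\Gamma = a\mathbb{Z}$, the kernel of $\chi$ contains non-peripheral simple loops, so such a $\delta$ exists and the resulting degeneration is a pinched torus. By the discussion in \S\ref{ssec:singkerleaves}, the symmetry of the two jointly shrinking saddle connections forces the local model in the leaf to be a pointed disk rather than a branched cover, so the corresponding compactification point is a marked point of the translation structure and not a conical singularity of the flat metric.

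By Lemma~\ref{lem:VolConstraints}, a real leaf (where $Vol_{\chi}=0$) contains neither chambers of torus type nor of degenerate type, so the two cases above exhaust the possible degenerations. Consequently every conical singularity arises from the first case and has cone angle exactly $6\pi$. The main subtle step is the unramified local model in the second case, which depends on the symmetry forced by the joint shrinkage and is concretely visible in the gluing of $CC^+_{1,0}$ and $CC^-_{1,0}$ across $[-1,1]$, where $z=0$ becomes a single marked point in the metric completion.
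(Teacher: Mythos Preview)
Your argument is correct and follows a genuinely different route from the paper. The paper proves the lemma by an explicit combinatorial verification: it uses the detailed gluing rules of \S8.2 to list, for each chamber $CC^{\pm}_{k,l}$ and each boundary point $0$, exactly which six chambers meet there, splitting into the three cases $2l-k>0$, $2l-k<0$, and $(k,l)=(2,1)$. You instead invoke the general local analysis of \S\ref{ssec:singkerleaves}: a singularity of the leaf arises either from the collapse of a single $B$--$W$ saddle connection (giving the $6\pi$ cone via the breaking-up surgery on $\mathcal{H}(2,-2)$) or from the simultaneous collapse of two such saddle connections bounding a zero-period loop (giving the pinched-torus ``center'', which is a smooth marked point). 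This conceptual route is shorter and explains \emph{why} the answer is $6\pi$, whereas the paper's case check additionally confirms the explicit chamber adjacencies needed later for the Loch Ness Monster description.

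One minor correction: your appeal to Lemma~\ref{lem:VolConstraints} in the third paragraph is both unnecessary and slightly misstated. That lemma only gives $Vol_{\chi}\le 0$ for degenerate type, not $Vol_{\chi}<0$, so it does not by itself exclude degenerate-type surfaces from a real leaf; indeed such surfaces do occur, but only on the discriminant (the walls $S(l_1,l_2,l_3)$ of \S8.1), hence they do not form open chambers. In any case the exhaustiveness of your two degeneration types was already established in your first paragraph directly from \S\ref{ssec:singkerleaves}, so this paragraph can simply be deleted.
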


\begin{proof}
Note that, by the description of the gluing, every singularity arises from the points \( n\,k + m \) and \( 0 \) on the boundary of \(\textnormal{C}^+(\,k,\,m\,)\), or from \( n\,k - m \) and \( 0 \) on the boundary of \(\textnormal{C}^-(\,k,\,m\,)\). Moreover, it is easy to see that each such point is glued to the point \( 0 \) on the boundary of some chamber. Thus, it suffices to show that six chambers are glued at the point \( 0 \) in each \(\textnormal{C}^+(\,k,\,m\,)\). We can now verify the following, based on the gluing rules:

\begin{itemize}[itemsep=4pt, topsep=4pt]
    \item When \(2m - k > 0\), the point \(0\) on \(\partial \textnormal{C}^+(\,k,\,m\,)\) is glued to the point \({-}m\) on \(\partial \textnormal{C}^-(\,p,\,q\,)\), where \(p = k - m\), and \(q = (\,r+1\,)m - r\,k\) for the unique integer 
    \[ \frac{2m - k}{k - m} < r \le \frac{m}{k - m},
    \]
    as well as to the point \({-}k + m\) on \(\partial \textnormal{C}^+(\,m,\,2m - k\,)\), the point \(0\) on \(\partial \textnormal{C}^-(\,k,\,m\,)\), the point \(m\) on \(\partial \textnormal{C}^+(\,p,\,q\,)\), and the point \(k - m\) on \(\partial \textnormal{C}^-(\,m,\,2m - k\,)\). So, six chambers are glued at \(0\).

    \item When \(2m - k < 0\), the point \(0\) on \(\partial \textnormal{C}^+(\,k,\,m\,)\) is glued to the point \({-}m\) on \(\partial \textnormal{C}^-(\,k - m,\,m\,)\); also to the point \({-}k + m\) on \(\partial \textnormal{C}^+(\,m,\,q\,)\), where \(q = (\,r + 2\,)m - k\) for the unique integer \(r \in [\,k/m - 2,\,k/m - 1\,)\); the point \(0\) on \(\partial \textnormal{C}^-(\,k,\,m\,)\), the point \(m\) on \(\partial \textnormal{C}^+(\,k - m,\,m\,)\), and the point \(k - m\) on \(\partial \textnormal{C}^+(\,m,\,q\,)\). Again, six chambers are glued at \(0\).

    \item When \(k = 2m\), the only possible pair is \((\,k,\,m\,) = (\,2,\,1\,)\). In this case, the point \(0\) on \(\partial \textnormal{C}^+(\,2,\,1\,)\) is glued to both \({-}1\) and \(1\) on \(\partial \textnormal{C}^-(\,1,\,0\,)\), to both \({-}1\) and \(1\) on \(\partial \textnormal{C}^+(\,1,\,0\,)\), and to \(0\) on \(\partial \textnormal{C}^-(\,2,\,1\,)\). Counting the chambers \(\textnormal{C}^\pm(\,1,\,0\,)\) twice, we again obtain six chambers glued at \(0\).
\end{itemize}

\noindent All possibilities have been considered. Since each chamber provides a contribution of \(\pi\), the desired claim follows.
\end{proof}

\smallskip

\noindent Recall that the point \(0\) on the boundary of \(\textnormal{C}^\pm(\,1,\,0\,)\) corresponds to a pinched torus, so it does not lie on the leaf itself, but rather in its metric completion. In this completion, it does not produce a singularity, since the two chambers \(\textnormal{C}^\pm(\,1,\,0\,)\) are glued along the segment \([\,{-}1,\,1\,]\). We refer to the corresponding point in the completion of the leaf as its \emph{center}.

\smallskip

\noindent As in the case of positive and negative leaves, a real arithmetic leaf supports an involution \(\iota\), given by identifying \(\textnormal{C}^\pm(\,k,\,m\,)\) via \(w \mapsto -w\). We have

\begin{lem}
For every arithmetic representation \(\rho\) such that \(\textnormal{Im}(\,\rho\,)=\mathbb{Z}\). The quotient of the completion of the marked leaf \(\mathfrak{ML}(\,\rho,\mu\,)\) by the involution \(\iota\) is a complex disk. The projection is ramified at the cone points and the center.
\end{lem}

\begin{proof}
The only statement that requires a proof is the topology of the quotient. The others follow immediately from the description of gluing and the definition of \(\iota\).

\smallskip

\noindent Under \(\iota\), a chamber \(\textnormal{C}^+(\,k,\,m\,)\) is identified with \(\textnormal{C}^-(\,k,\,m\,)\), so we shall use \(\textnormal{C}(\,k,\,m\,)\) to denote the corresponding chamber in the quotient. Starting with \(\textnormal{C}(\,1,\,0\,)\), we can obtain the whole leaf inductively as follows.
\begin{itemize}
    \item We start by gluing the segment \([\,0,\,1\,]\) to \([\,{-}1,\,0\,]\) on \(\partial\textnormal{C}(\,1,\,0\,)\). This creates a cone angle of \(\pi\) at the center. Let the resulting surface be \(\mathfrak{L}_0\). The boundary of \(\mathfrak{L}_0\) is topologically a line, being the image of \([\,1,\,\infty\,)\) and \((\,{-}\infty,\,-1\,]\) on \(\partial\textnormal{C}(\,1,\,0\,)\), where \(\pm1\) map to the same point \(p_{0\,1}\).
    \smallskip
    \item The segment \([\,{-}1,\,1\,]\) on \(\partial\textnormal{C}(\,2,\,1\,)\) is glued to \(\mathfrak{L}_0\), so that \(0\) on \(\partial\textnormal{C}(\,2,\,1\,)\) is glued to \(p_{0\,1}\). Let this new surface be \(\mathfrak{L}_1\). It has two boundary components, and each boundary component consists of two rays coming from the boundaries of two different chambers meeting at a point. Let these points be \(p_{1\,1}\) and \(p_{1\,2}\) for the two components.
    \smallskip
    \item Inductively, the surface \(\mathfrak{L}_n\) has \(2^n\) boundary components, each with a distinguished point \(p_{n\,i}\). The next surface \(\mathfrak{L}_{n+1}\) is obtained from \(\mathfrak{L}_n\) by gluing the origin of some chamber \(\textnormal{C}(\,k_i,\,m_i\,)\) to \(p_{n\,i}\) for each \(i\). This creates \(2^{n+1}\) boundary components, each with a distinguished point \(p_{n+1\,j}\).
\end{itemize}

\begin{figure}[htp]
    \centering
    \includegraphics[width=1\linewidth]{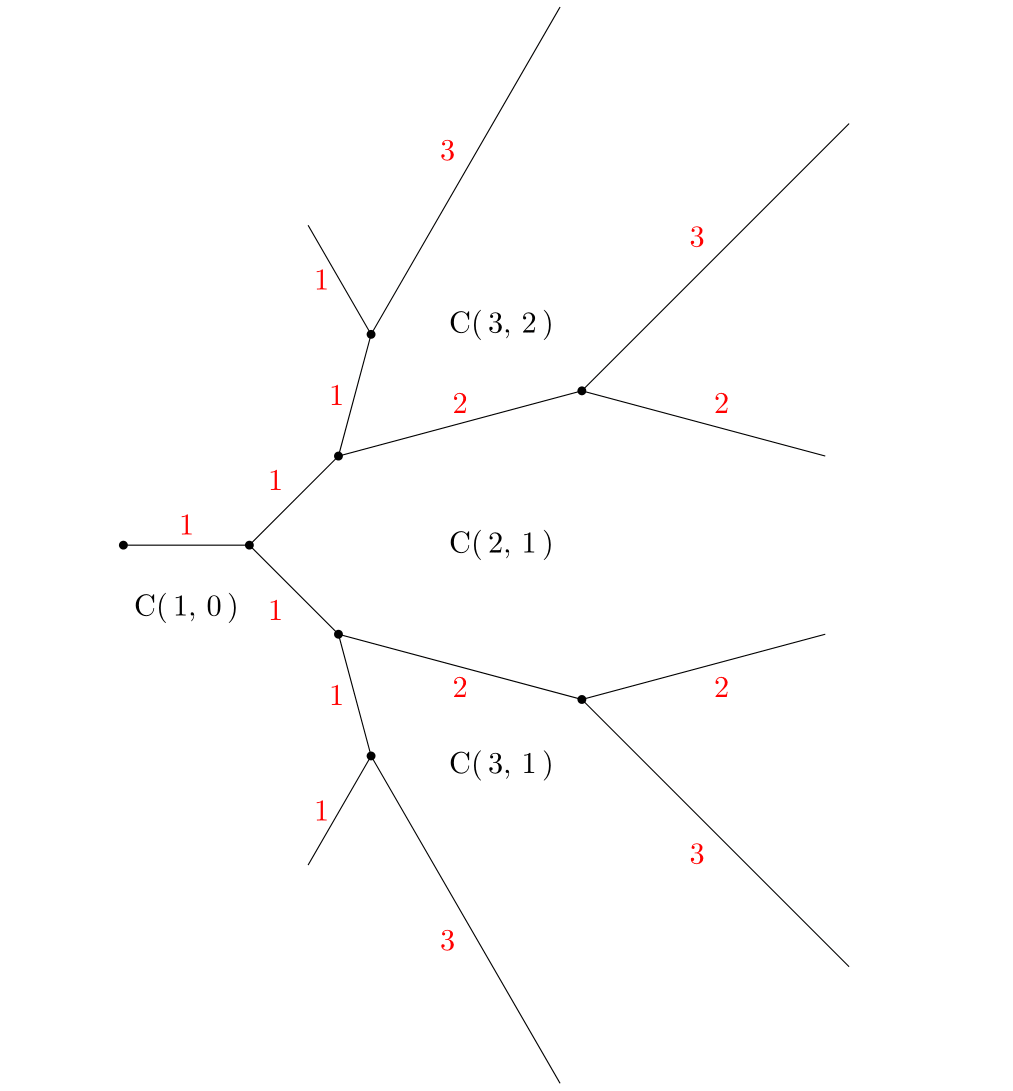}
    \caption{A portion of the arithmetic leaf in the unmarked stratum. Numbers in read are (relative) lengths of the segments. The wall is a binary tree, consisting of surfaces of degenerate type. The root and branching points of the binary tree are the pole and the zeros, respectively, of the quadratic differential defining the flat structure of the leaf}
    \label{fig:binary_tree}
\end{figure}

\noindent It is easy to see that the resulting surface \(\mathfrak{L}(\,\rho,\mu\,) = \bigcup_{n \ge 0} \mathfrak{L}_n\) is topologically a plane. In fact, the set of surfaces of degenerate type in the quotient forms a binary tree rooted at the center; see Figure~\ref{fig:binary_tree}. The branching points of the tree are precisely the zeros of the quadratic differential. If we identify the ends of the tree with a Cantor set, the chambers can be identified with the complementary intervals.

\smallskip

\noindent In terms of complex structures, the completion of the leaf is either a complex plane or a complex disk. It follows from Corollary~\ref{cor:hyp} that the leaf itself must be a punctured disk, so the completion is biholomorphic to a disk.
\end{proof}

\noindent The gluing described above implies that marked arithmetic leaves satisfy the properties of Theorem~\ref{thm:LochNess}. Indeed, we invoke Lemma~\ref{lem:double_cover} once again, we conclude that the marked leaf is a Loch Ness Monster. Moreover, in the quotient by involution, the simple pole of the quadratic differential defining the flat structure comes from the center of the leaf.

\begin{rmk}
    Again, we remark that while this gives the same topology as positive and negative leaves, the translation structure is very different.
\end{rmk}

\subsection{Veech group}

\noindent Finally, we determine the Veech group of an arithmetic real leaf as a translation surface. This concludes the proof of Theorem~\ref{thm:Veech}.

\begin{prop}
The Veech group of an arithmetic real leaf is a conjugate of the following group:
\[ V=\left\{\pm\begin{pmatrix}1&b\\0&a\end{pmatrix}\,\big|\,a,b\in\mathbb{R}, a>0\right\}.\]
\end{prop}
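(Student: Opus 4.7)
The plan is to follow the same strategy used for positive, negative, and non-arithmetic real leaves: identify the Veech group of the leaf with the stabilizer of the absolute period group $\Gamma$ in $\text{GL}^+(2,\mathbb{R})$. After applying the $\text{GL}^+(2,\mathbb{R})$-action we may assume $\Gamma=\mathbb{Z}$, and the general case follows by conjugation. The key observation is that, by Theorem \ref{thm:Unique} and the construction of the leaf in this section, the translation structure of the leaf is uniquely determined by $\Gamma$, so any Veech group element $g$ must satisfy $g\Gamma=\Gamma$.

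For the containment Veech group $\subseteq V$: since $\Gamma \subset \mathbb{R}$ and $g\Gamma=\Gamma$, the matrix $g$ preserves the real line, hence is upper triangular of the form $g=\begin{pmatrix}p&b\\0&a\end{pmatrix}$ with $pa>0$. The condition $g(\mathbb{Z})=\mathbb{Z}$ forces $p=\pm 1$; combined with the positivity of the determinant this gives $g \in V$. This is the arithmetic analogue of the argument in \S\ref{sec:nonarith}, where the discreteness of $\mathbb{Z}$ in $\mathbb{R}$ replaces the role played there by units in the ring of integers of a quadratic field.

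For the reverse containment $V \subseteq$ Veech group: any $g \in V$ preserves $\mathbb{R}$ and acts on it as $\pm\text{id}$, hence $g\Gamma=\Gamma$ and $g \circ \chi = \pm \chi$, so $g$ maps the leaf to itself (the character $-\chi$ gives the same leaf via the symplectic change of basis $-I \in \text{SL}(2,\mathbb{Z})$). In the relative period coordinate $z$ on each chamber, the induced action is $z \mapsto g(z)$, viewing $g$ as an $\mathbb{R}$-linear map of $\mathbb{C} \cong \mathbb{R}^2$, which is $\mathbb{R}$-affine with derivative $g$. Concretely, for the $+$-sign $g$ preserves each cylinder chamber $CC^{\pm}_{k,l}$ pointwise on its labelling data (since $g$ fixes the integers $\pm k, l$), while for the $-$-sign $g$ interchanges $CC^{+}_{k,l}$ with $CC^{-}_{k,l}$; in both cases the action is globally consistent across the chamber gluing described in this section.

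The only delicacy in the argument is the compatibility of the piecewise-defined affine action across chamber boundaries. However, this compatibility is automatic because the action is induced by the ambient $\text{GL}^+(2,\mathbb{R})$-action on $\mathcal{H}(1,1,-2)$, which respects the whole period atlas and therefore respects the gluing pattern of chambers inside the leaf. No separate verification is needed, so the argument closes uniformly with that for the non-arithmetic case without a quadratic field.
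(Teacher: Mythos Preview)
Your proposal is correct and takes essentially the same approach as the paper: both argue that a Veech group element must preserve the real line and fix $\mathbb{Z}$ setwise (giving the containment in $V$), and conversely that each element of $V$ preserves the chamber decomposition and hence defines an affine automorphism. Your write-up is simply more explicit in framing this as the stabilizer-of-$\Gamma$ argument used for the other leaf types and in noting that compatibility across chamber boundaries is inherited from the ambient $\text{GL}^+(2,\mathbb{R})$-action.
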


\begin{proof}
Indeed, any element in the Veech group must preserve the real line, and maps $\mathbb{Z}$ to itself. This gives precisely the group $V$. Conversely, any element in $V$ leaves each chamber invariant and its boundary fixed, so is indeed in the Veech group.
\end{proof}

\medskip

\section{Conformal geometry of leaves}\label{sec:conf_geom}

\noindent In this final section, we explore the conformal geometry of the leaves in the unmarked stratum. Our goal is to provide additional details while remaining consistent with the main focus developed in the previous sections. In particular, we will prove the last theorem stated in the introduction, namely Theorem~\ref{thm:confgeoleaves}. Description and drawing of the conformal geometry of leaves date back to McMullen's work in \cite{Mc}.

\subsection{Biholomorphic maps to the Teichm\"uller space}\label{ssec:biholototeich} We begin with explicit map from each leaf to the Teichm\"uller space \(\mathcal{T}_{1,1}\) case-by-case as follows.

\subsubsection{Positive and negative leaves} Let \(\rho\) be any representation with non-zero volume and let \(\mathfrak{L}(\,\rho,\,\mu\,)\) be the associated leaf. We realize an explicit map from any such a leaf to the Teichm\"uller space \( \mathcal{T}_{1,1} \), which serves as a lift of the natural map to the moduli space \( \mathcal{M}_{1,1} \).

\smallskip

\noindent Suppose first that the period group \( \Gamma \) is a lattice in \( \mathbb{C} \). As in sections \ref{sec:posleaves} and \ref{sec:negleaves}, for simplicity we shall assume \( \textnormal{Im}(\,\rho\,)=\Gamma = \mathbb{Z}[\,i\,] \). Given a point \( (X,\,\omega) \in\mathfrak{L(\,\rho,\,\mu\,)}\), there is an isomorphism \( H_1(\,X,\,\mathbb{Z}\,) \cong \mathbb{Z}[\,i\,] \) determined by the integration of \( \omega \). Moreover, if we remain within a positive (resp.\ negative) leaf, this isomorphism preserves (resp.\ reverses) the symplectic pairing. Therefore, each such point determines a \emph{homologically marked} translation surface. In genus one, the space of homologically marked elliptic curves coincides with the Teichm\"uller space, since the Torelli group is trivial.

\smallskip

\noindent The map described above is clearly injective: if two translation surfaces \( (X_1,\,\omega_1) \) and \( (X_2,\,\omega_2) \) define the same marked surface, then \( X_1 = X_2 \), and \( \omega_1 - \omega_2 \) must be an exact holomorphic 1-form. As such, \( \omega_1 = \omega_2 \). The map is also surjective: given any elliptic curve \( \mathbb{C} \big/ (\,\mathbb{Z} + \mathbb{Z}\tau\,) \), with \( \tau \in \mathbb{H} \), we can construct a holomorphic differential with a double pole as
\[
\omega = \big(\,a + b\,\wp_\tau\,\big)\,dz,
\]
where \( \wp_\tau \) denotes the Weierstrass \( \wp \)-function associated to the lattice. Solving the system
\[
\int_0^1 \big(\,a + b\,\wp_\tau\,\big)\,dz = 1 \qquad \text{and} \qquad \int_0^\tau \big(\,a + b\,\wp_\tau\,\big)\,dz = \pm i,
\]
we obtain a differential whose period group is \( \mathbb{Z}[\,i\,] \).

\smallskip

\subsubsection{Non-arithmetic real case}
Essentially the same argument works for the non-arithmetic real case as well: without loss of generality we may assume $\Gamma=\mathbb{Z}+\omega\mathbb{Z}$, where $\omega$ is a irrational number. Replacing $\pm i$ with $\omega$ in the arguments above, we obtain the desired map.

\smallskip

\subsubsection{Arithmetic real case}
Next, consider the case of arithmetic real leaves. Again, for simplicity, we may assume \( \Gamma = \mathbb{Z} \). Recall that the leaf \( \mathfrak{L}(\,\rho,\,\mu\,) \), with period group \( \Gamma \), is biholomorphic to a punctured disk. We construct a covering map from \( \mathcal{T}_{1,1} \) to \( \mathfrak{L}(\,\rho,\,\mu\,) \) as follows.

\smallskip

\noindent For any \( \tau \in \mathbb{H} \), one can solve for \( a, b \in \mathbb{C} \) in the system
\[
\int_0^1 \big(\,a + b\,\wp_\tau\,\big)\,dz = 0 \qquad \text{and} \qquad \int_0^\tau \big(\,a + b\,\wp_\tau\,\big)\,dz = 1,
\]
exactly as in the previous case. This determines a point \( (\,X,\,\omega\,) \) on the leaf, where \( X \cong \mathbb{C} \big/ (\,\mathbb{Z} + \mathbb{Z}\tau\,) \) and \( \omega = a + b\,\wp_\tau \). However, since the horizontal period vanishes, it is easy to verify that the translation surface associated to \( \tau \) is the same as that for \( \tau + 1 \). In particular, the map from \( \mathcal{T}_{1,1} \cong \mathbb{H} \) to \( \mathfrak{L}(\,\rho,\,\mu\,) \) descends to a map
\[
\mathbb{H} \big/ \langle\, z \mapsto z+1 \,\rangle \,\longrightarrow\, \mathfrak{L}(\,\rho,\,\mu\,).
\]

\smallskip

\noindent We claim that this map is bijective. Indeed, an inverse can be constructed as follows. Given \( (\,X,\,\omega\,) \in \mathfrak{L}(\,\rho,\,\mu\,) \), choose a symplectic basis \( \alpha,\,\beta \) of \( H_1(\,X,\,\mathbb{Z}\,) \) such that
\[
\int_\alpha \omega = 0 \qquad \text{and} \qquad \int_\beta \omega = 1.
\]
This induces a marking of \( H_1(\,X,\,\mathbb{Z}\,) \). The choice of \( \beta \) is not unique, as one can always replace \( \beta \) by \( \beta + n\alpha \) for any \( n \in \mathbb{Z} \). Thus, we obtain a well-defined map from \( \mathfrak{L}(\,\rho,\,\mu\,) \) to \( \mathbb{H} \big/ \langle\, z \mapsto z+1 \,\rangle \), which is clearly inverse to the map defined above.

\subsection{The Teichm\"uller geometry of cylinder chambers}
 Recall that the Teichm\"uller space \( \mathcal{T}(\,S\,) \) is endowed with a mapping class group invariant complete metric \( d_{\mathcal T} \), called the \emph{Teichm\"uller metric}. Given two points in Teichm\"uller space \( \mathcal{T}(\,S\,) \) represented by \( f\colon S \to X \) and \( g\colon S \to Y \), the Teichm\"uller distance is given by
\[
d_{\mathcal T}(\,[f],\,[g]\,) := \frac{1}{2} \log \inf \left\{\,K \,\Big|\, h\colon X \to Y \text{ is \( K \)-quasi-conformal and homotopic to } g \circ f^{-1} \,\right\}.
\]

\smallskip

\noindent For \( S = S_{1,1} \), the Teichm\"uller space \( \mathcal{T}_{1,1} = \mathcal{T}(\,S_{1,1}\,) \) is biholomorphic to \( \mathbb{H} \), and it is well-known that the Teichm\"uller metric on \( \mathcal{T}_{1,1} \) agrees with the hyperbolic metric, up to a constant multiple.

\smallskip

\noindent Recall that every cylinder chamber \( \textnormal{C} \) is a half-plane, whose boundary is parallel to \( \mathbb{R}u \) for some complex number \( u \). Let us denote by \( X_t \) the point in Teichm\"uller space \( \mathcal{T}_{1,1} \) corresponding to the surface with period vector \( t u \) on the boundary of \( \textnormal{C} \). Since each leaf is biholomorphic to a quotient of \( \mathcal{T}_{1,1} \cong \mathbb{H} \), we may map the chamber \( \textnormal{C} \) conformally, and hence isometrically, to a region in the upper half-plane \( \mathbb{H} \), such that the limit point at infinity of the chamber is mapped to \( \infty \), and \( X_o \) corresponds to the point \( i \). Let \( \sigma_\textnormal{C}(\,t\,) \) denote the image of \( X_t \) under this map. The main goal of this section is to prove the following:

\begin{thm}\label{thm:teich_geometry}
    For any chamber of cylinder type \(\textnormal{C}\), we have \(\sigma_{\textnormal{C}}(t) \asymp t + i \log t\). More precisely, there exist positive constants \(D, T\) depending only on the leaf containing \(\textnormal{C}\) so that 
    \[ d_{\mathbb{H}}\Big(\,\sigma_{\textnormal{C}}(\,t\,),\, t + i \log t\,\Big) \le D 
    \] when \(|\,t\,| \ge T\).
\end{thm}

\noindent In particular, a chamber of cylinder type \(\textnormal{C}\) does not contain any horodisks, and is also not contained in a \(D\)-neighborhood of some complete Teichm\"uller geodesic for any \(D > 0\). As before, for simplicity, we assume the period group \(\Gamma = \mathbb{Z}[\,i\,]\). The result for other leaves can be obtained via the \(\glplus\)-action.

%Let us first consider the cylinder chamber in the positive leaf with boundary parallel to the real line. In the coordinate described in \S\ref{sec:chamcyltype}, this chamber is identified with the upper half plane.

\smallskip

\noindent First consider the torus obtained as follows. Let \(P\) be the unit square given by \([0,1] \times [0,1]\) and \(Q\) the rectangle given by \([0,1] \times [0,s]\) for some \(s > 0\). As in \S\ref{sec:chamcyltype}, we obtain a torus \(T_s\) by gluing the vertical sides of \(P\), the vertical sides of \(\cp \setminus Q\), and the corresponding horizontal sides of \(P\) and \(\cp \setminus Q\). By horizontal and vertical reflection symmetry, it is easy to see that \(T_s\) corresponds to a point \(i \tau(s)\) on the imaginary axis of \(\mathcal{T}_{1,1} \cong \mathbb{H}\). We can estimate \(\tau(s)\) as follows.

\begin{lem}\label{lem:Teich_asymp}
    The function \(\tau(s)\) satisfies \(\tau(s) \asymp \log(s)\).
\end{lem}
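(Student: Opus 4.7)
Plan. The proof proceeds in two stages: first decompose $T_s$ conformally to reduce to estimating the modulus of a single annulus, then establish the asymptotic.

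First, I would show that $\tau(s) = 1 + m(s)$, where $m(s)$ is the conformal modulus of the annulus $Q' := (\hat{\mathbb{C}} - Q)/{\sim}$ obtained by identifying the vertical sides of $\partial Q$. Cutting $T_s$ along the core geodesic of the cylinder $P$ produces a long cylinder that decomposes as $P$ and $Q'$ glued in series along circles of circumference $1$; since the gluing is untwisted (it is the identity on the horizontal parameter induced by the flat structure), the moduli are additive, and $P$ has modulus $1$. This reduces the lemma to showing $m(s) \asymp \log s$ as $s \to \infty$.

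Next, I would compute $m(s)$ via Schwarz--Christoffel uniformization. The horizontal and vertical reflection symmetries of $Q$ force the four prevertices, images of the corners under the uniformization $f : \mathbb{H}^2 \to \hat{\mathbb{C}}-Q$, to be $\{\pm 1, \pm 1/k\}$ for some $k = k(s) \in (0,1)$; the identification $(0,y)\sim(1,y)$ translates to $x \sim -x$ on $[-1/k,-1] \cup [1,1/k] \subset \mathbb{R}$. Then $m(s)$ equals the reciprocal of the Dirichlet integral of the harmonic function $u$ on $\mathbb{H}^2$ with $u=1$ on $\{|x|>1/k\}$, $u=0$ on $[-1,1]$, and Neumann on the identified arcs (the Neumann condition is forced by the even symmetry $u(x+iy) = u(-x+iy)$ dictated by the identification). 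Passing to the first quadrant by even symmetry and applying $w=\log z$ converts this into a mixed boundary value problem for $\hat u$ on the strip $\{0<\operatorname{Im} w<\pi/2\}$ with $\hat u \to 0, 1$ at the two ends, Neumann on top, and piecewise Dirichlet/Neumann/Dirichlet on the bottom with a Neumann middle segment of length $L := |\log k|$. Standard analysis (the harmonic function is essentially linear in the middle and constant in the two wings, up to exponentially small corrections) yields $D(\hat u) \sim \pi/(2L)$, hence $m(s) \sim L/\pi = |\log k|/\pi$.

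Finally, the Schwarz--Christoffel period conditions $|f(1)-f(-1)| = 1$ and $|f(1/k)-f(1)| = s$ give $s = J_2(k)/J_1(k)$ with $J_1, J_2$ real elliptic integrals; expanding as $k \to 0$ yields $J_1(k) \sim \pi/(2k)$ and $J_2(k) \sim 1/(3k^3)$, so $s \sim 2/(3\pi k^2)$ and $|\log k| \sim (\log s)/2$. Combining, $m(s) \sim (\log s)/(2\pi)$, and therefore $\tau(s) \asymp \log s$.

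The main obstacle is making the asymptotic analysis in step two rigorous: confirming that the wing contributions to $D(\hat u)$ decay exponentially (via separation of variables in the strip) and that the piecewise-linear trial captures the leading behavior in both directions (Dirichlet's principle gives the upper bound, and an extremal-length dual argument gives the matching lower bound). A cleaner alternative avoiding Schwarz--Christoffel is to construct matching trial conformal metrics on $Q'$ directly: for the upper bound $m(s) \lesssim \log s$, a truncated spherical metric; for the lower bound, a metric adapted to the neck of $Q'$ near the top corners of $Q$, where the conformal distortion of the spherical metric near $\infty$ produces precisely the $\log s$ growth.
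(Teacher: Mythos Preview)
There is a genuine gap in your first step. Moduli of annuli glued in series along a common boundary circle satisfy only the Gr\"otzsch inequality $\tau(s) \geq 1 + m(s)$; equality holds precisely when the gluing circle is a level circle in the extremal flat-cylinder metric on the union. That the gluing is ``the identity on the horizontal parameter induced by the flat structure'' is not the relevant criterion: what would be needed is that the flat boundary parametrization of $\partial Q'$ coincide with its extremal (conformal) boundary parametrization, i.e.\ that the conformal map from $Q^c$ to a Euclidean rectangle restrict to the identity on the horizontal sides of $\partial Q$, and there is no reason for this. (The reflection symmetries of $T_s$ do force the \emph{cores} of $P$ and of $Q'$ to be level circles in the uniformized torus, but not the gluing circles $\partial P = \partial Q'$.) Incidentally, the horizontal reflection symmetry shows that your $m(s)$ actually equals the quadrilateral modulus $\Mod(Q^c)$, so your asymptotic $m(s)\sim\tfrac{1}{2\pi}\log s$ conflicts with the known $\Mod(Q^c)\sim\tfrac{1}{\pi}\log s$; there is likely a factor-of-two slip in the Schwarz--Christoffel step as well.

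The practical consequence is that your plan delivers only the lower bound $\tau(s)\gtrsim\log s$ and no upper bound: controlling $m(s)$ from above does not control $\tau(s)$ from above. The paper's lower bound is exactly this superadditivity argument (quoting Dyutin--Nguyen for $\Mod(Q^c)$). For the upper bound the paper uses an entirely different device: it builds an explicit piecewise-defined quasiconformal map, with dilatation $K$ bounded independently of $s$, from $Q^c$ to the complement of two small round disks in $\mathbb{C}$; after a M\"obius map and gluing $P$ back on, this exhibits a $K$-quasiconformal map from $T_s$ to a rectangular torus of modulus $\asymp\log s$, whence $\tau(s)\leq K\cdot C\log s$. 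Your proposal has no analogue of this step, and the trial metrics you sketch at the end bound only $m(s)$, which is the wrong quantity.
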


\noindent The lower bound can be obtained from the following result in \cite{DN}:
\begin{thm}\label{thm:DN}
    The modulus of the quadrilateral \(Q^c=\cp\setminus Q\), defined as the extremal length of the path family in \(\cp\setminus Q\) connecting top and bottom sides of \(Q\), satisfies \(\Mod(\,Q^c\,)\sim\frac1\pi\log(s)\).
\end{thm}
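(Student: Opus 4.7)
The plan is to compute $\Mod(Q^c)$ explicitly via the Schwarz--Christoffel transformation and then extract the leading asymptotic from the standard asymptotics of complete elliptic integrals.

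First I will exploit the two reflection symmetries of $Q^c$ (across $\{\mathfrak{Re}\,z = 1/2\}$ and $\{\mathfrak{Im}\,z = s/2\}$) to normalise a conformal equivalence $F:\mathbb{H}^2 \to Q^c$ with $F(i) = \infty$ so that the four corners of $Q$ are the images of $\pm k, \pm 1/k$ for some modular parameter $k \in (0,1)$. The derivative then takes the Schwarz--Christoffel form
\begin{equation*}
F'(w) \;=\; C\,\frac{\sqrt{(w^2 - k^2)(w^2 - 1/k^2)}}{(w^2+1)^2},
\end{equation*}
where the denominator accounts for the double pole over the interior point $\infty\in Q^c$ (and its mirror image across $\mathbb{R}$). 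The constants $C$ and $k$ are fixed by the two side-length identities $\int_{-k}^k |F'(x)|\,dx = 1$ and $\int_k^{1/k} |F'(x)|\,dx = s$. Reducing both integrals to their leading behaviour as $k \to 0^+$---using $w = k\sin\theta$ for the top edge, and isolating the bulk contribution in the range $w \asymp 1$ for the right edge---will yield $C \sim 2/(\pi k)$ and $s \sim 1/(\pi k^2)$, so that $k \sim 1/\sqrt{\pi s}$.

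Next I will pull back the path family in $Q^c$ joining top to bottom of $Q$ to obtain, inside $\mathbb{H}^2$, the family of paths joining $[-k,k]$ to $(-\infty, -1/k] \cup [1/k, \infty)$ on the real line. The classical elliptic-integral map
\begin{equation*}
g(w) \;=\; \int_0^w \frac{d\zeta}{\sqrt{(\zeta^2 - k^2)(\zeta^2 - 1/k^2)}}
\end{equation*}
realises this upper half plane conformally as a rectangle of width $2kK(k^2)$ and height $kK'(k^2)$, with the two prescribed arcs corresponding to the horizontal sides. Hence
\begin{equation*}
\Mod(Q^c) \;=\; \frac{K'(k^2)}{2\,K(k^2)}.
\end{equation*}
Combining with the classical asymptotics $K(k^2)\to\pi/2$ and $K'(k^2) = K(\sqrt{1-k^4}) \sim 2\log(2/k)$ as $k \to 0^+$, and substituting $k \sim 1/\sqrt{\pi s}$, one gets
\begin{equation*}
\Mod(Q^c) \;\sim\; \frac{2\log(2/k)}{\pi} \;\sim\; \frac{\log s}{\pi},
\end{equation*}
as claimed.

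The main technical obstacle is nailing down the precise constant $\pi$ in the relation $s \sim 1/(\pi k^2)$ of the first step. The dominant contribution to $\int_k^{1/k} |F'(x)|\,dx$ comes from the bulk region $w \asymp 1$ away from both prevertices, and one has to verify carefully that the boundary layers near $w = k$ and $w = 1/k$ contribute only at a subleading order, and that the $(w^2+1)^2$ factor in the denominator is correctly normalised by the residue of $F$ at $w=i$. Once this leading-order dependence of $s$ on $k$ is established, the third and fourth steps reduce to routine manipulations of complete elliptic integrals.
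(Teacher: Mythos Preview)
The paper does not prove this statement: it is quoted verbatim as a result of Dyutin--Nguyen \cite{DN} and used as a black box for the lower bound in Lemma~\ref{lem:Teich_asymp}. So there is no ``paper's own proof'' to compare against.

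Your Schwarz--Christoffel approach is correct and is the natural route to this asymptotic (and almost certainly close to what \cite{DN} does). A few remarks that would tighten the sketch:
\begin{itemize}
\item The single-valuedness of $F$ near the interior pole $w=i$ requires the residue of $F'$ at $i$ to vanish. You flag this as part of the ``normalisation'' worry, but in fact it is automatic from the symmetric placement $\pm k,\pm 1/k$ of the prevertices: writing $F'(w)=g(w)/(w-i)^{2}$ with $g(w)=C\sqrt{(w^{2}-k^{2})(w^{2}-1/k^{2})}/(w+i)^{2}$, one checks directly that $g'(i)=0$ because $h(w)=(w^{2}-k^{2})(w^{2}-1/k^{2})$ satisfies $h(i)=(k+1/k)^{2}$ and $h'(i)=-2i(k+1/k)^{2}$, and the two terms in $g'(i)$ cancel. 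So no extra constraint is imposed and the two side-length equations really do determine $(C,k)$.
\item Your leading-order balance is right: the first integral gives $C\pi k/2\sim 1$ (via $w=k\sin\theta$), and the second reduces to $(C/k)\int_{0}^{\infty}w\,(w^{2}+1)^{-2}\,dw=C/(2k)$ at leading order, yielding $s\sim 1/(\pi k^{2})$. The boundary layers near $w=k$ and $w=1/k$ contribute $O(C)=O(1/k)$, which is subleading compared to $s\asymp 1/k^{2}$, so the constant $\pi$ is secure.
\item The elliptic-integral step is routine once $k$ is known; the asymptotics $K(k^{2})\to\pi/2$ and $K'(k^{2})\sim 2\log(2/k)$ combine with $\log(1/k)\sim\tfrac12\log s$ to give exactly $\Mod(Q^{c})\sim\pi^{-1}\log s$.
\end{itemize}
In short, the plan is sound; the only thing the paper ``buys'' by citing \cite{DN} is avoiding the bookkeeping you outline.
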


\noindent Here \(f(s)\sim g(s)\) means \(\lim f(s)/g(s)=1\), so the \(\frac1\pi\log(s)\) gives the exact asymptotics of \(\Mod(\,Q^c\,)\) as \(s\to\infty\).

\begin{proof}[Proof of Lemma~\ref{lem:Teich_asymp}.]
For the lower bound, note that \(\tau(s)\) is equal to the extremal length of the path family homotopic to the curve obtained by concatenation of a vertical side of \(P\) and a vertical side of \(Q^c\). By additive property of extremal length, we conclude that \(\tau(s)\) is bounded below by the sum of the moduli of \(P\) and \(Q^c\). Since the modulus of \(P\) is 1, the theorem quoted above implies \(\tau(s) \ge 1 + \frac{1}{\pi} \log(s)\).

\smallskip
\noindent For the upper bound, we construct a quasiconformal map \(f : Q^c \to \mathbb{C} - B\big((s + 1/2)i, 1/2\big) - B(-1/2 i, 1/2)\) as follows. The quadrilateral \(Q^c\) is divided into several regions. Outside the red and blue regions in Figure~\ref{fig:qc}, the map \(f\) is identity / translation. In the blue regions, \(f\) is a scaling in the \(x\)-direction.

\begin{figure}[htp]
    \centering
    \includegraphics[width=0.6\linewidth]{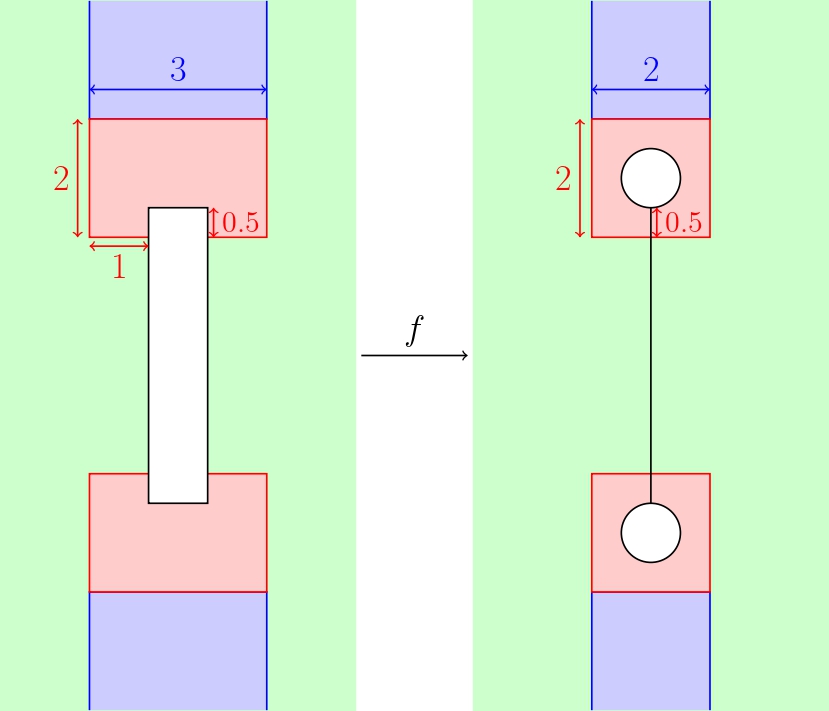}
    \caption{The quasiconformal map}
    \label{fig:qc}
\end{figure}

\noindent It remains to define \(f\) in the two red regions. Over every boundary segment of the upper red region except the top side of \(Q\), \(f\) is either already defined, or can be defined easily as a translation. It also maps the top side of \(Q\) to the circle of radius \(1/2\) centered at \((s + 1/2)i\), via a map to be determined below.

\smallskip
\noindent Consider the map 
\[
g(z) = \frac{\sqrt{s^2 + s} + i z}{\sqrt{s^2 + s} - i z}.
\] 
\smallskip

\noindent This maps the disk \(B((s + 1/2)i, 1/2)\) to \(B(0, r(s))\), and the disk \(B(-(s + 1/2)i, 1/2)\) to the complement of \(B(0, 1 / r(s))\), where \(r(s) = 2s + 1 - 2 \sqrt{s^2 + s}\). Now we can define \(f\) on \([0,1] \times \{s\}\) as follows:
\[
f(t) = g^{-1} \bigl( r(s) e^{2 \pi i (1 - t)} \bigr).
\]
It is easy to check that \(f\) has uniformly bounded derivative on the boundary of the red region, and hence can be extended to the red region as a quasi-conformal map with uniformly bounded dilation \(K\). It can be similarly defined on the lower red region.

\smallskip
\noindent Now \(f\) is constructed so that \(\frac{i}{2\pi} \log (g \circ f)\) maps \(Q^c\) to a rectangle of height \(\frac{1}{\pi} \log (r(s))\) and width 1, which maps the top and bottom sides of \(Q^c\) isometrically to the bottom and top sides of this rectangle. Now we can glue a copy of \(P\) to both rectangles, and obtain a quasi-conformal map from \(T_s\) to a torus with modulus \(\pi^{-1} \log (1 / r(s)) + 1\). By property of quasi-conformal maps, we then have the modulus of \(T_s\), given by \(\tau(s)\), is bounded above by \(K\pi^{-1} \log (1 / r(s)) + K \asymp \log(s)\). This gives the upper bound.
\end{proof}

\noindent Of course, the same argument above also gives a lower bound, but by quoting Theorem~\ref{thm:DN} we obtain the lower bound without an undetermined multiplicative factor.

\smallskip

\noindent First suppose \(\textnormal{C}\) is contained in a positive leaf. Recall that its boundary is parallel to a primitive element \(u \in \mathbb{Z}[\,i\,]\). Let \(X_t\) be the torus corresponding to the point \(t\,u\) on the boundary of the chamber \(\textnormal{C}\). Recall that this is constructed from the complement of a slit of length \((|\,t\,| + 1)|\,u\,|\) in \(\cp\), and a parallelogram \(P_t\) with vertices at \(0\), \(u\), \(t u + v\), and \((t + 1)u + v\), where \((u, v)\) form a positive basis for \(\mathbb{Z}[\,i\,]\). Note that we may choose so that the projection of \(v\) onto \(u\) has length shorter than \(|\,u\,|\). Suppose the corresponding point in the Teichm\"uller space \(\mathcal{T}_{1,1} \cong \mathbb{H}\) is \(\sigma(t)\). We claim

\begin{lem}
    There exists a constant \(D > 0\) independent of the chamber so that 
    \[
    d_{\mathbb{H}}\Big(\,\sigma(\,t\,),\, t + i \log |\,t\,|\,\Big) \le D
    \]
    for all \(|\,t\,|\) large enough.
\end{lem}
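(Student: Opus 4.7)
The plan is to reduce to a standard normalization using the Veech group action and then build an explicit quasiconformal map from $X_t$ to a model flat torus whose modular parameter lies at bounded hyperbolic distance from $t+i\log|t|$. The desired estimate then follows from the defining property of the Teichm\"uller metric.

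First I would invoke Proposition~\ref{prop:Veechpositive}: the Veech group $\textnormal{SL}(2,\mathbb{Z})$ acts on $\mathcal{T}_{1,1}\cong\mathbb{H}^2$ by hyperbolic isometries and permutes cylinder chambers by $u\mapsto Au$, and the chamber-specific identifications (sending $X_0$ to $i$ and the endpoint to $\infty$) intertwine with this action. Hence it suffices to establish the bound for a single representative, which I take to be $(u,v)=(1,i)$. In this normalization, the parallelogram $P_t$ has vertices $0,1,t+i,t+1+i$; identifying its two horizontal sides yields a flat cylinder of circumference $1$ and modulus $1$, whose remaining two long boundary sides are then glued to the two sides of the horizontal slit $\sigma\subset\hat{\mathbb{C}}$ of length $|t|+1$ to produce $X_t$.

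Next I would construct a quasiconformal map $\Phi_t\colon X_t\to T_{\tau_t'}$ to a flat torus with modular parameter $\tau_t'$ satisfying $\operatorname{Re}\tau_t'=t+O(1)$ and $\operatorname{Im}\tau_t'=\tfrac{1}{\pi}\log|t|+O(1)$. The map is defined piecewise. On the slit complement $\hat{\mathbb{C}}\setminus\sigma$, rescale by $|t|+1$ and apply the Joukowski-type construction from the proof of Lemma~\ref{lem:Teich_asymp} together with its bounded quasiconformal interpolation; this opens the slit complement into a flat rectangle of width $|t|+1$ and height $\tfrac{1}{\pi}\log|t|+O(1)$, with the two sides of $\sigma$ on its top and bottom. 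On the cylinder coming from $P_t$, apply a shear-and-scale that straightens the tilt induced by $v=i$ to produce a matching rectangle of width $|t|+1$ and height $1$. Gluing the two pieces along their horizontal sides produces a flat torus with modular parameter $\tau_t'$ as above, and both pieces are $K_0$-quasiconformal with $K_0$ independent of $t$. The inequality $d_{\mathcal{T}_{1,1}}(X_t,T_{\tau_t'})\le\tfrac{1}{2}\log K_0$, together with the bound $d_{\mathbb{H}^2}(\tau_t',t+i\log|t|)=O(1)$ -- which holds since the two points differ by $O(1)$ in real part and by a bounded multiplicative factor in imaginary part -- then yields the claim after transporting through the chamber-specific identification.

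The main obstacle I anticipate is the precise control of the real part of $\tau_t'$. As $t$ increases by $1$, the cyclic attachment of the slit complement to the cylinder shifts by exactly one circumference, which should correspond to a single Dehn twist around the cylinder core and thus to $\tau\mapsto\tau+1$ in the modular parameter. Making this rigorous requires a careful combinatorial tracking of how the long sides of $P_t$ are identified with the two sides of $\sigma$, and verifying that the tilt of $P_t$ induced by $v=i$ only contributes an $O(1)$ correction to the twist despite being summed across a slit of length $|t|+1$; once this is settled, the remaining estimates are routine consequences of Lemma~\ref{lem:Teich_asymp} and Theorem~\ref{thm:DN}.
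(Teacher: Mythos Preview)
Your reduction to a single chamber via the Veech group is the main gap. You assert that the Veech group $\textnormal{SL}(2,\mathbb{Z})$ acts on the leaf by hyperbolic isometries once the leaf is identified with $\mathcal{T}_{1,1}\cong\mathbb{H}^2$. It does not. An element of the Veech group acts on the leaf by an \emph{affine} automorphism in the translation charts given by the relative period; unless its linear part is conformal, this is only a quasiconformal self-map of the underlying Riemann surface, not a biholomorphism. The paper makes exactly this distinction: Proposition~\ref{prop:conjugacy} (and the remark in Theorem~1.4) says that the Veech action and the mapping-class-group isometry action agree only on the boundary $\partial\mathbb{H}^2$, not in the interior. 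Since the dilatation of $A\in\textnormal{SL}(2,\mathbb{Z})$ is unbounded over the group, transporting an estimate for the chamber $u=1$ to a general chamber via $A$ costs an additive error $\tfrac12\log K(A)$ that blows up, so you do not obtain a $D$ independent of the chamber this way. The paper achieves uniformity differently: it works with an arbitrary primitive $u$ and checks that the explicit quasiconformal pieces have dilatation independent of both $t$ and $u$ (after rotating and rescaling by $1/|u|$, with $v$ chosen so that its projection onto $u$ is shorter than $|u|$).

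There is also a secondary gap in your quasiconformal construction. The map you invoke from Lemma~\ref{lem:Teich_asymp} is built for the complement of a \emph{rectangle} $Q=[0,1]\times[0,s]$, not for the complement of a slit; rescaling the slit does not produce a rectangle complement, and after rescaling there is no $t$-dependence left from which $\tfrac1\pi\log|t|$ could emerge. The paper inserts an additional step here: a piecewise-affine map $h_t$ (Figure~\ref{fig:qc_shear_draft}) from the slit complement to $\hat{\mathbb C}\setminus([0,1]\times[0,t])$, whose dilatation is checked to be independent of $t$ because the only $t$-dependent pieces are handled by similarities. Only then is the construction of Lemma~\ref{lem:Teich_asymp} applied. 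Finally, your description of the cylinder is inverted: the diagonal sides of $P_t$ (of period $tu+v$) are identified to form the cylinder with circumference $|u|$, and the two remaining short sides of period $u$ are what get glued to portions of the slit---not the ``long boundary sides'' to the whole slit. Your concern about the real part of $\tau_t'$ is legitimate but, in the paper's approach, is resolved simply by gluing the (normalized) sheared parallelogram $P_t$ directly onto the rectangle and reading off the modular parameter.
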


\begin{proof}
    It suffices to construct a quasi-conformal map between \(X_t\) and the torus corresponding to \(t+i\log(\,t\,)\).

    \smallskip

    \noindent For simplicity, assume \(t > 0\). The case \(t < 0\) is entirely similar. First note that we can construct a quasi-conformal map \(h_t\) from the complement of the slit to the complement of \(Q = [0,1] \times [0,t]\); see Figure~\ref{fig:qc_shear_draft}. We divide the complement of the slit into pieces by rays perpendicular to or forming \(\pi/4\) degree angles with the slit, as in Figure~\ref{fig:qc_shear_draft}.
    
    \begin{figure}[htp]
        \centering
        \includegraphics[width=0.6\linewidth]{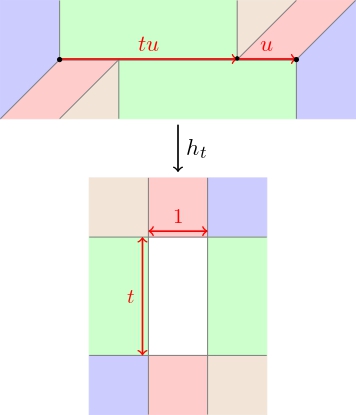}
        \caption{A piecewise affine quasi-conformal map}
        \label{fig:qc_shear_draft}
    \end{figure}
    
 \noindent The complement of \(Q\) can be divided into pieces similarly. Note that we can map each piece to one with the same color by an affine map. The only pieces varying with \(t\) are green ones, but we can map each green piece to a corresponding one by a conformal map (rotation plus scaling). So the dilation of \(h_t\) is independent of \(t\).

\smallskip

\noindent Following the same construction, \(Q^c\) is quasi-conformal to a rectangle with modulus \(\Mod(\,Q^c\,)\asymp \log(t)\). Thus the torus \(X_t\) is quasi-conformal to the torus constructed by gluing \(P_t\) to this rectangle; see Figure~\ref{fig:point_in_teich}. It is easy to see that, up to a rotation and scaling by \(1 / |\,u\,|\), the parallelogram \(P_t\) has lengths labeled as in the figure. This new torus corresponds to a point within bounded distance from \(t + i \log(\,t\,)\), as desired.
\end{proof}

\begin{figure}[htp]
        \centering
        \includegraphics[width=0.4\linewidth]{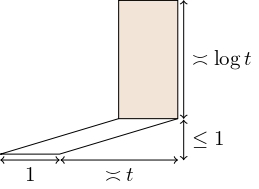}
        \caption{\(X_t\) is quasi-conformal to this torus}
        \label{fig:point_in_teich}
\end{figure}

\noindent Theorem~\ref{thm:teich_geometry} for cylinder chambers in a positive leaf then follows, since \(\glplus\) acts as quasi-conformal maps between leaves, so each group element only causes bounded distortion on Teichm\"uller distance.

\smallskip

\noindent For the negative leaf with period group \(\mathbb{Z}[\,i\,]\), recall that each cylinder chamber is again parametrized by a primitive element \(u\) in \(\mathbb{Z}[\,i\,]\), whose boundary is parallel to \(\mathbb{R}u\). The surface \(X_t\) corresponding to \(t\,u\) is given by the complement of a parallelogram \(Q\) with vertices at \(0,\,u,\, t\,u\,+\,v,\,(\,t\,+\,1\,)\,u\,+\,v\) and identification specified on the left of Figure~\ref{fig:point_in_teich_negative}. Here \((u,\, v)\) again form a positive basis for \(\mathbb{Z}[\,i\,]\) where \(v\) is chosen so that the projection of \(v\) onto \(u\) has length shorter than \(|\,u\,|\).

\begin{figure}[htp]
    \centering
    \includegraphics{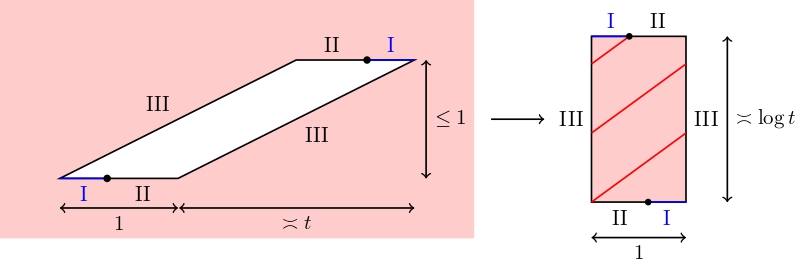}
    \caption{Illustration for negative leaves}
    \label{fig:point_in_teich_negative}
\end{figure}

\smallskip

\noindent As in the case of positive leaves, we can construct an explicit quasi-conformal map from \(X_t\) to a surface obtained from a rectangle of width 1 and height \(\asymp \log t\), with side identification specified on the right of Figure~\ref{fig:point_in_teich_negative}. The dilation of the quasi-conformal map is independent of \(t\). The closed curve with absolute period \(v\) in \(X_t\) is mapped to a curve homotopic to the curve in the rectangle with slope \(\log(t)/t\). Hence the corresponding point in the Teichmüller space is of bounded distance from \(t + i \log(t)\), as in the positive case. This concludes Theorem~\ref{thm:teich_geometry} for cylinder chambers in negative leaves.

\smallskip

\noindent Finally, the same construction works for arithmetic and non-arithmetic real leaves as well.

\subsection{Conjugacy of action of \(\pslz\)}
Recall that \(\slz\) acts as the Veech group of each generic leaf. On the other hand, \(\slz\) acts on \(\mathcal{T}_{1,1}\) by isometries.

\begin{prop}\label{prop:conjugacy}
The natural map defined in the previous section from a generic (unmarked) leaf to \(\mathcal{T}_{1,1}\) conjugates these two actions of \(\slz\) on the boundary at infinity.
\end{prop}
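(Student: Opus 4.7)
The plan is to show that both actions of $\slz$ extend continuously to the boundary circle $\partial\mathbb{H} = \mathbb{R} \cup \{\infty\}$ and agree on the dense subset $\mathbb{Q} \cup \{\infty\}$, which by the preceding theorem consists exactly of the accumulation points of cylinder chambers under the natural map. First I would verify the continuous extensions: the mapping class group acts on $\overline{\mathbb{H}}$ by M\"obius transformations which are continuous on the closed disk, while Veech elements are quasiconformal self-homeomorphisms of $\mathcal{L}\cong\mathbb{H}$ with uniformly bounded dilation and hence extend continuously to $\partial\mathbb{H}$ by standard quasiconformal boundary extension.

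Next I would identify how both actions permute the rational cusps. Assume for concreteness $\Gamma = \mathbb{Z}[i]$ and fix a symplectic basis $(\alpha,\beta)$ of absolute homology with $\chi(\alpha) = 1$ and $\chi(\beta) = \varepsilon i$ for $\varepsilon = \pm 1$ (positive or negative leaf). For a primitive $u = p + qi \in \Gamma$, the core curve of the cylinder in chamber $CC_u$ represents the homology class $p\alpha + \varepsilon q\beta$; under the natural map, pinching this core curve corresponds to the cusp $p/q$ of $\mathcal{T}_{1,1} \cong \mathbb{H}$ via the standard $\slz$-equivariant identification of cusps with $\mathbb{Q} \cup \{\infty\}$. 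The Veech action of an element $A \in \slz$ with entries $a,b,c,d$ sends $CC_u$ to $CC_{Au}$ since the $\glplus$-action is linear on periods; the rational cusp associated to $Au = (ap+bq) + (cp+dq)i$ is $\tfrac{ap+bq}{cp+dq} = A \cdot \tfrac{p}{q}$, exactly the M\"obius image of $p/q$. So the two $\slz$-actions on $\mathbb{Q} \cup \{\infty\}$ coincide, and by density of the cusps in $\mathbb{R} \cup \{\infty\}$ together with the continuity established in the first step, the two actions agree on the whole boundary.

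The hard part will be justifying the explicit correspondence $CC_u \leftrightarrow p/q$ used in the second step. The Teichm\"uller geometry estimate $\sigma_\mathcal{C}(t) \asymp t + i\log t$ from Theorem~\ref{thm:teich_geometry} already confirms that each cylinder chamber accumulates to a single cusp; identifying this cusp with $p/q$ comes from the classical description of cusps of $\mathcal{T}_{1,1}$ as pinching classes, combined with the observation that the pinched class at the boundary of $CC_u$ is the core curve of the cylinder, which has homology class $p\alpha + \varepsilon q\beta$. Once this identification is fixed, the M\"obius action on the rational labels $p/q$ matches the linear $\slz$-action on primitive periods $u$, which is what is needed.
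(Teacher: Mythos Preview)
Your proposal is correct and follows essentially the same route as the paper: identify the point at infinity of each cylinder chamber $CC_{u}$ (for $u=p+qi$) with the cusp $p/q$ via the pinching description of $\partial\mathcal{T}_{1,1}$, check that the Veech $\slz$-action $u\mapsto Au$ matches the M\"obius action $p/q\mapsto A\cdot p/q$ on these rational cusps, and conclude by density and continuity. The only difference is that you spell out the continuity step (quasiconformal boundary extension for the Veech action) more carefully than the paper, which simply invokes ``continuity of actions'' without elaboration.
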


\noindent For simplicity, we focus on positive leaves. The case for negative leaves is entirely similar. Without loss of generality, we may also assume the period group is \(\mathbb{Z}[\,i\,]\). Recall that a cylinder chamber is parametrized by a primitive element \(u = p + iq \in \mathbb{Z}[i]\), up to multiplication by \(-1\). Here \(p, q\) are relatively prime integers. An element \(\gamma \in \slz\) maps the chamber \(\textnormal{C}(\,u\,)\) associated with \(u\) to that associated with \[\gamma \cdot u\, =\, \gamma \cdot \begin{pmatrix} p \\ q \end{pmatrix}.\]

\smallskip

\noindent On the other hand, fixing a base surface \(X \in \mathcal{T}_{1,1}\) and a symplectic basis \((\alpha, \beta)\) for \(H^1(X, \mathbb{Z})\), each point \(p/q \in \mathbb{Q} \cup \{\infty\}\) on \(\partial \mathbb{H}\) corresponds to a pinched hyperbolic surface where the simple closed curve \(p\alpha + q\beta\) is pinched. Moreover, an element \(\gamma \in \slz\) maps \(p/q\) to \[\gamma \cdot p/q = \gamma \cdot \begin{pmatrix} p \\ q \end{pmatrix}.\]

\smallskip

\noindent Thus to prove Proposition~\ref{prop:conjugacy}, it suffices to show that the point at infinity of \(\textnormal{C}(\,u\,)\) is precisely \(p/q\), noting the density of \(\mathbb{Q} \cup \{\infty\}\) in \(\partial \mathbb{H} = \mathbb{R} \cup \{\infty\}\) and the continuity of actions. But the discussions in the previous section imply that the point at infinity of \(\textnormal{C}(\,u\,)\) corresponds to pinching the curve with period \(u\). If we choose \(\alpha, \beta\) to be the curves with periods 1 and \(i\) respectively, then the curve with period \(u\) is given by \(p\alpha + q\beta\). We then have the desired result by the discussions above.

\appendix
\section{Equilateral triangulation, true trees and conformal structure of arithmetic leaves}\label{sec:true_trees}
\noindent In this appendix, we wish to provide another perspective of the conformal structure of arithmetic leaves, inspired by recent work of Ivrii-Lin-Rohde-Sygal \cite{ILRS}.

\medskip

\subsection{Equilateral triangulation and true trees}

A \textit{Bely\u{\i} function} is a holomorphic map \(f\) from a compact Riemann surface \(X\) to \(\cp\) with only 3 critical values (usually taken to be \(0, 1, \infty\)). Equivalently, \(X\) admits an \emph{equilateral triangulation}, by taking the inverse images under \(f\) of the upper half-plane and lower half-plane in \(\mathbb{C}\). Conformally, we may reconstruct \(X\) by gluing identical pieces of equilateral triangles according to the combinatorics given by the triangulation.

\smallskip

\noindent Note that not every Riemann surface admits an equilateral triangulation. A classical result of Bely\u{\i} states that a compact Riemann surface admits an equilateral triangulation if and only if it is algebraic \cite{Bel}. On the other hand, by a recent result of Bishop and Rempe, every non-compact Riemann surface admits an equilateral triangulation \cite{BR}.

\smallskip

\noindent If \(X = \cp\) and the Bely\u{\i} function \(f\) is a polynomial, then it is also called a \textit{Shabat polynomial}. It is easy to see that in this case \(f^{-1}\left(\,[0,1]\,\right)\) is a tree. Such a tree is called a \textit{true tree}, and up to M\"obius transformations, every finite tree in the plane is equivalent to a unique true tree.

\smallskip

\noindent True trees are \emph{conformally balanced}, in the sense that the harmonic measure seen from infinity of every edge on either side is the same. Equivalently, the Riemann mapping from the complement in \(\cp\) of a true tree to \(\cp \setminus \mathbb{D}\), that fixes \(\infty\), maps each side of the edges to a circular arc of equal length.

\smallskip

\noindent Let \(\mathfrak{L}\) be the (unmarked) arithmetic real leaf with period group \(\mathbb{Z}\). Then \(\mathfrak{L}\) admits an equilateral triangulation. Indeed, each chamber is identified with the upper half-plane, which can be tiled by vertical half-strips bounded by vertical rays starting at integers. Mapping the vertex at infinity of each triangle to \(\infty\), we get the corresponding Bely\u{\i} function \(f : \mathfrak{L} \to \mathbb{C}\). The preimage \(f^{-1}([0,1])\) gives precisely the walls of the chambers, shown in Figure~\ref{fig:binary_tree}.

\medskip

\subsection{Approximation by finite trees}

Let \(\mathcal{T}_\infty\) be the trivalent infinite tree with a distinguished vertex \(v\), and \(\mathcal{T}_n\) be the subtree induced by all vertices of distance \(n\) from \(v\). Denote also by \(\mathcal{T}_n\) the conformally balanced form of the tree, normalized so that the Riemann mapping \(\varphi_n: \cp \setminus \mathbb{D} \to \cp\setminus \mathcal{T}_n\) has expansion \(z \mapsto z + O(1/z)\) at infinity. Then the main results in \cite{ILRS} state:
\begin{itemize}
    \item[1.] The trees \(\mathcal{T}_n\) converge in the Hausdorff topology to an infinite trivalent tree union a Jordan curve \(\mathcal{T}_\infty \cup \partial \Omega\);
    \smallskip
    \item[2.] The corresponding Shabat polynomials \(p_n\) converge to a Bely\u{\i} function \(F: \Omega \to \cp\), so that \(\mathcal{T}_\infty = F^{-1}([0,1])\);
    \smallskip
    \item[3.] Each connected component \(\textnormal{C}\) of \(\Omega - \mathcal{T}_\infty\) is a disk, whose boundary meets \(\partial \Omega\) at a unique point and otherwise consists of edges of \(\mathcal{T}_\infty\);
    \smallskip
    \item[4.] More precisely, the domain \(\Omega\) is the so-called \emph{developed deltoid}, studied in \cite{LLMM}.
\end{itemize}
In particular, \(\Omega\) is conformally equivalent to the surface obtained by gluing half planes along the infinite tree \(\mathcal{T}_\infty\), in the spirit of Figure~\ref{fig:binary_tree}.

\smallskip

\noindent Inspired by this, we consider the following way of producing a conformal image of the arithmetic leaf \(\mathfrak{L}(\,\rho,\,\mu\,)\). Instead of looking at the rooted binary tree in Figure~\ref{fig:binary_tree}, we look at a degree 3 cover of it ramified over the root vertex \(v\). This tree is combinatorially the same as \(\mathcal{T}_\infty\), but each edge is assigned a weight according to its length in the period coordinates, as marked in Figure~\ref{fig:binary_tree}. We still denote this tree by \(\mathcal{T}_\infty\). Similarly we have the level \(n\) sub-tree \(\mathcal{T}_n\), and the corresponding weight function on its edges.

\smallskip

\noindent Let \(\mathcal{T}_n^*\) be the tree obtained from \(\mathcal{T}_n\) by adding \(t - 1\) interior vertices to an edge with weight \(t\). In this way, in the corresponding true form, the harmonic measure of an arc between branching points viewed from infinity is proportional to the weight of the corresponding edge in \(\mathcal{T}_n\). We similarly obtain \(\mathcal{T}_\infty^*\). Again, we use the same symbols \(\mathcal{T}_n^*, \mathcal{T}_\infty^*\) to denote the corresponding true trees, normalized so that the Riemann mapping \(\varphi_n^*\colon \cp \setminus \mathbb{D} \to \cp \setminus \mathcal{T}_n^*\) has expansion \(z \mapsto z + O(1/z)\) at infinity. The estimates in \cite{ILRS} can be adapted to prove the following:
\begin{itemize}
    \item A subsequence of the trees \(\mathcal{T}_n^*\) converge in the Hausdorff topology to \(\mathcal{T}_\infty^*\) union a Jordan curve \(\partial \Omega^*\);
    \item The corresponding Shabat polynomials \(p_n^*\) converge to a Bely\u{\i} function \(F^*\colon \Omega^* \to \cp\), so that \(\mathcal{T}_\infty^* = {F^*}^{-1}([0,1])\);
    \item Each connected component \(\textnormal{C}\) of \(\Omega^* - \mathcal{T}_\infty^*\) is a disk, whose boundary meets \(\partial \Omega^*\) at a unique point, and otherwise consists of edges of \(\mathcal{T}_\infty^*\);
    \item In fact \(\Omega^*\) is conformally equivalent to a degree 3 cover of the leaf \(\mathfrak{L}(\,\rho,\,\mu\,)\), with the cusp at the center filled in.
\end{itemize}
See Figure~\ref{fig:true_tree} for the picture of a true tree approximating \(\Omega^*\). The proof of these statements follows the same strategy as \cite{ILRS}, although due to a break in symmetry, many estimates need to be slightly adapted.

\begin{figure}[htp]
    \centering
    \includegraphics[width=0.5\linewidth,angle=90]{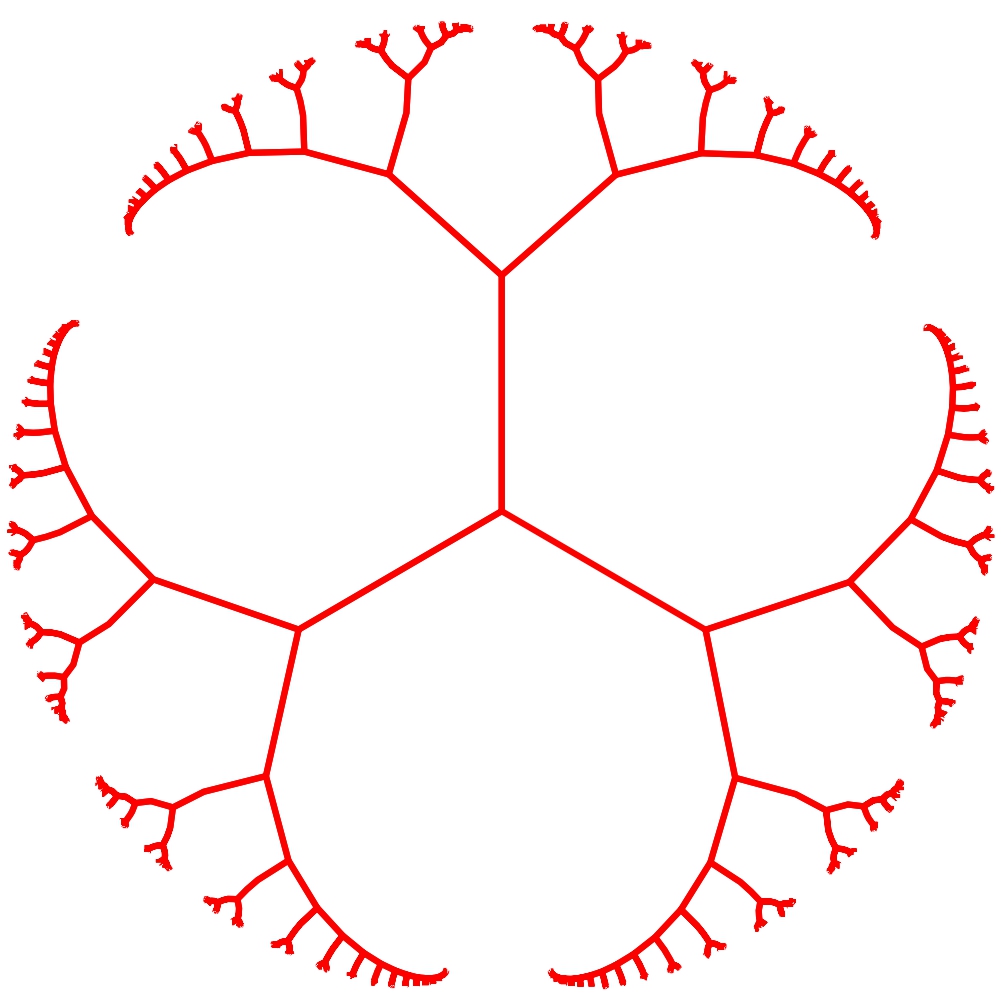}
    \caption{True trees approximate the conformal structure of \(\mathfrak{L}(\,\rho,\,\mu\,)\)}
    \label{fig:true_tree}
\end{figure}

\noindent In \cite{ILRS}, as mentioned above, the domain \(\Omega\) is identified with the developed deltoid, which arises as the mating of the dynamics of the ideal triangle group (an index 2 subgroup of \(\mathrm{PSL}_2(\mathbb{Z})\)) in the interior and the anti-holomorphic map \(z \mapsto \overline{z}^2\) in the exterior (see \cite{LLMM}). They also prove the uniqueness of the Hausdorff limit by showing that every sub-sequential limit gives the same mating. Moreover, because of the action of the ideal triangle group, the Bely\u{\i} function \(F\colon \Omega \to \cp\) composed with the Riemann mapping \(R\colon \mathbb{D} \to \Omega\) is a modular function invariant under the group.

\smallskip

\noindent In our case, however, the different lengths of the edges break the symmetry of the trivalent tree, and there is no natural \(\pslz\)-action (recall that the Veech group of the arithmetic leaf is not \(\pslz\)!). We are not sure what the exterior conformal structure could be, and hence do not claim uniqueness of the Hausdorff limit. On the other hand, it is clear that Figure~\ref{fig:true_tree} is homeomorphic to the picture produced in \cite{ILRS}; see their Figure~1. Could \(\Omega^*\) also arise dynamically? \textit{e.g.}, as the mating of a Fuchsian group and an anti-holomorphic map. We hope to further investigate these questions in the future.

\end{document}